\newtheorem{theoremalphabetical}{Theorem}[section]
\newtheorem{theorem}{Theorem}[section]
\newtheorem{proposition}[theorem]{Proposition}
\newtheorem{lemma}[theorem]{Lemma}
\newtheorem{corollary}[theorem]{Corollary}
\theoremstyle{definition}
\newtheorem{definition}[theorem]{Definition}
\newtheorem{example}[theorem]{Example}
\newtheorem{remark}[theorem]{Remark}
\newenvironment{altenumerate}
{\begin{list}
		{\textup{(\theenumi)} }
		{\usecounter{enumi}
			\setlength{\labelwidth}{0pt}
			\setlength{\labelsep}{0pt}
			\setlength{\leftmargin}{0pt}
			\setlength{\itemsep}{0pt}
			\setlength{\topsep}{0pt}
			\renewcommand{\theenumi}{\roman{enumi}}
	}}
	{\end{list}}
\newenvironment{altenumeratelevel2}
{\begin{list}
		{\textup{(\theenumi)} }
		{\usecounter{enumii}
			\setlength{\labelwidth}{2em}
			\setlength{\labelsep}{0pt}
			\setlength{\leftmargin}{2em}
			\setlength{\itemsep}{2pt}
			\setlength{\topsep}{2pt}
			\setlength{\itemindent}{0pt}
			\renewcommand{\theenumi}{\arabic{enumii}}
	}}
	{\end{list}}
\newcommand{\eqmathbox}[2][N]{\eqmakebox[#1]{$\displaystyle#2$}}
\numberwithin{equation}{section}
\def\@seccntformat#1{%
	\protect\textup{\protect\@secnumfont
		\ifnum\pdfstrcmp{subsection}{#1}=0 \bfseries\fi
		\csname the#1\endcsname
		\protect\@secnumpunct
	}%
}  
\def\@tocline#1#2#3#4#5#6#7{\relax
	\ifnum #1>\c@tocdepth 
	\else
	\par \addpenalty\@secpenalty\addvspace{#2}%
	\begingroup \hyphenpenalty\@M
	\@ifempty{#4}{%
		\@tempdima\csname r@tocindent\number#1\endcsname\relax
	}{%
		\@tempdima#4\relax
	}%
	\parindent\z@ \leftskip#3\relax \advance\leftskip\@tempdima\relax
	\rightskip\@pnumwidth plus4em \parfillskip-\@pnumwidth
	#5\leavevmode\hskip-\@tempdima
	\ifcase #1
	\or\or \hskip 1em \or \hskip 2em \else \hskip 3em \fi%
	#6\nobreak\relax
	\hfill\hbox to\@pnumwidth{\@tocpagenum{#7}}\par
	\nobreak
	\endgroup
	\fi}
\newcommand{\Ad}{{\mathrm{Ad}}}
\newcommand{\abs}[1]{{\lvert{#1}\rvert}}
\newcommand{\an}{{\mathrm{an}}}
\newcommand{\alg}{{\mathrm{alg}}}
\newcommand{\adm}{{\mathrm{adm}}}
\newcommand{\blank}{{\,\_\,}}
\renewcommand{\cot}[1]{{\,\widehat{\otimes}_{#1}\,}}
\renewcommand{\dim}{{\mathrm{dim}}}
\newcommand{\defeq}{\vcentcolon=}
\newcommand{\ev}{{\mathrm{ev}}}
\newcommand{\GL}{{\mathrm{GL}}}
\newcommand{\hy}{{\mathrm{hy}}}
\renewcommand{\Im}{{\mathrm{Im}}}
\newcommand{\id}{{\mathrm{id}}}
\newcommand{\Ind}{{\mathrm{Ind}}}
\newcommand{\inv}{{\mathrm{inv}}}
\newcommand{\Ker}{{\mathrm{Ker}}}
\newcommand{\la}{{\mathrm{la}}}
\newcommand{\lra}{\longrightarrow}
\newcommand{\lto}{\longmapsto}
\DeclareRobustCommand\longtwoheadrightarrow
\DeclareRobustCommand\longhookrightarrow
\newcommand{\mto}{\mapsto}
\newcommand{\ov}[1]{\overline{#1}}
\newcommand{\ot}[1]{{\,\otimes_{#1}\,}}
\newcommand{\ra}{\rightarrow}
\newcommand{\res}[1]{{\!\,\mid_{#1}}}
\newcommand{\rig}{{\mathrm{rig}}}
\newcommand{\Rep}{{\mathrm{Rep}}}
\newcommand{\sm}{{\infty}}
\newcommand{\transp}[1]{{{#1}^\mathrm{t}}}
\newcommand{\ul}[1]{{\underline{#1}}}
\let\originalmiddle\middle
\renewcommand{\middle}[1]{\,\originalmiddle#1\,}
\newcommand{\llrrbracket}[1]{\mkern-3mu\left[\mkern-2.5mu\left[#1\right]\mkern-2.5mu\right]}
\newcommand{\BN}{{\mathbb {N}}}
\newcommand{\BQ}{{\mathbb {Q}}}
\newcommand{\CA}{{\mathcal {A}}}
\newcommand{\CC}{{\mathcal {C}}}
\newcommand{\CF}{{\mathcal {F}}}
\newcommand{\CI}{{\mathcal {I}}}
\newcommand{\CL}{{\mathcal {L}}}
\newcommand{\CM}{{\mathcal {M}}}
\newcommand{\CO}{{\mathcal {O}}}
\newcommand{\bB}{{\mathrm{\bf B}}}
\newcommand{\bG}{{\mathrm{\bf G}}}
\newcommand{\bL}{{\mathrm{\bf L}}}
\newcommand{\bP}{{\mathrm{\bf P}}}
\newcommand{\bT}{{\mathrm{\bf T}}}
\newcommand{\bU}{{\mathrm{\bf U}}}
\newcommand{\bX}{{\mathrm{\bf X}}}
\newcommand{\Fa}{{\mathfrak {a}}}
\newcommand{\Fd}{{\mathfrak {d}}}
\newcommand{\Fg}{{\mathfrak {g}}}
\newcommand{\Fl}{{\mathfrak {l}}}
\newcommand{\Fm}{{\mathfrak {m}}}
\newcommand{\Fp}{{\mathfrak {p}}}
\newcommand{\Ft}{{\mathfrak {t}}}
\newcommand{\Fu}{{\mathfrak {u}}}
\newcommand{\Fx}{{\mathfrak {x}}}
\newcommand{\Fz}{{\mathfrak {z}}}
\newcommand{\FM}{{\mathfrak {M}}}
\newcommand{\FZ}{{\mathfrak {Z}}}
\title{The Functors $\CF^G_P$ over Local Fields of Positive Characteristic}
\author{Georg Linden}
\date{July 9, 2024}
\begin{document}

\begin{abstract}
	Let $G$ be a split connected reductive group over a non-archimedean local field. 
	In the $p$-adic setting, Orlik--Strauch constructed functors from the BGG category $\CO$ associated to the Lie algebra of $G$ to the category of locally analytic representation of $G$. 
	We generalize these functors to such groups over non-archimedean local fields of arbitrary characteristic. 
	To this end, we introduce the hyperalgebra of a non-archimedean Lie group $G$, which generalizes its Lie algebra, and consider topological modules over the algebra of locally analytic distributions on $G$ and subalgebras related to this hyperalgebra.
\end{abstract}
\maketitle
\tableofcontents

\section{Introduction}

Let $L$ be a non-archimedean local field and let $K$ be a finite extension of $L$, which serves as our coefficient field.
Let $\bG$ be a split connected reductive algebraic group over $L$ and let $\bP \supset \bB \supset \bT$ be a parabolic subgroup, a Borel subgroup and a maximal split torus of $\bG$ respectively.
Moreover, let $G$, $P$, etc.\ denote the respective groups of $L$-valued points considered as locally $L$-analytic Lie groups.

When $L$ is a finite extension of $\BQ_p$, Orlik and Strauch \cite{OrlikStrauch15JordanHoelderSerLocAnRep} defined and studied bi-functors
\begin{equation*}
	\CF_P^G \colon \CO_\alg^\Fp \times \Rep^{\sm, \adm}_K (L_\bP) \lra \Rep^{\la,\adm}_K (G) ,
\end{equation*}
which give rise to admissible locally analytic $G$-representations via a procedure reminiscent of parabolic induction. 
Here $\bL_\bP$ is the standard Levi subgroup of $\bP$ and $\Fg$, $\Fp$, etc.\ denote the Lie algebras of the respective groups.
Moreover, $\CO_\alg^\Fp$ is the subcategory of the Bernstein--Gelfand--Gelfand category $\CO$ consisting of those finitely generated $\Fg$-modules which are locally $\Fp$-finite and on which $\Ft$ acts semisimply with algebraic weights.
In \textit{loc.\ cit.\ }Orlik and Strauch showed that the functors $\CF_P^G$ are exact and gave a criterion for when a representation in their image is irreducible.

One may also consider locally analytic $G$-representations defined in the same way when $L$ is of positive characteristic\footnote{The coefficient field $K$ continues to be a finite extension of $L$.}.
For instance, (the strong dual of) representations in the holomorphic discrete series of $\GL_n(L)$ are locally analytic, for any non-archimedean local field $L$\footnote{See \cite{Schneider92CohomLocSystemspAdicUnifVar} for the definition of and \cite{SchneiderTeitelbaum02pAdicBoundVal, Pohlkamp02RandwerteHolomFunkt, Orlik08EquivVBDrinfeldUpHalfSp} for results on holomorphic discrete series representations in the $p$-adic case.}.
In this generality, Gräf \cite{Graef20BoundDistr} studied the connection between a certain family of holomorphic discrete series representations and harmonic cocycles on the Bruhat--Tits building for $\GL_3(L)$.
Moreover, in \cite{Linden23EquivVBDrinfeldUpHalfSp} we identified certain subquotients of holomorphic discrete series representations of $\GL_n(L)$ and described them using a construction similar to the functors $\CF_P^G$. 
This extended work of Orlik \cite{Orlik08EquivVBDrinfeldUpHalfSp} for the $p$-adic case.\\

The goal of the present paper is to systematically study a generalization of the functors $\CF_P^G$ to non-archimedean local fields of arbitrary characteristic. 
Our approach is based on topological modules over the topological algebra $D(G)$ of locally analytic distributions on $G$ and subalgebras of $D(G)$.

More precisely, let $\CM_{D(G)}$ be the category of \textit{separately continuous $D(G)$-modules} on locally convex $K$-vector spaces (\Cref{Def - Topological algebras and modules}), and let $\CM_{D(G)}^\mathrm{nF} $ denote its full subcategory consisting of modules on nuclear $K$-Fr\'echet spaces.
Then there is an anti-equivalence between $\CM_{D(G)}^\mathrm{nF} $ and the category $\Rep_K^{\la, \mathrm{ct}} (G)$ of locally analytic $G$-representations whose underlying locally convex $K$-vector space is of compact type. 
This anti-equivalence is realized by passing from a locally analytic representation $V$ to its strong dual $V'_b$ and vice versa, and it can be proven exactly as in the $p$-adic setting pioneered by Schneider and Teitelbaum \cite{SchneiderTeitelbaum02LocAnDistApplToGL2}.

We define the \emph{hyperalgebra} $\hy(G)$ of a non-archimedean Lie group $G$ as the subalgebra of $D(G)$ consisting of those distributions supported in the identity element $1 \in G$ which are of finite order (see \Cref{Def - Distributions with support}). 
When $L$ is $p$-adic, $\hy(G)$ coincides with the universal enveloping algebra $U(\Fg)$ of the Lie algebra of $G$.
However, for $\mathrm{char}(L)>0$ this hyperalgebra is better suited than $U(\Fg)$.
For example, the pairing between $\hy(G)$ and germs of locally analytic functions at $1$ is non-degenerate (\Cref{Cor - Pairing of hyperalgebra and vector valued germs}) -- a property that fails for $U(\Fg)$ in positive characteristic.

With $\hy(G)$ at our disposal we generalize a definition of Orlik--Strauch and consider the subalgebra $D(\dot{\Fg}, P)$ of $D(G)$ generated by $\hy(G)$ and $D(P)$.
It can also be expressed as the inductive tensor product
\begin{equation*}
	D(\dot{\Fg},P) \cong \hy(G) \ot{\hy(P),\iota} D(P) .
\end{equation*}
Here again, we have the categories $\CM_{D(\dot{\Fg},P)}$ (resp.\ $\CM_{D(\dot{\Fg},P)}^\mathrm{nF} $) of separately continuous $D(\dot{\Fg},P)$-modules (with underlying nuclear $K$-Fr\'echet space).
We show that $\CM_{D(\dot{\Fg},P)}^\mathrm{nF} $ is quasi-abelian and closed under taking completed tensor products (\Cref{Prop - Category of nuclear Frechet modules is quasi-abelian}, \Cref{Cor - Completed tensor product is module}).
We note that the aforementioned subquotients of holomorphic discrete series representations occurring in \cite{Linden23EquivVBDrinfeldUpHalfSp} lie in the category $\CM_{D(\dot{\Fg},P)}^\mathrm{nF} $, for certain parabolic subgroups $P$ of $ G =\GL_n(L)$.

\begin{theoremalphabetical}[{\Cref{Prop - Locally analytic representation via tensoring up}, \Cref{Thm - Functor via tensoring up is left exact}}]\label{Thm - A}
		\begin{altenumerate}
		\item 
			For $M \in \CM_{D(\dot{\Fg},P)}$ whose underlying locally convex $K$-vector space is pseudo-metrizable and nuclear,
			\begin{equation*}
				\dot{\CF}_P^G (M) \defeq \big( D(G) \cot{D(\dot{\Fg},P),\iota} M \big)'_b 
			\end{equation*}
			has a canonical structure of a locally analytic $G$-representation of compact type.
		
		\item 
		This association yields a left exact contravariant functor between quasi-abelian categories
		\begin{equation*}
			\dot{\CF}^G_P \colon \CM_{D(\dot{\Fg},P)}^\mathrm{nF} \lra \Rep_K^{\la, \mathrm{ct}} (G) \,,\quad M \lto \dot{\CF}^G_P(M) .
		\end{equation*}
	\end{altenumerate}
\end{theoremalphabetical}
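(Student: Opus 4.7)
\textbf{Strategy.} The plan is to first construct the object on the module side in $\CM_{D(G)}^{\mathrm{nF}}$, transport it via the anti-equivalence $(-)'_b\colon \CM_{D(G)}^{\mathrm{nF}} \simeq \Rep_K^{\la,\mathrm{ct}}(G)^{\mathrm{op}}$, and then reduce the exactness statement to right-exactness of a covariant tensor functor between quasi-abelian module categories.

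\textbf{Part (i).} First I would check that the completed tensor product $D(G) \cot{D(\dot{\Fg},P),\iota} M$ is a nuclear $K$-Fr\'echet space: since $M$ is pseudo-metrizable and nuclear, its completion (and hence the completed tensor product with any suitable right-exact factor) is nuclear Fr\'echet, and the inductive tensor product with the nuclear factor $D(G)$ (viewed through a compact-open-subgroup decomposition writing $D(G)$ as a locally convex direct sum of nuclear Fr\'echet $D(P)$-modules $\delta_g D(H)$) inherits these properties. The left regular action of $D(G)$ on itself factors through the balancing over $D(\dot{\Fg},P)$ and gives a separately continuous $D(G)$-action; invoking the cited corollary (completed tensor product is a module) confirms that $D(G) \cot{D(\dot{\Fg},P),\iota} M$ lies in $\CM_{D(G)}^{\mathrm{nF}}$. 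Finally, dualizing via the anti-equivalence with $\Rep_K^{\la,\mathrm{ct}}(G)$ (generalized from Schneider--Teitelbaum) endows $\dot{\CF}_P^G(M)$ with its canonical structure as a locally analytic $G$-representation of compact type.

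\textbf{Part (ii).} Functoriality and contravariance are formal: a morphism $f \colon M_1 \to M_2$ in $\CM_{D(\dot{\Fg},P)}^{\mathrm{nF}}$ induces $\id \cot{} f$, which is $D(G)$-linear and continuous, and dualizing reverses the arrow. Because $(-)'_b$ is an anti-equivalence of quasi-abelian categories, it is strictly exact in both directions. Thus left-exactness of the contravariant $\dot{\CF}_P^G$ is equivalent to right-exactness of the covariant functor
\[
	D(G) \cot{D(\dot{\Fg},P),\iota} - \,\colon\, \CM_{D(\dot{\Fg},P)}^{\mathrm{nF}} \lra \CM_{D(G)}^{\mathrm{nF}} .
\]
To establish this right-exactness, I would exploit a direct-sum decomposition $D(G) = \bigoplus_i \delta_{g_i} D(H)$ coming from a coset decomposition $G = \bigsqcup_i g_i H$ for a compact open subgroup $H$ compatible with $P$ (and containing $H \cap P$ as the relevant Levi-compatible piece), so that the functor reduces (up to locally convex direct sums, which preserve strict epimorphisms) to a completed tensor product of nuclear Fr\'echet spaces. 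Here Grothendieck's theorem that the projective (equivalently, for a nuclear factor, inductive) tensor product is exact on strict surjections of Fr\'echet spaces yields the right-exactness at the level of underlying topological vector spaces, and the module structure is preserved by functoriality.

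\textbf{Main obstacle.} The delicate step is verifying that the balancing relations over $D(\dot{\Fg},P)$ remain compatible with the quasi-abelian exactness structure on $\CM^{\mathrm{nF}}$: a strict epimorphism in this category is a surjection onto a Hausdorff quotient with the quotient topology, and one must show that balancing out a closed submodule generated by the $D(\dot{\Fg},P)$-action does not destroy strictness after taking the completion. This reduces to a density and closedness argument using nuclearity (so that no completion artifacts appear when passing from the algebraic to the completed tensor product) together with the structural decomposition of $D(G)$ as a topological $D(\dot{\Fg},P)$-module outlined above. Once these compatibility points are checked, both assertions of the theorem follow.
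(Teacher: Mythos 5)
Your high-level plan—dualize to the module side of the anti-equivalence and show right (co)exactness of the covariant functor $D(G)\cot{D(\dot{\Fg},P),\iota}(\,\blank\,)$—is the same as the paper's, and part (i) is essentially correct in outline (though you cite \Cref{Cor - Completed tensor product is module}, which concerns the completed tensor product over $K$ of two $\CM^\mathrm{nF}$-objects; what is actually needed is \Cref{Lemma - Extending separately continuous module structure to completion}, using that $D(G)$ is barrelled and that $D(G)\ot{D(\dot{\Fg},P),\iota}M \cong D(G_0)\ot{D(\dot{\Fg},P_0)}M$ is pseudo-metrizable and nuclear by \Cref{Lemma - Tensoring up from composite algebra}).

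For part (ii), however, there is a genuine gap. You propose to use a coset decomposition of $G$ to reduce the balanced tensor product $D(G)\cot{D(\dot{\Fg},P),\iota}(\,\blank\,)$ to ``a completed tensor product of nuclear Fr\'echet spaces,'' i.e.\ to $D(G_0)\cot{K}(\,\blank\,)$, where exactness is known. This reduction does not go through: after passing to $D(G_0)\ot{D(\dot{\Fg},P_0)}M$ via \Cref{Lemma - Tensoring up from composite algebra}, the group $G_0$ is already compact and there is no further direct-sum splitting; and the balancing over $D(\dot{\Fg},P_0)$ is a genuine quotient by a (not obviously closed) submodule which cannot be removed by sheer functoriality or by a decomposition over cosets. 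You flag precisely this as ``the main obstacle,'' and appeal to ``a density and closedness argument using nuclearity,'' but this is not a proof: what one actually must show is (a) that the completed map $\widehat{g}\colon D(G_0)\cot{D(\dot{\Fg},P_0)}M_2 \to D(G_0)\cot{D(\dot{\Fg},P_0)}M_3$ is a strict epimorphism, and (b) that $\ov{\Im(\widehat{f})}=\Ker(\widehat{g})$. The paper handles (a) by placing $\widehat{g}$ under the strict epimorphism $\id\cot{}g\colon D(G_0)\cot{K}M_2\to D(G_0)\cot{K}M_3$ (exactness of $\cot{K,\pi}$ on Fr\'echet spaces, \cite[Cor.\ 2.2]{BreuilHerzig18TowardsFinSlopePartGLn}) in a commutative square of strict epimorphisms and applying the kernel--cokernel lemma \cite[Lemma 2]{KopylovKuzminov00KerCokerSeqSemiAbCat}; and (b) by an explicit diagram chase on the \emph{uncompleted} level, using algebraic surjectivity of $g$ to lift elements of the balancing submodule $M'_3$ into $M'_2$, and then passing to completions via the hereditary completeness of $D(G_0)\cot{K}M_2$. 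These concrete steps are missing from your proposal; without them the left exactness of $\dot{\CF}_P^G$ does not follow from what you wrote.
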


We observe that for a locally analytic $P$-representation $V$ of compact type we naturally obtain a $D(\dot{\Fg},P)$-module structure on $\hy(G) \cot{\hy(P)} V'_b $ so that the latter lies in $\CM_{D(\dot{\Fg},P)}^\mathrm{nF}$ (\Cref{Cor - Composite module from H-representation}).
In this way, the functor $\dot{\CF}^G_P$ recovers locally analytic induction:
\begin{equation*}
	\dot{\CF}^G_P \big( \hy(G) \cot{\hy(P)} V'_b  \big) \cong \Ind_{P}^{\la, G}  ( V ) .
\end{equation*}

Our construction generalizes the Orlik--Strauch functors $\CF^G_P$ for $p$-adic $L$ in the following sense.
We equip each $M\in \CO_\alg^\Fp$ with a locally convex topology so that $M$ and its Hausdorff completion $\widehat{M}$ naturally become separately continuous $D(\dot{\Fg},P)$-modules.
This yields a strongly exact functor (\Cref{Cor - Composite module structure on completion of object in category O})
\begin{equation*}
	\CC \colon  \CO_\alg^\Fp \lra \CM_{D(\dot{\Fg},P)}^\mathrm{nF}\,,\quad M \lto \widehat{M}.
\end{equation*}
Furthermore, let $\Rep_K^{\sm, \mathrm{s\text{-}adm}} (P)$ denote the category of strongly admissible smooth $P$-re\-pre\-sen\-tations\footnote{
Here $V\in \Rep_K^{\infty}(P)$ is \textit{strongly admissible} if $V$ embeds as a subrepresentation into $C^\sm (P_0,K)^{\oplus n}$, for some compact open subgroup $P_0 \subset P$ and some $n\in \BN$, when viewed as a representation of $P_0$, see \cite[Sect.\ 2]{SchneiderTeitelbaumPrasad01UgFinLocAnRep}, \cite[Sect.\ 4.1.2]{AgrawalStrauch22FromCatOLocAnRep}.}.
For $V \in \Rep_K^{\sm, \mathrm{s\text{-}adm}} (P)$ we may form the inductive tensor product
\begin{equation}\label{Eq - Tensored up module}
	D(G) \ot{D(\dot{\Fg},P),\iota} \big(M \ot{K} V'_b \big) \tag{$*$}, 
\end{equation}
which is a separately continuous $D(G)$-module.
Agrawal and Strauch \cite[Thm.\ 4.2.3]{AgrawalStrauch22FromCatOLocAnRep} showed that the abstract $D(G)$-module underlying \eqref{Eq - Tensored up module} is coadmissible, and thus carries a canonical Fr\'echet topology.
We prove that this canonical Fr\'echet topology coincides with the above inductive tensor product topology (\Cref{Thm - Comparision between canonical Frechet topology and topological tensor product}).
This serves as an intermediate step to showing:

\begin{theoremalphabetical}[{\Cref{Thm - Dual of Orlik-Strauch functors}}]\label{Thm - B}
	For $M \in \CO_\alg^\Fp$ and $V\in \Rep_K^{\sm, \mathrm{s\text{-}adm}}(P)$, there is a canonical topological isomorphism of $D(G)$-modules
	\begin{equation*}
		 \CF^G_P (M,V)'_b \cong D(G) \ot{D(\dot{\Fg},P),\iota} \big(M \ot{K} V'_b \big) .
	\end{equation*}
	In particular, the functor $\CF_P^G$ factors over $\dot{\CF}_P^G$:
	\begin{equation*}
		\begin{tikzcd}
			\CO_\alg^\Fp \times \Rep_K^{\sm, \mathrm{s\text{-}adm}} (P) \ar[rrr, "\CF_P^G"] \ar[dr, "\CC \times (\blank)'_b"', end anchor= 164 ] &[-30pt] &[10pt] &[-30pt] \Rep_K^{\la,\adm}(G) \\
			&\CM_{D(\dot{\Fg},P)}^\mathrm{nF} \times \CM_{D(\dot{\Fg},P)}^\mathrm{nF} \ar[r, "\blank \, \widehat{\otimes}_{K} \blank"] & \CM_{D(\dot{\Fg},P)}^\mathrm{nF} \ar[ur, "\dot{\CF}_P^G"'] & .
		\end{tikzcd}
	\end{equation*}
\end{theoremalphabetical}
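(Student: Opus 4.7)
The approach is to compose two identifications. First, the construction of the Orlik--Strauch functor, as refined by Agrawal--Strauch, already identifies $\CF^G_P(M,V)'_b$ at the level of abstract $D(G)$-modules with $D(G) \otimes_{D(\dot{\Fg},P)} (M \otimes_K V'_b)$, and \cite[Thm.\ 4.2.3]{AgrawalStrauch22FromCatOLocAnRep} shows this module to be coadmissible. Under the strong-dual anti-equivalence between admissible locally analytic representations and coadmissible $D(G)$-modules (generalizing Schneider--Teitelbaum to arbitrary characteristic), $\CF^G_P(M,V)'_b$ therefore acquires the canonical Fréchet topology of a coadmissible module. My first step would be to record these facts in the positive-characteristic setting, verifying both that the strong-dual anti-equivalence goes through and that the coadmissibility argument of Agrawal--Strauch remains valid here.

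The second step is to invoke \Cref{Thm - Comparision between canonical Frechet topology and topological tensor product}, which equates this canonical Fréchet topology with the inductive tensor product topology arising from the topologies on $D(G)$, $M$, and $V'_b$. Combined with the first step, this immediately yields the claimed topological isomorphism
\begin{equation*}
    \CF^G_P (M,V)'_b \cong D(G) \ot{D(\dot{\Fg},P),\iota} \big( M \ot{K} V'_b \big).
\end{equation*}

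For the factorization diagram, I would unwind the composite. Starting from $(M,V)$, the functor $\CC \times (\blank)'_b$ produces $(\widehat{M}, V'_b)$, the completed tensor over $K$ yields $\widehat{M} \cot{K} V'_b \in \CM^{\mathrm{nF}}_{D(\dot{\Fg},P)}$, and $\dot{\CF}^G_P$ gives the representation $\big( D(G) \cot{D(\dot{\Fg},P),\iota} (\widehat{M} \cot{K} V'_b) \big)'_b$. Using reflexivity of compact type representations together with the main isomorphism just established, verifying commutativity of the diagram reduces to showing that the natural map
\begin{equation*}
    D(G) \ot{D(\dot{\Fg},P),\iota} \big( M \ot{K} V'_b \big) \lra D(G) \cot{D(\dot{\Fg},P),\iota} \big( \widehat{M} \cot{K} V'_b \big)
\end{equation*}
is an isomorphism of Fréchet spaces. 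Its image is dense by construction, and the source is already complete (as a coadmissible module with its canonical Fréchet topology), forcing the map to be a topological isomorphism. Dualizing then gives $\CF^G_P(M,V) \cong \dot{\CF}^G_P(\widehat{M} \cot{K} V'_b)$, as required.

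The genuine technical obstacle sits in \Cref{Thm - Comparision between canonical Frechet topology and topological tensor product}, which requires matching two a priori unrelated Fréchet structures on the same abstract module. With that result available, the present theorem becomes an assembly of the pieces; the remaining care is bookkeeping compatibility of the various $D(\dot{\Fg},P)$-module structures on $M$, $\widehat{M}$, $V'_b$, and their (completed) tensor products, so that all tensor products are genuinely formed over the intended algebras and the density argument above actually applies inside $\CM_{D(\dot{\Fg},P)}$.
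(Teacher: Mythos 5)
Your first step is mis-scoped: Theorem~B lives entirely in the $p$-adic setting (all of Section~6 assumes $\mathrm{char}(L)=0$, and the objects $\CO_\alg^\Fp$ and $\CF^G_P$ do not exist in positive characteristic). Worse, you assert a ``strong-dual anti-equivalence between admissible locally analytic representations and coadmissible $D(G)$-modules \ldots\ to arbitrary characteristic,'' but the paper explicitly warns that no analogue of the Schneider--Teitelbaum Fr\'echet--Stein theory is available over local fields of positive characteristic; this is precisely why the paper works with separately continuous modules and keeps track of topologies directly. So there is nothing to ``verify in positive characteristic'' here, and the verification you propose cannot succeed. The content of the theorem is that the new functor $\dot{\CF}^G_P$, which makes sense in all characteristics, \emph{recovers} the classical $\CF^G_P$ in the $p$-adic case where the latter is defined.

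Your second and more substantive shortcut is the claim that the abstract isomorphism $\CF^G_P(M,V)'_b \cong D(G)\ot{D(\Fg,P)}(M\ot{K} V')$ is ``already recorded'' by Agrawal--Strauch. The paper does cite \cite[Thm.\ 4.2.3]{AgrawalStrauch22FromCatOLocAnRep}, but only for \emph{coadmissibility} of the tensor module; it then proves the identification with $\CF^G_P(M,V)'_b$ from scratch. That argument is most of the work: one resolves $M$ by a generalized Verma module $U(\Fg)\ot{U(\Fp)} W$, uses the quotient description coming from the strictly exact sequence of coadmissible modules, constructs the duality pairing of \Cref{Lemma - Some duality pairing}, and proves the compatibility lemma \Cref{Lemma - Compatibility of duality pairings} that matches the $D(G)$-module pairing with the Orlik--Strauch pairing $\langle\blank,\blank\rangle_{C^\la(G,V)}$ used to cut out $\Ind^{\la,G}_P(W'\ot{K} V)^\Fd$; only then does one identify the closed submodule $\mathfrak{D}=D(G)\ot{D(\Fg,P),\iota}(\Fd\ot{K}V'_b)$ with the annihilator ideal. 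If you wish to replace this by a citation you should check that Agrawal--Strauch prove the identification and not merely coadmissibility, and in any case the burden does not disappear.

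Finally, in your factorization step the claim that a continuous map with dense image from a complete Fr\'echet space must be a topological isomorphism is false in that generality (consider a dense non-surjective continuous inclusion of Banach spaces). What is actually needed is that $D(G)\cot{D(\dot\Fg,P),\iota}(\widehat M \cot{K} V'_b)$ \emph{is} the Hausdorff completion of $D(G)\ot{D(\dot\Fg,P),\iota}(M\ot{K}V'_b)$, i.e.\ that completion commutes with passing to $\widehat M\cot{K}V'_b$ and then tensoring up; once that is established, completeness of the source (via \Cref{Thm - Comparision between canonical Frechet topology and topological tensor product}) forces the natural map to be an isomorphism. This is correct in spirit but as stated it skips the decisive lemma.
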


In \cite{AgrawalStrauch22FromCatOLocAnRep}, Agrawal and Strauch also adopt a strategy based on modules over locally analytic distribution algebras to extend the functors $\CF_P^G$ to the extension closure of the category $\CO_\alg^\Fp$  in the $p$-adic setting.
For them it often suffices to consider such modules entirely in algebraic terms thanks to Schneider--Teitelbaum's theory of coadmissible modules over Fr\'echet--Stein algebras \cite{SchneiderTeitelbaum03AlgpAdicDistAdmRep}.
We are not aware of an analogue of this theory applicable over local fields of positive characteristic\footnote{
For example, in general a compact non-archimedean Lie group $G$ over a local field of positive characteristic is not topologically finitely generated and thus in particular does not admit an open uniform pro-$p$ subgroup.
Hence a direct transfer of the reasoning in \cite{SchneiderTeitelbaum03AlgpAdicDistAdmRep} for $D(G)$ to be a Fr\'echet--Stein algebra fails.}, 
and thus take the topologies of the modules more into consideration.

\subsection{Structure of the Paper}

In \Cref{Sect - Preliminaries}, we collect some generalities about topological algebras and modules on locally convex $K$-vector spaces whose (scalar) multiplication is separately continuous.
In particular, for a continuous homomorphism $A \ra B$ of such algebras and a separately continuous $A$-module $M$, we consider the extension of scalars $B \ot{A,\iota} M$ and its completion $B \cot{A,\iota} M$ in the context of separately continuous $B$-modules (\Cref{Lemma - Extending separately continuous module structure to completion}, \Cref{Lemma - Base change for topological modules}).

\Cref{Sect - Locally analytic representation theory} is a recapitulation of foundational topics of locally analytic representation theory which are valid independently of the characteristic of $L$. 
This includes the definition and properties of locally analytic functions and distributions on locally $L$-analytic manifolds (\Cref{Prop - Properties of locally analytic functions}, \Cref{Prop - Properties of space of distributions}), the convolution product of $D(G)$ and the anti-equivalence between $\Rep_K^{\la,\mathrm{ct}}(G)$ and $\CM_{D(G)}^\mathrm{nF}$ (\Cref{Prop - Anti-equivalence for locally analytic representations}).
Finally, we consider locally analytically induced representations and, for $V \in \Rep_K^{\la,\mathrm{ct}}(P)$, prove in \Cref{Prop - Module description for induction} that
\begin{equation*}
	\big( \Ind_P^{\la,G} (V) \big)'_b \cong D(G) \cot{D(P),\iota} V'_b.
\end{equation*}

In \Cref{Sect - The hyperalgebra}, after recalling the notion of germs of locally analytic functions (resp.\ of locally analytic distributions) supported at a point, we introduce the hyperalgebra $\hy(G)$ of a locally $L$-analytic Lie group $G$ as a subalgebra of $D(G)$ (\Cref{Def - Hyperalgebra}).
We also consider the locally analytic adjoint representation of $G$ on $\hy(G)$ (\Cref{Prop - Adjoint representation}).
Furthermore, we show that if $G$ arises as the group of $L$-valued points of a linear algebraic group $\bG$, then $\hy\big(\bG(L) \big)$ coincides with the (algebraic) distribution algebra of $\bG$ as treated for example in \cite[Ch.\ I.7]{Jantzen03RepAlgGrp} (\Cref{Cor - Distributions of algebraic group}).

In \Cref{Sect - The functors FGP} we first study the subalgebra $D(\dot{\Fg},P) \subset D(G)$ and separately continuous modules over it.
In particular, we show that a separately continuous $D(\dot{\Fg},P)$-module structure is equivalent to separately continuous $\hy(G)$- and $D(P)$-module structures which satisfy compatibility conditions involving the adjoint representation of $P$ on $\hy(G)$ (\Cref{Prop - Equivalent characterization for composite algebra action}).
Then we define the functors $\dot{\CF}_P^G$ and prove the assertions of \Cref{Thm - A}.

Finally, in \Cref{Sect - Comparison} we first recall the definition of the category $\CO_\alg^\Fp$ in the $p$-adic setting and show how $M \in \CO_\alg^\Fp$ naturally becomes a separately continuous $D(\dot{\Fg},P)$-module (\Cref{Prop - Composite module structure on object in category O}). 
We then recapitulate the definition of the Orlik--Strauch functors $\CF_P^G$ and prove the comparison result of \Cref{Thm - B}.

\subsection{Acknowledgements}
This work originates from my doctoral thesis. 
I want to thank my advisor Sascha Orlik for his valuable advise and strong support.
Moreover, I want to thank Matthias Strauch and Yingying Wang for helpful comments and discussions.

This work was done while the author was a member of the Research Training Group \textit{Algebro-Geometric Methods in Algebra, Arithmetic and Topology} and subsequently of the Research Training Group \textit{Symmetries and Classifying Spaces – Analytic, Arithmetic and Derived} both funded by the DFG (Deutsche Forschungsgemeinschaft).

\subsection{Notation and Conventions}\label{Sect - Notation and conventions}

Let $L$ be a non-archimedean local field and $K$ an extension field of $L$ which we assume to be spherically complete. 
From \Cref{Sect - The functors FGP} on we demand the extension $K/L$ to be finite.

We let $\mathrm{LCS}_K$ denote the category of locally convex $K$-vector spaces; its morphisms are continuous homomorphisms of $K$-vector spaces.
We refer to \cite{Schneider02NonArchFunctAna} for an introduction to the theory of such locally convex vector spaces.
The category $\mathrm{LCS}_K$ is quasi-abelian in the sense of \cite{Schneiders98QuasiAbCat}, see \cite[Prop.\ 2.1.11]{Prosmans00DerCatFunctAna}.
The strict morphisms are those $f\colon V \ra W$ for which the induced $V/\Ker(f) \ra \Im(f)$ is a topological isomorphism.

For $V, W \in \mathrm{LCS}_K$ we let $\CL(V,W)$ denote the $K$-vector space of continuous homomorphisms from $V$ to $W$.
We write $\CL_b(V,W)$ and $\CL_s(V,W)$ for this space endowed with the strong respectively weak topology.
Moreover, we denote the dual space of a $K$-vector space $V$ by $V^\ast$.
If $V$ is locally convex, we write $V' \subset V^\ast$ for the subspace of continuous linear forms, as well as $V'_b$ and $V'_s$ for the strong and weak dual spaces accordingly.
However, if $V$ is a $K$-Banach space, we sometimes simplify the notation by writing $V'$ instead of $V'_b$.

By a locally $L$-analytic manifold we mean an analytic manifold in the sense of \cite[\S 5]{Bourbaki07VarDiffAnFasciDeResult} which in addition is Hausdorff, second-countable and finite-dimensional. 
Since $L$ is locally compact, it follows that such a locally $L$-analytic manifold is strictly paracompact, i.e.\ any open covering of it can be refined into a disjoint open covering \cite[Prop.\ 8.7]{Schneider11pAdicLieGrps}.

We fix the convention that all modules are assumed to be left modules and all algebras are assumed to be associative.

\section{Preliminaries}\label{Sect - Preliminaries}

\subsection{Preliminaries from Functional Analysis}

Recall that a locally convex $K$-vector space $V$ is \emph{pseudo-metriz\-able} (or a \emph{pseudo-metrizable space} here for short) if its topology can be defined by a pseudo-metric.
Analogously to \cite[Prop.\ 8.1]{Schneider02NonArchFunctAna} one shows that $V$ is pseudo-metrizable if and only if its topology can be defined by a countable family of seminorms.
One readily deduces the following properties.

\begin{lemma}\label{Lemma - Properties of pseudo-metrizable spaces}
	\begin{altenumerate}
		\item 
			Subspaces, quotients, countable products and the projective tensor product of pseudo-metrizable spaces are pseudo-metrizable.
			
		\item 
			A homomorphism from a pseudo-metrizable space to a locally convex $K$-vector space is continuous if and only if it is sequentially continuous.
			A pseudo-metrizable space is complete if and only if it is sequentially complete.
			
		\item 
			A pseudo-metrizable space is metrizable if and only if it is Hausdorff.
			
		\item 
			The Hausdorff completion of a pseudo-metrizable space is a $K$-Fr\'echet space.
	\end{altenumerate}
\end{lemma}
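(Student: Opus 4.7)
The plan is to work systematically from the characterization just stated: a locally convex $K$-vector space is pseudo-metrizable precisely when its topology can be defined by a countable family of seminorms $(p_n)_{n\in \BN}$. Each of the four parts will be reduced to producing (or using) such a countable defining family, in close parallel with the standard non-archimedean proofs for (metrizable) Fr\'echet spaces in \cite[\S 8]{Schneider02NonArchFunctAna}.

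For (i), let $V$ and the $V_n$ be pseudo-metrizable with countable defining families of seminorms $(p_k)_k$ respectively $(p_k^{(n)})_k$. For a subspace $W \subset V$ the restricted family $(p_k\res{W})_k$ defines the subspace topology; for a quotient $V/W$ the induced seminorms $\bar p_k(v+W) \defeq \inf_{w\in W} p_k(v+w)$ define the quotient topology. For a countable product $\prod_n V_n$ the family $\{p_k^{(n)} \circ \pr_n : k,n \in \BN\}$ is countable and defines the product topology. For the projective tensor product $V \otimes_{K,\pi} W$, the defining seminorms $p\otimes q$ with $p,q$ ranging over a defining family of $V$ respectively $W$ form a countable family provided $V$ and $W$ have countable ones. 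In each case the countable-seminorm criterion gives pseudo-metrizability.

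For (ii), fix a countable defining family $(p_n)_n$ on the source $V$. Then $d(v,w) \defeq \sum_{n\in \BN} 2^{-n} \min\bigl(p_n(v-w),1\bigr)$ is a pseudo-metric defining the topology, so a neighbourhood basis of $0$ is given by the countable collection of sets $U_n \defeq \{v : p_1(v),\ldots,p_n(v) \le 2^{-n}\}$. If $f \colon V \to Y$ is sequentially continuous but not continuous, then there exists an open $0$-neighbourhood $V' \subset Y$ such that $f^{-1}(V')$ contains no $U_n$; choosing $v_n \in U_n$ with $f(v_n) \notin V'$ produces a sequence $v_n \to 0$ with $f(v_n) \not\to 0$, a contradiction. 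The same basis argument shows that every Cauchy filter in $V$ contains a Cauchy sequence (obtained by picking one element from each $U_n$-translate), so sequential completeness implies completeness.

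For (iii), if $V$ is Hausdorff then $\bigcap_n \{v : p_n(v) = 0\} = \{0\}$, so the seminorms separate points and the pseudo-metric $d$ constructed above is a genuine metric; conversely, every metric space is Hausdorff. For (iv), let $\wh V$ be the Hausdorff completion of $V$. The topology of $\wh V$ is defined by the countable family of continuous extensions $\wh p_n$ of the $p_n$, so $\wh V$ is pseudo-metrizable by the initial characterization; since $\wh V$ is Hausdorff by construction, (iii) gives that $\wh V$ is metrizable; being complete in addition, $\wh V$ is a $K$-Fr\'echet space. No step is a serious obstacle -- this is essentially a translation of the metric Fr\'echet arguments -- the only mild subtlety is checking that passing to the Hausdorff quotient and completing preserves the existence of a countable defining family, which is immediate from the universal property.
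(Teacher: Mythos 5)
Your proof is correct and follows exactly the route the paper implicitly suggests: the paper supplies no proof of this lemma, only the preceding remark that pseudo-metrizability is equivalent to having a countable defining family of seminorms and the claim that the four properties are then ``readily deduced.'' Your proposal carries out precisely that deduction, mirroring the arguments of \cite[\S 8]{Schneider02NonArchFunctAna} in the metrizable case. All four parts are sound. The only slight imprecision is in the completeness direction of (ii): the Cauchy sequence is extracted not by ``picking one element from each $U_n$-translate'' but by choosing, for each $n$, a member $F_n$ of the Cauchy filter that is small of order $U_n$ and picking $x_n \in F_1 \cap \cdots \cap F_n$; the ultrametric inequality then makes the filter converge to $\lim x_n$. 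This is a wording issue rather than a gap — the intended argument is clear and complete.
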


On the tensor product of $V, W \in \mathrm{LCS}_K$, we denote the projective (resp.\ inductive) tensor product topology by $V \ot{K,\pi} W$ (resp.\ $V \ot{K,\iota} W$).
We write $V \cot{K,\pi} W$ and $V \cot{K,\iota} W$ for the respective Hausdorff completions.
When the projective and inductive tensor product topology coincide, we will unambiguously write $V \ot{K} W$ and $V\cot{K} W$.
For instance, this is the case if both $V$ and $W$ are semi-complete LB-spaces (e.g.\ spaces of compact type) \cite[Prop.\ 1.1.31]{Emerton17LocAnVect} or under the following conditions.

\begin{lemma}\label{Lemma - Equality of projective and inductive tensor product}
	If $V, W \in \mathrm{LCS}_K$ are pseudo-metrizable and $V$ is barrelled, then
	\[ V \ot{K,\iota} W = V \ot{K,\pi} W .\]
\end{lemma}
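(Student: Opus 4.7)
The plan is to reduce the claim to the following standard fact: under the hypotheses of the lemma, every separately continuous bilinear map $\beta \colon V \times W \to U$ into any locally convex $K$-vector space $U$ is already jointly continuous. Once this is established, the equality of the two tensor product topologies follows from their respective universal properties. Indeed, the canonical map $V \times W \to V \ot{K,\iota} W$ is separately continuous by construction, hence jointly continuous by the fact; by the universal property of $V \ot{K,\pi} W$, the identity on $V \otimes_K W$ then descends to a continuous linear map $V \ot{K,\pi} W \to V \ot{K,\iota} W$. The reverse direction is automatic, since joint continuity always implies separate continuity, so the identity in the opposite direction is continuous as well.

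To prove the joint continuity of $\beta$, I use that the product $V \times W$ is pseudo-metrizable, as a finite product of pseudo-metrizable spaces (\Cref{Lemma - Properties of pseudo-metrizable spaces}(i) applied in the same manner as for countable products). By part (ii) of the same lemma, continuity of a map out of $V \times W$ may be tested on sequences. Let therefore $(v_n, w_n) \to (0,0)$ be a null-sequence in $V \times W$; it suffices to show $\beta(v_n, w_n) \to 0$ in $U$.

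The crucial input is the non-archimedean Banach--Steinhaus theorem (\cite[Prop.~6.15]{Schneider02NonArchFunctAna}): since $V$ is barrelled, every pointwise bounded subset of $\CL(V,U)$ is equicontinuous. The family $\{\beta(\blank, w_n)\}_{n \in \BN}$ consists of continuous linear maps $V \to U$, and for every fixed $v \in V$ the sequence $\beta(v, w_n)$ tends to $0$ in $U$ because $\beta(v, \blank) \colon W \to U$ is continuous and $(w_n) \to 0$; in particular, the family is pointwise bounded. Banach--Steinhaus then yields, for every continuous seminorm $r$ on $U$, a continuous seminorm $p$ on $V$ such that $r\big(\beta(v, w_n)\big) \leq p(v)$ for all $v \in V$ and all $n \in \BN$. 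Specializing to $v = v_n$ gives $r\big(\beta(v_n, w_n)\big) \leq p(v_n) \to 0$, which is the desired conclusion.

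The main obstacle I anticipate is invoking Banach--Steinhaus in the non-archimedean setting with values in a general locally convex space $U$ rather than a Banach space, and making the sequential continuity reduction compatible with pseudo- (rather than genuine) metrizability; both points are handled by the cited form of Banach--Steinhaus and by \Cref{Lemma - Properties of pseudo-metrizable spaces}, so the argument parallels the classical Bourbaki treatment of this result.
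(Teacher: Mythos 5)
Your proof takes essentially the same approach as the paper: both reduce to showing that a separately continuous bilinear map out of $V \times W$ into an arbitrary locally convex $K$-vector space is jointly continuous, both use pseudo-metrizability of $V \times W$ to reduce to sequential continuity at the origin, and both invoke the non-archimedean Banach--Steinhaus theorem for the barrelled space $V$ to conclude via equicontinuity of the family $\{\beta(\blank, w_n)\}_n$. The paper compresses the final steps by referring to the proof of \cite[Prop.\ 17.6]{Schneider02NonArchFunctAna}, whereas you spell them out explicitly, but the argument is the same.
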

\begin{proof}
	We slightly generalize the proof of \cite[Prop.\ 17.6]{Schneider02NonArchFunctAna} where this statement is shown when $V$ and $W$ are $K$-Fr\'echet spaces.
	As remarked in \textit{loc.\ cit.}, it suffices to show that any separately continuous bilinear map $\beta \colon V \times W \ra U$ into a locally convex $K$-vector space $U$ is continuous already.
	But $V \times W$ is pseudo-metrizable and therefore $\beta$ is continuous if and only if it is sequentially continuous.
	Moreover, the assumption that $V$ is barrelled ensures that the Banach--Steinhaus theorem \cite[Prop.\ 6.15]{Schneider02NonArchFunctAna} is applicable to $\CL_s(V,U)$.
	Now the claim follows in the same way as in the proof of \cite[Prop.\ 17.6]{Schneider02NonArchFunctAna}.
\end{proof}

We will also need the following technical lemma.

\begin{lemma}\label{Lemma - Annihilator of kernel is weak closure of image of the transpose}
	Let $f \colon V \ra W$ be a continuous homomorphism between locally convex $K$-vector spaces, and assume that $V$ is Hausdorff.
	Then in $V'$ we have the equality
	\[ \ov{\Im(\transp{f})}^s = \Ker(f)^\perp \defeq \big\{ \ell \in V' \,\big\vert\, \forall v \in \Ker(f) : \ell(v) = 0 \big\} \]
	where $\ov{\Im(\transp{f})}^s$ denotes the closure of the image of the transpose $\transp{f} \colon W' \ra V'$ in $V'_s$.
	Moreover, if in addition $V$ is semi-reflexive, then $\Ker(f)^\perp \subset \ov{\Im(\transp{f})}$ in $V'_b$.
\end{lemma}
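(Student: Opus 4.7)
The plan is to prove the equality $\ov{\Im(\transp{f})}^s = \Ker(f)^\perp$ by mutual inclusion, and then deduce the second assertion through a Hahn--Banach duality argument exploiting semi-reflexivity. The easy inclusion $\ov{\Im(\transp{f})}^s \subset \Ker(f)^\perp$ rests on two observations: firstly, $\Im(\transp{f}) \subset \Ker(f)^\perp$ by the direct computation $\transp{f}(\ell)(v) = \ell(f(v)) = 0$ for $\ell \in W'$ and $v \in \Ker(f)$; secondly, $\Ker(f)^\perp = \bigcap_{v \in \Ker(f)} \Ker(\ev_v)$ is an intersection of kernels of weakly continuous evaluation maps, hence weakly closed.

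For the reverse inclusion I plan to argue by contraposition using the non-archimedean Hahn--Banach theorem. Given $\ell_0 \in V' \setminus \ov{\Im(\transp{f})}^s$, spherical completeness of $K$ yields a $\phi \in (V'_s)'$ vanishing on $\ov{\Im(\transp{f})}^s$ with $\phi(\ell_0) \neq 0$. Since $V$ is Hausdorff, $V'$ separates points of $V$, realizing $(V'_s)' = V$ via the evaluation pairing; hence $\phi = \ev_v$ for some $v \in V$. The vanishing on $\Im(\transp{f})$ translates to $\ell'(f(v)) = 0$ for all $\ell' \in W'$. After passing to the Hausdorff quotient of $W$ if necessary---which leaves $W'$, and hence $\Im(\transp{f})$, unchanged---this forces $f(v) = 0$, so $v \in \Ker(f)$; but then $\ell_0(v) = \phi(\ell_0) \neq 0$ shows $\ell_0 \notin \Ker(f)^\perp$, completing the contraposition.

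For the second assertion, semi-reflexivity of $V$ gives $(V'_b)' = V$, so the weak topology on $V'$ induced by $V'_b$ coincides with that of $V'_s$. Invoking the standard consequence of Hahn--Banach that the closure of a linear subspace in a locally convex space equals its closure in the weak topology associated with the continuous dual, I would conclude $\ov{\Im(\transp{f})}^b = \ov{\Im(\transp{f})}^s = \Ker(f)^\perp$ by the first part, which is in fact stronger than the claimed inclusion. The main obstacle throughout is the Hahn--Banach separation step, where spherical completeness of $K$ is indispensable: without this hypothesis, one cannot in general separate a closed subspace from a point lying outside it in non-archimedean functional analysis.
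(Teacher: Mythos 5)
Your proof is correct and arrives by a genuinely different route: whereas the paper offloads the first equality to a citation of the duality apparatus in Bourbaki (TVS, p.\,II.47, Cor.~2) after observing that $f$ is weakly continuous, you unpack the underlying argument yourself, running a Hahn--Banach separation on $V'_s$ and identifying the separating functional with an evaluation $\mathrm{ev}_v$ via $(V'_s)'\cong V$ (which needs $V$ Hausdorff and spherical completeness of $K$, both in force). The treatment of the second claim is essentially the same as the paper's --- closed subspaces of $V'_b$ are weakly closed by Hahn--Banach, and semi-reflexivity gives $(V'_b)_s = V'_s$ --- and you correctly note that this actually produces an equality, of which the paper records only the inclusion it needs.

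One caveat. In the reverse inclusion you write that, after passing to the Hausdorff quotient of $W$, the vanishing $\ell'(f(v))=0$ for all $\ell'\in W'$ forces $f(v)=0$. But passing to $W/\overline{\{0\}}_W$ replaces $f$ by the composite $\bar f$, whose kernel is $f^{-1}\big(\overline{\{0\}}_W\big)$ and can strictly contain $\Ker(f)$; you could conclude only $v\in\Ker(\bar f)$, not $v\in\Ker(f)$. So the quotient remark does not rescue the non-Hausdorff case, and in fact nothing can: if $V=K$, $0\neq w_0\in\overline{\{0\}}_W$ and $f(c)=c\,w_0$, then $\Ker(f)^\perp=K$ while $\Im(\transp{f})=0$, so the stated equality fails whenever $W$ is non-Hausdorff. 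This is a hypothesis already latent in the paper's own proof (its $W_s\defeq(W'_s)'_s$ is the Hausdorff quotient of $W$ in disguise), and it is harmless in every application made of the lemma; but your ``if necessary'' clause should simply be replaced by the running assumption that $W$ is Hausdorff.
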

\begin{proof}
	Taking the transpose twice, $f$ still defines a continuous homomorphism when $V$ and $W$ carry the respective weak topologies $V_s \defeq (V'_s)'_s$ and $W_s \defeq (W'_s)'_s$, see \cite[Thm.\ 7.3.3]{PerezGarciaSchikhof10LocConvSpNonArch}.
	Then the statement $\Ker(f)^\perp = \ov{\Im(\transp{f})}^s$ is part of \cite[p.II.47 Cor.\ 2]{Bourbaki87TopVectSp1to5}.

	It is a consequence of the Hahn--Banach theorem that the closed subspace $\ov{\Im(\transp{f})} \subset V'_b$ is closed for the weak topology on $V'_b$ as well \cite[Thm.\ 5.2.1]{PerezGarciaSchikhof10LocConvSpNonArch}.
	If we assume that $V$ is semi-reflexive, then this weak topology on the strong dual of $V$ coincides with the topology of the weak dual: $(V'_b)_s=V'_s $ \cite[Thm.\ 7.4.9]{PerezGarciaSchikhof10LocConvSpNonArch}.
	Since $\ov{\Im(\transp{f})}$ now is a closed subset of $V'_s$ which contains $\Im(\transp{f})$, it follows that $\ov{\Im(\transp{f})}^s \subset \ov{\Im(\transp{f})}$.
\end{proof}

\subsection{Locally Convex Algebras}

We now recall and define some notions concerning topological $K$-algebras.

\begin{definition}[{cf.\ \cite[Def.\ 1.2.1]{Emerton17LocAnVect}}]\label{Def - Topological algebras and modules}
	\begin{altenumerate}
		\item
			By a \emph{separately continuous locally convex $K$-algebra} we mean a locally convex $K$-vector space $A$ which carries the structure of a $K$-algebra such that the multiplication map $A\times A \ra A$ is separately continuous.
			If the multiplication map even is jointly continuous, we call $A$ a \emph{jointly continuous locally convex $K$-algebra}.
		\item
			Let $A$ be a separately continuous $K$-algebra.
			We define a \emph{separately continuous locally convex $A$-module} to be a locally convex $K$-vector space $M$ together with a compatible left $A$-module structure such that the scalar multiplication map $A\times M \ra M$ is separately continuous.
			If $A$ is a jointly continuous locally convex $K$-algebra and the scalar multiplication map is jointly continuous, we call such $M$ a \textit{jointly continuous locally convex $A$-module}.
	\end{altenumerate}
	In these contexts, we usually omit the adjective ``locally convex'' in the following.
\end{definition}

For a separately continuous $K$-algebra $A$, we let $\CM_A$	denote the category of separately continuous $A$-modules with morphisms given by continuous $A$-linear maps.
Moreover, we let $\CM_A^\mathrm{nF}$ denote the full additive subcategory of $\CM_A$ consisting of separately continuous $A$-modules whose underlying locally convex $K$-vector space is a nuclear $K$-Fr\'echet space.

\begin{proposition}\label{Prop - Category of nuclear Frechet modules is quasi-abelian}
	The category $\CM_A^\mathrm{nF}$ is quasi-abelian. 
	More precisely, for a morphism $f\colon M \ra N$ in $\CM_A^\mathrm{nF}$, its kernel (resp.\ cokernel) is given by $\Ker(f)$ (resp. $N/\ov{\Im(f)}$) with the induced subspace topology (resp.\ quotient topology).
	Moreover, the following are equivalent:
	\begin{altenumeratelevel2}
		\item 
		$f$ is strict in $\CM_A^\mathrm{nF}$,
		
		\item 
		$f$ considered as a morphism in $\mathrm{LCS}_K$ is strict,
		
		\item 
		$\Im(f)\subset N$ is a closed subspace.
	\end{altenumeratelevel2}
\end{proposition}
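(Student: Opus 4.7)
My plan is to first compute kernels and cokernels in $\CM_A^\mathrm{nF}$, then prove the strictness equivalences, and finally verify the two Schneiders axioms of a quasi-abelian category.

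For $f\colon M\to N$ in $\CM_A^\mathrm{nF}$, the subspace $\Ker(f)\subset M$ is closed and $A$-stable, hence inherits the structure of a nuclear $K$-Fr\'echet $A$-module with separately continuous $A$-action. For the cokernel, $\ov{\Im(f)}$ is $A$-stable because for each $a \in A$ the left-multiplication $a\cdot(\blank)\colon N\to N$ is continuous and therefore sends the $A$-invariant set $\Im(f)$, and hence also its closure, into itself. The quotient $N/\ov{\Im(f)}$ is then a nuclear $K$-Fr\'echet space (using \Cref{Lemma - Properties of pseudo-metrizable spaces} together with standard permanence of nuclearity under closed quotients), and it carries an induced separately continuous $A$-action. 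The universal properties of both constructions are inherited from those in $\mathrm{LCS}_K$.

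For the strictness equivalences, (ii)$\Leftrightarrow$(iii) follows from the open mapping theorem applied to the continuous bijection $M/\Ker(f)\to\Im(f)$: if $\Im(f)$ is closed then both sides are Fr\'echet spaces so the map is a topological isomorphism, while conversely strictness in $\mathrm{LCS}_K$ identifies $\Im(f)$ with the Fr\'echet space $M/\Ker(f)$, forcing $\Im(f)$ to be complete and hence closed in the Hausdorff space $N$. The equivalence (i)$\Leftrightarrow$(iii) is a formal consequence of the description of the cokernel: strictness in $\CM_A^\mathrm{nF}$ means that the canonical coimage-to-image map $M/\Ker(f)\to\ov{\Im(f)}$ is a topological isomorphism, and since its set-theoretic image equals $\Im(f)$, this forces $\Im(f)=\ov{\Im(f)}$; conversely, when $\Im(f)$ is closed this map coincides with the strict map of (ii) and is thus a topological isomorphism.

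Turning to quasi-abelianness, the previous characterizations identify strict epimorphisms in $\CM_A^\mathrm{nF}$ with the surjective morphisms and strict monomorphisms with the closed topological $A$-equivariant embeddings. The pullback of a surjection $f\colon M\twoheadrightarrow N$ along any $g\colon N'\to N$ is realized as the closed $A$-submodule $P=\{(m,n')\in M\times N' : f(m)=g(n')\}$ of the nuclear Fr\'echet $A$-module $M\times N'$, and the projection $P\to N'$ remains surjective. For the pushout of a strict monomorphism $f\colon M\hookrightarrow N$ along any $g\colon M\to N'$, I would first show that the relation submodule $R=\{(f(m),-g(m)) : m\in M\}\subset N\oplus N'$ is already closed: a convergent sequence $(f(m_i),-g(m_i))\to(n,n')$ forces $n\in f(M)$ by closedness of $f(M)$, and then $m_i$ converges in $M$ by the topological embedding property of $f$, so that $n'$ is determined as $-g$ of the corresponding limit. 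The pushout $(N\oplus N')/R$ thus lies in $\CM_A^\mathrm{nF}$, and the natural map $N'\to(N\oplus N')/R$ is an injection with closed image, which is a topological embedding by another application of the open mapping theorem. I expect the main subtlety to be precisely this closedness of $R$ in the pushout construction, which crucially uses that $f$ is a strict monomorphism rather than a mere continuous injection.
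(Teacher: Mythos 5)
Your proof is correct, but it takes a genuinely different route from the paper's. After establishing the kernel/cokernel descriptions and the strictness equivalences, the paper observes that the forgetful functor from $\CM_A^\mathrm{nF}$ to the category of $K$-Fr\'echet spaces is continuous and cocontinuous, and then imports quasi-abelianness from the known quasi-abelianness of $K$-Fr\'echet spaces \cite[Prop.\ 3.4.5]{Prosmans00DerCatFunctAna}; the strictness equivalences there are obtained from the closed graph theorem. You instead verify the two Schneiders axioms by hand: you realize the pullback of a surjection as a closed $A$-submodule of the product, and the pushout of a strict monomorphism as the quotient by the relation submodule $R=\{(f(m),-g(m))\}$, proving that $R$ is closed by exploiting that $f^{-1}\colon f(M)\to M$ is continuous on the closed image. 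This is the key point, and you are right to single it out as the place where strict-mono (rather than mere injectivity) is indispensable; you also correctly use the open mapping theorem in place of the closed graph theorem for the equivalence (ii)$\Leftrightarrow$(iii). Your approach is more self-contained (no appeal to the Prosmans result) at the cost of being longer. One small imprecision: you cite \Cref{Lemma - Properties of pseudo-metrizable spaces} for the nuclear Fr\'echet property of $N/\ov{\Im(f)}$, but that lemma only addresses pseudo-metrizability; nuclearity of the Hausdorff quotient needs \cite[Prop.\ 19.4]{Schneider02NonArchFunctAna}, which is the reference the paper uses. This does not affect the validity of the argument.
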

\begin{proof}
	First note that, for $N\in \CM_A$ and an $A$-submodule $N' \subset N$, continuity implies that $\ov{N'} \subset N$ is an $A$-submodule as well.
	Furthermore, restricting the separately continuous scalar multiplication on $N$ yields $N', N/N' \in \CM_A$.

	For a morphism $f\colon M \ra N$ in $\CM_A^\mathrm{nF}$, it follows from \cite[Prop.\ 8.3]{Schneider02NonArchFunctAna} and \cite[Prop.\ 19.4]{Schneider02NonArchFunctAna} that $\Ker(f)$ and $N/\ov{\Im(f)}$ are nuclear $K$-Fr\'echet spaces.
	Analogously to \cite[Prop.\ 4.4.2]{Prosmans00DerCatFunctAna} for the category of $K$-Fr\'echet spaces one verifies that $\Ker(f)$ and $N/\ov{\Im(f)}$ constitute the kernel and cokernel of $f$ in $\CM_A^\mathrm{nF}$ respectively.
	Furthermore, the forgetful functor from $\CM_A^\mathrm{nF}$ to the category of $K$-Fr\'echet spaces is continuous and cocontinuous.
	
	The equivalence of the characterizations of strictness of $f$ follows from the closed graph theorem \cite[Cor.\ 8.7]{Schneider02NonArchFunctAna}.
	Since the category of $K$-Fr\'echet spaces is quasi-abelian \cite[Prop.\ 3.4.5]{Prosmans00DerCatFunctAna}, we conclude that $\CM_A^\mathrm{nF}$ is quasi-abelian. 
\end{proof}

Let $A$ be a separately continuous $K$-algebra.
We can endow any finitely generated (abstract) $A$-module $M$ with the \emph{quotient topology} induced by some epimorphism $A^{\oplus n} \twoheadrightarrow M$ of $A$-modules.
This topology is locally convex.
It is independent of the choice of the $A$-module epimorphism as the following proposition shows when applied to $\id_M\colon M \ra M$.

\begin{proposition}\label{Prop - Topologies on finitely generated modules}
	Endowed with the quotient topology, $M$ becomes a separately continuous $A$-module.
	Furthermore, any homomorphism $f \colon M \ra M'$ of $A$-modules from a finitely generated $A$-module $M$ to $M'\in \CM_A$ is continuous when $M$ carries the quotient topology.
\end{proposition}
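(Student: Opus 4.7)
The plan is to exploit the universal property of the quotient topology induced by a surjection $\pi\colon A^{\oplus n}\twoheadrightarrow M$, reducing every continuity statement to a corresponding statement on $A^{\oplus n}$, where separate continuity is visible componentwise from the separate continuity of the product on $A$. Fix such a $\pi$ and let $e_1,\dots,e_n\in M$ denote the images of the standard basis of $A^{\oplus n}$. Nothing in the argument is especially delicate; the only point worth highlighting is that in the first assertion one needs both left multiplication by a fixed $a_0\in A$ and right multiplication by fixed elements $a_i^0\in A$ to be continuous, which is precisely the standing separate-continuity assumption on the product of $A$.

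For separate continuity of the action $A\times M\to M$, I first fix $a_0\in A$. Coordinatewise left multiplication by $a_0$ is a continuous endomorphism of $A^{\oplus n}$ and descends along $\pi$ to the map $m\mapsto a_0 m$ on $M$; the universal property of the quotient topology then yields continuity on $M$. For the second variable, fix $m_0\in M$ and choose a lift $(a_1^0,\dots,a_n^0)\in A^{\oplus n}$ with $m_0=\pi(a_1^0,\dots,a_n^0)$. Then $a m_0=\pi(aa_1^0,\dots,aa_n^0)$, so $a\mapsto a m_0$ is the composite of the map $A\to A^{\oplus n}$, $a\mapsto(aa_1^0,\dots,aa_n^0)$, with $\pi$. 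Each coordinate of the first factor is right multiplication in $A$ by a fixed element and hence continuous, and composition with $\pi$ yields continuity of $a\mapsto a m_0$.

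For the second assertion, let $f\colon M\to M'$ be an $A$-module homomorphism with $M'\in\CM_A$. By the universal property of the quotient topology, $f$ is continuous if and only if $f\circ\pi\colon A^{\oplus n}\to M'$ is continuous. This composite sends $(a_1,\dots,a_n)$ to $\sum_{i=1}^n a_i f(e_i)$; each summand factors as the continuous projection $A^{\oplus n}\to A$ onto the $i$-th coordinate followed by the map $a\mapsto a f(e_i)$, which is continuous by the separate continuity of the $A$-action on $M'$ at the fixed element $f(e_i)$. Since addition in $M'$ is jointly continuous, the finite sum is continuous, and hence so is $f$. Specializing this to $f=\id_M$ and comparing the quotient topologies induced by two different surjections $A^{\oplus n}\twoheadrightarrow M$ and $A^{\oplus m}\twoheadrightarrow M$ then yields the asserted independence of the quotient topology on $M$ from the choice of presentation.
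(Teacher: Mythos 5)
Your argument is correct and follows the same strategy as the paper's: reduce both continuity statements to the free module $A^{\oplus n}$ via the open quotient map $\pi$, using separate continuity of the product on $A$ (for the first claim) and separate continuity of the action on $M'$ (for the second). The paper phrases the first assertion more compactly via a commutative square with open vertical maps, but your variable-by-variable treatment unpacks exactly that diagram, and your closing remark on independence of the presentation via $f=\id_M$ matches the paper's discussion preceding the proposition.
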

\begin{proof}
	For the first assertion it suffices to show that the multiplication map $A \times M \ra M$ is separately continuous.
	To this end, we consider the commutative diagram
	\begin{equation*}
		\begin{tikzcd}
			A \times A^{\oplus n} \ar[r] \ar[d, two heads] & A^{\oplus n} \ar[d, two heads] \\
			A \times M \ar[r] & M
		\end{tikzcd}
	\end{equation*}
	where the vertical maps are open by our choice of topology on $M$.
	But the multiplication map of the finite free $A$-module $A^{\oplus n}$ is separately continuous, which implies the claim for the multiplication map of $M$.

	Concerning a homomorphism $f$ as in the second assertion, we argue similarly to \cite[3.7.3 Prop.\ 2]{BoschGuentzerRemmert84NonArchAna}.
	Consider an epimorphism $\varphi \colon A^{\oplus n} \twoheadrightarrow M$ of $A$-modules which endows $M$ with its topology.
	Then the homomorphism $\varphi' \defeq f \circ \varphi$ is continuous because the addition and multiplication maps of $M' \in \CM_A$ are continuous and separately continuous respectively.
	As $\varphi$ is open by definition, it follows that $f$ is continuous.
\end{proof}

\begin{remark}
	When $A$ is a jointly continuous $K$-algebra, one analogously shows that every finitely generate $A$-module becomes a jointly continuous $A$-module with its quotient topology.
\end{remark}

It is natural to ask if a separately or jointly continuous $A$-module structure on $M$ extends to the Hausdorff completion $\widehat{M}$.
For jointly continuous $A$-modules this is true without further conditions \cite[Lemma 1.2.2]{Emerton17LocAnVect}.
Concerning separately continuous algebras and modules, we will need the following result.

\begin{lemma}\label{Lemma - Extending separately continuous module structure to completion}
	Let $A$ be a separately continuous $K$-algebra and $M \in \CM_A$.
	If $A$ is barrelled and $M$ is pseudo-metrizable, then the $A$-module structure on $M$ extends uniquely to a separately continuous $A$-module structure on $\widehat{M}$.
\end{lemma}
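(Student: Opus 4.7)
The plan is to first reduce to the Hausdorff case, then extend multiplication by a fixed $a \in A$ via continuity on a dense subspace, and finally use the Banach--Steinhaus theorem to obtain continuity in the first variable.

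First I would observe that the closed subspace $\overline{\{0\}} \subset M$ is automatically an $A$-submodule: for any $a \in A$, the continuous endomorphism $\mu_a \colon M \to M$, $m \mapsto a\cdot m$, satisfies $\mu_a(\overline{\{0\}}) \subset \overline{\mu_a(\{0\})} = \overline{\{0\}}$. The quotient $M/\overline{\{0\}}$ inherits a separately continuous $A$-module structure (joint separate continuity is checked by composing with the quotient map, which is continuous and open), is pseudo-metrizable and Hausdorff, hence metrizable by \Cref{Lemma - Properties of pseudo-metrizable spaces}, and has the same Hausdorff completion as $M$. We may therefore assume $M$ is a metrizable locally convex $K$-vector space and $\widehat{M}$ is a $K$-Fr\'echet space containing $M$ as a dense subspace.

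Next, for every $a \in A$ the continuous $K$-linear map $\mu_a \colon M \to M \hookrightarrow \widehat{M}$ extends uniquely to a continuous $K$-linear endomorphism $\widehat{\mu}_a \colon \widehat{M} \to \widehat{M}$, since continuous linear maps into a Hausdorff complete LCS extend uniquely from dense subspaces. I would then define the desired action by $\widehat{\mu} \colon A \times \widehat{M} \to \widehat{M}$, $(a,\widehat{m}) \mapsto \widehat{\mu}_a(\widehat{m})$. The identities $\widehat{\mu}_{ab} = \widehat{\mu}_a \circ \widehat{\mu}_b$, $\widehat{\mu}_{a+b} = \widehat{\mu}_a + \widehat{\mu}_b$ and $\widehat{\mu}_1 = \id_{\widehat{M}}$ hold on the dense subspace $M$ by assumption, and thus extend to all of $\widehat{M}$ by continuity; this gives a well-defined $A$-module structure on $\widehat{M}$. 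Continuity of $\widehat{\mu}(a, \blank)$ for fixed $a$ holds by construction.

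The main step, and the only place where the hypotheses on $A$ and $M$ are really used, is to verify continuity of $\widehat{\mu}(\blank, \widehat{m}) \colon A \to \widehat{M}$ for fixed $\widehat{m} \in \widehat{M}$. Since $\widehat{M}$ is metrizable and $M$ is dense in it, I pick a sequence $(m_n)_n$ in $M$ with $m_n \to \widehat{m}$ in $\widehat{M}$. For each $n$, the map $\phi_n \colon A \to \widehat{M}$, $a \mapsto a \cdot m_n$, is the composition of the continuous map $A \to M$ coming from the separately continuous action on $M$ with the inclusion $M \hookrightarrow \widehat{M}$, hence is continuous and $K$-linear. For every $a \in A$, continuity of $\widehat{\mu}_a$ yields $\phi_n(a) = \widehat{\mu}_a(m_n) \to \widehat{\mu}_a(\widehat{m})$, so $(\phi_n)_n$ converges pointwise to $\widehat{\mu}(\blank, \widehat{m})$ and is in particular pointwise bounded. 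The Banach--Steinhaus theorem \cite[Prop.\ 6.15]{Schneider02NonArchFunctAna}, applicable because $A$ is barrelled, then implies $(\phi_n)_n$ is equicontinuous, whence its pointwise limit $\widehat{\mu}(\blank, \widehat{m})$ is continuous.

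Finally, uniqueness is immediate: any other separately continuous extension gives, for each $a \in A$, a continuous endomorphism of the Hausdorff space $\widehat{M}$ agreeing with $\mu_a$ on the dense subspace $M$, and hence equal to $\widehat{\mu}_a$. I expect the only delicate point to be the equicontinuity step; once Banach--Steinhaus is invoked, everything else is a routine density/continuity argument.
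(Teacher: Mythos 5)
Your proof is correct and follows essentially the same route as the paper's: reduce to the Hausdorff/metrizable case, use metrizability to reach points of $\widehat{M}$ by (bounded) sequences from $M$, and invoke barrelledness of $A$ via Banach--Steinhaus to get continuity in the $A$-variable. The paper packages the last two ingredients by citing Bourbaki (hypocontinuity with respect to bounded sets of $M$, which for barrelled $A$ is exactly Banach--Steinhaus, followed by the extension result for hypocontinuous bilinear maps), whereas you unwind that machinery into a direct equicontinuity argument; the mathematical content is the same.
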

\begin{proof}
	By replacing $M$ with $M/\ov{\{0\}}$ we may assume that $M$ is Hausdorff and hence metrizable.
	Since $A$ is barrelled, it follows from \cite[p.III.31 Prop.\ 6]{Bourbaki87TopVectSp1to5} that the multiplication map $A \times M \ra M $ is $\mathfrak{S}$-hypocontinuous with respect to the set $\mathfrak{S}$ of all bounded subsets of $M$.
	Moreover, the assumption that $M$ is metrizable implies that every element of $\widehat{M}$ is the limit of a converging sequence in $M$.
	Such a sequence is bounded.
	Arguing like in the proof of \cite[p.III.32 Prop.\ 8]{Bourbaki87TopVectSp1to5} shows that the above map extends uniquely to the separately continuous multiplication map $A \times \widehat{M} \ra  \widehat{M} $.
	It follows from density of $M$ in $\widehat{M}$ that $\widehat{M}$ is an $A$-module.
\end{proof}

Now consider a continuous homomorphism $A \ra B$ of separately continuous $K$-algebras and $M \in \CM_A$.
There is an isomorphism of (abstract) $B$-modules
\begin{equation}\label{Eq - Tensor product as quotient}
	B \ot{A} M \cong \big( B \ot{K} M \big) \big/ M' ,
\end{equation}
where $M'$ is the $B$-submodule generated by $ba \otimes m - b \otimes a m$, for $b\in B$, $a\in A$, $m\in M$.

\begin{lemma}\label{Lemma - Base change for topological modules}
	\begin{altenumerate}
		\item
			The $B$-module $B \ot{A} M$ becomes a separately continuous $B$-module when endowed with the quotient topology induced from $B \ot{K,\iota} M$ via \eqref{Eq - Tensor product as quotient}.

		\item{\normalfont(\cite[Lemma 1.2.3]{Emerton17LocAnVect})}
			The analogous assertion holds in the setting with ``separately continuous'' replaced by ``jointly continuous'' and $B \ot{K,\iota} M$ by $B \ot{K,\pi} M$.
	\end{altenumerate}
\end{lemma}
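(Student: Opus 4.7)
Part (ii) is \cite[Lemma 1.2.3]{Emerton17LocAnVect}, so the content to prove is part (i). The plan is to first equip $B \ot{K,\iota} M$ with its natural left $B$-module structure (multiplication on the first tensor factor) and verify that this makes it into a separately continuous $B$-module, and then to descend this structure along the surjection onto $B \ot{A} M$ via the quotient topology.

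To carry out the first step, I would check separate continuity of the action on $B \ot{K,\iota} M$ one variable at a time. For each fixed $b_0 \in B$, the bilinear map $B \times M \ra B \ot{K,\iota} M$ sending $(b',m) \mto b_0 b' \otimes m$ is separately continuous, because separate continuity in $b'$ is just continuity of left multiplication by $b_0$ on $B$; hence by the universal property of the inductive tensor product topology (its defining feature as the finest locally convex topology making the canonical bilinear map separately continuous) it induces a continuous linear endomorphism of $B \ot{K,\iota} M$, which is exactly left multiplication by $b_0$. In the other direction, any $x \in B \ot{K,\iota} M$ can be written as a finite sum $\sum_{i=1}^n b_i \otimes m_i$, and the map $b \mto b \cdot x = \sum_i (bb_i) \otimes m_i$ is then a finite sum of the compositions of right multiplication by $b_i$ on $B$ (continuous by separate continuity of the product) with the continuous linear map $B \ra B \ot{K,\iota} M$, $c \mto c \otimes m_i$.

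Second, the subspace $M' \subset B \ot{K,\iota} M$ generated by the elements $ba \otimes m - b \otimes am$ is by construction a $B$-submodule, so the $B$-action on $B \ot{K,\iota} M$ descends to a $B$-action on $B \ot{A} M = (B \ot{K,\iota} M)/M'$. Endowing the quotient with the quotient topology (which is locally convex, as quotients of locally convex spaces are), separate continuity is immediate: for fixed $b_0 \in B$, left multiplication by $b_0$ on $B \ot{A} M$ is induced from the continuous composition of left multiplication by $b_0$ on $B \ot{K,\iota} M$ with the quotient map, and is therefore continuous by the universal property of the quotient topology; for fixed $x \in B \ot{A} M$, I would pick any lift $\tilde x \in B \ot{K,\iota} M$ and write $b \mto b \cdot x$ as the composition of the continuous map $b \mto b \cdot \tilde x$ from the first step with the continuous quotient map.

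I do not expect any substantive obstacle here. The argument is a transparent adaptation of the jointly continuous case in \cite{Emerton17LocAnVect}; the only conceptual point worth flagging is that the inductive tensor product is the correct choice in (i) precisely because it is the universal recipient of separately continuous bilinear maps, which is exactly what is needed to invoke the universal property in the separately continuous rather than the jointly continuous form.
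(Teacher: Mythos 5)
Your proof is correct and takes essentially the same route as the paper: both reduce to checking separate continuity of the $B$-action on $B \ot{K,\iota} M$ and then descend along the open quotient map $B \ot{K,\iota} M \twoheadrightarrow B \ot{A} M$. The only cosmetic difference is in how you verify continuity of left multiplication by a fixed $b_0 \in B$ on $B \ot{K,\iota} M$ --- you invoke the universal property of the inductive tensor product directly on the separately continuous bilinear map $(b',m) \mapsto b_0 b' \otimes m$, whereas the paper factors this map through the associativity isomorphism $B \ot{K,\iota}(B \ot{K,\iota} M) \cong (B \ot{K,\iota} B) \ot{K,\iota} M$ and the induced multiplication $m_B \otimes \id_M$, which amounts to the same structural fact.
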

\begin{proof}
	For (i), it suffices to show that the multiplication map of $B \ot{A} M$ is separately continuous.
	Continuity in $B$ is immediate.
	Since the quotient map $B\ot{K,\iota} M  \twoheadrightarrow B \ot{A} M$ is open, continuity in $B \ot{A} M$, for fixed $b'\in B$, follows once we show that $B\ot{K,\iota} M \ra B\ot{K,\iota} M $, $b\otimes m \mto b'b \otimes m$, is continuous.
	But this is the composite
	\begin{align*}
		B\ot{K,\iota} M &\xrightarrow{\mathmakebox[\widthof{\tiny $b'\otimes \blank$}]{b'\otimes \blank}} B\ot{K,\iota} \big( B\ot{K,\iota} M \big) \cong \big( B\ot{K,\iota} B \big) \ot{K,\iota} M \xrightarrow{m_B \otimes \id_M} B\ot{K,\iota} M, \\
		b \otimes m &\xmapsto{\mathmakebox[\widthof{\tiny $b'\otimes \blank$}]{}} b'\otimes b \otimes m ,
	\end{align*}
	where $m_B$ denotes the separately continuous homomorphism induced by multiplication in $B$.
	Moreover, it follows from the definition of the inductive tensor product topology \cite[\S 17.A]{Schneider02NonArchFunctAna} that the canonical isomorphism in the middle of the above composite is a topological isomorphism.
\end{proof}

\begin{definition}\label{Def - Base change for topological modules}
	In the above situation, we let $B \ot{A,\iota} M$ (resp.\ $B \ot{A,\pi} M$) denote the separately continuous (resp.\ jointly continuous) $B$-module $B \ot{A} M$ endowed with the respective quotient topology.
	Moreover, we write $B \cot{A,\iota} M$ (resp.\ $B \cot{A,\pi} M$) for its Hausdorff completion.	
\end{definition}

If $B$ and $M$ are pseudo-metrizable and at least one of them is barrelled, \Cref{Lemma - Equality of projective and inductive tensor product} implies that the projective and inductive tensor product topologies on $B \ot{K} M$ coincide.
Consequently we simply write $B \ot{A} M$ and $B \cot{A} M$ in such a case.


Recall that a locally convex $K$-vector space is \emph{hereditarily complete} if all its Hausdorff quotients are complete \cite[Def.\ 1.1.39]{Emerton17LocAnVect}.

\begin{lemma}\label{Lemma - Completion of projective tensor product over algebras}
	Let $A \ra B$ be a continuous homomorphisms of jointly continuous $K$-algebras and $M$ a jointly continuous $A$-module.
	If $B \cot{K,\pi} M$ is hereditarily complete, then 
	\begin{equation*}
		B \cot{A,\pi} M \cong  \big( B \cot{K,\pi} M \big) \big/ \ov{M'}
	\end{equation*}
	where $\ov{M'}$ denotes the closure in $ B \cot{K,\pi} M$ of the $B$-submodule $M'$ generated by $ba \otimes m - b \otimes a m$, for $b\in B$, $a\in A$, $m\in M$. 
\end{lemma}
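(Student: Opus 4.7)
The strategy is to realize both $B \cot{A,\pi} M$ and $\big( B \cot{K,\pi} M \big) / \ov{M'}$ as solutions to the same universal problem: each is a complete Hausdorff locally convex $K$-vector space receiving a canonical continuous map from $W \defeq B \ot{A,\pi} M = (B \ot{K,\pi} M)/M'$ (cf.\ \Cref{Def - Base change for topological modules} and \eqref{Eq - Tensor product as quotient}) with dense image. I would then construct mutually inverse continuous maps between them using the universal property of the Hausdorff completion in both directions, and verify the inversion on dense subspaces.

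Writing $V \defeq B \ot{K,\pi} M$, the key steps are the following. First, since $\ov{M'}$ is closed in $B \cot{K,\pi} M$ by construction, the quotient $\big( B \cot{K,\pi} M \big) / \ov{M'}$ is Hausdorff, and the hypothesis that $B \cot{K,\pi} M$ is hereditarily complete guarantees that it is also complete. Second, the composite $V \to B \cot{K,\pi} M \twoheadrightarrow \big( B \cot{K,\pi} M \big) / \ov{M'}$ is continuous and annihilates $M'$; it therefore factors through $W$, and the universal property of the Hausdorff completion $B \cot{A,\pi} M$ yields a unique continuous extension $\alpha \colon B \cot{A,\pi} M \to \big( B \cot{K,\pi} M \big) / \ov{M'}$. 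Third, the continuous map $V \to W \to B \cot{A,\pi} M$ lands in a complete Hausdorff space and so extends uniquely to $B \cot{K,\pi} M \to B \cot{A,\pi} M$; by continuity this extension kills the closure of $M'$ in $B \cot{K,\pi} M$, namely $\ov{M'}$, and hence descends to $\beta \colon \big( B \cot{K,\pi} M \big) / \ov{M'} \to B \cot{A,\pi} M$. Finally, $\beta \circ \alpha$ and $\alpha \circ \beta$ restrict to the identity on the dense images of $W$ in $B \cot{A,\pi} M$ and of $V$ in $\big( B \cot{K,\pi} M \big) / \ov{M'}$ respectively, so uniqueness of the extensions in the universal properties forces both composites to be the identity.

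The main obstacle I expect is a conceptual one packed into the first step: one needs the quotient of $B \cot{K,\pi} M$ by a closed subspace to remain complete, which in the non-archimedean locally convex setting is not automatic. This is precisely why the hypothesis of hereditary completeness is imposed; without it, the map $\alpha$ cannot be produced in this way, and the purported isomorphism can fail. Once this point is handled, the remaining arguments are formal diagram chases relying on density and the universal property of the Hausdorff completion, without the need for any delicate topological estimates.
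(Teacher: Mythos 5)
Your proposal is correct, though it takes a genuinely different route from the paper. The paper's proof is a one-liner: apply \cite[Cor.\ 2.2]{BreuilHerzig18TowardsFinSlopePartGLn} (Hausdorff completion preserves strict exactness under hereditary completeness of the middle term) to the strictly exact sequence
\begin{equation*}
0 \lra M' \lra B \ot{K,\pi} M \lra B \ot{A,\pi} M \lra 0 ,
\end{equation*}
whence the completed sequence identifies $B \cot{A,\pi} M$ with $\big( B \cot{K,\pi} M \big) / \ov{M'}$. You instead unwind the completion directly: you build mutually inverse continuous maps by hand, using the universal property of the Hausdorff completion in both directions, density, and the observation that hereditary completeness ensures $\big( B \cot{K,\pi} M \big)/\ov{M'}$ is complete. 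The two arguments use the hereditary-completeness hypothesis in exactly the same place, and your chain of reductions (factor through $W = B \ot{A,\pi} M$; extend; descend; check on dense subspaces) is in essence a self-contained re-proof of the relevant instance of the cited Breuil--Herzig lemma. What the paper's approach buys is brevity; what yours buys is independence from an external citation and a transparent account of where hereditary completeness is indispensable. No gap.
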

\begin{proof}
	By \cite[Cor.\ 2.2]{BreuilHerzig18TowardsFinSlopePartGLn} completing preserves the strict exactness of 
	\begin{equation*}
		0 \lra M' \lra B \ot{K,\pi} M \lra B \ot{A,\pi} M \lra 0 .
	\end{equation*}
\end{proof}

The above condition on $B \cot{K,\pi} M$ is fulfilled for example when $B$ and $M$ both are $K$-Fr\'echet spaces (see discussion after \cite[Prop.\ 17.6]{Schneider02NonArchFunctAna}) or both are of compact type (see \cite[Prop.\ 1.1.32 (i)]{Emerton17LocAnVect}) by the comment after \cite[Def.\ 1.1.39]{Emerton17LocAnVect}.

\begin{lemma}\label{Lemma - Tensor identities for modules}
	\begin{altenumerate}
		\item
		For a separately continuous, unital $K$-algebra $A$ and $M \in \CM_A$, there is a topological isomorphism of separately continuous $A$-modules
		\begin{equation*}
			A \ot{A,\iota} M \overset{\cong}{\lra} M \,,\quad a \otimes m \lto am ,
		\end{equation*}
		
		\item
		Let $A$ and $B$ be separately continuous $K$-algebras, and let $L$, $M$ and $N$ be a separately continuous right $A$-module, $A$-$B$-bi-module and left $B$-module respectively.
		Then there is a canonical topological isomorphism
		\begin{equation*}
			\big( L \ot{A,\iota} M \big) \ot{B,\iota} N \cong L \ot{A,\iota} \big( M \ot{B,\iota} N \big) .
		\end{equation*}
	\end{altenumerate}
\end{lemma}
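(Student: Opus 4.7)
Both parts transfer identities from the algebraic theory of tensor products to the topological setting of $\CM_A$. The main tool is the universal property of the inductive tensor product: a continuous $K$-linear map out of $V \ot{K,\iota} W$ into a locally convex space $U$ corresponds to a separately continuous $K$-bilinear map $V \times W \to U$. Passing to the quotient by the $A$-balancing relations upgrades this to: continuous $K$-linear maps out of $V \ot{A,\iota} W$ correspond to separately continuous, $A$-balanced bilinear maps $V \times W \to U$.

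\textbf{Part (i).} The scalar multiplication $A \times M \to M$ is separately continuous by definition of $M \in \CM_A$ and $A$-balanced by associativity of the module action, so the universal property yields a continuous $K$-linear map $A \ot{A,\iota} M \to M$, $a \otimes m \mapsto am$, which is manifestly $A$-linear. For the algebraic inverse $m \mapsto 1 \otimes m$: the canonical bilinear map $A \times M \to A \ot{K,\iota} M$ is separately continuous, so its partial map at $1 \in A$ is continuous $M \to A \ot{K,\iota} M$; composing with the quotient projection gives a continuous inverse $M \to A \ot{A,\iota} M$.

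\textbf{Part (ii).} The right $B$-action on $M$ induces a separately continuous right $B$-action on $L \ot{A,\iota} M$ (for each $b \in B$, the bilinear map $(l, m) \mapsto l \otimes mb$ is separately continuous and $A$-balanced, and continuity in $b$ is checked on elementary tensors and extended linearly), and analogously $M \ot{B,\iota} N$ carries a separately continuous left $A$-action, so both iterated tensor products are well-defined. To construct the map from left to right, for each $n \in N$ the bilinear map $(l,m) \mapsto l \otimes (m \otimes n)$ into $L \ot{A,\iota} (M \ot{B,\iota} N)$ is separately continuous (using that $m \mapsto m \otimes n$ is a continuous partial map into $M \ot{B,\iota} N$) and $A$-balanced, hence factors through a continuous linear $\Phi_n \colon L \ot{A,\iota} M \to L \ot{A,\iota}(M \ot{B,\iota} N)$. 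The induced $(x,n) \mapsto \Phi_n(x)$ is $B$-balanced (verified on elementary tensors: $\Phi_n(l \otimes mb) = l \otimes (mb \otimes n) = l \otimes (m \otimes bn) = \Phi_{bn}(l \otimes m)$) and separately continuous in $n$ (checked on elementary $x$ and extended by linearity). A second application of the universal property yields the desired continuous linear map; the reverse direction is constructed symmetrically, and the two maps realize the classical algebraic associativity isomorphism on elementary tensors, hence are mutually inverse.

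\textbf{Main obstacle.} Once the universal property of the inductive tensor product is in hand, the arguments reduce to bookkeeping. The only genuinely nontrivial step is verifying separate continuity of the intermediate bilinear maps after quotienting, which requires tracing the quotient topologies through the iterated tensor constructions; mutual inversion and $A$-linearity then follow immediately on elementary tensors.
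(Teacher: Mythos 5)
Your proposal is correct on both parts. Part (i) is essentially identical to the paper's argument (verify continuity of $m \mapsto 1 \otimes m$ by precomposing with the quotient from $A \ot{K,\iota} M$).

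For part (ii) you take a genuinely different, more bottom-up route. The paper invokes the topological associativity isomorphism at the $K$-level,
\begin{equation*}
\varphi \colon \big( L \ot{K,\iota} M \big) \ot{K,\iota} N \overset{\cong}{\lra} L \ot{K,\iota} \big( M \ot{K,\iota} N \big),
\end{equation*}
which it has already established in the proof of \Cref{Lemma - Base change for topological modules}, and then simply checks that $\pi_r \circ \varphi$ and $\pi_l \circ \varphi^{-1}$ descend through the two-step quotient projections $\pi_l$ and $\pi_r$: since $\varphi$ visibly carries each $A$- and $B$-balancing relator into the kernel of the opposite projection, both composites factor, and the resulting maps are mutually inverse. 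You instead build the map directly by two consecutive applications of the universal property of $\ot{K,\iota}$ and its factoring through $\ot{A,\iota}$: fix $n$, get $\Phi_n$, then check separate continuity in $n$ and $B$-balance of $(x,n)\mapsto \Phi_n(x)$ and apply the universal property a second time. Your extra verifications (the induced one-sided module structures on the intermediate tensor products, the separate continuity of $n\mapsto\Phi_n(x)$ by writing $x$ as a finite sum of elementary tensors) are all sound. The paper's argument is shorter because it reuses the already-proven $K$-level associativity; yours is more self-contained and spells out where separate continuity is actually needed, at the cost of more bookkeeping. Either proof is acceptable; nothing is missing from yours.
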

\begin{proof}
	In (i), it follows from the definition of the inductive tensor product topology that the homomorphism
	\begin{equation*}
		M \lra A \ot{K,\iota} M \longtwoheadrightarrow A \ot{A,\iota} M  \,,\quad m \lto 1 \otimes m ,
	\end{equation*}
	is continuous.
	Thus it constitutes an inverse to the claimed topological isomorphism.

	For (ii), we have a topological isomorphism of locally convex $K$-vector spaces
	\begin{equation*}
		\varphi \colon 	\big( L \ot{K,\iota} M \big) \ot{K,\iota} N \overset{\cong}{\lra} L \ot{K,\iota} \big( M \ot{K,\iota} N \big) 
	\end{equation*}
	like in the proof of \Cref{Lemma - Base change for topological modules} (i).
	We pass to the quotients
	\begin{align*}
		\pi_l \colon &\big(L \ot{K,\iota} M \big) \ot{K,\iota} N \lra \big(L \ot{A,\iota} M \big)\ot{K,\iota} N  \lra \big(L \ot{A,\iota}  M \big)\ot{B,\iota} N  , \\
		\pi_r \colon &L \ot{K,\iota} \big(M \ot{K,\iota} N \big) \lra L \ot{K,\iota} \big(M \ot{B,\iota} N \big) \lra L \ot{A,\iota} \big( M \ot{B,\iota} N \big) .
	\end{align*}
	Then $\pi_r \circ \varphi$ factors over $\pi_l$, and $\pi_l \circ \varphi^{-1} $ factors over $\pi_r$ yielding the sought topological isomorphism.
\end{proof}

\section{Locally Analytic Representation Theory}\label{Sect - Locally analytic representation theory}

In this section we recapitulate some aspects of the theory of locally analytic representations. 
Such representations were systematically introduced and studied by F\'eaux de Lacroix \cite{FeauxdeLacroix99TopDarstpAdischLieGrp}, and Schneider and Teitelbaum \cite{SchneiderTeitelbaumPrasad01UgFinLocAnRep, SchneiderTeitelbaum02LocAnDistApplToGL2, SchneiderTeitelbaum03AlgpAdicDistAdmRep, SchneiderTeitelbaum05DualAdmLocAnRep} originally over $p$-adic fields.
However, many foundational definitions and results readily generalize to non-archimedean local fields of arbitrary characteristic, which was already remarked and employed by Gräf \cite[App.\ A]{Graef20BoundDistr}.
For propositions first stated in the $p$-adic setting, we will not include proofs if their original proofs remain valid in positive characteristic.

\subsection{Locally Analytic Functions}

Let $X$ be a locally $L$-analytic manifold (see the convention in \Cref{Sect - Notation and conventions}) and let $V$ be a Hausdorff locally convex $K$-vector space.

Recall that a \emph{BH-subspace} of $V$ is an algebraic subspace $E\subset V$ which admits a compatible structure of a $K$-Banach space such that the associated topology is finer than its subspace topology \cite[Def.\ 1.2.1]{FeauxdeLacroix99TopDarstpAdischLieGrp}.
We let $\ov{E}$ denote $E$ carrying such a Banach topology, which is necessarily unique.

\begin{definition}[{cf.\ \cite[\S2]{SchneiderTeitelbaum02LocAnDistApplToGL2}}]\label{Def - Locally analytic functions}
	\begin{altenumerate}
		\item 
			A \emph{$V$-index $\CI$ on $X$} is a family of triples $(U_i,\varphi_i,E_i)_{i\in I}$ where 
			\begin{altenumeratelevel2}
				\item 
					$X= \bigcup_{i\in I} U_i$ is a disjoint open covering,
					
				\item 
					$\varphi_i\colon U_i \ra  L^{d_i}$ is a chart of $X$ whose image is an affinoid polydisc,
					
				\item 
					 and $E_i \subset V$ is a BH-subspace.
			\end{altenumeratelevel2} 
			The $V$-indices on $X$ form a directed set \cite[Bem.\ 2.1.9]{FeauxdeLacroix99TopDarstpAdischLieGrp}.
			Moreover, $C^\an (U_i,E_i)$ is defined to be the space of functions $f\colon U_i \ra E_i$ such that $f \circ \varphi_i^{-1} $ is given by an $\ov{E_i}$-valued power series strictly convergent on $\varphi_i(U_i)$, and topologized via
			\begin{equation*}
				C^\an (U_i,E_i)\cong \CO(\varphi_i(U_i)) \cot{K,\pi} \ov{E_i} .
			\end{equation*}
			Here $\CO(\varphi_i(U_i))$ denotes the $K$-Banach algebra of power series with coefficients in $K$ which converge on all points of $\varphi_i(U_i)$ defined over an algebraic closure of $K$.

		\item 
			The locally convex $K$-vector space of \emph{locally analytic functions on $X$ with values in $V$} is defined to be the locally convex inductive limit
			\begin{equation*}
				C^\la(X,V) \defeq \varinjlim_{\CI} \, C^\la_\CI (X,V) 
			\end{equation*}
			over all $V$-indices on $X$ where $C^\la_\CI (X,V) \defeq \prod_{i\in I} C^\an (U_i,E_i)$.
	\end{altenumerate}
\end{definition}

An element $f\in C^\la(X,V)$ defines a function from $X$ to $V$, which by the identity theorem for power series determines $f$ uniquely.
Thus we may identify $f$ with its induced function $f\colon X \ra V$.
We call functions arising this way \emph{locally analytic}.
They are continuous in particular.

\begin{proposition}\label{Prop - Properties of locally analytic functions}
	\begin{altenumerate}
		\item
			The formation of $C^\la(X,V)$ is contravariantly functorial in $X$ and covariantly functorial in $V$ {\normalfont \cite[Bem.\ 2.1.11]{FeauxdeLacroix99TopDarstpAdischLieGrp}, \cite[Prop.\ 1.1.7]{Emerton17LocAnVect}}.
			
		\item 
			The evaluation homomorphisms $\ev_x \colon C^\la(X,V) \ra V$, $f \mto f(x)$, for $x\in X$, are continuous, and $C^\la(X,V)$ is Hausdorff and barrelled {\normalfont \cite[Satz 2.1.10]{FeauxdeLacroix99TopDarstpAdischLieGrp}}.
			
		\item 
			Any disjoint covering $X= \bigcup_{i\in I} X_i$ by open subsets yields a topological isomorphism {\normalfont \cite[Kor.\ 2.2.4]{FeauxdeLacroix99TopDarstpAdischLieGrp}}
			\begin{equation*}
				C^\la(X,V) \overset{\cong}{\lra} \prod_{i\in I} C^\la(X_i,V) \,,\quad f \lto (f\res{X_i})_{i\in I} .
			\end{equation*}
					
		\item 
			If $V$ is of compact type, $C^\la(X,V)$ is reflexive and complete, cf.\ {\normalfont \cite[p.\ 40]{Emerton17LocAnVect}}.
			If in addition $X$ is compact, then $C^\la(X,V)$ is of compact type with $C^\la(X,V) \cong C^\la(X,K) \cot{K} V$ {\normalfont \cite[Satz 2.3.2]{FeauxdeLacroix99TopDarstpAdischLieGrp}}.
		
		\item 
			For compact locally $L$-analytic manifolds $X$ and $Y$, there are topological isomorphisms {\normalfont \cite[Lemma A.1]{SchneiderTeitelbaum05DualAdmLocAnRep}}
			\begin{alignat*}{3}
				C^\la(X,K) \cot{K} C^\la(Y,K) &\lra C^\la\big(X,C^\la(Y ,K&&) \big) &&\lra C^\la(X\times Y ,K) , \\
				f \otimes g &\lto f(\blank)g  \,, && F	 &&\lto \big[ (x,y) \mto F(x)(y) \big] .
			\end{alignat*}
			
	\end{altenumerate}
\end{proposition}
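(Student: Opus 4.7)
The plan is to recognize $C^\la(X,V)$ as a double directed colimit and dispatch each part as a stability/naturality statement at the level of the Banach pieces $C^\an(U_i,E_i) \cong \CO(\varphi_i(U_i)) \cot{K,\pi} \ov{E_i}$. Each assertion is treated in the cited references in the $p$-adic setting, and none of those arguments uses anything special about $\mathrm{char}(L)=0$, so they transfer verbatim; the task is really to verify this directly.

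For (i), a locally $L$-analytic map $g\colon X \to Y$ pulls back any $V$-index on $Y$ by refining the preimages of its charts to a disjoint covering of $X$ by polydiscs (using strict paracompactness) and keeping the BH-subspaces; pullback of power series is norm-decreasing on each affinoid factor, so continuity is automatic. Functoriality in $V$ is immediate, since a continuous $V\to W$ sends a BH-subspace $E\subset V$ to one in $W$ and yields continuous Banach maps $C^\an(U_i,E_i)\to C^\an(U_i,E_i')$. For (ii), evaluation at $x\in X$ factors, for any $V$-index containing $x\in U_i$, through the single Banach piece $C^\an(U_i,E_i)\to \ov{E_i}\hookrightarrow V$ and is therefore continuous. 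Hausdorff-ness follows from the identity theorem for power series, and barrelledness from second-countability of $X$: each $\CI$-piece is a countable product of Banach spaces, hence Fr\'echet, hence barrelled, and barrelledness passes to locally convex inductive limits.

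For (iii), $V$-indices refining the disjoint covering $(X_i)$ are cofinal in the directed set of $V$-indices on $X$, so regrouping the product description yields the claimed topological isomorphism. For (iv), when $V=\varinjlim_n V_n$ is of compact type with compact transition maps $V_n\hookrightarrow V_{n+1}$, every BH-subspace of $V$ is contained in some $V_n$, so the subsystem of $V$-indices with $E_i=V_n$ is cofinal; this presents $C^\la(X,V)$ as a colimit of nuclear Fr\'echet spaces with compact transition maps, giving completeness and reflexivity. If $X$ is compact in addition, the covering index set $I$ can be taken finite, yielding an explicit compact-type structure; the identity $C^\la(X,V)\cong C^\la(X,K)\cot{K} V$ then follows by pulling the Banach tensor product $\CO(\varphi_i(U_i))\cot{K,\pi} V_n$ through the colimit. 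For (v), one first identifies the two arrows on a single pair of charts with the regrouping of a convergent multivariable power series into a power series in one block of variables with coefficients in the other, checks these are mutually inverse isometries of Banach spaces, and then passes to the colimit over $V$-indices on $X$ and $Y$.

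The hardest step is (iv): one must verify that the cofinal subsystem of $V$-indices behaves well with respect to the Hausdorff completion built into the definition of $C^\an(U_i,E_i)$, and that the resulting transition maps in the colimit presentation of $C^\la(X,V)$ remain compact. Once this cofinality-and-compactness bookkeeping is set up, the remaining properties, including the tensor identities in (iv) and (v), reduce to formal manipulations of the projective tensor product of Banach spaces.
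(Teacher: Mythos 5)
The paper gives no proof of this proposition at all: it invokes the convention stated at the start of \Cref{Sect - Locally analytic representation theory} (``we will not include proofs if their original proofs remain valid in positive characteristic'') and cites the references listed in the statement, whose arguments are purely functional-analytic and characteristic-independent. Your overall framing (``none of those arguments uses anything special about $\mathrm{char}(L)=0$, so they transfer verbatim'') therefore matches the paper exactly, and your sketches for (i), (ii), (iii) and (v) correctly reconstruct how one would see this.

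However, your argument for (iv) contains a genuine error in the non-compact case. You claim that restricting to $V$-indices with $E_i$ among the Banach steps $V_n$ of $V$ ``presents $C^\la(X,V)$ as a colimit of nuclear Fr\'echet spaces with compact transition maps,'' which you then use to conclude completeness and reflexivity. For non-compact $X$ this fails on three counts. First, $C^\la_\CI(X,V)=\prod_{i\in I}C^\an(U_i,E_i)$ is then a countably infinite product of infinite-dimensional Banach spaces, which is Fr\'echet but never nuclear. Second, a transition map between two such infinite products is not compact just because each coordinate restriction $C^\an(U_i,E_i)\to C^\an(U_i',E_i')$ is; compactness does not pass to infinite products. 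Third, the directed set of $V$-indices on a non-compact $X$ has no countable cofinal subsystem that would yield an LB-presentation --- and in fact $C^\la(X,V)$ is not an LB-space for non-compact $X$, so a conclusion in that direction would be false. The correct and much shorter route is to first apply your own (iii) to a disjoint covering of $X$ by compact open subsets $X_i$, reducing to $C^\la(X,V)\cong\prod_i C^\la(X_i,V)$; for compact $X_i$ the space $C^\la(X_i,V)$ is of compact type (your cofinality argument is sound there, where $I$ is finite), and completeness and reflexivity then pass to the product. Your remark about the tensor identity $C^\la(X,K)\cot{K}V$ in the compact case and your treatment of (v) are fine once (iv) is repaired in this way.
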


\begin{definition}
	The space of \emph{locally analytic distributions on $X$} is defined as the strong dual space
	\begin{equation*}
		D(X,K) \defeq C^\la(X,K)'_b .
	\end{equation*}
	When the coefficient field $K$ is apparent from the context, we simply write $D(X)$.
	The element $\delta_x \defeq \ev_x \in D(X)$, for $x\in X$, is called \emph{Dirac distribution supported in $x$}.
\end{definition}

\begin{proposition}\label{Prop - Properties of space of distributions}
	\begin{altenumerate}
		\item 
			The locally convex $K$-vector space $D(X)$ is reflexive and barrelled.
			If $X$ is compact, $D(X)$ is a nuclear $K$-Fr\'echet space {\normalfont \cite[Thm.\ 1.1, Thm.\ 1.3]{SchneiderTeitelbaum02LocAnDistApplToGL2}}.
			
		\item 
			Any disjoint covering $X= \bigcup_{i\in I} X_i$ by open subsets yields a topological isomorphism {\normalfont \cite[p.\ 447]{SchneiderTeitelbaum02LocAnDistApplToGL2}}
			\begin{equation*}
				D(X) \cong \bigoplus_{i\in I} D(X_i) .
			\end{equation*}
			
		\item 
			For locally $L$-analytic manifolds $X$ and $Y$, there is a canonical topological isomorphism {\normalfont \cite[Prop.\ A.3]{SchneiderTeitelbaum05DualAdmLocAnRep}}
			\begin{equation*}
				D(X\times Y) \cong D(X) \cot{K,\iota} D(Y) .
			\end{equation*}
			
		\item 
			The subspace of $D(X)$ generated by all Dirac distributions $\delta_x$, for $x\in X$, is dense {\normalfont \cite[Lemma 1.1.1]{Kohlhaase05InvDistpAdicAnGrp}}.
			
	\end{altenumerate}
\end{proposition}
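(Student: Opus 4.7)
The plan is to deduce each part from the corresponding structural property of $C^\la(X,K)$ established in the preceding proposition, reducing essentially everything to the compact case through strict paracompactness.

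For part (i), I would first treat the case of compact $X$. Specializing Definition 3.1 to $V=K$, every $V$-index $\CI$ forces $E_i=K$, so $C_\CI^\la(X,K) = \prod_{i\in I} \CO(\varphi_i(U_i))$; compactness of $X$ implies that each disjoint covering into affinoid polydiscs is finite, so $C_\CI^\la(X,K)$ is a $K$-Banach space. The transition maps between two $K$-indices $\CI \le \CI'$ are (up to finite products) the restriction maps between nested strictly convergent power series algebras, which are compact. Hence $C^\la(X,K)$ is an LB-space of compact type, and its strong dual $D(X)$ is a nuclear $K$-Fr\'echet space by the general duality theory of compact-type spaces. For arbitrary $X$, strict paracompactness furnishes a disjoint open cover by relatively compact sets, and then Proposition 3.2 (iii) gives $C^\la(X,K) \cong \prod_{i\in I} C^\la(X_i,K)$, a product of reflexive barrelled spaces. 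Reflexivity and barrelledness pass to arbitrary products, and reflexivity descends to $D(X)$ by passing to the strong dual; barrelledness of $D(X)$ then follows from reflexivity of $C^\la(X,K)$.

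For part (ii), the topological isomorphism $C^\la(X,K) \cong \prod_{i\in I} C^\la(X_i,K)$ from Proposition 3.2 (iii) dualizes: since each $C^\la(X_i,K)$ is reflexive and barrelled, the strong dual of the product identifies canonically and topologically with the locally convex direct sum $\bigoplus_{i\in I} D(X_i)$. For part (iii), I would combine Proposition 3.2 (v) with Proposition 3.2 (iv) in the compact case to obtain $C^\la(X\times Y,K) \cong C^\la(X,K) \cot{K} C^\la(Y,K)$, and then use that the strong dual of the completed projective tensor product of two nuclear Fr\'echet spaces is the completed inductive tensor product of their strong duals; this yields $D(X\times Y) \cong D(X) \cot{K,\iota} D(Y)$ for compact $X,Y$. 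The non-compact case reduces to the compact one by choosing disjoint compact-open coverings of $X$ and $Y$, taking the product covering of $X\times Y$, and invoking part (ii) together with the fact that $\cot{K,\iota}$ commutes with locally convex direct sums.

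For part (iv), I would apply the Hahn--Banach theorem, which is available thanks to the spherical completeness of $K$. If the closed subspace $E \subset D(X)_s$ spanned by $\{\delta_x\}_{x\in X}$ were proper, then some $0 \ne \ell \in (D(X)_s)' = C^\la(X,K)$ (where I use reflexivity from part (i)) would vanish on every $\delta_x$; but $\langle \delta_x,\ell\rangle = \ell(x)$, so $\ell$ would vanish pointwise, forcing $\ell = 0$ by the identity theorem for locally analytic functions. This gives density in $D(X)_s$, and since the closed subspaces of $D(X)_b$ and $D(X)_s$ coincide by Hahn--Banach (as used in Lemma 2.3), density follows in the strong topology as well. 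The main obstacle I anticipate is keeping track of the interplay between the projective and inductive tensor-product topologies in part (iii), ensuring that the dualizations genuinely produce $\cot{K,\iota}$ and not merely $\cot{K,\pi}$; this requires careful use of the nuclearity and Fr\'echet properties established in part (i).
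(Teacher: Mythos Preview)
Your proposal is correct and follows the standard line of argument found in the cited references; note that the paper itself does not give a proof of this proposition but simply refers to \cite{SchneiderTeitelbaum02LocAnDistApplToGL2}, \cite{SchneiderTeitelbaum05DualAdmLocAnRep}, and \cite{Kohlhaase05InvDistpAdicAnGrp}, remarking earlier in the section that the original proofs go through unchanged in positive characteristic. One small slip: in part (iii) you write ``the strong dual of the completed projective tensor product of two nuclear Fr\'echet spaces'', but for compact $X$ the space $C^\la(X,K)$ is of compact type (an LB-space), not Fr\'echet---the duality you need is the one between spaces of compact type and nuclear Fr\'echet spaces (as in \cite[Prop.\ 1.1.32]{Emerton17LocAnVect}), which indeed yields $D(X)\cot{K}D(Y)$ with coinciding $\pi$- and $\iota$-topologies.
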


In particular, every $\delta \in D(X)$ is supported in some compact open subset $C$ of $X$, i.e.\ contained in $D(C) \hookrightarrow D(X)$. 
Indeed, there exists a disjoint covering of $X$ by compact open subsets, to which we may apply (ii).

\begin{lemma}\label{Lemma - Dual of tensor product with locally analytic functions}
	If $V$ is of compact type, there is a topological isomorphism
	\begin{equation*}
		D(X) \cot{K,\iota} V'_b \overset{\cong}{\lra} \big( C^\la(X,K) \cot{K,\pi} V \big)'_b \,,\quad \delta \otimes \ell \lto [f \otimes v \mto \delta(f) \ell(v)] .
	\end{equation*}
\end{lemma}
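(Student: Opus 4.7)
The plan is to first treat the case of a compact $X$ by appealing to the standard duality between compact type spaces and their nuclear Fr\'echet strong duals, and then to reduce the general case to this via the disjoint decomposition of $X$ into compact open subsets. Both sides of the claimed isomorphism are compatible with such a decomposition.

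For the compact case, \Cref{Prop - Properties of locally analytic functions}(iv) gives that $C^\la(X,K)$ is of compact type and $C^\la(X,K) \cot{K,\pi} V = C^\la(X,K) \cot{K} V$ is again of compact type; simultaneously, by \Cref{Prop - Properties of space of distributions}(i) and the assumption on $V$, both $D(X)$ and $V'_b$ are nuclear $K$-Fr\'echet spaces, so $D(X) \cot{K,\iota} V'_b = D(X) \cot{K,\pi} V'_b$ by \Cref{Lemma - Equality of projective and inductive tensor product}. Writing $C^\la(X,K) = \varinjlim_n E_n$ and $V = \varinjlim_m V_m$ as strict inductive limits of $K$-Banach spaces with compact (hence nuclear) transition maps, one has $C^\la(X,K) \cot{K} V \cong \varinjlim_{n,m}\big(E_n \cot{K,\pi} V_m\big)$. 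The duality $(E_n \cot{K,\pi} V_m)'_b \cong E_n' \cot{K} V_m'$, valid for Banach spaces with nuclear transitions, passes to the limit to give
\[ \big(C^\la(X,K)\cot{K} V\big)'_b \cong \varprojlim_{n,m}\big(E_n' \cot{K} V_m'\big) \cong \big(\varprojlim_n E_n'\big) \cot{K} \big(\varprojlim_m V_m'\big) \cong D(X) \cot{K} V'_b, \]
with the isomorphism implemented by the formula $\delta \otimes \ell \mapsto [f \otimes v \mapsto \delta(f)\ell(v)]$ of the statement.

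For general $X$, I would pick a disjoint covering $X = \bigsqcup_{i \in I} X_i$ by compact open subsets, possible since $X$ is locally compact and strictly paracompact (and with $I$ countable by second-countability). Then \Cref{Prop - Properties of locally analytic functions}(iii) and \Cref{Prop - Properties of space of distributions}(ii) provide topological isomorphisms $C^\la(X,K) \cong \prod_i C^\la(X_i,K)$ and $D(X) \cong \bigoplus_i D(X_i)$. On the left-hand side, the inductive tensor product (being a colimit in $\mathrm{LCS}_K$) commutes with locally convex direct sums, and the countable direct sum of nuclear Fr\'echet spaces is already complete; hence $D(X) \cot{K,\iota} V'_b \cong \bigoplus_i \big(D(X_i) \cot{K,\iota} V'_b\big)$. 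On the right-hand side, one verifies that the natural continuous map $C^\la(X,K) \cot{K,\pi} V \to \prod_i \big(C^\la(X_i,K) \cot{K} V\big) \cong \prod_i C^\la(X_i,V)$ is a topological isomorphism (by the universal property, together with the compact case applied factorwise), and that the strong dual of a countable product of compact type spaces is the locally convex direct sum of their strong duals. Invoking the compact case on each $X_i$ then assembles the claimed topological isomorphism.

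The main obstacle lies in the compact case: rigorously interlinking the strict inductive limit structures on $C^\la(X,K)$ and $V$ with the Banach-space duality for projective tensor products, and verifying that passage to strong duals intertwines with the corresponding inductive/projective limits, requires careful use of nuclearity of the transition maps. A secondary difficulty is the interchange $C^\la(X,K) \cot{K,\pi} V \cong \prod_i C^\la(X_i,V)$ in the non-compact case, since projective tensor product does not commute with arbitrary products in general; this is handled here by exploiting nuclearity of $V$ and the individual compact-type structures of the factors, or alternatively by first dualizing and invoking the already-established compact case.
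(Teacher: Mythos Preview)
Your approach is essentially the same as the paper's: first the compact case, then a reduction via a disjoint covering by compact open subsets using that $D(X)$ decomposes as a direct sum and $C^\la(X,K)$ as a product. The paper is simply more terse. For the compact case it invokes \cite[Prop.\ 1.1.32 (ii)]{Emerton17LocAnVect} directly, which packages precisely the inductive-limit duality you sketch (and absorbs the two obstacles you flag about Banach-space duality and limit interchanges). For the general case the paper cites \cite[Cor.\ 1.2.14]{Kohlhaase05InvDistpAdicAnGrp} for distributivity of $\cot{K,\iota}$ over direct sums, \cite[Lemma 2.1 (iii)]{BreuilHerzig18TowardsFinSlopePartGLn} for distributivity of $\cot{K,\pi}$ over direct products (this is exactly your ``secondary difficulty''), and \cite[Prop.\ 9.11]{Schneider02NonArchFunctAna} for the sum/product duality; no countability hypothesis on the covering is needed once these references are in hand.
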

\begin{proof}
	When $X$ is compact, the statement follows from \cite[Prop.\ 1.1.32 (ii)]{Emerton17LocAnVect}.
	In the general case one considers a disjoint covering by compact open subsets.
	The claim then follows from the compact case by distributivity of $\cot{K,\iota}$ over direct sums \cite[Cor.\ 1.2.14]{Kohlhaase05InvDistpAdicAnGrp}, resp.\ of $\cot{K,\pi}$ over direct products \cite[Lemma 2.1 (iii)]{BreuilHerzig18TowardsFinSlopePartGLn}, and the duality between direct sums and products \cite[Prop.\ 9.11]{Schneider02NonArchFunctAna}.
\end{proof}

\begin{theorem}[{\cite[Thm.\ 2.2]{SchneiderTeitelbaum02LocAnDistApplToGL2}}]\label{Thm - Integration map}
	There exists a continuous $K$-linear \emph{integration map}
	\begin{equation*}
		I \colon C^\la (X,V) \lra \CL_b \big(D(X),V \big)
	\end{equation*}
	uniquely defined by $I(f)(\delta_x) = f(x)$, for all $f\in C^\la (X,V)$, $x \in X$.
	Moreover, this map is natural in $X$ and $V$, and injective.
	If $V$ is of LB-type\footnote{Recall that $V$ is \emph{of LB-type} if it can be written as the increasing union of countably many BH-subspaces \cite[Def.\ 1.1.9]{Emerton17LocAnVect}.}, then $I$ is an isomorphism of (abstract) $K$-vector spaces with inverse 
	\begin{equation*}
		I^{-1} \colon \CL \big(D(X),V \big) \lra C^\la (X,V) \,,\quad T \lto \big[x \mto T(\delta_x) \big] .
	\end{equation*}
\end{theorem}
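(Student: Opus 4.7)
The plan is to construct $I$ locally on polydisc charts using Banach-space duality and glue via the decompositions in Propositions 3.1.4(iii) and 3.2.3(ii), then verify the defining formula and deduce uniqueness from the density of Dirac distributions (Proposition 3.2.3(iv)). For the inverse in the LB-type case, the explicit formula $T \mto [x \mto T(\delta_x)]$ is the only candidate; the main difficulty is showing its image consists of locally analytic functions.

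First assume $X$ is compact. Since a disjoint open cover of a compact space must be finite, any $V$-index $\CI = (U_i, \varphi_i, E_i)_{i\in I}$ has finite $I$ and $C^\la_\CI(X,V) = \prod_{i \in I} C^\an(U_i, E_i)$. Pullback along $\varphi_i$ embeds $\CO(\varphi_i(U_i))$ continuously into $C^\la(U_i,K)$, whose transpose yields a continuous restriction $r_i \colon D(U_i) \to \CO(\varphi_i(U_i))'_b$. Composing $r_i$ with the canonical continuous map of $K$-Banach tensor products
\begin{equation*}
C^\an(U_i, E_i) \cong \CO(\varphi_i(U_i)) \cot{K,\pi} \ov{E_i} \lra \CL_b\bigl(\CO(\varphi_i(U_i))'_b, \ov{E_i}\bigr), \quad w \otimes e \lto [\ell \mto \ell(w)e],
\end{equation*}
turns each component $f_i$ of $f$ into a continuous map $T_i \colon D(U_i) \to \ov{E_i} \hookrightarrow V$. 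Using Proposition 3.2.3(ii), set $I(f) \defeq \bigoplus_{i \in I} T_i \in \CL_b(D(X), V)$. The identity $I(f)(\delta_x) = f(x)$ is immediate, because for $x \in U_i$ the restriction $r_i(\delta_x)$ is evaluation at $x$ on $\CO(\varphi_i(U_i))$. For general $X$, strict paracompactness gives a disjoint cover $X = \bigsqcup_j X_j$ by compact opens, and one glues via $C^\la(X,V) \cong \prod_j C^\la(X_j, V)$ and $D(X) \cong \bigoplus_j D(X_j)$. Continuity of $I$ is verified on each $C^\la_\CI(X,V)$ from continuity of the maps involved; uniqueness follows from the density of Dirac distributions; naturality is clear. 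For injectivity, $I(f) = 0$ forces $f(x) = 0$ for every $x \in X$, and the identity theorem for power series then gives $f = 0$.

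For the inverse when $V = \bigcup_n \ov{E_n}$ is of LB-type, given $T \in \CL(D(X), V)$, define $g(x) \defeq T(\delta_x)$. The substantive task -- and the main obstacle -- is to show $g$ is locally analytic. The strategy is, around each $x \in X$, to produce a polydisc chart $\varphi \colon U \to L^d$, an index $n$, and a factorization
\begin{equation*}
T|_{D(U)} \colon D(U) \xrightarrow{r} \CO(\varphi(U))'_b \lra \ov{E_n} \hookrightarrow V ,
\end{equation*}
after which inverting the Banach tensor-product map identifies the middle arrow with an element of $\CO(\varphi(U)) \cot{K,\pi} \ov{E_n} = C^\an(U, E_n)$, supplying a convergent power series representation of $g$ on $U$. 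Extracting such a factorization is the hard point: using that $D(U)$ is the projective limit of duals of the Banach subspaces of successively refined analyticity in $C^\la(U,K)$, and the LB-type hypothesis on $V$, one argues that the continuous $T|_{D(U)}$ must factor through $\ov{E_n}$ for some $n$ and through the restriction $r$. Assembling these local expansions gives a $V$-index witnessing $g \in C^\la(X,V)$. The relations $I \circ I^{-1} = \id$ and $I^{-1} \circ I = \id$ then follow from the defining identity $I(f)(\delta_x) = f(x)$ combined with uniqueness and injectivity established above.
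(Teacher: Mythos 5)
The paper does not prove this theorem itself; it cites \cite[Thm.\ 2.2]{SchneiderTeitelbaum02LocAnDistApplToGL2} and notes that the original proof carries over to positive characteristic.  So your proposal must be judged against the Schneider--Teitelbaum argument.  Your construction of $I$ in the compact case via the canonical map $\CO(\varphi_i(U_i)) \cot{K,\pi} \ov{E_i} \ra \CL_b\big(\CO(\varphi_i(U_i))'_b, \ov{E_i}\big)$, the gluing over disjoint covers, the identity $I(f)(\delta_x)=f(x)$, injectivity via the identity theorem, uniqueness via density of Dirac distributions, and naturality are all carried out correctly and match the cited proof.

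The inverse direction has a genuine gap, and it sits exactly where you flagged the difficulty.  Two separate points are elided.  First, the claim that the continuous $T$ must factor through some $\ov{E_n}$ is a Grothendieck-type factorization theorem (a continuous linear map from a Fr\'echet, hence Baire, space into a Hausdorff increasing countable union of Banach subspaces factors continuously through one of the members); this is true and standard, but it is not an obvious consequence of ``the projective limit structure'' and needs to be stated and cited.  Second, and more seriously, the step ``inverting the Banach tensor-product map identifies the middle arrow with an element of $\CO(\varphi(U)) \cot{K,\pi} \ov{E_n}$'' is not valid as stated: for Banach spaces $W$ and $E$, the canonical homomorphism $W \cot{K,\pi} E \ra \CL_b(W'_b, E)$ is injective, not surjective -- its image consists only of the completely continuous (compactoid) operators, not all continuous ones -- so a continuous $\CO(\varphi(U))'_b \ra \ov{E_n}$ need not be invertible back to the tensor product.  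What actually rescues the argument, and what Schneider--Teitelbaum's proof relies on, is the nuclearity of the Fr\'echet space $D(U)$ (for compact $U$, \Cref{Prop - Properties of space of distributions}~(i)): a continuous linear map from a nuclear Fr\'echet space to a Banach space is automatically nuclear, and a nuclear map $D(U)\ra \ov{E_n}$ is precisely an element of $D(U)'_b \cot{K,\pi} \ov{E_n} = C^\la(U,K)\cot{K}\ov{E_n} = C^\la(U,\ov{E_n})$.  Your factorization through the single restriction $r\colon D(U)\ra\CO(\varphi(U))'_b$ for a fixed chart is also imprecise (the factorization through a Banach step of the projective limit $D(U)=\varprojlim B_r$ will generically land at a finer subdivision than $B_0=\CO(\varphi(U))'_b$), but that can be repaired by shrinking $U$; the missing nuclearity cannot be elided.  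As written, the inverse construction does not close, and this omission is the core technical content of the theorem rather than a routine verification.
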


\begin{corollary}\label{Cor - Pairing of distributions and vector valued functions}
	There is a natural, separately continuous, non-degenerate $K$-bilinear pairing
	\begin{equation*}
		D(X) \times C^\la(X,V) \lra V \,,\quad (\delta, f) \lto \delta(f) \defeq I(f)(\delta).
	\end{equation*}
	This pairing is compatible with the duality between $D(X)$ and $C^\la(X,K)$ in the sense that, for compact open $U\subset X$ and a BH-subspace $E \subset V$, its restriction to $D(U) \times C^\la(U,E)$ is given by tensoring the duality pairing $D(U) \times C^\la(U,K)\ra K$ with $E$.
\end{corollary}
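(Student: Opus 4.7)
The plan is to define the pairing as $(\delta, f) \mapsto I(f)(\delta)$ using the integration map $I \colon C^\la(X,V) \ra \CL_b(D(X),V)$ from the preceding theorem. Bilinearity is immediate from the $K$-linearity of $I$ and of each $I(f)$, and naturality in both $X$ and $V$ transfers directly from the naturality of $I$. For separate continuity, with $f$ fixed the element $I(f) \in \CL_b(D(X),V)$ is continuous, so $\delta \mapsto I(f)(\delta)$ is continuous; and with $\delta$ fixed, $f \mapsto I(f)(\delta)$ is the composition of $I$ with the continuous evaluation $\ev_\delta \colon \CL_b(D(X),V) \ra V$.

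For non-degeneracy in the first argument, suppose $\delta(f) = 0$ for every $f \in C^\la(X,V)$. Picking any nonzero $v \in V$ and taking $f = g \cdot v$ for $g \in C^\la(X,K)$ gives $\delta(g) v = I(g \cdot v)(\delta) = 0$, whence $\delta(g) = 0$ for all $g$, so $\delta = 0$ by the very definition of $D(X) = C^\la(X,K)'_b$. For non-degeneracy in the second argument, if $\delta(f) = 0$ for every $\delta \in D(X)$, then in particular $f(x) = \delta_x(f) = 0$ for every $x \in X$, and so $f = 0$.

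The bulk of the work lies in the compatibility assertion, which is also where I expect the main obstacle. Given a compact open $U \subset X$ and a BH-subspace $E \subset V$, I would first produce a finite disjoint cover of $U$ by charts and combine the defining isomorphism $C^\an(U_i, E) \cong \CO(\varphi_i(U_i)) \cot{K,\pi} \ov E$ with the compatibility of $\cot{K,\pi}$ with finite direct sums to obtain an identification $C^\la(U,E) \cong C^\la(U,K) \cot{K,\pi} \ov E$. Under this identification, $C^\la(U,K) \otimes_K E$ sits densely, and on an elementary tensor $g \otimes e$ the uniqueness of $I$ yields $I(g \cdot e)(\delta_x) = g(x) e = I(g)(\delta_x) \cdot e$, exactly matching the tensored pairing. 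The delicate step is then to extend this agreement globally: by density of Dirac distributions in $D(U)$ together with separate continuity in $\delta$, it extends to all of $D(U) \times (C^\la(U,K) \otimes_K E)$, and further to $D(U) \times C^\la(U,E)$ via density of the algebraic tensor product in its completion and continuity in $f$.
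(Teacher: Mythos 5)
Your proof is essentially correct, but it takes a genuinely different route from the paper's and has a minor logical ordering issue. For the compatibility statement, the paper simply appeals to the construction of $I$: in the proof of the cited integration theorem (Theorem 2.2 of Schneider--Teitelbaum), $I$ on a compact $U$ with Banach coefficients $E$ is \emph{defined} as the composite $C^\la(U,E)\cong C^\la(U,K)\cot{K}\ov E\cong \CL_b\big(D(U),\ov E\big)$, so the assertion is true by construction and no density argument is needed. You instead re-derive the identification $C^\la(U,E)\cong C^\la(U,K)\cot{K}\ov E$ and then propagate the agreement from Dirac distributions and elementary tensors by a two-step density argument; this is logically sound but more work, and your re-derivation of the factorization is sketchy as stated (one still has to pass to the colimit over $V$-indices; in practice one would cite F\'eaux de Lacroix's Satz 2.3.2 or the construction in the proof of Theorem 2.2). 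For non-degeneracy in the first variable, the paper derives it from the compatibility just established together with non-degeneracy of the scalar pairing $D(U)\times C^\la(U,K)\to K$; your argument with $f=g\cdot v$ is more elementary, but as written it already relies on the formula $I(g\cdot v)(\delta)=\delta(g)\,v$ for \emph{general} $\delta$, which you only establish later in your compatibility paragraph (it follows from the Dirac case by density of Dirac distributions in $D(X)$ and separate continuity). You should either establish that formula first, or record it as an observation independent of the full compatibility statement, so that the argument is not circular; once that is fixed your proof goes through.
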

\begin{proof}
	It is clear that the above pairing defined by $I$ is natural, $K$-bilinear, and separately continuous.
	On a compact subset $U\subset X$ and for a $K$-Banach space $E$, the map $I$ is defined as the continuous linear bijection 
	\begin{equation*}
		\begin{tikzcd}[row sep = 0ex	,/tikz/column 1/.append style={anchor=base east}	,/tikz/column 3/.append style={anchor=base west}]
			I \colon C^\la(X,E) \ar[r] & \eqmathbox[A]{C^\la(X,K) \cot{K} E} \ar[r, "\cong"] & \CL_b \big(D(X),E \big) ,  \\
			f(\blank) \, v  \ar[r, mapsfrom]& \eqmathbox[A]{f\otimes v} \ar[r, mapsto]& \big[\delta \mto \delta(f) \,v\big] ,
		\end{tikzcd}
	\end{equation*}
	see the proof of \cite[Thm.\ 2.2]{SchneiderTeitelbaum02LocAnDistApplToGL2}.
	This shows the last statement.

	To show the non-degeneracy of the pairing, we first fix a distribution $\delta \in D(X)$ and assume that $\delta(f)=0$, for all $f\in C^\la(X,V)$.
	Let $\delta$ be supported on a compact open subset $U\subset X$.
	If $V \neq \{0\}$, then $\delta = 0$ follows from the non-degeneracy of the duality pairing $D(U) \times C^\la(U,K)\ra K$ and the compatiblity already shown.
	On the other hand, consider $f \in C^\la(X,V)$ such that $\delta(f)=0$, for all $\delta \in D(X)$.
	Then we have $f(x)= I(f)(\delta_x) =0$, for all Dirac distributions $\delta_x$, and hence $f=0$.
\end{proof}

\subsection{Locally Analytic Representations}

Let $G$ be a \emph{locally $L$-analytic Lie group}, i.e.\ locally $L$-analytic manifold which carries the structure of a group such that the multiplication and inversion maps
\begin{equation*}
	m \colon G \times G \lra G \,,\qquad \inv\colon G \lra G
\end{equation*}
are locally $L$-analytic.
As a topological group $G$ is Hausdorff, totally disconnected and locally compact. 
This implies that each neighbourhood of the identity element $1$ in $ G$ contains a compact open subgroup.

\begin{definition}[{\cite[Def.\ p.451]{SchneiderTeitelbaum02LocAnDistApplToGL2}}]\label{Def - Locally analytic representation}
	A \textit{(left) locally analytic $G$-representation} is a barrelled, Hausdorff locally convex $K$-vector space $V$ with a $G$-action by continuous endomorphisms such that the orbit maps $G \ra V$, $g \mto g.v$, are locally analytic in the sense of \Cref{Def - Locally analytic functions}, for all $v\in V$.

	We let $\Rep_K^{\la}(G)$ denote the category of locally analytic $G$-representations with morphism given by $G$-equivariant continuous homomorphism of $K$-vector spaces.
	We write $\Rep_K^{\la,\mathrm{ct}}(G)$ for the full subcategory of locally analytic $G$-representations whose underlying locally convex $K$-vector space is of compact type.
\end{definition}

\begin{example}[{cf.\ \cite[Bsp.\ 3.1.6]{FeauxdeLacroix99TopDarstpAdischLieGrp}}]\label{Expl - Regular representations}
		Let $G$ be compact and let $V \in \mathrm{LCS}_K$ be Hausdorff.
		Then
		\begin{equation*}
			G \times C^\la(G,V) \lra C^\la(G,V) \,,\quad (g,f) \lto f(g^{-1}  \blank) \defeq \big[h \mto f(g^{-1}h) \big],
		\end{equation*}
		defines the \textit{left regular $G$-representation}, which is locally analytic.
		Similarly, the \textit{right regular $G$-representation} on $C^\la(G,V)$ and the \textit{$G$-representation by conjugation}
		\begin{equation*}
			G \times C^\la(G,V) \lra C^\la(G,V) \,,\quad (g,f) \lto f(g^{-1} \blank g),
		\end{equation*}
		are locally analytic $G$-representations.
\end{example}

\begin{example}[{cf.\ \cite[Sect.\ 2]{SchneiderTeitelbaumPrasad01UgFinLocAnRep}}]
	Let $V$ be an admissible smooth $G$-representation on a $K$-vector space. 
	The unit element of $G$ has a countable neighbourhood basis consisting of compact open subgroups $(H_n)_{n\in \BN}$.
	Since $V = \bigcup_{n\in \BN} V^{H_n}$ and the $V^{H_n}$ are finite-dimensional by assumption, it follows that $V$ endowed with the finest locally convex topology is of compact type. 
	We obtain $V \in \Rep_K^{\la,\mathrm{ct}}(G)$ with this topology because the orbit maps $g \mto g.v$, for $v\in V$, are locally constant.
\end{example}

\begin{proposition}[{\cite[Prop.\ 3.6.14]{Emerton17LocAnVect}}]\label{Prop - Subrepresentations and quotients of locally analytic representations}
	Let $V$ be a locally analytic $G$-representation and $W\subset V$ a $G$-invariant closed subspace.
	\begin{altenumerate}
		\item
			Then $V/W$ is a locally analytic $G$-representation with respect to the induced $G$-action.
		\item
			If $W$ is barrelled, then $W$ is a locally analytic $G$-representation with respect to the induced $G$-action.	
	\end{altenumerate}
\end{proposition}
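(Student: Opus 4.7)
The plan is to verify the three defining conditions of \Cref{Def - Locally analytic representation} — Hausdorffness, barrelledness, and local analyticity of all orbit maps — separately for $V/W$ and for $W$. The first two conditions are soft properties of locally convex spaces, while the third reduces in both parts to compatibility of BH-subspaces with the canonical maps $W \hookrightarrow V \twoheadrightarrow V/W$ together with the covariant functoriality of $C^\la(G,\blank)$ recorded in \Cref{Prop - Properties of locally analytic functions}~(i).

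For (i), I would first note that $V/W$ is Hausdorff because $W$ is closed in $V$, and barrelled as the quotient of a barrelled space. The $G$-action on $V$ descends to $V/W$ by continuous endomorphisms. For $\ov{v} \in V/W$, pick any lift $v \in V$: then the orbit map $g \mapsto g.\ov{v}$ is the composition of the locally analytic $g \mapsto g.v$ with the continuous quotient $\pi \colon V \twoheadrightarrow V/W$, and is therefore locally analytic by covariant functoriality.

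For (ii), $W$ is Hausdorff as a subspace of a Hausdorff space, barrelled by hypothesis, and inherits a $G$-action by continuous endomorphisms. For $w \in W$, the orbit map $o_w \colon G \to V$ is locally analytic and takes values in $W$; the substantive step is to upgrade this to local analyticity as a function $G \to W$. I would choose a $V$-index $(U_i,\varphi_i,E_i)_{i\in I}$ realising $o_w|_{U_i} \in C^\an(U_i,E_i)$. Since $\ov{E_i} \hookrightarrow V$ is continuous and $W$ is closed in $V$, the intersection $W\cap \ov{E_i}$ is closed in $\ov{E_i}$ and hence a Banach space; moreover the composite $W \cap \ov{E_i} \hookrightarrow \ov{E_i} \to V$ factors continuously through $W$, so $W\cap E_i$ is a BH-subspace of $W$.

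The main obstacle is the final step: showing that $o_w|_{U_i}$ actually lies in $C^\an(U_i, W\cap E_i)$. Writing $o_w \circ \varphi_i^{-1}(x) = \sum_\alpha a_\alpha(x-x_0)^\alpha$ as a strictly convergent $\ov{E_i}$-valued power series on $\varphi_i(U_i)$, I would post-compose with the continuous Banach quotient $\ov{E_i} \twoheadrightarrow \ov{E_i}/(W\cap \ov{E_i})$; the resulting power series converges to the zero function, since $o_w$ takes values in $W$. The identity theorem for Banach-valued power series then yields $a_\alpha \in W \cap \ov{E_i}$ for every multi-index $\alpha$, exhibiting $o_w|_{U_i}$ as an element of $C^\an(U_i, W\cap E_i)$ and completing the verification.
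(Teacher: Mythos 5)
Your argument is correct and is essentially the same reduction to BH-subspaces that underlies the cited result \cite[Prop.\ 3.6.14]{Emerton17LocAnVect}; the paper itself gives no proof beyond that citation. In particular the decisive point in (ii) — that $W \cap \overline{E_i}$ is closed in the Banach space $\overline{E_i}$ (hence itself Banach with the induced norm, hence a BH-subspace of $W$), and that post-composing the power series expansion with $\overline{E_i} \twoheadrightarrow \overline{E_i}/(W\cap\overline{E_i})$ and invoking the identity theorem forces the coefficients into $W\cap\overline{E_i}$ while strict convergence is automatic because the norm is inherited — is exactly the right mechanism, and everything is independent of $\mathrm{char}(L)$, as the paper needs.
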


\begin{lemma}[{cf.\ discussion after \cite[Thm.\ 3.6.12]{Emerton17LocAnVect}}]\label{Lemma - Continuity of the orbit homomorphism}
	Let $V \in \Rep_K^{\la}(G)$ with orbit maps $\rho_v$, $v\in V$.
	If $V$ is an LB-space\footnote{Recall that $V$ is an \emph{LB-space} if it is topologically isomorphic to the locally convex inductive limit of a sequence of $K$-Banach spaces \cite[Def.\ 1.1.16 (i)]{Emerton17LocAnVect}.} then the orbit homomorphism
	\begin{equation*}
		o_V\colon V \lra C^\la(G,V) \,,\quad v \lto \rho_v , 
	\end{equation*}
	is continuous.
\end{lemma}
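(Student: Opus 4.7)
The plan proceeds in three stages: reduction to a compact group, reduction to a Banach space piece, and a uniformity argument producing a single $V$-index.

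For the first reduction, I would fix a compact open subgroup $H \subset G$ with coset representatives $\{g_\alpha\}$ for $G/H$. The disjoint open covering $G = \bigsqcup_\alpha g_\alpha H$ yields, by \Cref{Prop - Properties of locally analytic functions}(iii), a topological isomorphism $C^\la(G, V) \cong \prod_\alpha C^\la(g_\alpha H, V)$. Continuity of $o_V$ is thus equivalent to continuity of each projected map $V \to C^\la(g_\alpha H, V)$, $v \mapsto [h \mapsto (g_\alpha h).v]$. Since each $g_\alpha$ acts by a continuous automorphism of $V$ and left-translation by $g_\alpha$ is a locally $L$-analytic diffeomorphism $H \to g_\alpha H$, functoriality of $C^\la(-,V)$ in both variables reduces the problem to showing that $V \to C^\la(H, V)$, $v \mapsto [h \mapsto h.v]$, is continuous for the single compact open subgroup $H$.

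Second, write $V = \varinjlim_n V_n$ as the locally convex inductive limit of an increasing sequence of $K$-Banach spaces with continuous inclusions. By the universal property of such inductive limits, it suffices to show that each composition $T_n \colon V_n \hookrightarrow V \to C^\la(H, V)$ is continuous. Since $H$ is compact, any $V$-index on $H$ may be taken finite, so $C^\la_\CI(H, V) = \prod_{i=1}^N C^\an(U_i, E_i) \cong \prod_{i=1}^N \CO(\varphi_i(U_i)) \cot{K,\pi} \overline{E_i}$ is itself a $K$-Banach space for each finite $\CI$.

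The heart of the argument is to produce, uniformly for $v$ in the unit ball of $V_n$, a single $V$-index $\CI = (U_i, \varphi_i, E_i)_{i=1}^N$ on $H$ such that $T_n$ factors continuously through $C^\la_\CI(H, V)$. The pointwise evaluations $\ev_h \circ T_n \colon V_n \to V$ are continuous since each $h$ acts by a continuous endomorphism of $V$. Combining this with the local analyticity of each orbit map $\rho_v$, a Banach--Steinhaus / equicontinuity argument (exploiting the barrelledness of the Banach space $V_n$ together with the compactness of $H$) produces a common BH-subspace $E \subset V$ containing $h.v$ for all $h \in H$ and all $v$ in the unit ball of $V_n$, and bounds on the Taylor coefficients of $\rho_v$ depending continuously on $v$. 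Continuity of the resulting map $V_n \to \prod_{i=1}^N \CO(\varphi_i(U_i)) \cot{K,\pi} \overline{E}$ is then a direct verification between Banach spaces.

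The main obstacle is the final step: constructing a uniform BH-subspace and $V$-index valid on the unit ball of $V_n$. This is the technical heart, and it relies essentially on the Banach-space structure of $V_n$ combined with the fact that local analyticity of the orbit maps is stronger than continuity, providing the uniformly bounded Taylor expansions needed for the uniformity conclusion.
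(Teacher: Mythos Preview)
Your first reduction to a compact open subgroup matches the paper's (the paper uses right cosets $Hg_i$ rather than left cosets, but this is immaterial). After that, the approaches diverge.

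The paper does not pass to Banach steps $V_n$ and attempts no direct uniformity argument. Instead it applies the closed graph theorem: for $H' = Hg_i$, the graph of $o' \colon V \to C^\la(H', V)$ is the kernel of the continuous map
\[
V \times C^\la(H', V) \lra \prod_{h \in H} V, \qquad (v, f) \lto \big(\rho_v(hg_i) - f(hg_i)\big)_{h \in H},
\]
hence closed. Since $V$ is an LB-space by hypothesis and $C^\la(H', V)$ is an LB-space because $V$ is of LB-type \cite[Prop.\ 2.1.30]{Emerton17LocAnVect}, a version of the closed graph theorem valid between such spaces \cite[p.II.34 Prop.\ 10]{Bourbaki87TopVectSp1to5} gives continuity of $o'$ in one stroke.

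Your third step, by contrast, has a genuine gap. Banach--Steinhaus applied to the family $\{h.(-) \colon V_n \to V\}_{h \in H}$ yields only equicontinuity, hence that $\{h.v : h \in H,\ \|v\|_{V_n} \le 1\}$ is bounded in $V$; but a bounded subset of an arbitrary LB-space need not lie in a single BH-step, and even when it does, knowing that the \emph{values} $\rho_v(h)$ lie in a common $E$ says nothing about the \emph{radii of convergence} of the local power-series expansions of $\rho_v$. The assertion that one obtains ``bounds on the Taylor coefficients of $\rho_v$ depending continuously on $v$'' is precisely the statement that $T_n$ lands continuously in some fixed $C^\la_\CI(H,V)$, and cannot be extracted from an equicontinuity statement about point evaluations alone. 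What you identify as ``the main obstacle'' is essentially the entire lemma; the closed graph theorem is the device that closes it without ever producing a uniform $V$-index explicitly.
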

\begin{proof}
	Let $H\subset G$ be a compact open subgroup and $g_i \in G$ such that $G = \bigcup_{i\in I} H g_i$ is a disjoint covering.
	Under the topological isomorphism from \Cref{Prop - Properties of locally analytic functions} (iii), the homomorphism $o_V$ coincides with the product of the maps $o_i \colon V \ra C^\la(H g_i, V)$, $v \mto \rho_v\res{H g_i}$.
	Hence it suffices to show that all $o_i$ are continuous.

	We fix $i\in I$ and consider the graph $\Gamma_{o'} \subset V \times C^\la(H',V)$ of $o' \defeq o_i$, for $H' \defeq H g_i$.
	But $\Gamma_{o'}$ is precisely the kernel of the continuous homomorphism
	\begin{equation*}
		V \times C^\la(H',V) \lra \prod_{h\in H} V \,,\quad (v,f) \lto \big(\rho_v(hg_i) - f(hg_i)\big)_{h\in H} .
	\end{equation*}
	Therefore $\Gamma_{o'} \subset V \times C^\la(H',V)$ is closed, and we conclude by a version of the closed graph theorem \cite[p.II.34 Prop.\ 10]{Bourbaki87TopVectSp1to5} that $o'$ is continuous.
	Concerning this last step we remark that in particular $V$ is of LB-type so that $C^\la(H',V)$ is an LB-space by \cite[Prop.\ 2.1.30]{Emerton17LocAnVect}.
\end{proof}

\begin{proposition}\label{Prop - Locally analytic action on Hom space}
	Let $V$ be a locally analytic $G$-representation on a $K$-Banach space and let $W$ be a $K$-Banach space. 
	Then the $G$-representation
	\begin{equation*}
		\eta \colon G \times \CL_b(V,W) \lra \CL_b(V,W) \,,\quad (g,\varphi) \lto \big[v \mto \varphi(g^{-1}.v) \big],
	\end{equation*}
	is locally analytic.
\end{proposition}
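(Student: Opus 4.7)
The space $\CL_b(V,W)$ is a $K$-Banach space since $V$ and $W$ are, hence barrelled and Hausdorff. For each $g\in G$ the operator $\pi(g^{-1})\in \CL(V,V)$ is a topological automorphism of $V$, so $\eta(g,\blank) = (\blank)\circ\pi(g^{-1})$ is a continuous endomorphism of $\CL_b(V,W)$; and $\pi((gg_0)^{-1}) = \pi(g_0^{-1})\pi(g^{-1})$ shows that $\eta$ defines a left $G$-action. What remains is the local analyticity of each orbit map $\rho_\varphi\colon G \to \CL_b(V,W)$, $g\mapsto \eta(g,\varphi)$. Since $\eta(gg_0,\varphi)=\eta(g,\eta(g_0,\varphi))$ and right translation on $G$ is locally analytic, it suffices to verify local analyticity of $\rho_\varphi$ at $1\in G$, for every $\varphi$.

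The plan is to expand $h.v$ as a Banach-valued convergent power series in $h$, uniformly in $v$, near $1$. Since $V$ is a $K$-Banach space, it is in particular an LB-space, so \Cref{Lemma - Continuity of the orbit homomorphism} provides a continuous orbit homomorphism $o_V\colon V \to C^\la(G,V)$. Choose a compact open subgroup $H\subset G$ contained in the domain of a single chart $\varphi_H\colon H \overset{\sim}{\to} \Fo_L^d$ with affinoid image. Since $V$ is itself a BH-subspace of $V$, and every disjoint open cover of the compact manifold $H$ is finite, the $V$-indices $\CI$ on $H$ with all BH-subspaces equal to $V$ are cofinal, and the corresponding spaces $C^\la_\CI(H,V)\cong \prod_i \CO(\varphi_i(U_i))\cot{K,\pi}V$ are $K$-Banach spaces. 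Using regularity of the LB-space $C^\la(H,V)$ (so that bounded subsets sit inside a single Banach step) one factors the continuous linear restriction $V \to C^\la(H,V)$ through such a step; refining and passing to a single-piece index $\{(H,\varphi_H,V)\}$, we obtain continuous operators $T_\alpha \in \CL(V,V)$ with $\|T_\alpha\|\to 0$ as $|\alpha|\to\infty$ and
\[
    h.v \;=\; \sum_{\alpha\in \BN^d} \varphi_H(h)^\alpha\, T_\alpha(v),\qquad h \in H,\ v\in V,
\]
with convergence in $V$ uniform in $v$ on bounded sets.

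Post-composing with $\varphi$, set $S_\alpha \defeq \varphi \circ T_\alpha \in \CL_b(V,W)$; then $\|S_\alpha\|_{\CL_b(V,W)} \le \|\varphi\|\cdot \|T_\alpha\| \to 0$, so $\sum_\alpha \varphi_H(h)^\alpha S_\alpha$ is a $\CL_b(V,W)$-valued power series converging on the affinoid polydisc $\varphi_H(H)$. It represents the map $h \mapsto \varphi\circ \pi(h)\colon H \to \CL_b(V,W)$, which is therefore locally analytic at $1$. Precomposing with the locally analytic inversion $g \mapsto g^{-1}$ on $G$ yields that $g\mapsto \varphi \circ \pi(g^{-1}) = \eta(g,\varphi)$ is locally analytic in a neighbourhood of $1$, completing the argument.

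The principal obstacle is the uniformity in $v$ of the power-series expansion — equivalently, the factorisation of $o_V\colon V \to C^\la(H,V)$ through a single Banach-space step whose BH-subspaces all equal $V$. This uses Banachness of $V$ in an essential way: without it, one would be forced to work with an increasing family of BH-subspaces of $V$, losing the uniform operator-norm control on the $T_\alpha$ and thus the conclusion $\|S_\alpha\|_{\CL_b(V,W)}\to 0$.
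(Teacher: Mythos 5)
Your proposal follows a genuinely different, more computational route than the paper's: the paper's proof is a soft argument chaining functoriality, the orbit homomorphism $o_V$, the integration map $I$, and the swap $\CL(V,\CL_b(D(G),W))\to\CL(D(G),\CL_b(V,W))$, whereas you expand the orbit maps into a $\CL_b$-valued power series by hand via the Banach steps of $C^\la(H,V)$. The overall strategy can be made to work, but there is a genuine gap at the crucial decay estimate.

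When you factor $o_V\colon V\to C^\la(H,V)$ through a Banach step $C^\la_\CI(H,V)$ and project to the factor $C^\an(U_{i_0},V)\cong\CO(\varphi_{i_0}(U_{i_0}))\cot{K,\pi}V$ for the piece $U_{i_0}\ni 1$ (note: this is what you should write rather than ``passing to a single-piece index $\{(H,\varphi_H,V)\}$'' --- coarsening the index shrinks the function space, so the orbit maps need not lie in $C^\an(H,V)$), the norm on $\CO\cot V$ is $\sup_\alpha\lVert v_\alpha\rVert$. Continuity of the factorised map therefore gives only $\sup_\alpha\lVert T_\alpha\rVert_{\mathrm{op}}=:C<\infty$, together with pointwise decay $\lVert T_\alpha v\rVert\to 0$ for each fixed $v$; it does \emph{not} give $\lVert T_\alpha\rVert\to 0$. (Pointwise decay of a uniformly bounded operator sequence does not imply operator-norm decay.) Consequently $\lVert S_\alpha\rVert\to 0$ does not follow as claimed, and the $\CL_b(V,W)$-valued power series need not converge on $\varphi_{i_0}(U_{i_0})$. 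The repair is to shrink: choose $c\in L$ with $0<\abs{c}=r<1$ and a smaller compact open subgroup $H'\subset U_{i_0}$ with $\varphi_{i_0}(H')$ a polydisc of polyradius $r$. After rescaling the coordinates by $c$, the new coefficients $T'_\alpha=c^{\abs{\alpha}}T_\alpha$ satisfy $\lVert T'_\alpha\rVert\leq C\,r^{\abs{\alpha}}\to 0$, and the rest of your argument goes through on $H'$. With this fix, together with a citation for the fact that a continuous linear map from the Banach space $V$ into the LB-space $C^\la(H,V)$ factors through a Banach step (Grothendieck-type factorisation, not asserted in the paper), your proof is valid; it trades the paper's abstract functorial argument for an explicit power-series expansion, at the cost of more delicate estimates.
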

\begin{proof}
	Each fixed $g\in G$ acts on $\CL_b(V,W)$ by composition with a continuous automorphism of $V$.
	Hence this is an action via continuous automorphisms \cite[p.113]{Schneider02NonArchFunctAna}.
	It remains to show that the orbit maps of $\eta$ are locally analytic.
	Consider
	\begin{equation*}
		\CL (V,W) \lra \CL \big(C^\la(G,V),C^\la(G,W) \big) \overset{o_V^\ast}{\lra} \CL \big(V,C^\la(G,W) \big) \xrightarrow{(I_W)_\ast} \CL \big(V, \CL_b(D(G),W) \big)
	\end{equation*}
	where the first map is induced by functoriality from \Cref{Prop - Properties of locally analytic functions} (i), the second by the orbit homomorphism $o_V \colon V \ra C^\la(G,V)$ from \Cref{Lemma - Continuity of the orbit homomorphism}, and the third by the integration homomorphism $I_W \colon C^\la (G,W) \ra \CL_b(D(G),W)$ from \Cref{Thm - Integration map}.
	Moreover, we have
	\begin{equation*}
		\CL\big(V, \CL_b(D(G),W) \big) \lra \CL \big(D(G), \CL_b(V,W) \big) \xrightarrow{I^{-1}_{\CL_b(V,W)}} C^\la(G,\CL_b(V,W))
	\end{equation*}
	where the first map is via \cite[Prop.\ 1.1.35]{Emerton17LocAnVect}.
	One verifies that the composition of all of these maps sends $\varphi \in \CL(V,W)$ to $\eta_\varphi \circ \inv$.
	Therefore  the orbit map $\eta_\varphi$ is locally analytic.
\end{proof}

\begin{corollary}[{\cite[Prop.\ 5.1.11]{Emerton17LocAnVect}}]\label{Cor - Contragredient representation on Banach space is locally analytic}
	Let $V$ be a locally analytic $G$-representation on a $K$-Banach space.
	Then the contragredient $G$-representation on $V'_b$ is locally analytic.
\end{corollary}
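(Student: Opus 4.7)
The plan is to deduce this as a direct specialization of the preceding \Cref{Prop - Locally analytic action on Hom space}. I would apply that proposition with the choice $W \defeq K$, regarded as a $K$-Banach space in the obvious way. Since $\CL_b(V,K) = V'_b$ as locally convex $K$-vector spaces, and the $G$-action $\eta$ in the proposition is given by $\eta(g,\varphi)(v) = \varphi(g^{-1}.v)$, specializing to $W = K$ yields precisely the contragredient $G$-representation
\begin{equation*}
    G \times V'_b \lra V'_b \,,\quad (g,\ell) \lto \big[v \mto \ell(g^{-1}.v) \big] .
\end{equation*}
The content of \Cref{Prop - Locally analytic action on Hom space} then immediately delivers that all orbit maps of this action are locally analytic.

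To invoke \Cref{Def - Locally analytic representation}, it remains to observe that $V'_b$ satisfies the ambient hypotheses required of a locally analytic representation space. Since $V$ is a $K$-Banach space, so is its strong dual $V'_b$; in particular $V'_b$ is Hausdorff and barrelled. The $G$-action is by continuous endomorphisms because each $g\in G$ acts on $V$ via a continuous automorphism, hence induces a continuous endomorphism of $V'_b$ by transposition (as already noted at the start of the proof of \Cref{Prop - Locally analytic action on Hom space}).

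There is essentially no obstacle here; the main work has been done in \Cref{Prop - Locally analytic action on Hom space}, and this corollary is simply the case $W = K$. The only point worth flagging is that one should confirm that the topology on $\CL_b(V,K)$ (the strong operator topology) really coincides with the strong dual topology on $V'_b$, which is immediate from the definitions since the bounded subsets of $V$ form the defining family of seminorms in both cases.
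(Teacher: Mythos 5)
Your proposal is correct and is precisely the intended deduction: the paper states the corollary immediately after \Cref{Prop - Locally analytic action on Hom space} with no proof, exactly because it is the specialization $W = K$, under which $\CL_b(V,K) = V'_b$ and the action $\eta$ becomes the contragredient action. Your additional checks (that $V'_b$ is a Banach space, hence Hausdorff and barrelled, and that the topology matches) are the right points to verify and are all immediate.
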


\begin{corollary}[{For $\mathrm{char}(L)=0$, cf.\ \cite[Kor.\ 3.1.9]{FeauxdeLacroix99TopDarstpAdischLieGrp}}]\label{Cor - Locally analytic Banach space representation is given by homomorphism of Lie groups}
	A $G$-representation on a $K$-Banach space $V$ is locally analytic if and only if it is given by a locally $L$-analytic homomorphism $\rho\colon G \ra \GL(V)$ of Lie groups\footnote{The latter is called an analytic linear representation by Bourbaki \cite[III.\ \S 1.2 Expl.\ (3)]{Bourbaki89LieGrpLieAlg1to3}. Concerning the definition of locally analytic manifolds whose charts take values in Banach spaces, see \cite[\S 5.1]{Bourbaki07VarDiffAnFasciDeResult}.} where we consider $\GL(V) \subset \CL_b(V,V)$ as a locally $L$-analytic Lie group via restriction of scalars.
\end{corollary}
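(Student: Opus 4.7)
The plan is to prove the two implications separately, using \Cref{Prop - Locally analytic action on Hom space} for the direction ``$\Rightarrow$'' and functoriality of $C^\la(G,-)$ for the direction ``$\Leftarrow$''. The key preliminary observation is that for a $K$-Banach space $W$, viewed as an $L$-Banach space by restriction of scalars, a map $f\colon G \to W$ is locally $L$-analytic in the Bourbaki sense if and only if $f \in C^\la(G, W)$ in the sense of \Cref{Def - Locally analytic functions}: indeed, $W$ is itself a BH-subspace of $W$ with $\ov{W}=W$, and in both senses the condition amounts to $f$ being given, on the tiles of a $W$-index, by a $W$-valued power series strictly convergent on the corresponding affinoid polydiscs.

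For ``$\Rightarrow$'', assume the $G$-representation on $V$ is locally analytic, and write $\rho(g)(v) \defeq g.v$. We apply \Cref{Prop - Locally analytic action on Hom space} with $W=V$: the orbit map at $\id_V \in \CL_b(V,V)$ is then locally analytic. But $\eta(g, \id_V) = [v \mapsto g^{-1}.v] = \rho(g^{-1})$, so $g\mapsto \rho(g^{-1})$ belongs to $C^\la\big(G, \CL_b(V,V)\big)$. Composing with the locally $L$-analytic inversion map of $G$ and invoking the observation above shows that $\rho$ is locally $L$-analytic as a map to $\CL_b(V,V)$. Since $\GL(V)$ is open in $\CL_b(V,V)$ and $\rho$ takes values there, $\rho\colon G \to \GL(V)$ is a locally $L$-analytic homomorphism of Lie groups.

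For ``$\Leftarrow$'', assume $\rho\colon G \to \GL(V)$ is a locally $L$-analytic homomorphism. Then $V$ is Hausdorff and barrelled as a $K$-Banach space, and each $\rho(g)$ is a continuous automorphism by definition of $\GL(V)$. To verify local analyticity of the orbit maps, fix $v \in V$ and factor
\begin{equation*}
    G \xrightarrow{\ \rho\ } \CL_b(V,V) \xrightarrow{\ \ev_v\ } V,
\end{equation*}
where $\ev_v$ is continuous and $K$-linear. By the preliminary observation, $\rho \in C^\la\big(G,\CL_b(V,V)\big)$; applying contravariant functoriality in the target (\Cref{Prop - Properties of locally analytic functions}(i)) to $\ev_v$, the composite $g \mapsto g.v$ lies in $C^\la(G,V)$, as required.

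\textbf{Main obstacle.} The only genuine subtlety is the reconciliation of the two notions of local analyticity, the Bourbaki notion used in defining Lie groups with Banach charts and the BH-indexed inductive-limit notion of \Cref{Def - Locally analytic functions}. Once this identification is in place for the Banach-space target $\CL_b(V,V)$, both implications are immediate from the tools already in hand, and no further analytic work is needed.
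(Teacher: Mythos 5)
Your proof is correct and takes essentially the same approach as the paper's: for the forward direction, apply \Cref{Prop - Locally analytic action on Hom space} with $W=V$ to see that the orbit map at $\id_V$ is locally analytic, and note $\rho = \eta_{\id_V}\circ\inv$; for the converse, factor each orbit map as $\ev_v\circ\rho$ and invoke functoriality from \Cref{Prop - Properties of locally analytic functions} (i). The one thing you spell out that the paper leaves implicit is the reconciliation of Bourbaki's notion of a locally $L$-analytic map into a Banach space with the BH-index definition of $C^\la(G,V)$, which is a genuine (if routine) point and is correctly handled.
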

\begin{proof}
	If the representation $V$ is a locally analytic, we may apply \Cref{Prop - Locally analytic action on Hom space} with $W\defeq V$.
	Then the orbit map $\eta_{\id_V}$ is locally analytic and the $G$-action on $V$ is given by $\rho = \eta_{\id_V} \circ \inv\colon G \ra \GL(V)$.
	For the converse implication, note that the orbit map $\rho_v$, for $v\in V$, is the composition of $\rho$ with the evaluation homomorphism $\ev_v \colon \GL(V) \ra V$, which is continuous.	
\end{proof}

\subsection{Locally Analytic Distribution Algebras}

Locally analytic representations of $G$ can be understood in terms of modules over the locally analytic distribution algebra $D(G)$.
Here the algebra structure on $D(G)$ is given by the convolution product
\begin{equation*}
	\ast \colon D(G) \times D(G) \lra D(G)\cot{K,\iota} D(G) \cong D(G\times G) \xrightarrow{m_\ast} D(G) 
\end{equation*}
induced by the multiplication map of $G$ and \Cref{Prop - Properties of space of distributions} (iii).

\begin{proposition}[{\cite[Prop.\ 2.3]{SchneiderTeitelbaum02LocAnDistApplToGL2}, \cite[Rmk.\ A.4]{SchneiderTeitelbaum05DualAdmLocAnRep}}]\label{Prop - Convolution product}
	The convolution product $\ast$ endows $D(G)$ with the structure of a unital, separately continuous $K$-algebra.
	Its unit element is $\delta_1$ where $1$ is the identity element of $G$, and we have $\delta_g \ast \delta_{g'} = \delta_{gg'}$, for $g,g'\in G$.
	If $G$ is compact then $\ast$ is jointly continuous.
\end{proposition}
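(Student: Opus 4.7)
The plan is to reduce the algebra axioms to statements about Dirac distributions by exploiting the separate continuity built into the definition of convolution together with the density of $\Span\{\delta_g : g \in G\}$ in $D(G)$ guaranteed by \Cref{Prop - Properties of space of distributions} (iv).

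First, I would make explicit the value of $\delta_g \ast \delta_{g'}$. The canonical bilinear map $D(G) \times D(G) \to D(G) \cot{K,\iota} D(G)$ sends $(\delta_g, \delta_{g'})$ to $\delta_g \otimes \delta_{g'}$, and under the topological isomorphism of \Cref{Prop - Properties of space of distributions} (iii) this corresponds to $\delta_{(g,g')} \in D(G\times G)$, as one checks by testing against elementary tensors $f \otimes f'$ and using \Cref{Prop - Properties of locally analytic functions} (v). Pushing forward along $m$, one has $m_\ast(\delta_{(g,g')})(f) = \delta_{(g,g')}(f\circ m) = f(gg')$, hence $\delta_g \ast \delta_{g'} = \delta_{gg'}$. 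In particular $\delta_1 \ast \delta_g = \delta_g = \delta_g \ast \delta_1$ and $(\delta_g \ast \delta_{g'}) \ast \delta_{g''} = \delta_{gg'g''} = \delta_g \ast (\delta_{g'} \ast \delta_{g''})$.

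Next, I would verify separate continuity, which will then allow the unital and associativity properties to be bootstrapped from the Dirac case. The canonical map into $D(G) \cot{K,\iota} D(G)$ is separately continuous by the very construction of the inductive tensor product topology, the identification with $D(G\times G)$ from \Cref{Prop - Properties of space of distributions} (iii) is a topological isomorphism, and $m_\ast$ is continuous by duality from the continuous pullback $m^\ast \colon C^\la(G,K) \to C^\la(G\times G,K)$ afforded by \Cref{Prop - Properties of locally analytic functions} (i). Composing these gives separate continuity of $\ast$. To promote $\delta_1 \ast \delta_g = \delta_g$ to $\delta_1 \ast \delta = \delta$ for arbitrary $\delta \in D(G)$, I note that both $\delta_1 \ast \blank$ and $\id_{D(G)}$ are continuous endomorphisms agreeing on a dense subspace. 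Associativity is handled analogously, one variable at a time: for fixed Dirac distributions $\mu, \nu$, both sides of associativity are continuous in the third argument and agree on Diracs, hence on all of $D(G)$; iterating slot by slot (using separate continuity each time) extends the identity to all of $D(G)^{\times 3}$.

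Finally, for the compact case, $D(G)$ is a nuclear $K$-Fr\'echet space by \Cref{Prop - Properties of space of distributions} (i); in particular it is pseudo-metriz\-able and barrelled, so \Cref{Lemma - Equality of projective and inductive tensor product} yields $D(G) \ot{K,\iota} D(G) = D(G) \ot{K,\pi} D(G)$. The canonical bilinear map into the projective tensor product is jointly continuous, hence so is $\ast$ as a composition of jointly continuous and continuous maps. The only delicate point is the slot-by-slot bootstrapping in the previous paragraph: separate continuity does not permit a simultaneous limit in both variables, so one must extend from Diracs to arbitrary distributions in one factor before touching the other. Beyond this bookkeeping, no genuine obstacle arises, the proof resting entirely on functoriality of $D(-)$, the identification $D(G\times G) \cong D(G) \cot{K,\iota} D(G)$, and density of Diracs.
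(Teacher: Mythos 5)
Your proof is correct and fills in exactly what the cited references establish; the paper itself does not reprove this proposition but relies on the Schneider--Teitelbaum originals, which make the same moves. You have all the key ingredients: computing $\delta_g \ast \delta_{g'}$ by chasing it through $D(G)\cot{K,\iota} D(G) \cong D(G\times G) \xrightarrow{m_\ast} D(G)$, extending the unit and associativity laws slot by slot from the dense span of Diracs using separate continuity (you are right that one must be careful to fix all but one variable at a time), and invoking \Cref{Lemma - Equality of projective and inductive tensor product} in the compact case to upgrade to joint continuity since $D(G)$ is then a nuclear Fr\'echet space, hence pseudo-metrizable and barrelled. One tiny remark: when you claim $m_\ast$ is continuous "by duality" from $m^\ast$, it is worth citing something like \cite[Rmk.\ 16.1 (ii)]{Schneider02NonArchFunctAna} (as the paper does elsewhere) to justify continuity of the transpose for strong duals, but this is a bookkeeping detail rather than a gap.
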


\begin{lemma}[{cf.\ \cite[Proof of Prop.\ 3.5]{OrlikStrauch15JordanHoelderSerLocAnRep}}]\label{Lemma - Fubinis theorem}
	For $f \in C^\la(G,K)$ and $\mu , \nu \in D(G)$, the functions on $G$ defined by
	\begin{align*}
		g \lto \nu \big[ g' \mto f(gg') \big] \qquad\text{and}\qquad g' \lto \mu \big[ g \mto f(gg') \big]
	\end{align*}
	are locally analytic, and we have the following identities reminiscent of Fubini's theorem:
	\begin{equation*}
		\begin{aligned}
			(\mu \ast \nu )(f) = \mu \big[g \mto \nu\big[g'\mto f(gg') \big]\big]
			= \nu\big[ g'\mto \mu\big[g \mto f(gg') \big]\big].
		\end{aligned}
	\end{equation*}
\end{lemma}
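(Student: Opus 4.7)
The plan is to reduce to the compact case and then unpack the topological isomorphism $C^\la(X\times Y,K) \cong C^\la(X,C^\la(Y,K))$ from \Cref{Prop - Properties of locally analytic functions} (v), together with its dual $D(X\times Y) \cong D(X) \cot{K,\iota} D(Y)$ from \Cref{Prop - Properties of space of distributions} (iii) that underlies the definition of the convolution product. Using the comment after the latter proposition, I fix compact open subsets $U, V \subset G$ such that $\mu \in D(U)$ and $\nu \in D(V)$, regarded as subspaces of $D(G)$.

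To prove local analyticity of $g \mapsto \nu\bigl[g'\mapsto f(gg')\bigr]$, I would argue locally: given $g_0 \in G$, choose a compact open neighbourhood $U_0$ of $g_0$. The function $(f\circ m)\big\vert_{U_0\times V} \in C^\la(U_0\times V,K)$ corresponds under \Cref{Prop - Properties of locally analytic functions} (v) to a locally analytic map $F^\vee \colon U_0 \to C^\la(V,K)$, $g\mapsto [g'\mapsto f(gg')]$. Composing $F^\vee$ with the continuous linear functional $\nu \in D(V) = C^\la(V,K)'$ via the functoriality of \Cref{Prop - Properties of locally analytic functions} (i) produces a locally analytic function $U_0 \to K$, which coincides with $g\mapsto \nu[g'\mapsto f(gg')]$ on $U_0$ since $\nu$ is supported on $V$. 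Covering $G$ by such compact opens and invoking the distributivity of \Cref{Prop - Properties of locally analytic functions} (iii) yields local analyticity on all of $G$. The second function is handled symmetrically.

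For the Fubini identities, unwinding the definition of convolution gives $(\mu\ast\nu)(f) = (\mu\otimes\nu)(f\circ m)$, where $\mu\otimes\nu \in D(G)\cot{K,\iota} D(G) \cong D(G\times G)$ is supported on $U\times V$, so it suffices to identify the pairing on $C^\la(U\times V,K)$. Under the isomorphisms $C^\la(U\times V,K) \cong C^\la(U,K)\cot{K} C^\la(V,K)$ and its dual $D(U\times V) \cong D(U)\cot{K,\iota} D(V)$ (compatibility of the pairings being the content of \Cref{Lemma - Dual of tensor product with locally analytic functions} applied to $V = C^\la(V,K)$), the pairing sends decomposable tensors $(\mu\otimes\nu, a\otimes b)$ to $\mu(a)\nu(b)$. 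I claim that for an arbitrary $F \in C^\la(U\times V,K)$, identified with $F^\vee \colon U \to C^\la(V,K)$, one has $(\mu\otimes\nu)(F) = \mu\bigl[g\mapsto \nu(F^\vee(g))\bigr]$: both sides are continuous in $F$ (the right-hand side by the previous paragraph) and they agree on $a\otimes b$ since $\mu[g\mapsto \nu(a(g)b)] = \mu(a)\nu(b)$; density of the algebraic tensors in the completed tensor product then gives the full identity. Specializing to $F = (f\circ m)\vert_{U\times V}$ yields the first Fubini identity, and the second follows symmetrically via the analogous isomorphism $C^\la(U\times V,K) \cong C^\la(V,C^\la(U,K))$. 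The main obstacle is keeping the bookkeeping of these topological identifications straight and verifying their compatibility with the duality pairings; I would manage this by consistently reducing to decomposable tensors and exploiting continuity.
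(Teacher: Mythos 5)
Your proof is correct and follows essentially the same route as the paper: reduce to compact open supports, use the isomorphism $C^\la(U\times V,K) \cong C^\la(U, C^\la(V,K))$ together with pushforward along $\nu$ for local analyticity, and verify the Fubini identity on the level of the tensor-product description of $\mu*\nu$. The only cosmetic difference is that you make the density-of-decomposable-tensors argument explicit where the paper encodes it in the commutativity of a diagram, but the content is the same.
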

\begin{proof}
	We may assume that $\nu$ is supported in the compact open subset $H'\subset G$.
	Then, restricted to some compact open subset $H\subset G$, the first function is the image of $f\res{m(H\times H')}$ under
	\begin{equation*}
			C^\la \big(m(H\times H'),K \big)\xrightarrow{m^\ast} C^\la(H \times H', K) \cong C^\la \big(H, C^\la(H',K) \big) \xrightarrow{\nu_\ast} C^\la(H,K) 
	\end{equation*}
	using \Cref{Prop - Properties of locally analytic functions} (v).
	Analogously, one shows that the second function is locally analytic.

	Now let $\mu$ be supported in the compact open subset $H \subset G$.
	By the definition of $\ast$ the distribution $\mu \ast \nu$ is supported in $m(H \times H')$ and given by
	\begin{equation}\label{Eq - Explicit map for convolution of distributions}
		C^\la \big(m(H \times H'),K \big) \xrightarrow{m^\ast} C^\la(H\times H',K) \cong C^\la(H,K) \cot{K} C^\la (H',K) \xrightarrow{\mu \otimes \nu} K . 
	\end{equation}
	The claimed identities then follow from the commutativity of
	\begin{equation*}
		\begin{tikzcd}[row sep = 0ex]
			&C^\la(H,K) \cot{K} C^\la(H',K) \ar[rd, start anchor=east, end anchor=145, "\mu \otimes \nu"] & \\
			C^\la(H \times H', K) \ar[ru, end anchor=west, "\cong"] \ar[rd, start anchor=south east, end anchor = 178, "\cong"] &&K \\
			&C^\la \big(H, C^\la(H',K) \big) \overset{\nu_\ast}{\lra}  C^\la(H,K) \ar[ru, start anchor=east, "\mu"'] &
		\end{tikzcd}
	\end{equation*}
	and the analogous diagram for $\nu \circ \mu_\ast$.
\end{proof}

\begin{definition}\label{Def - Involution of distriubtion algebra}
	The locally $L$-analytic anti-automorphism $\inv \colon G \ra G$, $ g \mto g^{-1},$ induces by functoriality a topological automorphism
	\begin{equation*}
		D(G) \lra D(G) \,,\quad \delta \lto \dot{\delta} \defeq \delta \circ \inv^\ast = \big[f \mto \delta(f \circ \inv)\big] .
	\end{equation*}
	For $\gamma, \delta \in D(G)$, one computes that $\left( \gamma \ast \delta \right)\dot{} = \dot{\delta} \ast \dot{\gamma}$.
\end{definition}

\begin{proposition}[{\cite[Prop.\ 3.2, Cor.\ 3.3, Cor.\ 3.4]{SchneiderTeitelbaum02LocAnDistApplToGL2}}]\label{Prop - Anti-equivalence for locally analytic representations}
	Let $V$ be a locally analytic $G$-re\-pre\-sen\-tation with orbit maps $\rho_v \colon G \ra V$, for $v\in V$.
	\begin{altenumerate}
		\item
			Then $V$ becomes a separately continuous $D(G)$-module via $\delta \ast v \defeq I(\rho_v)(\delta)$, for $\delta \in D(G)$, $v \in V$.
			In particular, we have $\delta_g \ast v = g.v$.
			This yields an equivalence of categories
			\begin{equation*}
				\Bigg(\substack{\text{\small locally analytic $G$-representations}  \\ \text{\small of LB-type with continuous} \\ \text{\small  $G$-equivariant homomorphisms}}\Bigg) \overset{\cong}{\lra}
				\left(\substack{\text{\small separately continuous $D(G)$-modules}  \\ \text{\small of LB-type with continuous} \\ \text{\small $D(G)$-module maps}}\right).
			\end{equation*}
			
		\item
			Moreover, $V'_b$ becomes a separately continuous $D(G)$-module via $\delta \ast \ell \defeq [v\mto \ell(\dot{\delta} \ast v) ]$, for $\delta \in D(G)$, $\ell \in V'_b$.
			In particular, we have $\delta_g \ast \ell = g.\ell$ with respect to the contragredient $G$-action on $V'_b$.
			This induces an anti-equivalence of categories
			\begin{align*}
				\Rep_K^{\la,\,\mathrm{ct}}(G) \overset{\cong}{\lra} \CM_{D(G)}^\mathrm{nF} \,,\quad \left[f\colon V \ra W \right] \lto \left[\transp{f} \colon W'_b \ra V'_b \right] . 
			\end{align*}
			If $G$ is compact, then the multiplication for $D(G)$-modules belonging to the right hand side category is jointly continuous automatically.
	\end{altenumerate}
\end{proposition}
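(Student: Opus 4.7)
The plan is to proceed in two stages, first establishing the module structure and then the (anti)-equivalences, and finally addressing the automatic joint continuity in the compact case.

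For (i), I would start by using the integration map $I\colon C^\la(G,V)\to\CL_b(D(G),V)$ from \Cref{Thm - Integration map}, which is an isomorphism of abstract $K$-vector spaces since $V$ is of LB-type. Setting $\delta\ast v\defeq I(\rho_v)(\delta)$ gives $\delta_g\ast v=\rho_v(g)=g.v$ as required. To verify that this is a module action, the only nontrivial point is associativity $(\mu\ast\nu)\ast v=\mu\ast(\nu\ast v)$. Unraveling definitions and using that $g$ acts by a continuous automorphism, the orbit map of $\nu\ast v$ is $g\mto\nu\bigl[h\mto\rho_v(gh)\bigr]$, so the right-hand side equals $\mu\bigl[g\mto\nu[h\mto\rho_v(gh)]\bigr]$, which coincides with $(\mu\ast\nu)(\rho_v)$ by the Fubini-type identity of \Cref{Lemma - Fubinis theorem}. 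Separate continuity is then immediate: fixing $v$, continuity in $\delta$ follows from $I(\rho_v)\in\CL_b(D(G),V)$; fixing $\delta$, continuity in $v$ is obtained from the continuity of the orbit homomorphism $o_V\colon V\to C^\la(G,V)$ provided by \Cref{Lemma - Continuity of the orbit homomorphism} (which applies since $V$ is of LB-type), composed with $I$ and evaluation at $\delta$.

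For the equivalence in (i), given $M\in\CM_{D(G)}$ of LB-type, set $g.m\defeq\delta_g\ast m$ and note that the resulting orbit map $G\to M$, $g\mto g.m$, corresponds under the bijection $I^{-1}$ to the continuous homomorphism $D(G)\to M$, $\delta\mto\delta\ast m$. Hence the orbit map is locally analytic, making $M$ a locally analytic $G$-representation. The two constructions are mutually inverse at the level of objects, and a continuous $K$-linear map is $G$-equivariant iff it commutes with the action of every $\delta_g$, which by density of the span of Dirac distributions (\Cref{Prop - Properties of space of distributions}\,(iv)) and separate continuity is equivalent to $D(G)$-linearity.

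For (ii), fix $V\in\Rep_K^{\la,\mathrm{ct}}(G)$, so $V'_b$ is a nuclear $K$-Fr\'echet space and $(V'_b)'_b\cong V$ topologically. The map $v\mto\ell(\dot\delta\ast v)$ is continuous as a composition of continuous maps, so $\delta\ast\ell\defeq[v\mto\ell(\dot\delta\ast v)]$ lies in $V'_b$; the identity $(\gamma\ast\delta)\dot{}=\dot\delta\ast\dot\gamma$ from \Cref{Def - Involution of distriubtion algebra} immediately converts associativity for $V$ into associativity for $V'_b$, and $\delta_g\ast\ell=[v\mto\ell(g^{-1}.v)]=g.\ell$. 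Separate continuity in each variable transfers from the action on $V$ via the duality $(V'_b)'_b\cong V$. Since strong duality is a contravariant equivalence between compact type and nuclear Fr\'echet spaces in $\mathrm{LCS}_K$, and morphisms on either side are interchanged by transposition, one obtains the anti-equivalence $\Rep_K^{\la,\mathrm{ct}}(G)\rightleftarrows\CM_{D(G)}^{\mathrm{nF}}$, with quasi-inverse again given by strong duality.

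Finally, when $G$ is compact, $D(G)$ is a nuclear $K$-Fr\'echet algebra with jointly continuous multiplication (\Cref{Prop - Convolution product}, \Cref{Prop - Properties of space of distributions}\,(i)). A module $M\in\CM_{D(G)}^{\mathrm{nF}}$ is Fr\'echet, hence pseudo-metrizable and barrelled, so the conclusion of \Cref{Lemma - Equality of projective and inductive tensor product} yields that the separately continuous bilinear map $D(G)\times M\to M$ is automatically jointly continuous. The main technical subtlety in the whole argument is the Fubini-style associativity together with checking that the orbit-map continuity passes through the integration isomorphism on the LB-type side; once these are in place, the rest is formal manipulation with duality and density of Dirac distributions.
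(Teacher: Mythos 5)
The paper does not include a proof here: it cites Schneider--Teitelbaum (\cite[Prop.\ 3.2, Cor.\ 3.3, Cor.\ 3.4]{SchneiderTeitelbaum02LocAnDistApplToGL2}) and relies on the blanket convention that the original $p$-adic arguments carry over verbatim to positive characteristic. Your proposal essentially reconstructs that argument, and its overall architecture (integration map for the module structure, Fubini for associativity, orbit homomorphism for continuity in $v$, transposition for the dual module, density of Dirac distributions for equivariance $\Leftrightarrow$ $D(G)$-linearity, and the pseudo-metrizable-plus-barrelled criterion for automatic joint continuity when $G$ is compact) is sound and in line with the cited reference.

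Two points deserve tightening. First, the associativity step invokes \Cref{Lemma - Fubinis theorem}, which in this paper is only stated for scalar-valued $f\in C^\la(G,K)$; you are applying it to the $V$-valued orbit map $\rho_v$. The extension is routine (work on a compact chart with a BH-subspace $E\subset V$, write $f\in C^\an(U,K)\cot{K}\ov{E}$, and reduce to the scalar case), but since the paper uses exactly this kind of BH-subspace reduction elsewhere (e.g.\ in \Cref{Cor - Pairing of distributions and vector valued functions}), it is worth saying so explicitly rather than citing the scalar statement as if it applied directly.

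Second, and more substantively, the sentence ``Separate continuity in each variable transfers from the action on $V$ via the duality $(V'_b)'_b\cong V$'' hides the only nontrivial part of (ii). Continuity of $\ell\mapsto\delta\ast\ell$ for fixed $\delta$ is indeed just the transpose of a continuous operator. But continuity of $\delta\mapsto\delta\ast\ell$ for fixed $\ell$ asks that $\delta\mapsto\ell\circ(\dot\delta\ast\blank)$ be continuous from $D(G)$ into $V'_b$ equipped with the topology of uniform convergence on bounded subsets of $V$; this does not follow formally from reflexivity. What is needed is hypocontinuity of the separately continuous pairing $D(G)\times V\to V$ with respect to bounded subsets of $V$, which holds because $D(G)$ is barrelled (\Cref{Prop - Properties of space of distributions}\,(i), combined with \cite[p.III.31 Prop.\ 6]{Bourbaki87TopVectSp1to5} as used in \Cref{Lemma - Extending separately continuous module structure to completion}). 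Concretely: given a bounded $B\subset V$ and a continuous seminorm $p$ on $V$ dominating $\ell$, hypocontinuity gives a $0$-neighbourhood $U\subset D(G)$ with $p(\dot\delta\ast v)$ small for all $\delta\in U$, $v\in B$, which is exactly the estimate controlling the $B$-seminorm of $\delta\ast\ell$. With that argument spelled out, your proof is correct.
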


\begin{remark}
	It follows from this anti-equivalence of categories that $\Rep_K^{\la,\mathrm{ct}}(G)$ is quasi-abelian.
	The kernel (resp.\ cokernel) of a morphism $f \colon M \ra N$ in $\Rep_K^{\la,\mathrm{ct}}(G)$ is given by $\Ker(f)$ (resp.\ $N/\ov{\Im(f)}$) with the subspace (resp.\ quotient) topology. 
	Indeed, there are canonical identifications $ \Ker(f) \cong \big( M'_b / \ov{\Im(\transp{f})} \big)'_b$ and $N/\ov{\Im(f)} \cong \Ker(\transp{f})'_b$, see p.II.47 Corollary 2, p.IV.8 Proposition 9 (ii) and p.IV.10 Remark in \cite{Bourbaki87TopVectSp1to5}.
\end{remark}

\subsection{Locally Analytic Induction}

Here let $G$ be a locally $L$-analytic Lie group and let $H\subset G$ be a \emph{locally $L$-analytic Lie subgroup}, i.e.\ a subgroup which is a locally $L$-analytic submanifold.
The latter means that, for every $h\in H$, there exist a chart $\varphi \colon U \ra L^d$ around $h$ and a linear subspace $F \subset L^d$ such that $\varphi$ induces a homeomorphism
\begin{equation*}
	\varphi \res{U \cap H} \colon U \cap H \lra \varphi(U) \cap F ,
\end{equation*}
see \cite[5.8.1,3]{Bourbaki07VarDiffAnFasciDeResult}.
Such a subgroup acquires the structure of a locally $L$-analytic Lie group itself and is closed in $G$ necessarily \cite[5.12.3]{Bourbaki07VarDiffAnFasciDeResult}.

\begin{definition}[{\cite[Kap.\ 4.1]{FeauxdeLacroix99TopDarstpAdischLieGrp}}]\label{Def - Definition of locally analytic induction}
	For a locally analytic $H$-representation $V$ we define the \emph{locally analytically induced representation}
	\begin{equation*}
		\Ind^{\la,G}_H (V) \defeq \left\{ f \in C^\la(G,V) \middle{|} \forall g\in G, h \in H: f(gh)= h^{-1}.f(g) \right\} \subset C^\la(G,V)
	\end{equation*}
	and consider it with the left regular $G$-action.
\end{definition}

\begin{proposition}\label{Prop - Existence of locally analytic induction}
	Let $V$ be a locally analytic $H$-representation of compact type and assume that there exists a compact open subgroup $G_0 \subset G$ such that $G = G_0 H$.
	Then $\Ind^{\la,G}_H(V)$ is a locally analytic $G$-representation of compact type.
\end{proposition}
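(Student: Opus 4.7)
The plan is to reduce to the compact case via the decomposition $G = G_0 H$. Setting $H_0 \defeq G_0 \cap H$, which is a compact open subgroup of $H$, the quotient $G/H \cong G_0 / H_0$ is compact, and the projection $\pi \colon G \ra G/H$ is a locally trivial principal $H$-bundle in the locally $L$-analytic sense. Using strict paracompactness of $G/H$, I would fix a finite disjoint covering $G/H = \bigsqcup_{i=1}^n U_i$ by compact open subsets together with locally analytic sections $s_i \colon U_i \ra G$, so that $(u,h) \mto s_i(u) h$ defines a locally analytic isomorphism $U_i \times H \cong \pi^{-1}(U_i)$. Thanks to the transformation rule $f(gh) = h^{-1}.f(g)$, a function $f \in \Ind_H^{\la,G}(V)$ is uniquely determined on each $\pi^{-1}(U_i)$ by the restriction $u \mto f(s_i(u))$.

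This yields a candidate isomorphism
\begin{equation*}
	\Phi \colon \Ind_H^{\la,G}(V) \lra \prod_{i=1}^n C^\la(U_i, V) \,,\quad f \lto \big( u \mto f(s_i(u)) \big)_i ,
\end{equation*}
whose continuity is immediate from functoriality of $C^\la(\blank, V)$ and \Cref{Prop - Properties of locally analytic functions} (iii). For continuity of $\Phi^{-1}$, given $(\varphi_i)_i$ one sets $f(s_i(u) h) \defeq h^{-1}.\varphi_i(u)$ and must check that $f \in C^\la(G,V)$. Via the trivializations this amounts to showing that $(u,h) \mto h^{-1}.\varphi_i(u)$ is locally analytic on each $U_i \times H$; after decomposing $H$ into compact open pieces, this follows from \Cref{Prop - Properties of locally analytic functions} (v), the continuity of the orbit homomorphism $V \ra C^\la(H', V)$ of \Cref{Lemma - Continuity of the orbit homomorphism} for compact open $H' \subset H$, and the local analyticity of inversion on $H$. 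Once $\Phi$ is a topological isomorphism, the compact type assertion is immediate: each $C^\la(U_i, V) \cong C^\la(U_i, K) \cot{K} V$ is of compact type by \Cref{Prop - Properties of locally analytic functions} (iv), and a finite product of spaces of compact type is again of compact type. In particular, $\Ind_H^{\la,G}(V)$ is Hausdorff and reflexive, hence barrelled.

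It remains to verify that the orbit maps $\rho_f \colon G \ra \Ind_H^{\la,G}(V)$, $g \mto g.f$, are locally analytic. By left-translation this reduces to a neighbourhood of the identity; after shrinking the $U_i$ if necessary, for $g$ sufficiently close to $1$ and $u \in U_i$ one can arrange $g^{-1} s_i(u) \in \pi^{-1}(U_j)$ for some $j$ and write $g^{-1} s_i(u) = s_j(u')\, h(g,u)$ with $u'$ and $h(g,u)$ depending locally analytically on $(g,u)$. Transported through $\Phi$, the orbit map $\rho_f$ then becomes a tuple of compositions of locally analytic maps involving the sections, multiplication and inversion on $G$, and the $H$-action on $V$ --- each handled by the same mechanisms as in the previous paragraph. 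The main obstacle is the joint local analyticity of maps of the form $(u,h) \mto h^{-1}.\varphi(u)$ on the non-compact product $U_i \times H$; this is precisely where the LB-type structure of $V$ and \Cref{Lemma - Continuity of the orbit homomorphism} come in, allowing one to convert joint analyticity on compact pieces $U_i \times H'$ into local analyticity of $u \mto \rho_{\varphi(u)}\vert_{H'}$ with values in $C^\la(H',V)$, after which \Cref{Prop - Properties of locally analytic functions} (v) gives the required joint statement.
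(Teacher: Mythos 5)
Your proof is correct in its main ideas, but it takes a genuinely different route from the paper's. The paper handles the two assertions separately: it simply invokes F\'eaux de Lacroix \cite[Satz 4.1.5]{FeauxdeLacroix99TopDarstpAdischLieGrp} for the fact that $G$ acts by continuous automorphisms with locally analytic orbit maps, and then proves compact type by showing that restriction $f \mapsto f\vert_{G_0}$ is a topological isomorphism $\Ind_H^{\la,G}(V) \cong \Ind_{H_0}^{\la,G_0}(V)$, the latter being a \emph{closed subspace} of $C^\la(G_0,V)$ (which is of compact type by \Cref{Prop - Properties of locally analytic functions}\,(iv)). You, by contrast, reprove both assertions from scratch by trivializing the principal $H$-bundle $G \to G/H$ with locally analytic sections $s_i$ and identifying $\Ind_H^{\la,G}(V)$ with $\prod_i C^\la(U_i,V)$, essentially with $C^\la(G/H,V)$. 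This buys a more explicit and self-contained description of the locally convex space underlying the induced representation, at the cost of more technical work; the paper's restriction isomorphism $\Ind_H^{\la,G}(V) \cong \Ind_{H_0}^{\la,G_0}(V)$ is also more strategically useful, since it reappears verbatim in the proof of \Cref{Prop - Module description for induction}. Two points in your write-up are noticeably underspecified, though neither is fatal: the final paragraph on local analyticity of the orbit maps is left at the level of ``the same mechanisms as before,'' and you never explicitly argue that each $g \in G$ acts by a \emph{continuous} endomorphism of $\Ind_H^{\la,G}(V)$ --- the cleanest route is to observe that $\Ind_H^{\la,G}(V)$ is a closed $G$-invariant subspace of $C^\la(G,V)$ with the left regular action, so that continuity of the action is inherited.
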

\begin{proof}
	It is the content of \cite[Satz 4.1.5]{FeauxdeLacroix99TopDarstpAdischLieGrp} that $G$ acts on $\Ind^{\la,G}_H(V)$ by continuous automorphisms and with locally analytic orbit maps.
	To see that $\Ind^{\la,G}_H(V)$ is of compact type one argues analogously to \cite[Sect.\ 2.1]{Emerton07JacquetModLocAnRep2}:
	We fix $G_0$ as above and set $H_0 \defeq G_0 \cap H$.
	We may view $V$ as a locally analytic $H_0$-representation and have a continuous homomorphism
	\begin{equation*}
		\Ind_H^{\la, G} (V) \lra \Ind_{H_0}^{\la,G_0} (V) \,,\quad f \lto f\res{G_0}.
	\end{equation*}
	Let $h_i$, for $i\in I$, be representatives of $H_0\backslash H$ so that we have $C^\la(G,V) \cong \prod_{i\in I} C^\la(G_0 h_i,V)$ by the assumption $G= G_0 H$ and \Cref{Prop - Properties of locally analytic functions} (iii).
	One verifies that the product of the continuous homomorphisms
	\begin{equation*}
		\Ind_{H_0}^{\la,G_0}(V) \lra C^\la(G_0 h_i,V) \,,\quad f \lto \big[g \mto h_i^{-1} . f(g h_i^{-1})  \big] ,
	\end{equation*}
	yields an inverse to the above homomorphism. 
	It follows from \cite[Prop.\ 1.2 (i)]{SchneiderTeitelbaum02LocAnDistApplToGL2} that the closed subspace $\Ind_H^{\la, G} (V) \cong \Ind_{H_0}^{\la,G_0} (V)$ of $C^\la(G_0,V)$ (recall \Cref{Prop - Properties of locally analytic functions} (iv)) is of compact type as well.
\end{proof}

\begin{lemma}\label{Lemma - Tensoring up from distribution algebras of subgroups}
	Assume that there exists a compact open subgroup $G_0 \subset G$ such that $G= G_0 H$ and set $H_0 \defeq G_0 \cap H$.
	\begin{altenumerate}
		\item
			There is a topological isomorphism of $D(G_0)$-$D(H)$-bi-modules
			\begin{equation*}
				D(G_0) \ot{D(H_0),\iota} D(H) \overset{\cong}{\lra} D(G) \,,\quad \mu \otimes \nu \lto \mu \ast \nu .
			\end{equation*}
	
		\item
			For  $M \in \CM_{D(H)}$, the natural inclusion $D(G_0) \hookrightarrow D(G)$ induces a topological isomorphism of $D(G_0)$-modules
			\begin{equation*}
					D(G_0) \ot{D(H_0),\iota} M \overset{\cong}{\lra} D(G) \ot{D(H),\iota} M .
			\end{equation*}

	\end{altenumerate}
\end{lemma}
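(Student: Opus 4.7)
For (i), the convolution on $D(G)$ is separately continuous (\Cref{Prop - Convolution product}), and its restriction yields a separately continuous bilinear map $D(G_0) \times D(H) \lra D(G)$. By the universal property of the inductive tensor product topology this induces a continuous linear map $D(G_0) \ot{K,\iota} D(H) \lra D(G)$, which kills the relations $\mu \ast \nu_0 \otimes \nu - \mu \otimes \nu_0 \ast \nu$ (for $\mu \in D(G_0)$, $\nu_0 \in D(H_0)$, $\nu \in D(H)$) by associativity of $\ast$. Hence it descends to a continuous $D(G_0)$-$D(H)$-bimodule map $\varphi \colon D(G_0) \ot{D(H_0),\iota} D(H) \lra D(G)$, $\mu \otimes \nu \lto \mu \ast \nu$.

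To construct an inverse, pick representatives $\{h_i\}_{i\in I}$ of the right cosets $H_0 \backslash H$. The hypothesis $G = G_0 H$ together with $G_0 \cap H = H_0$ yields the disjoint decompositions $H = \bigsqcup_i H_0 h_i$ and $G = \bigsqcup_i G_0 h_i$ by compact open subsets; indeed, $G_0 h_i \cap G_0 h_j \neq \emptyset$ would force $h_j h_i^{-1} \in G_0 \cap H = H_0$, so $i=j$. By \Cref{Prop - Properties of space of distributions} (ii) one obtains $D(G) \cong \bigoplus_{i\in I} D(G_0 h_i)$, and right-translation by $h_i$ provides the topological isomorphism $D(G_0) \overset{\cong}{\lra} D(G_0 h_i)$, $\mu \lto \mu \ast \delta_{h_i}$. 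I then define $\psi \colon D(G) \lra D(G_0) \ot{D(H_0),\iota} D(H)$ summand-wise by $\psi(\mu) \defeq (\mu \ast \delta_{h_i^{-1}}) \otimes \delta_{h_i}$ for $\mu \in D(G_0 h_i)$. Each summand map is continuous, so $\psi$ itself is continuous by the universal property of the locally convex direct sum.

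The equality $\varphi \circ \psi = \id_{D(G)}$ follows from $\mu \ast \delta_{h_i^{-1}} \ast \delta_{h_i} = \mu$. For $\psi \circ \varphi = \id$, it suffices to check on simple tensors. Using the analogous decomposition $D(H) = \bigoplus_j D(H_0 h_j)$ and writing a component of $\nu$ in the $j$-th summand as $\nu = \nu_0 \ast \delta_{h_j}$ with $\nu_0 \in D(H_0)$, one has $\varphi(\mu \otimes \nu) = \mu \ast \nu_0 \ast \delta_{h_j} \in D(G_0 h_j)$, and $\psi$ sends this to $(\mu \ast \nu_0) \otimes \delta_{h_j}$. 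The defining relation of the tensor product over $D(H_0)$ (applicable since $\nu_0 \in D(H_0)$) equates this with $\mu \otimes (\nu_0 \ast \delta_{h_j}) = \mu \otimes \nu$, so $\varphi$ is a topological isomorphism.

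For (ii), I would combine (i) with the tensor identities in \Cref{Lemma - Tensor identities for modules}, chaining
\[
	D(G) \ot{D(H),\iota} M \;\cong\; \big( D(G_0) \ot{D(H_0),\iota} D(H) \big) \ot{D(H),\iota} M \;\cong\; D(G_0) \ot{D(H_0),\iota} \big( D(H) \ot{D(H),\iota} M \big) \;\cong\; D(G_0) \ot{D(H_0),\iota} M
\]
via (i), associativity, and the unit isomorphism. Tracing $(\mu \ast \nu) \otimes m \lto \mu \otimes \nu \otimes m \lto \mu \otimes (\nu \ast m)$ shows that the composite is induced by the inclusion $D(G_0) \incl D(G)$. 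The main technical point is the verification $\psi \circ \varphi = \id$ in (i): the apparent dependence of $\psi$ on the choice of representatives $\{h_i\}$ is absorbed precisely by the tensor product relation over $D(H_0)$, which is what makes the inverse well-defined in spite of an ostensibly non-intrinsic definition.
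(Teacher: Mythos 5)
Your proof is correct, and it takes a genuinely more self-contained route than the paper's. For part~(i), the paper merely remarks that the abstract isomorphism is \cite[Lemma~6.1~(i)]{SchneiderTeitelbaum05DualAdmLocAnRep} and that the topological upgrade follows from the reasoning there together with \cite[Lemma~1.2.13]{Kohlhaase05InvDistpAdicAnGrp}. You instead build the inverse $\psi$ explicitly from the coset decomposition $G=\bigsqcup_i G_0 h_i$, and your verification that $G_0 h_i \cap G_0 h_j \neq \emptyset$ forces $i=j$ (via $G_0 \cap H = H_0$) is exactly the point that makes the decomposition applicable to \Cref{Prop - Properties of space of distributions}~(ii). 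The cancellation $\psi\circ\varphi=\id$ via the middle-bilinearity relation $(\mu\ast\nu_0)\otimes\delta_{h_j}=\mu\otimes(\nu_0\ast\delta_{h_j})$, valid since $\nu_0\in D(H_0)\subset D(G_0)$, is precisely the content of the Schneider--Teitelbaum argument, and your observation at the end that the tensor-over-$D(H_0)$ relation absorbs the apparent dependence of $\psi$ on the choice of $\{h_i\}$ is the right explanation of why this ad~hoc definition works. For part~(ii), the paper constructs an inverse as a continuous map $D(G)\ot{K,\iota}M\to D(G_0)\ot{D(H_0),\iota}M$ and then argues it factors through the quotient $D(G)\ot{D(H),\iota}M$; your chain
\begin{equation*}
	D(G) \ot{D(H),\iota} M \cong \big( D(G_0) \ot{D(H_0),\iota} D(H) \big) \ot{D(H),\iota} M \cong D(G_0) \ot{D(H_0),\iota} \big( D(H) \ot{D(H),\iota} M \big) \cong D(G_0) \ot{D(H_0),\iota} M
\end{equation*}
using (i), associativity from \Cref{Lemma - Tensor identities for modules}~(ii), and the unit isomorphism from \Cref{Lemma - Tensor identities for modules}~(i), carries exactly the same information but avoids the factoring-over-the-quotient step and makes the $D(G_0)$-linearity manifest. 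Both approaches are valid; yours trades reliance on external references for a few more explicit calculations and is arguably easier to verify in isolation.
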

\begin{proof}
	On the level of abstract modules the statement of (i) is the content of \cite[Lemma 6.1 (i)]{SchneiderTeitelbaum05DualAdmLocAnRep}.
	But the reasoning there together with \cite[Lemma 1.2.13]{Kohlhaase05InvDistpAdicAnGrp} also shows the claimed topological isomorphism.

	For (ii), we obtain a continuous inverse as follows:
	Using (i) and \Cref{Lemma - Tensor identities for modules} (ii), there is a continuous homomorphism
	\begin{equation}\label{Eq - Homomorphism for tensoring up}
	\begin{aligned}
		D(G) \ot{K,\iota} M \cong D(G_0) \ot{D(H_0),\iota} \big( D(H) \ot{K,\iota} M \big) &\lra D(G_0) \ot{D(H_0),\iota} M , \\
			\delta \otimes \lambda \otimes m &\lto \delta \otimes \lambda \ast m .
	\end{aligned}
	\end{equation}
	It factors over the quotient $D(G) \ot{D(H),\iota} M$ and this yields the sought inverse.
\end{proof}

One can express the locally analytically induced representation $\Ind_{H}^{\la, G}(V)$ in terms of modules over $D(G)$.
For compact, $p$-adic $G$ and one-dimensional $V$ this is discussed before \cite[Lemma 6.1]{SchneiderTeitelbaum05DualAdmLocAnRep}.
Our proof of the following theorem is inspired by \textit{loc.\ cit.}.

\begin{theorem}\label{Prop - Module description for induction}
	Let $V$ be a locally analytic $H$-representation of compact type and assume that there exists a compact open subgroup $G_0 \subset G$ such that $G = G_0 H$.
	Then using the bilinear pairing of \Cref{Cor - Pairing of distributions and vector valued functions} there is a canonical topological isomorphism of $D(G)$-modules
	\begin{equation*}
		D(G) \cot{D(H),\iota} V'_b \overset{\cong}{\lra}  \big( \Ind_H^{\la,G} (V) \big)'_b \,,\quad \delta \otimes \ell \lto \big[ f \mto \ell\big( \delta(f) \big) \big] .
	\end{equation*}
\end{theorem}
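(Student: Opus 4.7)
My plan has two stages: first reduce to the case where $G$ is compact, and then in that case explicitly identify both sides via a trivialization of the $H$-bundle $G\to G/H$. To reduce to the compact case, I set $H_0 \defeq G_0 \cap H$. By the proof of \Cref{Prop - Existence of locally analytic induction}, restriction $f\mapsto f|_{G_0}$ yields a topological isomorphism $\Ind_H^{\la,G}(V) \cong \Ind_{H_0}^{\la,G_0}(V)$ of $G_0$-representations, hence an isomorphism of $D(G_0)$-modules $(\Ind_H^{\la,G}(V))'_b \cong (\Ind_{H_0}^{\la,G_0}(V))'_b$ on dualising. On the other hand, \Cref{Lemma - Tensoring up from distribution algebras of subgroups}(ii) gives $D(G) \cot{D(H),\iota} V'_b \cong D(G_0) \cot{D(H_0),\iota} V'_b$ as $D(G_0)$-modules. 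A direct check on generators shows that the map of the theorem corresponds under these identifications to its analogue for the pair $(G_0, H_0)$, and that both sides are nuclear $K$-Fr\'echet spaces. Since the map is $D(G)$-linear by construction, by the open mapping theorem it suffices to show it is a continuous bijection in the compact case.

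Now assume $G = G_0$ is compact and $H = H_0$. The projection $G_0 \twoheadrightarrow G_0/H_0$ is a locally trivial $H_0$-principal bundle over the compact locally $L$-analytic manifold $G_0/H_0$. Using strict paracompactness, I choose a finite disjoint open cover $G_0/H_0 = \bigsqcup_{i=1}^n W_i$ over which locally analytic sections $\sigma_i \colon W_i \to G_0$ of the projection exist; setting $U_i \defeq \sigma_i(W_i)$ gives a disjoint open cover $G_0 = \bigsqcup_i U_i H_0$ for which multiplication $U_i \times H_0 \to U_i H_0$ is a locally analytic isomorphism. Combining \Cref{Prop - Properties of locally analytic functions}(iii),(v), \Cref{Prop - Properties of space of distributions}(ii),(iii), \Cref{Lemma - Dual of tensor product with locally analytic functions}, and \Cref{Lemma - Tensor identities for modules}, I obtain parallel decompositions
\[ \big(\Ind_{H_0}^{\la,G_0}(V)\big)'_b \cong \bigoplus_{i=1}^n D(U_i) \cot{K,\iota} V'_b \cong D(G_0) \cot{D(H_0),\iota} V'_b, \]
where the first isomorphism is dual to restriction $f \mapsto (f|_{U_i})_i$. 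For the inverse of this restriction I extend $g_i \in C^\la(U_i, V)$ to a function on $U_i \times H_0 \cong U_i H_0$ by $(u,h) \mapsto h^{-1}.g_i(u)$; local analyticity of the extension follows because $V \to C^\la(H_0, V)$, $v \mapsto [h \mapsto h^{-1}.v]$, is continuous linear by \Cref{Lemma - Continuity of the orbit homomorphism}, so it factors via $C^\la(U_i,V) \to C^\la\big(U_i,C^\la(H_0,V)\big) \cong C^\la(U_i\times H_0, V)$.

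It remains to verify that the map of the theorem corresponds to the identity under the identifications above. This can be checked on the dense subspace (\Cref{Prop - Properties of space of distributions}(iv)) spanned by pure tensors $\delta_{uh} \otimes \ell$ with $u \in U_i$, $h \in H_0$, $\ell \in V'_b$: using $f(uh) = h^{-1}.f(u)$ and the definition of the $D(H_0)$-action on $V'_b$ (\Cref{Prop - Anti-equivalence for locally analytic representations}(ii)), one computes
\[ \ell\big(\delta_{uh}(f)\big) = \ell\big(h^{-1}.f(u)\big) = (\delta_h \ast \ell)\big(f(u)\big), \]
which is precisely the pairing of $\delta_u \otimes (\delta_h \ast \ell) \in D(U_i) \cot{K,\iota} V'_b$ with $f|_{U_i} \in C^\la(U_i,V)$ under the above decompositions. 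The hard step will be the explicit decomposition of $\Ind_{H_0}^{\la,G_0}(V)$ under restriction: verifying that the function reconstructed from $(g_i)_i$ is genuinely locally analytic on each $U_i H_0$ depends crucially on the LB-type (in particular compact type) hypothesis on $V$ via continuity of the orbit homomorphism, and the everything-else-is-formal feeling of the argument hinges on this single continuity assertion.
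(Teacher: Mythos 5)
Your argument reaches the same conclusion by a genuinely different route. The paper's proof is dual-theoretic: it embeds $\Ind_H^{\la,G}(V)$ into $C^\la(G,K)\cot{K,\pi}V$ via the integration map, sets up the map $\psi$ encoding the $H$-equivariance conditions, dualizes the whole diagram, and identifies $\Ker(\transp{\iota_0})=\ov{\Im(\transp{\psi_0})}$ using \Cref{Lemma - Annihilator of kernel is weak closure of image of the transpose} (a Hahn--Banach plus semi-reflexivity argument) combined with the density of Dirac distributions. You instead exploit a local---in fact, by \cite[Satz~4.1.1]{FeauxdeLacroix99TopDarstpAdischLieGrp}, \emph{global}---trivialization $G_0\cong U_1 H_0$ of the projection $G_0\to G_0/H_0$, identify both sides with $\bigoplus_i D(U_i)\cot{K}V'_b$, and check equality on Dirac tensors; this is more concrete and avoids the annihilator lemma entirely, and the extension step you flag as hard (reconstructing an element of $\Ind_{H_0}^{\la,G_0}(V)$ from $(g_i)_i$) is handled correctly via \Cref{Lemma - Continuity of the orbit homomorphism}. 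The real weight, however, sits in the line you compress into ``I obtain parallel decompositions'': to see that $D(G_0)\cot{D(H_0),\iota}V'_b \cong \bigoplus_i D(U_i)\cot{K}V'_b$ one must pass through \Cref{Lemma - Completion of projective tensor product over algebras}, write $D(U_i H_0)\cong D(U_i)\cot{K}D(H_0)$ as right $D(H_0)$-modules, and identify the closure of the relation submodule inside $D(U_i)\cot{K}D(H_0)\cot{K}V'_b$ as $D(U_i)\cot{K}\ov{N}$ (where $\ov{N}$ is the closure of the relations in $D(H_0)\cot{K}V'_b$); this rests on the exactness of $\cot{K,\pi}$ on $K$-Fr\'echet spaces \cite[Cor.~2.2]{BreuilHerzig18TowardsFinSlopePartGLn}, precisely the ingredient the paper also invokes. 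So the division of labour differs---you trade the annihilator lemma for a completed-tensor bookkeeping argument---but the genuinely non-formal inputs (exactness of the completed projective tensor product, density of Dirac distributions) appear at comparable points in both proofs, and your route is correct once that decomposition step is carried out carefully.
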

\begin{proof}
	We fix an compact open subgroup $G_0 \subset G$ with $G=G_0 H$ and set $H_0 \defeq G_0 \cap H$.
	Using the bijective continuous integration homomorphism $I$ from \Cref{Thm - Integration map} together with \cite[Cor.\ 18.8]{Schneider02NonArchFunctAna} and the reflexivity of $C^\la(G,K)$ we obtain an injective continuous homomorphism
	\begin{equation*}
		\iota \colon \Ind_H^{\la, G}(V) \longhookrightarrow C^\la(G,V) \overset{I}{\lra} \CL_b(D(G),V)\cong C^\la(G,K) \cot{K,\pi} V.
	\end{equation*}
	This homomorphism is $G$-equivariant with respect to the left regular $G$-action on the source and the first factor of the target, and the trivial $G$-action on $V$.
	Analogously, we obtain $\iota_0 \colon \Ind_{H_0}^{\la,G_0}(V) \hookrightarrow C^\la(G_0,K) \cot{K} V$, where the map $C^\la (G_0,V) \ra C^\la(G_0,K) \cot{K}V$ in fact is the topological isomorphism from \Cref{Prop - Properties of locally analytic functions} (iv).
	Furthermore, recall from the proof of \Cref{Prop - Existence of locally analytic induction} that restriction of functions yields a topological isomorphism $\Ind_H^{\la, G} (V) \cong \Ind_{H_0}^{\la,G_0} (V)$.
	Restricting also induces a continuous homomorphism $C^\la (G,K) \cot{K,\pi} V \ra C^\la (G_0,K) \cot{K} V$.
	All of these maps fit into the commutative diagram
	\begin{equation}\label{Eq - Commutative diagram for induction}
		\begin{tikzcd}
			\Ind_{H_0}^{\la,G_0} (V) \ar[r, "\iota_0"] &C^\la (G_0,K) \cot{K} V \ar[r, "\psi_0"] &\prod_{g\in G_0, h\in 	H_0} V \\
			\Ind_H^{\la, G} (V) \ar[r, "\iota"]\ar[u, "\cong"] &C^\la (G,K) \cot{K,\pi} V \ar[r, "\psi"]\ar[u] &\prod_{g\in G, h\in H} V \ar[u] .
		\end{tikzcd}
	\end{equation}
	Here $\psi$ is the product of the homomorphisms
	\begin{equation*}
		C^\la (G,K) \cot{K,\pi} V \lra V \,,\quad f \otimes v \lto f(gh) v - f(g) h^{-1}. v ,
	\end{equation*}
	for $g\in G$, $h\in H$, and $\psi_0$ is defined analogously.
	The rows of \eqref{Eq - Commutative diagram for induction} are algebraically exact, and $\iota_0$ is a closed embedding.

	We want to consider the strong dual of the diagram \eqref{Eq - Commutative diagram for induction}.
	Note that by \cite[Rmk.\ 16.1 (ii)]{Schneider02NonArchFunctAna}, the homomorphisms of this dualized diagram are continuous again.
	Recall the topological isomorphism $\big( \prod_{i \in I} V_i \big)'_b \cong \bigoplus_{i\in I} (V_i)'_b$ from \cite[Prop.\ 9.11]{Schneider02NonArchFunctAna}, for a family $(V_i)_{i \in I}$ of locally convex $K$-vector spaces.
	Under this identification and the one from \Cref{Lemma - Dual of tensor product with locally analytic functions}, the transpose of $\psi$ is given by 
	\begin{align*}
		\transp{\psi} \colon \bigoplus_{g\in G, h\in H} V'_b \lra D(G) \cot{K,\iota} V'_b \,,\quad
		\sum \ell_{g,h} \lto \sum \delta_g \ast  \delta_h \otimes \ell_{g,h} - \delta_g \otimes h.\ell_{g,h}.
	\end{align*}
	We obtain an analogous description for $\psi_0$.
	The strong dual of \eqref{Eq - Commutative diagram for induction} becomes
	\begin{equation}
		\begin{tikzcd}
			\bigoplus_{g\in G_0, h\in H_0} V \ar[r, "\transp{\psi_0}"]\ar[d] & D(G_0) \cot{K} V'_b \ar[r, "\transp{\iota_0}"]\ar[d] & \big( \Ind_{H_0}^{\la,G_0} (V) \big)'_b \ar[d, "\cong"] \\
			\bigoplus_{g\in G, h\in H} V \ar[r, "\transp{\psi}"] & D(G) \cot{K,\iota} V'_b \ar[r, "\transp{\iota}"] & \big( \Ind_{H}^{\la,G} (V) \big)'_b.
		\end{tikzcd}
	\end{equation}
	Note that here both rows are complexes because the rows in \eqref{Eq - Commutative diagram for induction} are.
	The Hahn--Banach Theorem \cite[Cor.\ 9.4]{Schneider02NonArchFunctAna} implies that $\transp{\iota_0}$ is surjective because $\iota_0$ is a closed embedding.
	It follows from the open mapping theorem \cite[Prop.\ 8.6]{Schneider02NonArchFunctAna} that $\transp{\iota_0}$ is strict.
	Moreover, $\transp{\iota}$ and $\transp{\iota_0}$ are $D(G)$- resp.\ $D(G_0)$-linear.

	Since $\Im(\transp{\psi}) \subset \Ker(\transp{\iota})$ and the latter is closed, it follows from the density of Dirac distributions in $D(H)$ that all elements of $D(G) \cot{K,\iota} V'_b $ which are of the form
	\begin{equation*}
		\delta \ast \lambda \otimes \ell - \delta \otimes \lambda \ast \ell \quad\text{, for $\delta \in D(G)$, $\lambda \in D(H)$, $\ell \in V'_b$, }
	\end{equation*}
	are contained in $\Ker(\transp{\iota})$.
	Therefore the natural map $D(G) \ot{K,\iota} V'_b \ra D(G) \cot{K,\iota} V'_b$ induces a continuous $D(G)$-module homomorphism
	\begin{equation*}
		\phi \colon D(G) \ot{D(H),\iota} V'_b \lra \big( D(G) \cot{K,\iota} V'_b \big) \big/ \Ker(\transp{\iota}) .
	\end{equation*}
	The analogous statement also holds for $G_0$, and we obtain a commutative diagram
	\begin{equation*}
		\begin{tikzcd}
			D(G_0) \ot{D(H_0)} V'_b \ar[r, "\phi_0"]\ar[d, "\cong"'] &\big( D(G_0) \cot{K} V'_b \big) \big/ \Ker(\transp{\iota_0}) \ar[r, "\cong"]\ar[d] &\big( \Ind_{H_0}^{\la,G_0} (V) \big)'_b \ar[d, "\cong"] \\		
			D(G) \ot{D(H),\iota} V'_b \ar[r, "\phi"] &\big( D(G) \cot{K,\iota} V'_b \big) \big/ \Ker(\transp{\iota}) \ar[r, "\transp{\iota}"] &\big( \Ind_{H}^{\la,G} (V) \big)'_b .
		\end{tikzcd}
	\end{equation*}
	Here the left vertical map is a topological isomorphism by \Cref{Lemma - Tensoring up from distribution algebras of subgroups} (ii).

	We claim that after completing, $\transp{\iota} \circ \phi$ gives the sought topological isomorphism of $D(G)$-modules.
	For this it suffices to show that the completion of $\phi_0$ is a topological isomorphism.
	Using \Cref{Lemma - Completion of projective tensor product over algebras} the completion of $D(G_0) \ot{D(H_0)} V'_b$ is topologically isomorphic to the quotient of $D(G_0) \cot{K} V'_b$ by the closure of the $D(G_0)$-submodule generated by the elements
	\begin{equation*}
		\delta \ast \lambda \otimes \ell - \delta \otimes \lambda \ast \ell \quad\text{, for $\delta \in D(G_0)$, $\lambda \in D(H_0)$, $\ell \in V'_b$. }
	\end{equation*}
	By the density of Dirac distributions in $D(H_0)$ this closure is equal to $\ov{ \Im(\transp{\psi_0})}$.
	Thus it suffices to show that $\ov{ \Im(\transp{\psi_0}) }= \Ker(\transp{\iota_0}) $.
	But we have $\Ker(\transp{\iota_0}) = \Im(\iota_0)^\perp$ where
	\begin{equation*}
		\Im(\iota_0)^\perp \defeq \big\{ \ell \in D(G_0) \cot{K} V'_b \,\big\vert\, \forall v \in \Im(\iota_0): \ell(v) = 0 \big\} 
	\end{equation*}
	by \cite[p.IV.27 Prop.\ 2]{Bourbaki87TopVectSp1to5}.
	The algebraic exactness of the first row of \eqref{Eq - Commutative diagram for induction} implies $\Im(\iota_0)^\perp = \Ker(\psi_0)^\perp$. 
	Moreover, we have $\Ker(\psi_0)^\perp \subset \ov{\Im(\transp{\psi_0})}$ by \Cref{Lemma - Annihilator of kernel is weak closure of image of the transpose} and thus $\Ker(\transp{\iota_0}) \subset \ov{\Im(\transp{\psi_0})}$.
	As $\Im(\transp{\psi_0}) \subset \Ker(\transp{\iota_0})$ and $\Ker(\transp{\iota_0})$ is closed, we conclude that $\Ker(\transp{\iota_0}) = \ov{\Im(\transp{\psi_0})}$.
\end{proof}

\begin{corollary}[{cf.\ \cite[Prop.\ 5.1]{Kohlhaase11CohomLocAnRep}}]\label{Cor - Locally analytic induction is exact}
	Assume that there exists a compact open subgroup $G_0 \subset G$ such that $G = G_0 H$.
	The functor
	\begin{equation*}
		\Ind_H^{\la, G} \colon \Rep_K^{\la, \mathrm{ct}} (H) \lra  \Rep_K^{\la, \mathrm{ct}} (G) \,,\quad V \lto \Ind_H^{\la, G} (V) , 
	\end{equation*}
	between quasi-abelian categories is exact\footnote{In the sense of \cite[Def.\ 1.1.18]{Schneiders98QuasiAbCat}, i.e.\ it sends short strictly exact sequences to short strictly exact sequences.}.
\end{corollary}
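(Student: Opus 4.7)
The plan is to use the anti-equivalence from Prop.\ \ref{Prop - Anti-equivalence for locally analytic representations} to translate the claim into a statement about strict exactness of a completed tensor product functor, and then reduce to the compact case.

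First, the anti-equivalence sends $V \in \Rep_K^{\la,\mathrm{ct}}(G)$ to $V'_b \in \CM_{D(G)}^\mathrm{nF}$ (and reverses arrows); being an anti-equivalence of quasi-abelian categories, it preserves and reflects strict exactness. Combined with the topological isomorphism $(\Ind_H^{\la,G}(V))'_b \cong D(G) \cot{D(H),\iota} V'_b$ of Prop.\ \ref{Prop - Module description for induction}, the claim becomes the strict exactness of the covariant functor
\[
	\CM_{D(H)}^\mathrm{nF} \lra \CM_{D(G)}^\mathrm{nF} \,,\quad M \lto D(G) \cot{D(H),\iota} M .
\]
Via Lemma \ref{Lemma - Tensoring up from distribution algebras of subgroups}~(ii), this functor is naturally isomorphic to $M \mapsto D(G_0) \cot{D(H_0),\iota} M$ where $H_0 \defeq G_0 \cap H$ is compact open in $H$. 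Since strict exactness in $\CM_{D(H)}^\mathrm{nF}$ depends only on the underlying locally convex structure, restriction of scalars along $D(H_0) \hookrightarrow D(H)$ preserves strict exactness. Hence it suffices to show strict exactness of $D(G_0) \cot{D(H_0),\iota} (\blank)$ on nuclear Fr\'echet $D(H_0)$-modules, where both $G_0$ and $H_0$ are now compact.

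In this compact setting, $D(G_0)$ and $D(H_0)$ are nuclear $K$-Fr\'echet spaces by Prop.\ \ref{Prop - Properties of space of distributions}~(i), and the inductive and projective completed tensor products coincide by Lemma \ref{Lemma - Equality of projective and inductive tensor product}. Lemma \ref{Lemma - Completion of projective tensor product over algebras} then identifies $D(G_0) \cot{D(H_0)} M$ with the cokernel, in the quasi-abelian category of nuclear Fr\'echet spaces, of the natural morphism
\[
	\mu_M \colon D(G_0) \cot{K} D(H_0) \cot{K} M \lra D(G_0) \cot{K} M \,,\quad \delta \otimes \lambda \otimes m \lto \delta\lambda \otimes m - \delta \otimes \lambda m .
\]
The standard exactness of the completed projective tensor product with a nuclear Fr\'echet space (applied to $D(G_0)$ and to $D(G_0) \cot{K} D(H_0)$) ensures that both the domains and codomains of $\mu_{M_i}$ form strict short exact sequences when $M_\bullet$ does. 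A snake-lemma-type argument in the quasi-abelian category of nuclear Fr\'echet spaces then yields strict exactness of the sequence of cokernels, as desired.

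The main technical obstacle is the final diagram chase: one must carefully verify that cokernels of a morphism of strict short exact sequences in the quasi-abelian category of nuclear Fr\'echet spaces again form a strict short exact sequence. This rests on the exactness properties of completed tensor products with nuclear Fr\'echet spaces together with the homological algebra of quasi-abelian categories (in the spirit of Prop.\ \ref{Prop - Category of nuclear Frechet modules is quasi-abelian} and \cite{Schneiders98QuasiAbCat}). Once this last ingredient is in place, the exactness of $\Ind_H^{\la,G}$ follows by unwinding the reductions above.
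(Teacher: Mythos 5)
Your initial reductions are sound: the anti-equivalence of \Cref{Prop - Anti-equivalence for locally analytic representations}~(ii) together with \Cref{Prop - Module description for induction} does reduce the claim to exactness of $M\mapsto D(G)\cot{D(H),\iota}M$ on $\CM_{D(H)}^\mathrm{nF}$, and passing to the compact situation via \Cref{Lemma - Tensoring up from distribution algebras of subgroups}~(ii) is correct. The proposal breaks down in the final step, and the breakdown is not a gap that ``careful verification'' will fill. Presenting $D(G_0)\cot{D(H_0)}M$ as $\Coker(\mu_M)$ and applying a snake-lemma argument to the morphism of strict short exact sequences $\mu_{M_\bullet}$ gives a six-term exact sequence
\begin{equation*}
	\Ker(\mu_{M_1}) \lra \Ker(\mu_{M_2}) \lra \Ker(\mu_{M_3}) \lra \Coker(\mu_{M_1}) \lra \Coker(\mu_{M_2}) \lra \Coker(\mu_{M_3}) \lra 0 ,
\end{equation*}
not a short exact sequence of cokernels. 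Short exactness of the cokernel row holds only if the connecting morphism vanishes, equivalently if $\Ker(\mu_{M_2})\to\Ker(\mu_{M_3})$ is an epimorphism, and you give no reason for this. In fact this is precisely the issue: $(\blank)\cot{D(H_0)}(\blank)$ is right exact by general nonsense, and the obstruction to left exactness is measured by the failure of the kernel sequence to be exact. So the argument assumes, in disguise, the very flatness-type property it is supposed to establish.

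The paper closes this gap with a structural input you do not use. The projection $G\to G/H$ admits a locally $L$-analytic section, giving an isomorphism of locally $L$-analytic manifolds $G\cong G/H\times H$ (Satz 4.1.1 of F\'eaux de Lacroix); by \Cref{Prop - Properties of space of distributions}~(iii) this yields $D(G)\cong D(G/H)\cot{K,\iota}D(H)$ as a right $D(H)$-module, i.e.\ $D(G)$ is ``free'' over $D(H)$. Then \Cref{Lemma - Tensor identities for modules} collapses the module tensor product,
\begin{equation*}
	D(G)\cot{D(H),\iota}M \cong D(G/H)\cot{K,\iota}M ,
\end{equation*}
and since $G/H$ is compact this is the completed projective tensor product of nuclear $K$-Fr\'echet spaces over $K$, where exactness is \cite[Cor.\ 2.2]{BreuilHerzig18TowardsFinSlopePartGLn}. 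No reduction to $G_0,H_0$ and no diagram chase are needed; the freeness coming from the section of $G\to G/H$ is the ingredient your proposal is missing.
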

\begin{proof}
	Because of the anti-equivalence from \Cref{Prop - Anti-equivalence for locally analytic representations} (ii) and \Cref{Prop - Module description for induction} the assertion is equivalent to the exactness of 
	\begin{equation*}
		\CM_{D(H)}^\mathrm{nF} \lra \CM_{D(G)}^\mathrm{nF} \,,\quad M \lto D(G) \cot{D(H),\iota} M .
	\end{equation*}
	The projection $G \ra G/H$ splits on the level of locally $L$-analytic manifolds and this yields an isomorphism of locally $L$-analytic manifolds $G\cong G/H \times H$, see \cite[Satz 4.1.1]{FeauxdeLacroix99TopDarstpAdischLieGrp}.
	Hence it follows from \Cref{Prop - Properties of space of distributions} (iii) that $D(G) \cong D(G/H) \cot{K,\iota} D(H)$.
	For $M \in \CM_{D(H)}^\mathrm{nF}$, \Cref{Lemma - Tensor identities for modules} yields a topological isomorphism
	\begin{equation*}
		D(G) \cot{D(H),\iota} M \cong D(G/H) \cot{K,\iota} M  .
	\end{equation*}
	Since $G/H$ is compact, the latter inductive tensor product agrees with the projective one.
	The exactness now follows from the exactness of the completed projective tensor product on $K$-Fr\'echet spaces, see \cite[Cor.\ 2.2]{BreuilHerzig18TowardsFinSlopePartGLn}.
\end{proof}

\section{The Hyperalgebra of a Non-Archimedean Lie Group}\label{Sect - The hyperalgebra}

\subsection{Germs of Locally Analytic Functions}

We recapitulate the concept of germs of locally analytic functions following F\'eaux de Lacroix \cite[Abschn.\ 2.3]{FeauxdeLacroix99TopDarstpAdischLieGrp}.
However, we will only need the case where the support consists of a single point.
In the following, $X$ is a locally $L$-analytic manifold and $V$ a Hausdorff locally convex $K$-vector space.

\begin{definition}\label{Def - Germs of locally analytic functions}
	For $x\in X$, the \textit{space of germs of locally analytic functions on $X$ with values in $V$ at $x$} is defined as the locally convex inductive limit over all open neighbourhoods $U\subset X$ of $x$
	\begin{equation*}
		C^\la_x (X,V) \defeq \varinjlim_{x\in U \subset X} C^\la (U,V)
	\end{equation*}
	with respect to the canonical restriction homomorphisms and endowed with the locally convex inductive limit topology.
\end{definition}

\begin{lemma}[{\cite[Abschn.\ 2.3.1]{FeauxdeLacroix99TopDarstpAdischLieGrp}}]\label{Lemma - Description of locally analytic germs}
	For every open subset $U\subset X$ with $x\in U$, the canonical map $C^\la(U,V) \ra C^\la_x(X,V) $ is a strict epimorphism.
	Moreover, there is a canonical topological isomorphism
	\begin{equation*}
		C^\la_x(X,V) \cong \varinjlim_{(U,E)} C^\an(U,E)
	\end{equation*}
	where the latter locally convex inductive limit is taken over all pairs of charts $\varphi\colon U\ra L^d$ with $x\in U$ and BH-subspaces $E\subset V$, and $C^\an(U,E)$ is defined as in \Cref{Def - Locally analytic functions} (i).
\end{lemma}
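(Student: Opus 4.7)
The plan is to handle the two claims in order. For the first part, I would first show surjectivity of $C^\la(U,V) \to C^\la_x(X,V)$ by an extension-by-zero argument and then deduce strictness via cofinality.

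For surjectivity, given a germ represented by some $f \in C^\la(W,V)$ with $W$ an open neighborhood of $x$, I would choose a compact open neighborhood $W_0 \subset U \cap W$ of $x$, which exists because $X$ is strictly paracompact and totally disconnected so that compact open sets form a neighborhood basis of $x$. Being clopen in $U$, we obtain a disjoint open decomposition $U = W_0 \sqcup (U \setminus W_0)$, and by \Cref{Prop - Properties of locally analytic functions} (iii) we may define $\tilde{f} \in C^\la(U,V)$ by $\tilde{f}\res{W_0} = f\res{W_0}$ and $\tilde{f}\res{U \setminus W_0} = 0$. This element has the same germ at $x$ as $f$. The same construction produces continuous sections of the restriction maps $C^\la(U,V) \twoheadrightarrow C^\la(W_0,V)$ for every compact open $W_0 \subset U$ with $x \in W_0$, which makes each such restriction a strict surjection. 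For strictness of the map to the germ space, I would note that the compact open neighborhoods $W_0 \subset U$ form a cofinal subsystem among all neighborhoods of $x$, so $C^\la_x(X,V) \cong \varinjlim_{W_0} C^\la(W_0,V)$. Writing $N$ for the kernel, the induced continuous bijection $C^\la(U,V)/N \to \varinjlim_{W_0} C^\la(W_0,V)$ has a continuous inverse because each $C^\la(W_0,V) \to C^\la(U,V)/N$ is continuous via the section, and hence so is the map from the inductive limit.

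For the second part, I would construct mutually inverse continuous maps between $C^\la_x(X,V)$ and $\varinjlim_{(U',E)} C^\an(U',E)$. In one direction, each $C^\an(U',E)\to C^\la(U',V)$ (via $E \hookrightarrow V$) is continuous by the definition of $C^\la(U',V)$ via $V$-indices; postcomposing with the strict surjection $C^\la(U',V) \to C^\la_x(X,V)$ yields a compatible family inducing $\Phi$ on the inductive limit. In the opposite direction, for each $V$-index $\CI = (U_i,\varphi_i,E_i)_{i \in I}$ on an open neighborhood $W$ of $x$, disjointness gives a unique $i_0$ with $x \in U_{i_0}$; the projection $\prod_{i} C^\an(U_i,E_i) \to C^\an(U_{i_0},E_{i_0})$ followed by the canonical map into $\varinjlim_{(U',E)} C^\an(U',E)$ is continuous, compatible with refinement of $\CI$ (since the piece of the refinement containing $x$ lies in $U_{i_0}$ with possibly enlarged BH-subspace), and compatible with shrinking $W$. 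This yields $\Psi \colon C^\la_x(X,V) \to \varinjlim_{(U',E)} C^\an(U',E)$, and the compositions $\Psi \circ \Phi$ and $\Phi \circ \Psi$ act as the identity on germs.

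The main obstacle will be in the cofinality bookkeeping: I need to know that every pair $(U',E)$ with $x \in U'$ a chart domain onto an affinoid polydisc and $E \subset V$ a BH-subspace actually arises as the piece containing $x$ in some $V$-index. After replacing $U'$ by a compact open subneighborhood of $x$ (so that $U'$ becomes clopen in $X$ and $X \setminus U'$ is itself open), strict paracompactness of $X$ allows one to cover $X \setminus U'$ by a disjoint family of chart domains with affinoid polydisc images, to which I assign $E$ (say) as the BH-subspace, thereby producing the required $V$-index. Combined with this, the precise identification of inductive limits $\varinjlim_W \varinjlim_\CI$ with $\varinjlim_{(U',E)}$ via the chosen $i_0(\CI)$ relies on the fact that restrictions of $V$-indices to smaller $W$ and refinement of $V$-indices on $W$ generate the same directed system up to cofinal equivalence, which is where I expect most of the technical effort to go.
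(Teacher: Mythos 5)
The paper does not reproduce a proof here; it cites Féaux de Lacroix directly, so there is no in-paper argument to compare against. Your proposal — extension by zero over a clopen $W_0$ for surjectivity and a continuous section yielding strictness, then matching $C^\la_x(X,V)=\varinjlim_{W_0}C^\la(W_0,V)$ with $\varinjlim_{(U',E)}C^\an(U',E)$ via projection to the unique chart of a $V$-index containing $x$, with the cofinality check that every pair $(U',E)$ extends to a $V$-index by strict paracompactness — is correct and is essentially the standard argument from Féaux de Lacroix's thesis.
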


\begin{proposition}\label{Prop - Properties of locally analytic germs}
	\begin{altenumerate}
		\item 
			The formation of $C^\la_x(X,V)$ is contravariantly functorial in $(X,x)$ and covariantly functorial in $V$.
			
		\item
			For $x\in X$, the locally convex $K$-vector space $C^\la_x(X,V)$ is Hausdorff and barrelled.
			If $V$ is of compact type, then $C^\la_x(X,V)$ is of compact type and there is a natural topological isomorphism $C^\la_x(X,V) \cong C^\la_x(X,K) \cot{K} V$
			\textnormal{\cite[Satz 2.3.1, Satz 2.3.2]{FeauxdeLacroix99TopDarstpAdischLieGrp}}.
		\item
			For locally $L$-analytic manifolds $X$ and $Y$ with $x\in X$ and $y\in Y$, there is a topological isomorphism
			\begin{equation*}
				C^\la_x(X,K) \cot{K} C^\la_y (Y,K) \overset{\cong}{\lra} C^\la_{(x,y)}(X\times Y, K) \,,\quad f\otimes g \lto \big[ (z,w) \mto f(z)g(w) \big] .
			\end{equation*}
	\end{altenumerate}
\end{proposition}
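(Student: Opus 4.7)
For (i), I would derive both functorialities directly from the construction of $C^\la_x(X,V)$ as a locally convex inductive limit together with the functoriality from \Cref{Prop - Properties of locally analytic functions} (i): a locally $L$-analytic morphism $\sigma\colon(X,x)\to(X',x')$ induces pullback maps $C^\la(U',V)\to C^\la(\sigma^{-1}(U'),V)$ compatible with shrinking $U'$, and a continuous $V\to V'$ yields post-composition, both passing to the limit. For (ii), I would simply cite \cite[Satz 2.3.1, Satz 2.3.2]{FeauxdeLacroix99TopDarstpAdischLieGrp}.

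The substantive content lies in (iii). My plan is first to construct the map at the level of germs, verify it is well-defined and separately continuous, and then upgrade it to a topological isomorphism after completion. Given representatives of $f\in C^\la_x(X,K)$ and $g\in C^\la_y(Y,K)$ on open neighbourhoods $U\ni x$ and $V\ni y$, the function $(z,w)\mapsto f(z)g(w)$ is locally analytic on $U\times V\ni(x,y)$, and its germ depends only on the germs of $f$ and $g$. This yields a separately continuous $K$-bilinear map on $C^\la_x(X,K)\times C^\la_y(Y,K)$ and, by the universal property of the inductive tensor product, a continuous $K$-linear map $C^\la_x(X,K)\cot{K,\iota} C^\la_y(Y,K)\to C^\la_{(x,y)}(X\times Y,K)$, extending to the completion.

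To prove the map is a topological isomorphism, I would fix nested cofinal sequences of affinoid polydisc charts $(U_n)_n$ around $x$ in $X$ and $(V_n)_n$ around $y$ in $Y$. Their products $(U_n\times V_n)_n$ form a cofinal family of polydisc charts around $(x,y)$ in $X\times Y$: any polydisc chart around $(x,y)$ contains a product polydisc (by openness in the product topology and shrinking the radii), which in turn contains some $U_n\times V_n$. By \Cref{Lemma - Description of locally analytic germs} each of the three germ spaces is thus a countable inductive limit over these nested data, and the product-Tate-algebra identity $C^\an(U_n,K)\cot{K,\pi} C^\an(V_n,K)\cong C^\an(U_n\times V_n,K)$ -- the Banach-space instance underlying \Cref{Prop - Properties of locally analytic functions} (v) -- matches the two sides level by level, compatibly with the transition maps.

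The hard part will be ensuring that the completed tensor product commutes with these countable inductive limits, so that $\varinjlim_n\bigl(C^\an(U_n,K)\cot{K,\pi} C^\an(V_n,K)\bigr)$ equals $C^\la_x(X,K)\cot{K} C^\la_y(Y,K)$. Here I would invoke part (ii): both $C^\la_x(X,K)$ and $C^\la_y(Y,K)$ are of compact type, so their inductive and projective tensor product topologies agree, the completed tensor product is again of compact type, and by the structure theory of such spaces it is realised precisely as the inductive limit of the completed Banach tensor products of the defining Banach steps, as in the discussion around \cite[Prop.\ 1.1.32]{Emerton17LocAnVect}. Assembling the chain of topological isomorphisms and checking the action on elementary tensors then yields the map of the statement.
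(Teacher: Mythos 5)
Your proof is correct and follows essentially the same route as the paper: part (i) by functoriality, part (ii) by citation, and part (iii) by choosing nested neighbourhood bases around $x$ and $y$, matching the germ spaces level-by-level via the Banach-algebra identity $C^\an(U_n,K)\cot{K} C^\an(W_n,K)\cong C^\an(U_n\times W_n,K)$, and then invoking \cite[Prop.\ 1.1.32 (i)]{Emerton17LocAnVect} to commute the completed tensor product with the countable inductive limits of Banach steps. The only difference is presentational — you construct the comparison map first and then verify it is an isomorphism, while the paper assembles the chain of topological isomorphisms directly — but the underlying ideas coincide.
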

\begin{proof}
	Using the functoriality of $C^\la(X,V)$ the statement of (i) is obvious.
	Concerning (iii), we may find a neighbourhood basis $(U_n)_{n\in \BN}$ of $x \in X$ resp.\  $(W_n)_{n\in \BN}$ of $y \in Y$.
	Then \Cref{Lemma - Description of locally analytic germs} shows that $C^\la_x(X,K) \cong \varinjlim_{n \in \BN} C^\an(U_n, K)$, similarly for $C^\la_y(Y,K)$, and 
	\begin{equation*}
		C^\la_{(x,y)}(X \times Y,K) \cong \varinjlim_{n \in \BN} C^\an(U_n \times W_n, K) \cong \varinjlim_{n \in \BN} \Big( C^\an(U_n,K) \cot{K} C^\an(W_n,K) \Big) .
	\end{equation*}
	The claim now follows from \cite[Prop.\ 1.1.32 (i)]{Emerton17LocAnVect}.
\end{proof}

\begin{proposition}\label{Prop - Properties of the algebra of germs of locally analytic functions}
	For $x\in X$, $C^\la_x(X,K)$ is a local, noetherian, jointly continuous locally convex $K$-algebra with respect to pointwise addition and multiplication. 
	Its maximal ideal is
	\begin{equation*}
		\Fm_x \defeq \Ker (\mathrm{ev}_x) \subset C^\la_x(X,K).
	\end{equation*}
	For all $n\in \BN_0$, the subspace $\Fm_x^{n+1} \subset C^\la_x(X,K)$ is closed and of finite codimension equal to $\binom{d+n}{n}$ where $d$ is the dimension of $X$ at $x$.
\end{proposition}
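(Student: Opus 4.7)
The plan is to reduce every assertion to the familiar ring of convergent power series in $d$ variables by working in a chart at $x$.

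First I would choose a chart $\varphi\colon U \to L^d$ with $\varphi(x) = 0$ whose image is an affinoid polydisc, together with a cofinal decreasing sequence of polydisc neighbourhoods $B_n \subset \varphi(U)$ of $0$. By \Cref{Lemma - Description of locally analytic germs} applied to $E = K$, one obtains a topological $K$-algebra isomorphism
\begin{equation*}
	C^\la_x(X, K) \cong \varinjlim_n \CO(B_n) ,
\end{equation*}
which realizes $C^\la_x(X, K)$ as the ring of power series over $K$ convergent on some polydisc neighbourhood of $0$. Each $\CO(B_n)$ is a $K$-affinoid Banach algebra whose supremum norm is submultiplicative, so the multiplication is jointly continuous. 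To upgrade this to joint continuity on the inductive limit, I would factor the pointwise product through the diagonal $\Delta\colon X \hookrightarrow X \times X$ as
\begin{equation*}
	C^\la_x(X,K) \cot{K} C^\la_x(X,K) \overset{\cong}{\lra} C^\la_{(x,x)}(X\times X, K) \overset{\Delta^\ast}{\lra} C^\la_x(X, K) ,
\end{equation*}
where the first arrow is the topological isomorphism of \Cref{Prop - Properties of locally analytic germs} (iii) and $\Delta^\ast$ is continuous by functoriality. Composing with the canonical jointly continuous bilinear map into the projective tensor product yields joint continuity of multiplication on $C^\la_x(X,K)$.

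That $C^\la_x(X,K)$ is local with maximal ideal $\Fm_x = \Ker(\ev_x)$ follows because any germ $f$ with $f(x) \neq 0$ admits a representative non-vanishing on a neighbourhood of $x$, and $z \mapsto z^{-1}$ is locally analytic on $L \setminus \{0\}$, so $1/f$ defines an inverse germ. Noetherianity is then the classical theorem that over a complete non-archimedean field the ring of convergent power series in $d$ variables is noetherian, proved via Weierstrass preparation (cf.\ Bosch--Güntzer--Remmert).

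For the final assertion, I would consider the Taylor truncation
\begin{equation*}
	\tau_n \colon C^\la_x(X,K) \lra K[T_1, \ldots, T_d] \big/ (T_1, \ldots, T_d)^{n+1} ,\qquad \sum_\alpha a_\alpha T^\alpha \lto \sum_{|\alpha| \leq n} a_\alpha T^\alpha ,
\end{equation*}
where coefficients are read off from the power-series expansion in the chart $\varphi$. This map is surjective (polynomials are locally analytic) and continuous, since each coefficient-extraction is continuous on the level $\CO(B_n)$ and the target is finite-dimensional. The key step is the identification $\Ker(\tau_n) = \Fm_x^{n+1}$. The inclusion $\supset$ is immediate because $\Fm_x$ corresponds in the chart to the ideal generated by $T_1, \ldots, T_d$; for $\subset$, a germ $f = \sum_{|\alpha| \geq n+1} a_\alpha T^\alpha$ convergent on some $B_r$ can be rewritten as $\sum_{|\beta|=n+1} T^\beta g_\beta$ with each $g_\beta \in \CO(B_r)$ via a standard combinatorial grouping of monomials. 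Granted this identification, $\Fm_x^{n+1}$ is closed as the kernel of a continuous map to a finite-dimensional Hausdorff space, with codimension $\dim_K K[T_1, \ldots, T_d] / (T_1, \ldots, T_d)^{n+1} = \binom{d+n}{n}$.

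The step I expect to require the most care is the identification $\Ker(\tau_n) = \Fm_x^{n+1}$: one direction is formal, but the other requires the factorization of convergent series with vanishing low-order terms as $(n+1)$-fold products of elements of $\Fm_x$, with explicit control that the resulting factors converge on the same polydisc $B_r$ rather than merely on a strictly smaller one.
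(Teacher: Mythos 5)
Your proposal is correct and takes essentially the same route as the paper: reduce via a chart to $\CA_0^d = \varinjlim_{\ul r} \CO(B^d_{\ul r}(0))$, cite Bosch--G\"untzer--Remmert for noetherianity, and realize $\Fm_x^{n+1}$ as the kernel of a continuous truncation onto a finite-dimensional space. The only small variations are cosmetic: the paper cites \cite[Kor.\ 2.4.4]{FeauxdeLacroix99TopDarstpAdischLieGrp} for joint continuity instead of your diagonal factorization through $C^\la_{(x,x)}(X\times X,K)$ (which is valid, using that $C^\la_x(X,K)$ is of compact type so that $\widehat\otimes_{K,\pi}$ and $\widehat\otimes_{K,\iota}$ agree), and it invokes the inverse function theorem rather than composing with the locally analytic $z\mapsto z^{-1}$ for locality.

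On the point you flagged as requiring care: the factorization of an $f = \sum_{|\alpha|\ge n+1} a_\alpha T^\alpha$ converging on a polydisc of multiradius $\ul r$ as $\sum_{|\beta|=n+1} T^\beta g_\beta$ poses no convergence problem in the non-archimedean setting. For each $\alpha$ with $|\alpha|\ge n+1$ choose a $\beta(\alpha)\le\alpha$ with $|\beta(\alpha)|=n+1$, and collect $g_\beta = \sum_{\beta(\alpha)=\beta} a_\alpha T^{\alpha-\beta}$. Convergence of $f$ on $B^d_{\ul r}(0)$ means $|a_\alpha| r^{|\alpha|}\to 0$, whence $|a_\alpha| r^{|\alpha|-(n+1)} = r^{-(n+1)}\cdot |a_\alpha| r^{|\alpha|}\to 0$ as well, so each $g_\beta$ converges on the \emph{same} polydisc. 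The paper avoids stating this explicitly by writing down the strict epimorphism $\CA_0^d \to K^{\binom{d+n}{n}}$ directly, but the underlying computation is identical.
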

\begin{proof}
	It follows from \cite[Kor.\ 2.4.4]{FeauxdeLacroix99TopDarstpAdischLieGrp} that $C^\la_x(X,K)$ is a jointly continuous $K$-algebra.
	Clearly $\Fm_x$ is a maximal ideal, and via the inverse function theorem for power series, one shows that every $f\in C^\la_x(X,K)\setminus \Fm_x$ has a multiplicative inverse, i.e.\ that $C^\la_x(X,K)$ is local.

	Let $d\in \BN$ be the dimension of $X$ at $x$, and let $B^d_\ul{r}(0) \subset L^d$ denote the closed polydisc around $0$ of multiradius $\ul{r}\defeq (r,\ldots,r) \in \abs{K^\times}^d$.
	Then the choice of a chart around $x$ yields a topological isomorphism of $K$-algebras
	\begin{equation*}
		C^\la_x(X,K) \cong C^\la_0 (L^d,K) \cong \CA_0^d \defeq \varinjlim_{\ul{r}\in \abs{K^\times}^d} \CO \big( B_{\ul{r}}^d(0) \big). 
	\end{equation*}
	Here $\CA_0^d$ is noetherian by \cite[7.3.2 Prop.\ 7]{BoschGuentzerRemmert84NonArchAna}, and the above somorphism sends $\Fm_x$ to the ideal $(T_1,\ldots, T_d)$ where we view $\CA_0^d$ as a subalgebra of $K \llrrbracket{T_1,\ldots, T_d}$.
	Moreover, we have the strict epimorphism
	\begin{equation*}
		\CA_0^d \lra K^{\binom{d+n}{n}} \,,\quad \sum_{\ul{i}\in \BN^d_0} a_{\ul{i}} \, T_1^{i_1}\cdots T_d^{i_d} \lto (a_{\ul{i}})_{\sum_{j=1}^d i_j \leq n},
	\end{equation*}
	whose kernel is $(T_1,\ldots, T_d)^{n+1}$.
\end{proof}

\subsection{Distributions with Support in a Point}

Next we consider the strong dual space of $C^\la_x(X,K)$, i.e.\ distributions on $X$ with support in $x$ as treated by Kohlhaase in \cite[Sect.\ 1.2]{Kohlhaase05InvDistpAdicAnGrp}.

\begin{definition}\label{Def - Distributions with support}
	\begin{altenumerate}
		\item 
			For $x\in X$, we define the \emph{space of locally analytic distributions with support in $x$} as 
			\begin{equation*}
				D_x (X,K) \defeq C^\la_x(X,K)'_b .
			\end{equation*}
			Via the strict epimorphism $C^\la(X,K) \twoheadrightarrow C^\la_x(X,K)$ from \Cref{Lemma - Description of locally analytic germs}, it is naturally a closed subspace of $D(X,K)$ (cf.\ \cite[Lemma 1.2.5]{Kohlhaase05InvDistpAdicAnGrp}).
			
		\item
			For $n\in \BN_0$, we say that $\mu \in D_x(X,K)$ has \emph{order $\leq n$} if $\mu(\Fm_x^{n+1})= 0$.
			We denote the subspace of such distributions by $D_x(X,K)_n$.
			The \emph{subspace of distributions with support in $x$ of finite order} is defined as
			\begin{equation*}
				D_x (X,K)_\mathrm{fin} \defeq \bigcup_{n\in \BN_0} D_x(X,K)_n \subset D_x(X,K) .
			\end{equation*}
			We usually omit the coefficient field $K$ from the notation for these objects.
	\end{altenumerate}

\end{definition}

The definition of $D_x(X)_\mathrm{fin}$ has similarities with the space of distributions on an $L$-scheme with support in a rational point treated for example in \cite[Ch.\ I.7]{Jantzen03RepAlgGrp}.
We will discuss the precise relation of the two notions in \Cref{Sect - The Cases of Schemes and p-adic Lie Groups}.

\begin{lemma}\label{Lemma - Functoriality of hyperalgebra}
	Let $f\colon X \ra Y$ be a locally $L$-analytic map of locally $L$-analytic manifolds, and $x\in X$.
	Then, for all $n\in \BN_0$, there is a continuous homomorphism
	\begin{equation*}
		f_\ast \colon D_x(X)_n \lra D_{f(x)}(Y)_n \,,\quad \mu \lto \mu( \blank \circ f) .
	\end{equation*}
\end{lemma}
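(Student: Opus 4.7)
The plan is to construct $f_\ast$ as the transpose of a suitable pullback homomorphism between the algebras of germs.

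First, by the contravariant functoriality of locally analytic germs (\Cref{Prop - Properties of locally analytic germs} (i)), the locally $L$-analytic map $f\colon X \ra Y$ induces a continuous $K$-linear pullback homomorphism
\begin{equation*}
    f^\ast \colon C^\la_{f(x)}(Y,K) \lra C^\la_x (X,K) \,,\quad g \lto g \circ f .
\end{equation*}
Since composition with $f$ is compatible with pointwise addition and multiplication of germs, $f^\ast$ is a homomorphism of $K$-algebras. Moreover, it is a homomorphism of local rings with respect to the local structure from \Cref{Prop - Properties of the algebra of germs of locally analytic functions}: a germ $g$ lies in $\Fm_{f(x)}$ precisely when $g(f(x)) = 0$, which is the same as $(f^\ast g)(x) = 0$, so $f^\ast (\Fm_{f(x)}) \subset \Fm_x$. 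Since $f^\ast$ is multiplicative, it follows inductively that $f^\ast(\Fm_{f(x)}^{n+1}) \subset \Fm_x^{n+1}$ for every $n \in \BN_0$.

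Next, I would pass to strong duals. Applying the transpose operation and using the definition $D_x(X) = C^\la_x(X,K)'_b$, we obtain a continuous $K$-linear map
\begin{equation*}
    \transp{(f^\ast)} \colon D_x(X) \lra D_{f(x)}(Y) \,,\quad \mu \lto \mu \circ f^\ast = \big[ g \mto \mu(g \circ f) \big] ,
\end{equation*}
which is continuous for the strong topologies by \cite[Rmk.\ 16.1 (ii)]{Schneider02NonArchFunctAna} (or directly because $f^\ast$ is continuous between barrelled spaces).

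Finally, I would verify that this transpose restricts to the order-$\leq n$ subspaces. Indeed, if $\mu \in D_x(X)_n$ and $g \in \Fm_{f(x)}^{n+1}$, then $f^\ast(g) \in \Fm_x^{n+1}$ by the containment established above, and therefore $(\transp{(f^\ast)} \mu)(g) = \mu(f^\ast g) = 0$. Hence $\transp{(f^\ast)}$ sends $D_x(X)_n$ into $D_{f(x)}(Y)_n$, and we define $f_\ast$ as this restriction. Its continuity follows from the continuity of $\transp{(f^\ast)}$ together with the fact that both $D_x(X)_n$ and $D_{f(x)}(Y)_n$ carry the subspace topologies inherited from $D_x(X)$ and $D_{f(x)}(Y)$ respectively.

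There is no serious obstacle here; the argument is pure functoriality. The only point that requires any care is checking that $f^\ast$ maps the $(n+1)$-th power of the maximal ideal into the target's $(n+1)$-th power, but this is immediate from $f^\ast$ being an algebra homomorphism sending $\Fm_{f(x)}$ into $\Fm_x$.
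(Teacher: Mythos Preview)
Your proposal is correct and follows essentially the same approach as the paper: take the transpose of the continuous pullback $f^\ast$ on germs, and use that $f^\ast$ is a homomorphism of local $K$-algebras (hence maps $\Fm_{f(x)}^{n+1}$ into $\Fm_x^{n+1}$) to conclude that the transpose preserves the order filtration. The paper's proof is just a terser version of what you wrote.
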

\begin{proof}
	The homomorphism $f^\ast \colon C^\la_{f(x)}(Y,K) \ra C^\la_x(X,K)$ induces the continuous homomorphism $f_\ast \colon D_x(X) \ra D_{f(x)}(Y)$, $\mu \mto \mu( \blank \circ f)$.
	Because $f^\ast$ is a homomorphism of local $K$-algebras, the claim follows.
\end{proof}

\begin{proposition}\label{Lemma - Hyperalgebra of product}
	Let $X$ and $Y$ be locally $L$-analytic manifolds with $x\in X$ and $y\in Y$.
	Then there is a canonical topological isomorphism
	\begin{equation*}
		D_{(x,y)}(X\times Y)_\mathrm{fin} \cong D_x(X)_\mathrm{fin} \ot{K} D_y(Y)_\mathrm{fin} 
	\end{equation*}
	which is compatible with the topological isomorphism $D(X\times Y) \cong D(X) \cot{K, \iota} D(Y)$ from \Cref{Prop - Properties of space of distributions} (iii).
\end{proposition}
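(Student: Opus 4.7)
The approach is to build a natural candidate map by dualizing the topological isomorphism of \Cref{Prop - Properties of locally analytic germs} (iii), and then verify in local coordinates that it restricts to an isomorphism of the finite-order subspaces. The construction makes the compatibility with $D(X\times Y) \cong D(X) \cot{K,\iota} D(Y)$ automatic, so the only real work is a bookkeeping check using monomial bases, which should go through cleanly.

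First I would define the candidate map. The natural bilinear map $D(X) \times D(Y) \to D(X) \cot{K,\iota} D(Y) \cong D(X\times Y)$ induced by \Cref{Prop - Properties of space of distributions} (iii) restricts, for distributions supported at $x$ and $y$, to a $K$-linear map
\[
	\Phi \colon D_x(X) \otimes_K D_y(Y) \lra D_{(x,y)}(X \times Y) \,,\quad \mu \otimes \nu \lto \big[ f\otimes g \mto \mu(f)\nu(g) \big] ,
\]
where we use the isomorphism $C^\la_{(x,y)}(X\times Y, K) \cong C^\la_x(X,K) \cot{K} C^\la_y(Y,K)$ of \Cref{Prop - Properties of locally analytic germs} (iii) to pair against germs at $(x,y)$. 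By construction, $\Phi$ is simply the restriction of the global homomorphism, so the asserted compatibility is built in.

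The main step is to verify that $\Phi$ restricts to an isomorphism on finite-order parts. Pick charts around $x$ and $y$ to identify $C^\la_x(X,K) \cong \CA_0^d$, $C^\la_y(Y,K) \cong \CA_0^e$, and $C^\la_{(x,y)}(X\times Y,K) \cong \CA_0^{d+e}$, with the maximal ideals given by the respective coordinate functions (as in the proof of \Cref{Prop - Properties of the algebra of germs of locally analytic functions}). For multi-indices $\alpha \in \BN_0^d$ and $\beta \in \BN_0^e$, let $\mu_\alpha \in D_x(X)_{\abs{\alpha}}$, $\nu_\beta \in D_y(Y)_{\abs{\beta}}$, and $\rho_{\alpha,\beta} \in D_{(x,y)}(X\times Y)_{\abs{\alpha}+\abs{\beta}}$ be the functionals dual to the monomials $T^\alpha$, $S^\beta$, and $T^\alpha S^\beta$ in the respective finite-dimensional quotients of the germ algebras by the appropriate powers of the maximal ideals. \Cref{Prop - Properties of the algebra of germs of locally analytic functions} ensures that these form bases of $D_x(X)_\mathrm{fin}$, $D_y(Y)_\mathrm{fin}$, and $D_{(x,y)}(X\times Y)_\mathrm{fin}$ respectively. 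A direct calculation yields
\[
	\Phi(\mu_\alpha \otimes \nu_\beta)(T^{\alpha'}S^{\beta'}) = \mu_\alpha(T^{\alpha'})\, \nu_\beta(S^{\beta'}) = \rho_{\alpha,\beta}(T^{\alpha'}S^{\beta'}) ,
\]
so $\Phi(\mu_\alpha \otimes \nu_\beta) = \rho_{\alpha,\beta}$, and $\Phi$ sends the basis $\{\mu_\alpha \otimes \nu_\beta\}$ of $D_x(X)_\mathrm{fin} \otimes_K D_y(Y)_\mathrm{fin}$ bijectively to the basis $\{\rho_{\alpha,\beta}\}$ of $D_{(x,y)}(X\times Y)_\mathrm{fin}$.

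For the topological assertion, both sides carry natural strict locally convex inductive limit structures with finite-dimensional steps: $\varinjlim_{n,m} D_x(X)_n \otimes_K D_y(Y)_m$ on the left and $\varinjlim_N D_{(x,y)}(X\times Y)_N$ on the right. Since $\Phi$ restricts to a linear isomorphism between finite-dimensional subspaces at each filtration step, it is automatically a topological isomorphism. The only mild obstacle I anticipate is the verification that $\Phi$ respects the order filtrations in the precise way claimed, which hinges on the explicit monomial basis description; once this is in place, the topological isomorphism and the compatibility with the global identification follow without further difficulty from the framework developed in the preceding sections.
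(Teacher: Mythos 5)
Your monomial-basis argument for the underlying linear isomorphism is correct, and it is a genuinely different (and more concrete) route than the paper takes: the paper first shows that the germ isomorphism carries $\Fm_{(x,y)}^k$ onto $\sum_{j}\Fm_x^j\cot{K}\Fm_y^{k-j}$ via an exact-sequence argument, then obtains injectivity of the induced map on finite-order parts from a strict-monomorphism square (citing Kopylov--Kuzminov) and surjectivity by a dimension count. Your dual-basis computation $\Phi(\mu_\alpha\otimes\nu_\beta)=\rho_{\alpha,\beta}$ does all of this at once. The compatibility with $D(X\times Y)\cong D(X)\cot{K,\iota}D(Y)$ is also handled correctly, since $\Phi$ is defined as a restriction of that isomorphism.

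However, your argument for the topological part of the claim has a gap. You assert that both sides carry "natural strict locally convex inductive limit structures with finite-dimensional steps" and conclude that any linear isomorphism between them is automatically topological. That conclusion is valid only if the topologies in question are the finest locally convex topologies. But the spaces $D_x(X)_\mathrm{fin}$, $D_y(Y)_\mathrm{fin}$, $D_{(x,y)}(X\times Y)_\mathrm{fin}$ are defined as \emph{subspaces} of the nuclear Fr\'echet spaces $D_x(X)$, $D_y(Y)$, $D_{(x,y)}(X\times Y)$, and the subspace topology on a dense countable-dimensional subspace of an infinite-dimensional Fr\'echet space is strictly coarser than the finest locally convex topology (there are discontinuous linear functionals on it). Similarly, $D_x(X)_\mathrm{fin}\ot{K}D_y(Y)_\mathrm{fin}$ carries the projective/inductive tensor product topology of these subspace topologies, which is \emph{not} the locally convex inductive limit $\varinjlim_{n,m}D_x(X)_n\otimes_K D_y(Y)_m$. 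So "automatically topological" is not available, and indeed the nontrivial topological content of the proposition is precisely that the tensor product topology on $D_x(X)_\mathrm{fin}\ot{K}D_y(Y)_\mathrm{fin}$ agrees with the subspace topology it inherits from $D_x(X)\cot{K}D_y(Y)$. This requires an argument (it follows from nuclearity of the factors, so that projective and $\varepsilon$-tensor products coincide and subspaces embed topologically), and it is exactly the step the paper absorbs into the claim that the left vertical arrow in its square is a strict monomorphism before invoking the Kopylov--Kuzminov lemma.
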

\begin{proof}
	We begin by showing that under the topological isomorphism 
	\begin{equation*}\label{Eq - Germs of product of manifolds}
		\phi \colon C^\la_{(x,y)}(X\times Y, K)  \overset{\cong}{\lra} C^\la_x(X,K) \cot{K} C^\la_y (Y,K) 
	\end{equation*}
	of \Cref{Prop - Properties of locally analytic germs} (iii) we have $\phi \big( \Fm_{(x,y)}^k \big) = \sum_{j=0}^{k} \Fm_x^j \cot{K} \Fm_y^{k-j} $, for $k\in \BN_0$.
	For this purpose first note that $\ev_{(x,y)} = (\ev_x \otimes \ev_y) \circ \phi$.
	As the short strictly exact sequence
	\begin{equation*}
		0 \lra  \Fm_y  \lra  C^\la_y(Y,K) \lra K  \lra 0
	\end{equation*}	
	consists of locally convex $K$-vector spaces of compact type, its completed tensor product with $C^\la_x(X,K)$ remains strictly exact \cite[Cor.\ 2.2]{BreuilHerzig18TowardsFinSlopePartGLn}:
	\begin{equation*}
		0 \lra C^\la_x(X,K) \cot{K} \Fm_y  \lra  C^\la_x(X,K) \cot{K} C^\la_y(Y,K) \xrightarrow{\id \otimes \ev_y} C^\la_x(X,K)  \lra 0 .
	\end{equation*}
	Similarly, $\id \otimes \ev_y $ restricts to a strict epimorphism $ \Fm_x \cot{K} C^\la_y(Y,K) \twoheadrightarrow \Fm_x$.
	Since $\ev_x \otimes \ev_y = \mathrm{ev}_x \circ (\id \otimes \ev_y)$, we conclude that
	\begin{align*}
	 \Ker\big(\mathrm{ev}_x \otimes \mathrm{ev}_y \big) = \big(\id \otimes \ev_y \big)^{-1} (\Fm_x) 
		&= \Fm_x \cot{K} C^\la_y(Y,K) + \Ker \big( \id \otimes \ev_y \big)  \\
		&= \Fm_x \cot{K} C^\la_y(Y,K)  +C^\la_x(X,K) \cot{K} \Fm_y .
	\end{align*}
	The claim now follows since $\phi$ is an isomorphism of $K$-algebras.

	By \cite[Prop.\ 1.1.32 (ii)]{Emerton17LocAnVect} the transpose of $\phi$ gives a topological isomorphism
	\begin{equation*}
		D_x(X) \cot{K} D_y(Y) \overset{\cong}{\lra}	D_{(x,y)}(X\times Y).
	\end{equation*}
	The above claim implies that under this isomorphism $D_x(X)_m \ot{K} D_y(Y)_n$ is mapped into $D_{(x,y)}(X\times Y)_{n+m}$, for all $n,m \in \BN_0$.
	Hence we obtain a commutative diagram 
	\begin{equation*}
	\begin{tikzcd}
		D_x(X)_\mathrm{fin} \ot{K} D_y(Y)_\mathrm{fin} \ar[r, "\psi"] \ar[d,hook] & D_{(x,y)}(X\times Y)_\mathrm{fin}  \ar[d,hook] \\
		D_x(X) \cot{K} D_y(Y) \ar[r, "\cong"] & D_{(x,y)}(X\times Y)
	\end{tikzcd}
	\end{equation*}
	where the vertical maps are strict monomorphisms.
	Then \cite[Lemma 2]{KopylovKuzminov00KerCokerSeqSemiAbCat} implies that $\psi$ is a strict monomorphism as well.
	Thus it remains to prove that $\psi$ is surjective.
	To this end it suffices to show that its restriction
	\begin{equation*}
		\sum_{j=0}^k D_x(X)_j \ot{K} D_y(Y)_{k-j} \lra D_{(x,y)}(X\times Y)_{k}
	\end{equation*}
	is surjective for all $k\in \BN_0$.
	We do so by computing the dimension of both sides. 
	Let $d$ and $e$ be the dimension of $X$ at $x$ resp.\ of $Y$ at $y$.
	Writing $D_x(X)_{=j}$ for some complementary subspace of $D_x(X)_{j-1}$ in $D_x(X)_j$, the left hand side is the direct sum of $D_x(X)_{=j} \ot{K} D_y(Y)_{k-j}$, for $j=0,\ldots,k$.
	By \Cref{Lemma - Description of locally analytic germs} these direct summands are of dimension $\binom{d+j-1}{j}\binom{e+k-j}{k-j}$.
	Their dimensions add up to $\binom{d+e+k}{k}$, which agrees with the dimension of the right hand side.
\end{proof}

\begin{proposition}\label{Prop - Pairing of distributions at a point and vector valued germs}
	For $x\in X$, the pairing of \Cref{Cor - Pairing of distributions and vector valued functions} induces a natural, separately continuous, non-degenerate $K$-bilinear pairing
	\begin{equation*}
		D_x(X) \times C^\la_x(X,V) \lra V \,,\quad (\delta,f) \lto \delta(f) \defeq \delta(\tilde{f}),
	\end{equation*}
	where $\tilde{f} \in C^\la(X,V)$ denotes some lift of $f$.
	This pairing is compatible with the duality between $D_x(X)$ and $C^\la_x(X,K)$ in the sense that, for a BH-subspace $E \subset V$, its restriction to $D_x(X) \times C^\la_x(X,E)$ is given by the duality pairing $D_x(X) \times C^\la_x(X,K)\ra K$ tensored with $E$.
\end{proposition}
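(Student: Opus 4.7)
The plan is to verify the assertions---well-definedness, naturality, separate continuity, BH-compatibility, and non-degeneracy---by reducing each to the corresponding property of the pairing $D(X) \times C^\la(X,V) \to V$ from \Cref{Cor - Pairing of distributions and vector valued functions}. The fundamental input will be that, by \Cref{Lemma - Description of locally analytic germs}, the canonical map $C^\la(X,V) \twoheadrightarrow C^\la_x(X,V)$ is a strict epimorphism, so every germ $f$ admits a global lift $\tilde f$.

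I would first show that $\delta(\tilde f)$ is independent of the lift: two lifts differ by some $h \in C^\la(X,V)$ vanishing on an open neighborhood of $x$, and by strict paracompactness this neighborhood contains a compact open $U' \ni x$. Under the decomposition $D(X) \cong D(U') \oplus D(X \setminus U')$ from \Cref{Prop - Properties of space of distributions}, every $\delta \in D_x(X)$ lies in the $D(U')$-summand (any function supported off $U'$ has germ zero at $x$ and is thus annihilated by $\delta$), so $\delta(h) = 0$ follows from $h\res{U'} = 0$. Naturality and separate continuity then follow at once: naturality in $X$ and $V$ is inherited by choosing lifts compatibly; continuity in $f$ stems from the universal property of the strict epimorphism $C^\la(X,V) \twoheadrightarrow C^\la_x(X,V)$; and continuity in $\delta$ is immediate upon fixing a lift, since $D_x(X) \hookrightarrow D(X)$ is continuous. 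The BH-compatibility claim will reduce similarly to its counterpart in \Cref{Cor - Pairing of distributions and vector valued functions}: applying \Cref{Lemma - Description of locally analytic germs} with $V$ replaced by the BH-subspace $E$ produces a lift $\tilde f \in C^\la(X,E)$ of $f \in C^\la_x(X,E)$, and restricting $\tilde f$ to a compact open neighborhood $U'$ of $x$ on which $\delta$ is supported puts us exactly in the setting of that corollary.

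For non-degeneracy in $\delta$, assuming $V \neq 0$ and $\delta(f) = 0$ for all $f$, I would pick $0 \neq v \in V$ so that $Kv \subset V$ is a BH-subspace and deduce from the BH-compatibility $\delta(\varphi)\, v = \delta(\varphi \cdot v) = 0$ that $\delta(\varphi) = 0$ for every $\varphi \in C^\la_x(X,K)$, whence $\delta = 0$ by the defining duality $D_x(X) = C^\la_x(X,K)'_b$. The dual direction is more delicate. Given $f$ annihilated by all $\delta \in D_x(X)$, \Cref{Lemma - Description of locally analytic germs} represents $f$ as coming from some $\tilde f \in C^\an(U,E) \cong \CO(\varphi(U)) \cot{K,\pi} \ov{E}$ on a chart neighborhood $U$ of $x$ and for some BH-subspace $E \subset V$. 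For each continuous linear functional $\ell \in E'$, covariant naturality gives $\delta(\ell_\ast(f)) = \ell(\delta(f)) = 0$ for every $\delta$, and scalar non-degeneracy---immediate from $D_x(X) = C^\la_x(X,K)'_b$ together with the Hahn--Banach theorem, valid because $K$ is spherically complete---then forces $\ell_\ast(f) = 0$ in $C^\la_x(X,K)$, i.e.\ $\ell \circ \tilde f = 0$ on a neighborhood of $x$. A second application of Hahn--Banach, this time to the Banach space $E$, will yield $\tilde f = 0$ on that neighborhood, so $f$ vanishes as a germ.

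I expect this last step to be the main obstacle. Over characteristic zero one could directly extract Taylor coefficients of $f$ via ``divided-derivative'' distributions at $x$, but in positive characteristic factorials may vanish and no such clean coefficient-extraction within $U(\Fg)$-style distributions is available. Routing through BH-subspaces and invoking Hahn--Banach twice sidesteps this obstacle and works uniformly in all characteristics; this is precisely where the spherical completeness hypothesis on $K$ becomes essential.
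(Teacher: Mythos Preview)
Your plan is sound and, for well-definedness, separate continuity, the BH-compatibility, and the non-degeneracy in $\delta$, it tracks the paper's own proof closely. The genuine divergence is in the non-degeneracy in $f$. The paper writes $f\in C^\an(U,E)\cong C^\an(U,K)\cot{K}\ov E$ as a series $\sum f_n\otimes v_n$ with bounded $(f_n)\subset C^\an(U,K)$ and $(v_n)\subset \ov E$, observes that $D_x(X)\to C^\an(U,K)'_b$ is surjective so that $\sum \lambda(f_n)v_n=0$ for every $\lambda\in C^\an(U,K)'$, and then invokes the \emph{approximation property} of the Banach algebra $C^\an(U,K)$ to conclude $f=0$ in the completed tensor product. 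Your route via $\ell\in \ov E'$ and scalar non-degeneracy is more elementary: it avoids the approximation property entirely and uses only Hahn--Banach, which is already ambient since $K$ is spherically complete.

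One point to tighten: as written, ``$\ell\circ\tilde f=0$ on a neighborhood of $x$'' yields an $\ell$-dependent neighborhood, so you cannot immediately apply Hahn--Banach pointwise on a common set. The fix is to note that the canonical map $C^\an(U,K)\hookrightarrow C^\la_x(X,K)$ is injective (the identity theorem for convergent power series on a polydisc, which the paper also uses implicitly when it writes $C^\an(U_{i_0},E_{i_0})\hookrightarrow C^\la_x(X,V)$ in its well-definedness step). Hence $\ell_\ast(f)=0$ as a germ forces $\ell\circ\tilde f=0$ on \emph{all} of $U$, uniformly in $\ell$, and then Hahn--Banach on $\ov E$ gives $\tilde f(y)=0$ for every $y\in U$. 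With that adjustment your argument goes through and is a clean alternative to the paper's approximation-property argument.
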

\begin{proof}
	To see that the above pairing is well defined, let $\tilde{f} \in \Ker\big( C^\la(X,V) \ra C^\la_x(X,V) \big)$.
	Let $\CI=(U_i,\varphi_i, E_i)_{i\in I}$ be some $V$-index on $X$ such that $\tilde{f} \in C^\la_\CI (X,V)$.
	The homomorphism $C^\la_\CI (X,V) \ra C^\la_x (X,V)$ factors over the inclusion $C^\an(U_{i_0}, E_{i_0}) \hookrightarrow C^\la_x(X,V)$ where $i_0\in I$ such that $x\in U_{i_0}$.
	It follows that $\tilde{f}\res{U_{i_0}} = 0$, and hence $\delta(\tilde{f}) = 0$ for all $\delta \in D(U_{i_0})$.
	Since $D_x(X) \subset D(U_{i_0})$, this shows the well-definedness.

	The separate continuity, the non-degeneracy in $D_x(X)$ and the compatibility statement of the above pairing follow from the respective statements of \Cref{Cor - Pairing of distributions and vector valued functions}.

	It remains to show the non-degeneracy in $C^\la_x(X,V)$.
	To this end, let $f\in C^\la_x(X,V)$ such that $\delta(f) = 0$ for all $\delta \in D_x(X)$.
	We have to show that $f = 0$.
	Let $U$ be a compact open chart around $x$ and $E\subset V$ a BH-subspace such that $f \in C^\an(U,E)\cong C^\an(U,K) \cot{K} \ov{E}$.
	We find bounded sequences $(f_n)_{n\in \BN}\subset C^\an(U,K)$ and $(v_n)_{n\in \BN} \subset \ov{E}$ such that $f = \sum_{n\geq 1} f_n \otimes v_n$.
	Since the homomorphism $D_x(X) \ra C^\an(U,K)'_b$ induced by the duality between $C^\la_x(X,K)$ and $D_x(X)$ via \Cref{Lemma - Description of locally analytic germs} is surjective, the assumption $\delta(f) = 0$ implies that 
	\begin{equation}\label{Eq - Functionals applied to tensor series}
		\sum_{n\geq 1} \lambda(f_n) v_n = 0 \quad \text{, for all $\lambda \in C^\an(U,K)'_b$.}
	\end{equation}
	But by \cite[Prop.\ 18.4]{Schneider02NonArchFunctAna} the $K$-Banach space $C^\an(U,K)$ satisfies the approximation property.
	Therefore \cite[Prop.\ 4.6]{Ryan02IntroTensProdBanachSp} shows that \eqref{Eq - Functionals applied to tensor series} implies $f = 0$.
\end{proof}

For $\mathrm{char}(L) = 0$ and a locally $L$-analytic Lie group $G$ with unit element $1$, F\'eaux de Lacroix shows that the pairing $U(\Fg)_K \times C^\la_1(G,V) \ra V$ is non-degenerate \cite[Kor.\ 4.7.4]{FeauxdeLacroix99TopDarstpAdischLieGrp}.
In view of \Cref{Prop - Hyperalgebra and universal enveloping algebra} below the following corollary generalizes this result.
F\'eaux de Lacroix's proof relies on differentiation with respect to elements of $\Fg$.
We pursue a different approach here which is more suitable for the case $\mathrm{char}(L) > 0$.

\begin{corollary}\label{Cor - Pairing of hyperalgebra and vector valued germs}
	For $x\in X$, $D_x(X)_\mathrm{fin} \subset D_x(X)$ is a dense subspace.
	In particular, the $K$-bilinear pairing
	\begin{equation*}
		D_x(X)_\mathrm{fin} \times C^\la_x(X,V) \lra V \,,\quad (\mu, f) \lto \mu(f) ,
	\end{equation*}
	induced from the one of \Cref{Prop - Pairing of distributions at a point and vector valued germs} is non-degenerate.
\end{corollary}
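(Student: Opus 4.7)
The plan is to first prove density of $D_x(X)_\mathrm{fin}$ in $D_x(X)$, and then derive the non-degeneracy statement as a soft consequence of that density together with \Cref{Prop - Pairing of distributions at a point and vector valued germs}.

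The algebraic core of the density claim is the identity
\[
	\bigcap_{n\geq 0} \Fm_x^{n+1} \;=\; 0 \qquad \text{in } C^\la_x(X,K).
\]
To see this, I would choose a chart at $x$ and invoke the $K$-algebra isomorphism $C^\la_x(X,K)\cong \CA_0^d$ from the proof of \Cref{Prop - Properties of the algebra of germs of locally analytic functions}, under which $\Fm_x$ corresponds to $(T_1,\dots,T_d)$. A germ lying in every power of this ideal has all Taylor coefficients at $0$ equal to zero, and since any representative converges on some open polydisc, the identity theorem for convergent power series forces the germ to vanish.

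To pass from this identity to density, I will use Hahn--Banach via reflexivity. By \Cref{Prop - Properties of locally analytic germs}~(ii) applied with $V = K$, $C^\la_x(X,K)$ is of compact type, so $D_x(X)$ is a reflexive nuclear $K$-Fr\'echet space and the canonical map $C^\la_x(X,K)\xrightarrow{\cong} D_x(X)'_b$ is a topological isomorphism. Consequently, density of $D_x(X)_\mathrm{fin}\subset D_x(X)$ is equivalent to the assertion that any $f\in C^\la_x(X,K)$ with $\mu(f)=0$ for all $\mu\in D_x(X)_\mathrm{fin}$ must vanish. Such an $f$ annihilates every $D_x(X)_n=(\Fm_x^{n+1})^\perp$. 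Since $\Fm_x^{n+1}\subset C^\la_x(X,K)$ is closed of finite codimension by \Cref{Prop - Properties of the algebra of germs of locally analytic functions}, Hahn--Banach applied to the finite-dimensional Hausdorff quotient $C^\la_x(X,K)/\Fm_x^{n+1}$ yields the bipolar identity $(D_x(X)_n)^\perp=\Fm_x^{n+1}$ inside $C^\la_x(X,K)$. Hence $f\in\bigcap_{n\geq 0}\Fm_x^{n+1}=0$, which completes the density argument.

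Non-degeneracy of the restricted pairing then follows formally. Non-degeneracy in the first variable is inherited directly from \Cref{Prop - Pairing of distributions at a point and vector valued germs} because $D_x(X)_\mathrm{fin}$ sits inside $D_x(X)$. For non-degeneracy in the second variable, fix $f\in C^\la_x(X,V)$ with $\mu(f)=0$ for all $\mu\in D_x(X)_\mathrm{fin}$. Separate continuity of the pairing in \Cref{Prop - Pairing of distributions at a point and vector valued germs} makes $\delta\mapsto\delta(f)$ a continuous map $D_x(X)\to V$; since it vanishes on the dense subspace $D_x(X)_\mathrm{fin}$ and $V$ is Hausdorff, it vanishes on all of $D_x(X)$, forcing $f=0$ by the non-degeneracy of the pairing in that proposition. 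The only non-formal ingredient is the intersection-of-powers identity; everything else is standard duality bookkeeping.
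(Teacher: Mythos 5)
Your proof is correct and follows the same overall strategy as the paper: establish density of $D_x(X)_\mathrm{fin}$ in $D_x(X)$ by reducing, via reflexivity of $C^\la_x(X,K)$ and the bipolar theorem for the closed finite-codimensional subspaces $\Fm_x^{n+1}$, to the vanishing of $\bigcap_{n} \Fm_x^{n+1}$, and then deduce non-degeneracy in the second variable from density plus separate continuity. The only substantive divergence is how that vanishing is proved: the paper simply invokes Krull's intersection theorem, which is available because \Cref{Prop - Properties of the algebra of germs of locally analytic functions} has already established that $C^\la_x(X,K)$ is local and noetherian; you instead argue directly that a germ lying in all powers of $\Fm_x\cong(T_1,\ldots,T_d)$ under the chart isomorphism $C^\la_x(X,K)\cong\CA_0^d$ has all Taylor coefficients zero and hence vanishes by the identity theorem. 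Both are valid; your version is a bit more self-contained, whereas the paper's leans on a fact it has already paid for. The remaining differences (direct argument versus proof by contradiction, invoking reflexivity explicitly versus citing Hahn--Banach to produce a separating functional) are purely cosmetic.
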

\begin{proof}
	As the pairing of \Cref{Prop - Pairing of distributions at a point and vector valued germs} is separately continuous, the second statement follows once we show that $D_x(X)_\mathrm{fin} \subset D_x(X)$ is a dense subspace.
	Assume, for the sake of contradiction, that there exists $\delta \in D_x(X) \setminus \ov{ D_x(X)_\mathrm{fin} }$.
	By the Hahn--Banach theorem \cite[Cor.\ 9.3]{Schneider02NonArchFunctAna} we find $f\in C^\la_x(X,K)$ such that $\mu(f) =0$, for all $\mu \in \ov{D_x(X)_\mathrm{fin} }$, and $\delta(f) =1$.
	For any $n\in \BN_0$, it follows that $f\in D_x(X)_n^\perp$ where we consider the orthogonal under the pairing of \Cref{Prop - Pairing of distributions at a point and vector valued germs}.
	But we have $D_x(X)_n = (\Fm_x^{n+1} )^\perp$ by definition, and $\big( (\Fm_x^{n+1})^\perp \big)^\perp = \Fm_x^{n+1}$ by \cite[p.II.45 Cor.\ 3]{Bourbaki87TopVectSp1to5}.
	Hence $f\in \bigcap_{n\in \BN_0} \Fm_x^{n+1}$, and we apply Krull's intersection theorem to the local, noetherian $K$-algebra $C^\la_x(X,K)$ to conclude that $f=0$, a contradiction.
\end{proof}

\subsection{The Hyperalgebra of a Locally Analytic Lie Group}

Let $G$ again be a locally $L$-analytic Lie group with multiplication map $m$ and identity element $1$.

\begin{definition}\label{Def - Hyperalgebra}
	We define the \emph{hyperalgebra\footnote{Despite the analogy of $\hy(G)$ with the distribution algebra of an algebraic group (cf.\ \Cref{Sect - The Cases of Schemes and p-adic Lie Groups}) we prefer the name ``hyperalgebra'' here to distinguish it from the algebra of locally analytic distributions on $G$.} of $G$ (over $K$)} as
	\begin{equation*}
		\hy(G,K) \defeq D_1(G,K)_\mathrm{fin} .
	\end{equation*}
	It is the union of the finite-dimensional subspaces $\hy(G,K)_n \defeq D_1(G,K)_n$, for $n\in \BN_0$.
	We omit mention of the coefficient field $K$ from the notation when it does not cause confusion.
\end{definition}

As the next lemma demonstrates, $\hy(G)$ is a jointly continuous locally convex subalgebra of $D(G)$ with respect to the convolution product.
Note that the multiplication map of $\hy(G)$ is jointly continuous since $\hy(G) \subset D(G_0)$, for any compact open subgroup $G_0$ of $G$.
By \Cref{Lemma - Functoriality of hyperalgebra}, $\hy(G)$ is preserved under the involution $\mu \mto \dot{\mu}$.

\begin{lemma}\label{Lemma - Multiplication for hyperalgebra}
	For $n, m \in \BN_0$, we have $	\hy(G)_n \ast \hy(G)_m \subset \hy(G)_{n+m} $.
\end{lemma}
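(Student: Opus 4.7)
The plan is to deduce the inclusion directly from the functoriality of distributions at a point (\Cref{Lemma - Functoriality of hyperalgebra}) together with the filtered tensor decomposition for products (\Cref{Lemma - Hyperalgebra of product}). Recall from the definition before \Cref{Prop - Convolution product} that the convolution is defined as the composite
\begin{equation*}
	D(G) \times D(G) \lra D(G) \cot{K,\iota} D(G) \cong D(G \times G) \xrightarrow{m_\ast} D(G) ,
\end{equation*}
so for $\mu, \nu \in D(G)$ one has $\mu \ast \nu = m_\ast(\mu \otimes \nu)$, where the elementary tensor $\mu \otimes \nu$ is viewed as an element of $D(G \times G)$ via the middle isomorphism (\Cref{Prop - Properties of space of distributions} (iii)).

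Now fix $\mu \in \hy(G)_n$ and $\nu \in \hy(G)_m$. Both are supported in $1 \in G$, so $\mu \otimes \nu$ is supported in $(1,1) \in G \times G$. The compatibility assertion in \Cref{Lemma - Hyperalgebra of product} ensures that under the identification $D_{(1,1)}(G\times G)_\mathrm{fin} \cong \hy(G) \ot{K} \hy(G)$, this elementary tensor is the expected one. The proof of that proposition moreover shows explicitly that the order filtrations are compatible in the sense that $D_1(G)_n \ot{K} D_1(G)_m$ is mapped into $D_{(1,1)}(G\times G)_{n+m}$. Consequently $\mu \otimes \nu \in D_{(1,1)}(G\times G)_{n+m}$.

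Finally, the multiplication map $m \colon G \times G \to G$ is locally $L$-analytic and sends $(1,1)$ to $1$, so \Cref{Lemma - Functoriality of hyperalgebra} applied to $m$ provides a continuous homomorphism $m_\ast \colon D_{(1,1)}(G\times G)_{n+m} \to D_1(G)_{n+m} = \hy(G)_{n+m}$. Evaluating it on $\mu \otimes \nu$ yields $\mu \ast \nu \in \hy(G)_{n+m}$, which is the desired inclusion. The argument is essentially a bookkeeping exercise using already-established facts; the only point requiring attention is the matching of the two appearances of ``$\mu \otimes \nu$'' (in the definition of convolution and in the product isomorphism for germs), which is exactly the compatibility claim included in \Cref{Lemma - Hyperalgebra of product}.
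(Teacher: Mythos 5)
Your proof is correct and follows essentially the same route as the paper's: identify the filtration behaviour under the product isomorphism from the proof of \Cref{Lemma - Hyperalgebra of product}, then push forward via \Cref{Lemma - Functoriality of hyperalgebra} applied to $m\colon G\times G \to G$. The extra care you take identifying the two occurrences of $\mu\otimes\nu$ is implicit in the paper's proof but is the same argument.
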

\begin{proof}
	In the proof of \Cref{Lemma - Hyperalgebra of product} we have seen that under the topological isomorphism $D(G) \cot{K,\iota} D(G) \cong D(G\times G)$ the subspace $\hy(G)_n \ot{K} \hy(G)_m$ is mapped into the subspace $D_1(G\times G)_{n+m}$.
	By \Cref{Lemma - Functoriality of hyperalgebra} the image of the latter under $m_\ast \colon D(G \times G) \ra D(G)$ is contained in $\hy(G)_{n+m}$.
\end{proof}

We obtain an analogue of the \emph{adjoint representation} of $G$ on $\hy(G)$ where $g\in G$ acts by the algebra homomorphism 
\begin{equation*}
	\Ad(g) \colon  \hy(G) \lra \hy(G) \,,\quad \mu \lto \Ad(g)(\mu) \defeq \left[f \mto \mu\big(f(g\blank g^{-1})\big)\right].
\end{equation*}

\begin{proposition}\label{Prop - Adjoint representation}
	The adjoint representation $\Ad$ of $G$ on $\hy(G)$ is locally analytic (or equivalently extends to a separately continuous $D(G)$-module structure on $\hy(G)$).
	For every $n\in \BN_0$, it restricts to a locally analytic $G$-subrepresentation $\Ad_n$ on $\hy(G)_n \subset \hy(G)$.
\end{proposition}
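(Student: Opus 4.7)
The plan is to verify stability of $\hy(G)_n$ under $\Ad$, to identify $\Ad_n$ as the contragredient of an action on the finite-dimensional quotient $A_n \defeq C^\la_1(G,K)/\Fm_1^{n+1}$, and then to transport local analyticity from the conjugation representation on $C^\la(H,K)$ for a compact open subgroup $H$ through these identifications.

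First I would check stability. The conjugation $c_g \colon G \to G$, $h \mapsto g h g^{-1}$, is a locally $L$-analytic automorphism fixing $1$, so by \Cref{Lemma - Functoriality of hyperalgebra} its pushforward preserves $D_1(G)_n$, and this pushforward coincides with the restriction of $\Ad(g)$ to $\hy(G)_n$. This gives the $G$-subrepresentation structure on each $\hy(G)_n$.

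Next, to establish local analyticity of $\Ad_n$, I would exploit that $A_n$ is finite-dimensional by \Cref{Prop - Properties of the algebra of germs of locally analytic functions}, and that the pairing of \Cref{Prop - Pairing of distributions at a point and vector valued germs} realizes $\hy(G)_n = (\Fm_1^{n+1})^\perp$ as the dual of $A_n$. Local analyticity of orbit maps being a local condition, and $\Ad(g_0 g) = \Ad(g_0)\circ \Ad(g)$ factoring through the continuous linear operator $\Ad(g_0)$, by translation it suffices to treat a compact open subgroup $H \subset G$. On $H$, the conjugation representation $(g,f) \mapsto f(g^{-1} \blank g)$ on $C^\la(H,K)$ is locally analytic by \Cref{Expl - Regular representations}. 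Because this action fixes $1 \in H$, it preserves $\Fm_1^{n+1}$ and descends along the continuous $H$-equivariant composition $C^\la(H,K) \twoheadrightarrow C^\la_1(G,K) \twoheadrightarrow A_n$; by covariant functoriality in the target (\Cref{Prop - Properties of locally analytic functions}~(i)), the induced $H$-action on the finite-dimensional Banach space $A_n$ is then locally analytic. A direct computation of the contragredient identifies the dual action on $A_n^\ast \cong \hy(G)_n$ with $\Ad_n\vert_H$, so \Cref{Cor - Contragredient representation on Banach space is locally analytic} yields local analyticity of $\Ad_n\vert_H$, and hence, after translation, of $\Ad_n$ on all of $G$.

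Finally, since $\hy(G) = \bigcup_n \hy(G)_n$ and every orbit map factors through some $\hy(G)_n$, local analyticity of $\Ad$ on $\hy(G)$ follows, while the equivalent $D(G)$-module formulation is provided by \Cref{Prop - Anti-equivalence for locally analytic representations}~(i). The main obstacle is conceptual rather than technical: correctly setting up the chain of identifications $\hy(G)_n \cong A_n^\ast$, descending the locally analytic conjugation action on $C^\la(H,K)$ to the finite-dimensional quotient $A_n$, and verifying that its contragredient is genuinely $\Ad_n\vert_H$. Once these pieces are aligned, the argument reduces to an application of the foundational results already assembled in the preceding sections.
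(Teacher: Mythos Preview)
Your proof is correct and follows the same overall strategy as the paper --- descend the locally analytic conjugation action on $C^\la(H,K)$ for a compact open subgroup, then dualize --- but the two arguments are organized differently. The paper works with the full germ space $C^\la_1(G,K)$: it shows this carries a locally analytic $G$-representation (first on a compact open $G_0$ via \Cref{Prop - Subrepresentations and quotients of locally analytic representations} applied to the quotient map $C^\la(G_0,K)\twoheadrightarrow C^\la_1(G,K)$, then on all of $G$ via \cite[Prop.\ 3.6.11]{Emerton17LocAnVect}), and then applies the compact-type anti-equivalence of \Cref{Prop - Anti-equivalence for locally analytic representations}~(ii) to obtain the $D(G)$-module $D_1(G)$ directly, identifying $\hy(G)_n$ and $\hy(G)$ as $D(G)$-submodules afterwards. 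You instead work level by level with the finite-dimensional quotients $A_n$ and invoke the contragredient-on-Banach-spaces result \Cref{Cor - Contragredient representation on Banach space is locally analytic}, assembling the full $\hy(G)$ only at the end. Your route is slightly more elementary in that it avoids the compact type machinery, at the cost of an explicit union-over-$n$ step; the paper's route is more uniform and yields the ambient $D(G)$-module $D_1(G)$ as a byproduct.
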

\begin{proof}
	We first want to show that the $G$-representation 
	\begin{equation}\label{Eq - Representation by conjugation on stalk}
		G \times C^\la_1 (G,K) \lra C^\la_1 (G,K) \,,\quad (g,f) \lto f(g^{-1} \blank g),
	\end{equation}
	is locally analytic. 
	It follows from functoriality (\Cref{Prop - Properties of locally analytic germs} (i)) that this is an action of $G$ on $C^\la_1(G,V)$ by topological automorphisms.
	Moreover, for some compact open subgroup $G_0$ of $G$, the $G_0$-representation on $C^\la(G_0,V)$ by conjugation is  locally analytic, see \Cref{Expl - Regular representations}.
	Since the quotient map $C^\la(G_0,V) \twoheadrightarrow C^\la_1(G,V)$ is $G_0$-equivariant, the latter is a locally analytic $G_0$-representation by \Cref{Prop - Subrepresentations and quotients of locally analytic representations} (i).
	Therefore the $G$-representation \eqref{Eq - Representation by conjugation on stalk} is locally analytic by \cite[Prop.\ 3.6.11]{Emerton17LocAnVect}.

	Note that $C^\la_1(G,K)$ is of compact type (\Cref{Prop - Properties of locally analytic germs} (ii)).
	Via the anti-equivalence of \Cref{Prop - Anti-equivalence for locally analytic representations} (ii), $D_1(G)$ becomes a separately continuous $D(G)$-module.
	The action of $\delta_g$, for $g\in G$, is given by 
	\begin{equation*}
		\delta_g \ast \mu = \left[f \mto \mu\big(f(g\blank g^{-1})\big)\right] \quad\text{, for $\mu \in D_1(G)$.}
	\end{equation*}
	Moreover, $\Fm_1^{n+1} \subset C^\la_1(G,K)$ is a closed subspace which is invariant under the $G$-action of \eqref{Eq - Representation by conjugation on stalk}.
	It follows that $\hy(G)_n = \big( C^\la_1(G,K) / \Fm_1^{n+1} \big)^\perp$ and consequently also $\hy(G)$ are $D(G)$-submodules of $D_1(G)$.
	Finally \Cref{Prop - Anti-equivalence for locally analytic representations} (i) implies that $\Ad$ and $\Ad_n$ are locally analytic $G$-representation because $\hy(G)$ is of LB-type.
\end{proof}

\begin{lemma}\label{Lemma - Formula for adjoint representation}
	For $\delta \in D(G)$ and $\mu \in \hy(G)$, we have $\Ad(\delta)(\mu) \ast \delta = \delta \ast \mu$.
\end{lemma}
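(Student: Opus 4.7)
The plan is to verify the identity on the dense subspace of Dirac distributions, where it reduces to a direct calculation with the Fubini-type identity, and to extend it to all of $D(G)$ by a continuity argument. For $\delta = \delta_g$ with $g \in G$, I apply both sides to an arbitrary test function $f \in C^\la(G,K)$ and unfold the convolutions via \Cref{Lemma - Fubinis theorem}. The right-hand side yields $(\delta_g \ast \mu)(f) = \mu[g' \mapsto f(gg')]$. For the left-hand side, the Fubini identity gives $(\Ad(g)(\mu) \ast \delta_g)(f) = \Ad(g)(\mu)[h \mapsto f(hg)]$, and then the explicit formula $\Ad(g)(\mu)(h) = \mu[g' \mapsto h(gg'g^{-1})]$ specialized to $h(k) = f(kg)$ produces $\mu[g' \mapsto f(gg'g^{-1} g)] = \mu[g' \mapsto f(gg')]$, in agreement with the right-hand side.

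Next I verify that both sides define continuous maps $D(G) \to D(G)$ in $\delta$. The right-hand side $\delta \mapsto \delta \ast \mu$ is continuous by the separate continuity of the convolution product (\Cref{Prop - Convolution product}). For the left-hand side, the map $\delta \mapsto \Ad(\delta)(\mu)$ is the separately continuous $D(G)$-module action on $\hy(G)$ provided by \Cref{Prop - Adjoint representation}, and it factors through the finite-dimensional $D(G)$-invariant subspace $\hy(G)_n$, where $n$ is the order of $\mu$ (using the $G$-stability of $\hy(G)_n$ from the same proposition). Because $\hy(G)_n$ is finite-dimensional, the separately continuous bilinear convolution $\hy(G)_n \times D(G) \to D(G)$ is automatically jointly continuous, so the composition $\delta \mapsto (\Ad(\delta)(\mu), \delta) \mapsto \Ad(\delta)(\mu) \ast \delta$ is continuous.

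With both sides known to be continuous and to agree on Dirac distributions, I conclude by invoking the density of the $K$-linear span of Dirac distributions in $D(G)$ (\Cref{Prop - Properties of space of distributions}(iv)). The main obstacle is the continuity of the left-hand side, where the crucial ingredient is the finite-dimensionality of $\hy(G)_n$, which allows one to upgrade the separate continuity of convolution to joint continuity on $\hy(G)_n \times D(G)$; without this one could not pass to limits in both arguments simultaneously.
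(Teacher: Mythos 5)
Your computation for the Dirac case $\delta = \delta_g$ is correct and matches the paper's calculation, and your finite-dimensionality argument for upgrading the separately continuous pairing $\hy(G)_n \times D(G) \to D(G)$ to a jointly continuous one is sound reasoning in itself. But the concluding density step does not close; the missing ingredient is not continuity but \emph{linearity} of the left-hand side.

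As you construct it -- the composition $D(G) \to \hy(G)_n \times D(G) \to D(G)$, $\delta \mapsto (\Ad(\delta)(\mu),\delta) \mapsto \Ad(\delta)(\mu)\ast\delta$ -- the left-hand side is homogeneous of degree $2$ in $\delta$ (replacing $\delta$ by $c\delta$ scales it by $c^2$), whereas the right-hand side $\delta\mapsto\delta\ast\mu$ is linear. \Cref{Prop - Properties of space of distributions}~(iv) gives density of the $K$-linear \emph{span} of the Dirac distributions, not of the set $\{\delta_g\}_g$ itself, and a continuous quadratic map need not agree with a continuous linear one on the span of a set where they happen to coincide. Indeed the identity as you read it already fails at $\delta=-\delta_1$: the left side gives $(-\mu)\ast(-\delta_1)=\mu$, the right side $-\mu$. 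The resolution is that ``$\Ad(\delta)(\mu)\ast\delta$'' must be read through the coproduct $\Delta=\mathrm{diag}_\ast\colon D(G)\to D(G\times G)\cong D(G)\cot{K,\iota}D(G)$, i.e.\ as $m_\ast\bigl((\Ad(\,\cdot\,)(\mu))\otimes\id\bigr)\bigl(\Delta(\delta)\bigr)$ -- the same linear map behind the abuse of notation ``$\delta\mapsto\delta\otimes\delta$'' in \Cref{Lemma - Module structure on tensor product}, and the only reading under which the formula $\delta\ast(\nu\otimes w)=\Ad(\delta)(\nu)\otimes\delta\ast w$ in \Cref{Cor - Composite module from H-representation} can even define a module action. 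Since $\Delta(\delta_g)=\delta_g\otimes\delta_g$, the coproduct version agrees with your literal expression on Diracs, but unlike it is a continuous $K$-\emph{linear} map in $\delta$; so agreement on $\{\delta_g\}$ passes to the span by linearity and then to all of $D(G)$ by density and continuity, with no need for your finite-dimensionality argument. To make the proof correct you must route the left-hand side through $\Delta$ explicitly before invoking density.
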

\begin{proof}
	It suffices to consider the case of a Dirac distribution $\delta = \delta_g$, for $g\in G$.
	Using \Cref{Lemma - Fubinis theorem} we compute, for $f\in C^\la(G,K)$,
	\begin{align*}
		\big( \Ad(g)(\mu) \ast \delta_g \big)(f) &= \Ad(g)(\mu)\big[ h \mto f(hg) \big] =\mu\big( f(g\blank g^{-1} g) \big)  \\
			&= \mu \big[ h' \mto f(gh') \big] = \big( \delta_g \ast \mu \big)(f) .
	\end{align*}
\end{proof}

\subsection{The Cases of Schemes and $p$-adic Lie Groups}\label{Sect - The Cases of Schemes and p-adic Lie Groups}

As mentioned before, we want to compare $\hy(G)$ to a certain distribution algebra when the non-archimedean Lie group $G$ arises from a smooth algebraic group.
To recall this notion, let $\bX$ be an $L$-scheme with an $L$-valued point $x \in \bX(L)$, and let $\FM_x \subset \CO_{\bX,x}$ denote the maximal ideal of the stalk of the structure sheaf $\CO_\bX$ at $x$.

\begin{definition}[{\cite[Sect.\ I.7.1]{Jantzen03RepAlgGrp}}]
	For $n\in \BN_0$, the $L$-vector space of \emph{distributions on $\bX$ of order $\leq n$ with support in $x$} is defined to be
	\begin{equation*}
		\mathrm{Dist}(\bX,x)_n \defeq \left\{\mu \in (\CO_{\bX,x})^\ast \middle{|} \mu(\FM_x^{n+1}) = 0 \right\}.
	\end{equation*}
	The space $\mathrm{Dist}(\bX,x)$ of \emph{distributions on $\bX$ with support in $x$} is the union of all $\mathrm{Dist}(\bX,x)_n$.
\end{definition}

Let $\bX$ be of finite type over $L$ and let $\bX^\rig$ denote the rigid analytic space over $L$ associated to $\bX$, cf.\ \cite[9.3.4 Expl.\ 2]{BoschGuentzerRemmert84NonArchAna}.
We write $\Fm_x \subset \CO_{\bX^\rig,x}$ for the maximal ideal in the rigid analytic stalk, cf.\  \cite[Sect.\ 7.3.2]{BoschGuentzerRemmert84NonArchAna}.
Then one has a canonical inclusion $\CO_{\bX,x} \hookrightarrow \CO_{\bX^\rig,x}$ of local $L$-algebras.
It induces an isomorphism between the completions with respect to the respective maximal ideals as stated in \cite[p.113]{Bosch14LectFormRigidGeom}.
Passing to the algebraic duals then yields
\begin{equation*}
	\mathrm{Dist}(\bX,x)_n \cong \big( \CO_{\bX,x}/\FM_x^{n+1} \big)^\ast \cong \big( \CO_{\bX^\rig,x}/\Fm_x^{n+1} \big)^\ast .
\end{equation*}
On the other hand, if in addition $\bX$ is smooth and separated, the set of $L$-valued points $\bX(L) = \bX^\rig(L)$ naturally carries the structure of a locally $L$-analytic manifold.
The description of \Cref{Lemma - Description of locally analytic germs} then yields an isomorphism of local $K$-algebras $(\CO_{\bX^\rig,x} )_K  \cong C^\la_x(\bX(L),K)$.
For the dual spaces we obtain
\begin{equation*}
	D_x(\bX(L),K)_n \cong \big( C^\la_x(\bX(L),K)/\Fm_x^{n+1} \big)^\ast
		\cong \big( (\CO_{\bX^\rig,x} )_K / (\Fm_x^{n+1})_K \big)^\ast 
		\cong \big( \CO_{\bX^\rig,x}/\Fm_x^{n+1} \big)^\ast_K .
\end{equation*}
Note that every $\mu \in C^\la_x(\bX(L),K)^\ast $ with $\mu(\Fm_x^{n+1})=0$ here is continuous automatically.
In total we obtain:

\begin{proposition}\label{Prop - Distributions on scheme}
	Let $\bX$ be a smooth, separated $L$-scheme of finite type and $x\in \bX(L)$.
	Then the inclusion $(\CO_{\bX,x} )_K \hookrightarrow C^\la_x(\bX(L),K)$ induces a canonical isomorphism of $K$-vector spaces
	\begin{equation*}
		D_x(\bX(L),K)_\mathrm{fin}  \overset{\cong}{\lra} \mathrm{Dist}(\bX,x)_K .
	\end{equation*} 
\end{proposition}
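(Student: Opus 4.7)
The plan is to establish the isomorphism at each finite order $n$ and then take the union, so that it suffices to construct, for every $n \in \BN_0$, a canonical $K$-linear isomorphism
\[ D_x(\bX(L),K)_n \overset{\cong}{\lra} \mathrm{Dist}(\bX,x)_n \otimes_L K \]
induced by restricting a continuous $K$-linear functional on $C^\la_x(\bX(L),K)$ to the subalgebra $(\CO_{\bX,x})_K$, and to verify that these isomorphisms are compatible as $n$ varies.

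The core step is to identify, via the inclusion $(\CO_{\bX,x})_K \hookrightarrow C^\la_x(\bX(L),K)$, the two finite-dimensional $K$-algebras
\[ \big(\CO_{\bX,x} / \FM_x^{n+1}\big)_K \overset{\cong}{\lra} C^\la_x(\bX(L),K) / \Fm_x^{n+1} . \]
For this I would combine two inputs from the discussion preceding the proposition. First, since $\bX$ is smooth at $x$, the canonical inclusion $\CO_{\bX,x} \hookrightarrow \CO_{\bX^\rig,x}$ induces an isomorphism between the $\FM_x$-adic and $\Fm_x$-adic completions, and hence on $n$-th truncations $\CO_{\bX,x}/\FM_x^{n+1} \overset{\cong}{\lra} \CO_{\bX^\rig,x}/\Fm_x^{n+1}$. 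Second, the identification $(\CO_{\bX^\rig,x})_K \cong C^\la_x(\bX(L),K)$ of local $K$-algebras coming from \Cref{Lemma - Description of locally analytic germs} matches the two maximal ideals (via the chart-based description employed in the proof of \Cref{Prop - Properties of the algebra of germs of locally analytic functions}). Extending scalars and combining the two identifications yields the displayed isomorphism of truncations. To pass from this to the displayed isomorphism of distributions, I would dualize and invoke that any $K$-linear functional on $C^\la_x(\bX(L),K)$ killing $\Fm_x^{n+1}$ factors through the finite-dimensional quotient and is therefore automatically continuous, together with the natural isomorphism $(M_K)^\ast \cong M^\ast \otimes_L K$ for a finite-dimensional $L$-vector space $M$. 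Compatibility as $n$ varies is immediate from the construction, and taking colimits gives the proposition.

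The main obstacle is the first ingredient above, namely the isomorphism of formal completions at a smooth $L$-rational point on a separated $L$-scheme of finite type. This is the standard comparison of algebraic and rigid-analytic formal neighbourhoods referenced in the paragraph preceding the proposition: one reduces via an étale local coordinate chart to the case $\bX = \BA_L^d$ with $x$ the origin, where both completions are canonically $L\llrrbracket{T_1,\ldots,T_d}$, and then transports the identification back to $\bX$. Once this input is in hand, all remaining steps are purely formal dualization and base change.
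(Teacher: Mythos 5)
Your proposal matches the paper's argument closely: the paper likewise works order by order, composing the two identifications $\mathrm{Dist}(\bX,x)_n \cong (\CO_{\bX^\rig,x}/\Fm_x^{n+1})^\ast$ (via the completion isomorphism $\CO_{\bX,x} \hookrightarrow \CO_{\bX^\rig,x}$, cited to Bosch) and $(\CO_{\bX^\rig,x})_K \cong C^\la_x(\bX(L),K)$ (from \Cref{Lemma - Description of locally analytic germs}), then dualizes and base-changes, noting as you do that any functional annihilating $\Fm_x^{n+1}$ is automatically continuous. The only minor deviation is that the paper invokes the algebraic-to-rigid completion isomorphism as a general fact about finite-type schemes without reducing to an étale chart, whereas your sketch routes it through smoothness; this is inessential.
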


Now let $\bG$ be a (smooth) linear algebraic group over $L$ with multiplication morphism $m$ and identity element $1 \in \bG(L)$.
Then $\bG(L)$ has the structure of a locally $L$-analytic Lie group.
The space ${\rm Dist}(\bG,1) $ carries an algebra structure where the product of $\mu$ and $\nu$ is given by \cite[Sect.\ I.7.7]{Jantzen03RepAlgGrp}
\begin{equation*}
	\CO_{\bG,1}  \xrightarrow{m^\ast}  \CO_{\bG,1} \ot{L}  \CO_{\bG,1}  \xrightarrow{\mu \otimes \nu} L .
\end{equation*}
Since the above map is compatible with \eqref{Eq - Explicit map for convolution of distributions} under the isomorphism of \Cref{Prop - Distributions on scheme}, we can strengthen the latter.

\begin{corollary}\label{Cor - Distributions of algebraic group}
	For a smooth linear algebraic group $\bG$ over $L$, there is a canonical isomorphism of $K$-algebras
	\begin{equation*}
		\hy(\bG(L),K) \overset{\cong}{\lra}  {\rm Dist}(\bG,1)_K .
	\end{equation*}
\end{corollary}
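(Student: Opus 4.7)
The plan is to invoke \Cref{Prop - Distributions on scheme} applied to the smooth, separated, finite-type $L$-scheme $\bG$ at the identity point $1 \in \bG(L)$. This immediately supplies the canonical $K$-linear isomorphism
\[ \Phi \colon \hy(\bG(L),K) = D_1(\bG(L),K)_\mathrm{fin} \overset{\cong}{\lra} {\rm Dist}(\bG,1)_K . \]
The only content left in the corollary is therefore to verify that $\Phi$ is an algebra homomorphism, i.e.\ that under $\Phi$ the convolution product $\ast$ on $\hy(\bG(L),K)$ corresponds to the multiplication on ${\rm Dist}(\bG,1)_K$ recalled just before the statement.

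My approach would be to describe both multiplications by a common formula "pull back along $m$, then pair via the tensor product of the distributions." On the analytic side, let $\mu,\nu \in \hy(\bG(L),K)$ and $f \in C^\la_1(\bG(L),K)$; by \eqref{Eq - Explicit map for convolution of distributions} (passed to germs, which is allowed since $\mu,\nu$ are supported at $1$) and the identification of \Cref{Lemma - Hyperalgebra of product}, $(\mu \ast \nu)(f)$ is computed as
\[ C^\la_1(\bG(L),K) \xrightarrow{\,m^\ast\,} C^\la_{(1,1)}(\bG(L)\times\bG(L),K) \cong C^\la_1(\bG(L),K) \ot{K} C^\la_1(\bG(L),K) \xrightarrow{\mu \otimes \nu} K . \]
On the algebraic side, by definition in \cite[Sect.\ I.7.7]{Jantzen03RepAlgGrp} the product of $\mu,\nu \in {\rm Dist}(\bG,1)_K$ applied to $f \in \CO_{\bG,1}$ is
\[ \CO_{\bG,1} \xrightarrow{\,m^\ast\,} \CO_{\bG,1} \ot{L} \CO_{\bG,1} \xrightarrow{\mu \otimes \nu} L \]
(extended $K$-linearly). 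Thus the task reduces to checking that the canonical inclusions $(\CO_{\bG,1})_K \hookrightarrow C^\la_1(\bG(L),K)$ and $(\CO_{\bG,1} \ot{L} \CO_{\bG,1})_K \hookrightarrow C^\la_{(1,1)}(\bG(L)\times\bG(L),K)$ intertwine the two pullback maps along $m$, which is immediate from functoriality of the passage to rigid analytification together with the compatibility of $\CO_{\bG,1} \hookrightarrow \CO_{\bG^\rig,1}$ with the analytic germs recorded in the discussion preceding \Cref{Prop - Distributions on scheme}.

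The only non-trivial point in this chase is to check that the topological isomorphism $C^\la_1(\bG(L),K) \cot{K} C^\la_1(\bG(L),K) \cong C^\la_{(1,1)}(\bG(L)\times\bG(L),K)$ of \Cref{Prop - Properties of locally analytic germs} (iii) restricts on the (dense) algebraic subspaces to the usual $L$-algebraic tensor product $\CO_{\bG,1} \ot{L} \CO_{\bG,1}$. This follows at once by identifying both sides, modulo $\Fm_{(1,1)}^{n+1}$, with $\CO_{\bG,1}/\FM_1^{n+1} \ot{L} \CO_{\bG,1}/\FM_1^{n+1}$ via the completion isomorphism of \cite[p.\ 113]{Bosch14LectFormRigidGeom} used in the proof of \Cref{Prop - Distributions on scheme}. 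I do not anticipate a real obstacle here: the whole verification is essentially diagram-chasing built on the two already established facts that (i) the algebraic and rigid-analytic stalks at $1$ have the same completion, and (ii) convolution is defined by pushforward along $m$ in both settings.
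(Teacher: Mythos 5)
Your proposal is correct and follows essentially the same route as the paper: apply \Cref{Prop - Distributions on scheme} for the $K$-linear isomorphism, then observe that both products are defined by ``pull back along $m$, then evaluate $\mu\otimes\nu$'' so that compatibility reduces to a functoriality check on $m^\ast$ and the tensor-product identifications. The paper compresses this verification into the single sentence ``Since the above map is compatible with \eqref{Eq - Explicit map for convolution of distributions} under the isomorphism of \Cref{Prop - Distributions on scheme}, we can strengthen the latter,'' whereas you spell out the intermediate steps (passing \eqref{Eq - Explicit map for convolution of distributions} to germs and checking that \Cref{Prop - Properties of locally analytic germs}~(iii) restricts to the algebraic tensor product), which is a faithful unfolding of the same argument.
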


\medskip

Now assume that $L$ is a $p$-adic field.
To a locally $L$-analytic Lie group $G$, one can associate its Lie algebra $\Fg$, see \cite[III.\ \S 3.7]{Bourbaki89LieGrpLieAlg1to3} or \cite[Sect.\ 13]{Schneider11pAdicLieGrps}.
As $\mathrm{char}(L)=0$, there exists an exponential map $\exp \colon \Fg \dashrightarrow G$ defined locally around $0\in \Fg$ \cite[III.4.3 Def.\ 1]{Bourbaki89LieGrpLieAlg1to3}.
Schneider and Teitelbaum use this to embed $\Fg$ into $D(G,K)$ via the rule
\begin{equation}\label{Eq - Formula for applying Lie algebra element}
	\Fx (f) \defeq \frac{d}{dt} f(\exp(t\Fx))\res{t=0} \quad\text{, for $\Fx \in \Fg$, $f \in C^\la(G,K)$.}
\end{equation}
In fact this embedding extends to an algebra homomorphism \cite[Sect.\ 2]{SchneiderTeitelbaum02LocAnDistApplToGL2}.
\begin{equation*}
	\iota \colon U(\Fg)_K \longhookrightarrow D(G,K) .
\end{equation*}
The following proposition allows us to identify $U(\Fg)_K$ with $\hy(G,K)$.

\begin{proposition}\label{Prop - Hyperalgebra and universal enveloping algebra}
	For a $p$-adic Lie group $G$, we have $\Im(\iota) = \hy(G,K)$.
\end{proposition}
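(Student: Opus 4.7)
The plan is to show both inclusions $\Im(\iota) \subseteq \hy(G,K)$ and $\hy(G,K) \subseteq \Im(\iota)$ by first establishing the former via the filtration on $U(\Fg)$ and then matching dimensions piece by piece.

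For $\Fx \in \Fg$ and $f \in C^\la(G,K)$ the formula \eqref{Eq - Formula for applying Lie algebra element} only depends on the germ of $f$ at $1$, so $\iota(\Fx) \in D_1(G,K)$. Moreover, if $f \in \Fm_1^2$, writing $f = \sum_j g_j h_j$ with $g_j, h_j \in \Fm_1$ and applying the Leibniz rule yields $\iota(\Fx)(f) = 0$; hence $\iota(\Fx) \in \hy(G,K)_1$. Let $U(\Fg)^{\leq n} \subset U(\Fg)$ denote the usual PBW filtration, spanned by products of at most $n$ elements of $\Fg$. Since $\iota$ is an algebra homomorphism, $\iota(\Fx_1 \cdots \Fx_n) = \iota(\Fx_1) \ast \cdots \ast \iota(\Fx_n)$, and iterated application of \Cref{Lemma - Multiplication for hyperalgebra} gives $\iota(U(\Fg)^{\leq n}_K) \subseteq \hy(G,K)_n$. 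In particular $\Im(\iota) \subseteq \hy(G,K)$.

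For the reverse inclusion I would argue by dimension on each filtration step. By the Poincar\'e--Birkhoff--Witt theorem the associated graded of $U(\Fg)$ is the symmetric algebra $S(\Fg)$, so
\begin{equation*}
    \dim_K U(\Fg)^{\leq n}_K = \sum_{k=0}^{n} \binom{d+k-1}{k} = \binom{d+n}{n},
\end{equation*}
where $d = \dim_L \Fg = \dim_1 G$. On the other hand, \Cref{Prop - Properties of the algebra of germs of locally analytic functions} gives $\dim_K C^\la_1(G,K)/\Fm_1^{n+1} = \binom{d+n}{n}$, and by definition $\hy(G,K)_n = \bigl(C^\la_1(G,K)/\Fm_1^{n+1}\bigr)^\ast$ has the same dimension. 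Since $\iota$ is injective (as recalled before the statement), its restriction to $U(\Fg)^{\leq n}_K$ is a $K$-linear injection into $\hy(G,K)_n$ between spaces of equal finite dimension, hence a bijection. Taking the union over $n$ and using that $\hy(G,K) = \bigcup_n \hy(G,K)_n$ finishes the proof.

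The main subtlety is keeping the algebraic bookkeeping straight: one has to verify that the order of $\iota(\Fx)$ is $\leq 1$ (not higher) so that the PBW filtration on $U(\Fg)$ maps cleanly into the order filtration on $\hy(G,K)$; once that is in place, the argument reduces to a PBW dimension count combined with \Cref{Prop - Properties of the algebra of germs of locally analytic functions}.
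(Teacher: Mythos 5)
Your proposal is correct and follows essentially the same route as the paper: show $\iota(\Fg_K)\subset \hy(G,K)_1$ via the product rule, use the multiplicativity of the order filtration (\Cref{Lemma - Multiplication for hyperalgebra}) to push the PBW filtration into the order filtration, and conclude by matching the finite dimensions $\dim_K U_n(\Fg)_K = \binom{d+n}{n} = \dim_K \hy(G,K)_n$ from PBW on one side and \Cref{Prop - Properties of the algebra of germs of locally analytic functions} on the other. The only difference is that you spell out the dimension count that the paper leaves implicit.
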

\begin{proof}
	The expression of \eqref{Eq - Formula for applying Lie algebra element} vanishes if $f$ is the zero function in a neighbourhood of $1$.
	Therefore the image of $\iota$ is contained in $D_1(G,K)$.
	Furthermore, using the product rule one verifies that $\iota(\Fx)(\Fm_1^2)=0$, for all $\Fx \in \Fg$, and hence $\iota(\Fg_K) \subset \hy(G,K)_1$.
	Let $U_n(\Fg)_K \subset U(\Fg)_K$ be the subspace spanned by all products $\Fx_1\cdots \Fx_r$, for $r\leq n$, $\Fx_i \in \Fg$.
	It follows that $U_n(\Fg)_K$ is mapped into $\hy(G,K)_n$ under $\iota$, and hence $\iota(U(\Fg)_K)\subset \hy(G,K)$.
	We deduce equality from $\dim_K ( U_n(\Fg)_K )= \dim_K (\hy(G,K)_n )$ for all $n\in \BN_0$.	
\end{proof}

\section{The Functors $\dot{\CF}_P^G$}\label{Sect - The functors FGP}

Throughout this section, let $K$ be a finite extension of the non-archimedean local field $L$.
Let $G$ denote a locally $L$-analytic Lie group with identity element $1$ and $H\subset G$ a locally $L$-analytic Lie subgroup.
Furthermore, we assume there exists a compact open subgroup $G_0$ of $G$ such that $G = G_0 H$, and write $H_0 \defeq G_0 \cap H$.

An important instance of this situation is $G = \bG(L)$ and $H =  \bP(L)$ for a connected reductive algebraic group $\bG$ over $L$ and a parabolic subgroup $\bP$ of $\bG$.
For $G_0$ one may take a special, good, maximal compact subgroup of $G$ from Bruhat--Tits theory so that $\bG(L)= G_0 \bP(L)$ is a consequence of the Iwasawa decomposition, see \cite[Sect.\ 3.5]{Cartier79ReppAdicGrpsSurvey}.

\subsection{The Subalgebras $D(\dot{\Fg}, H)$}

In analogy to the definition of Orlik and Strauch \cite[Sect.\ 3.4]{OrlikStrauch15JordanHoelderSerLocAnRep} we consider the separately continuous $K$-subalgebra
\begin{equation*}
	D(\dot{\Fg},H,K) \defeq \hy(G,K) \cdot D(H,K) \subset D(G,K)
\end{equation*}
generated by $\hy(G,K)$ and $D(H,K)$.
Again, we usually omit the coefficient field $K$ in this context.
The description of \cite[Prop.\ 3.5]{OrlikStrauch15JordanHoelderSerLocAnRep} and its proof generalize:

\begin{proposition}\label{Prop - Description of composite subalgebra}
	Every element of $D(\dot{\Fg},H)$ is a finite sum of elements of the form $\mu \ast \delta$, for $\mu \in \hy(G)$, $\delta \in D(H)$.
\end{proposition}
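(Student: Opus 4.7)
The plan is to show that the $K$-linear span $S \defeq \sum_{\mu \in \hy(G), \delta \in D(H)} K \cdot (\mu \ast \delta)$ of products $\mu \ast \delta$ is already a subalgebra of $D(G)$. Since $\delta_1 \in \hy(G) \cap D(H)$ serves as the unit of both, we have $\hy(G) \subset S$ (taking $\delta = \delta_1$) and $D(H) \subset S$ (taking $\mu = \delta_1$), so once $S$ is shown to be a subalgebra it must contain $D(\dot{\Fg},H)$, while the reverse inclusion $S \subset D(\dot{\Fg},H)$ is immediate from the definition.

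Closure of $S$ under addition and scalar multiplication is clear, so the substance of the argument is closure under the convolution product. By bilinearity and distributivity it suffices to check that the product of two generators of $S$ again lies in $S$, i.e.\ that for $\mu, \mu' \in \hy(G)$ and $\delta, \delta' \in D(H)$ one has $(\mu \ast \delta) \ast (\mu' \ast \delta') \in S$. Using associativity of the convolution product, this equals $\mu \ast (\delta \ast \mu') \ast \delta'$, so the crux is to rewrite $\delta \ast \mu'$ as something of the form ``element of $\hy(G)$ times element of $D(H)$''.

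This is exactly what \Cref{Lemma - Formula for adjoint representation} provides: we have the identity $\delta \ast \mu' = \Ad(\delta)(\mu') \ast \delta$ in $D(G)$. By \Cref{Prop - Adjoint representation}, the adjoint representation extends to a separately continuous $D(G)$-module structure on $\hy(G)$, so $\Ad(\delta)(\mu') \in \hy(G)$ for every $\delta \in D(G)$, in particular for $\delta \in D(H)$. Substituting back and using once more that $\hy(G)$ and $D(H)$ are each closed under $\ast$, we obtain
\begin{equation*}
	(\mu \ast \delta) \ast (\mu' \ast \delta') \;=\; \big(\mu \ast \Ad(\delta)(\mu')\big) \ast \big(\delta \ast \delta'\big),
\end{equation*}
which is visibly of the required form with first factor in $\hy(G)$ and second factor in $D(H)$.

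The only potential obstacle I foresee is verifying that the identity of \Cref{Lemma - Formula for adjoint representation}, stated there for $\delta \in D(G)$, can indeed be applied to $\delta \in D(H)$ in a way that yields an element of $\hy(G)$ on the left-hand side; this is precisely the content of \Cref{Prop - Adjoke representation} (adjoint-invariance of $\hy(G)$ under the $D(G)$-action), so no further work is needed beyond invoking these two results. Thus $S$ is a subalgebra containing $\hy(G)$ and $D(H)$, hence $D(\dot{\Fg},H) = S$, proving the proposition.
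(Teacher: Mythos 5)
Your proof has a genuine gap in its central step. You invoke \Cref{Lemma - Formula for adjoint representation} to assert $\delta \ast \mu' = \Ad(\delta)(\mu') \ast \delta$ for arbitrary $\delta \in D(H)$. Although that lemma is \emph{stated} in the paper for all $\delta \in D(G)$, its proof reduces to the Dirac case and that reduction is not actually valid: the expression $\Ad(\delta)(\mu') \ast \delta$ is \emph{quadratic} in $\delta$, so its agreement with the linear expression $\delta \ast \mu'$ on Dirac distributions does not extend by density. Concretely, with $\mu' = \delta_1$ and $\delta = \delta_{h_1} + \delta_{h_2}$ for distinct $h_1, h_2 \in H$, one has $\Ad(h_i)(\delta_1) = \delta_1$, so $\Ad(\delta)(\delta_1) \ast \delta = 2\delta_1 \ast (\delta_{h_1}+\delta_{h_2}) = 2\delta_{h_1} + 2\delta_{h_2}$, whereas $\delta \ast \delta_1 = \delta_{h_1}+\delta_{h_2}$; these differ whenever $2 \neq 1$ in $K$ (and a variant with distinct coefficients works in characteristic $2$). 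So the single factorization $\delta \ast \mu' = (\text{element of }\hy(G)) \ast \delta$ that your argument relies on simply does not exist for non-Dirac $\delta$.

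The paper's proof works around exactly this point. It restricts to $\mu \in \hy(G)_n$, uses that $\Ad_n$ is given by a locally analytic homomorphism $G \to \GL(\hy(G)_n)$, chooses a basis $\mu_1,\dots,\mu_r$ of $\hy(G)_n$ and writes $\Ad_n(g)(\mu) = \sum_i c_i(g)\,\mu_i$ with $c_i \in C^\la(G,K)$, and then defines \emph{several} distributions $\delta_i \in D(H)$ by $\delta_i(f) := \delta\big[h \mapsto c_i(h)\,f(h)\big]$. The identity $\delta \ast \mu = \sum_i \mu_i \ast \delta_i$ is then verified via Fubini (\Cref{Lemma - Fubinis theorem}). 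The essential point is that the Ad-twist varies with the point $h$ in the support of $\delta$, and this pointwise dependence must be carried by the $\delta_i$ on the $D(H)$-side rather than absorbed into a single element $\Ad(\delta)(\mu) \in \hy(G)$. Your initial reduction (it suffices to move $\delta$ past $\mu'$ in a product of two generators) is the same as the paper's; it is only the subsequent commutation step that fails.
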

\begin{proof}
	It suffices to consider elements of the form $\delta \ast \mu$, for $\mu \in \hy(G)$, $\delta \in D(H)$.
	Fix such an element and $n\in \BN_0$ such that $\mu \in \hy(G)_n$.
	The adjoint representation $\Ad_n$ on $\hy(G)_n$ is given by a locally $L$-analytic map of locally $L$-analytic Lie groups $G \ra \GL\big(\hy(G)_n \big)$ by \Cref{Cor - Locally analytic Banach space representation is given by homomorphism of Lie groups}.
	Hence, for an $L$-Basis $\mu_1,\ldots,\mu_r$ of $\hy(G)_n$, there exist $c_1,\ldots,c_r \in C^\la(G,K)$ such that
	\begin{equation*}
		\Ad_n(g)(\mu) = \sum_{i=1}^r c_i(g) \, \mu_i \quad\text{, for all $g\in G$.}
	\end{equation*}
	We define $\delta_i \in D(H)$, for $i=1,\ldots,r$, by the rule
	\begin{equation*}
		\delta_i(f) \defeq \delta\big[ h \mto c_i(h) \, f(h) \big] \quad \text{, for $f\in C^\la(H,K)$.} 
	\end{equation*}
	Analogously to the proof of \cite[Prop.\ 3.5]{OrlikStrauch15JordanHoelderSerLocAnRep} (and using the respective ``Fubini's theorem'' \Cref{Lemma - Fubinis theorem})) one computes that $\delta \ast \mu = \sum_{i=1}^r \mu_i \ast \delta_i$.
\end{proof}

\begin{corollary}\label{Cor - Compatibility with adjoint representation}
	For any locally analytic $G$-representation $V$, we have
	\begin{equation*}
		\delta \ast (\mu \ast v) = \Ad(\delta)(\mu) \ast (\delta \ast v) \quad\text{, for all $\delta \in D(G)$, $\mu \in \hy(G)$, $v \in V$.}
	\end{equation*}
\end{corollary}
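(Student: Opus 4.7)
The proof plan is to simply chain together associativity of the $D(G)$-module action with the identity of \Cref{Lemma - Formula for adjoint representation}. By \Cref{Prop - Anti-equivalence for locally analytic representations}~(i), the locally analytic $G$-representation $V$ is a separately continuous $D(G)$-module, so in particular the scalar multiplication satisfies $(\delta_1 \ast \delta_2) \ast v = \delta_1 \ast (\delta_2 \ast v)$ for $\delta_1, \delta_2 \in D(G)$ and $v \in V$.

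Using this, the computation is:
\begin{equation*}
    \delta \ast (\mu \ast v) = (\delta \ast \mu) \ast v = \big( \Ad(\delta)(\mu) \ast \delta \big) \ast v = \Ad(\delta)(\mu) \ast (\delta \ast v) ,
\end{equation*}
where the first and third equalities are associativity of the module action, and the middle equality is precisely the identity $\Ad(\delta)(\mu) \ast \delta = \delta \ast \mu$ from \Cref{Lemma - Formula for adjoint representation}. Note that $\Ad(\delta)(\mu)$ indeed lies in $\hy(G) \subset D(G)$ by \Cref{Prop - Adjoint representation} (extended $D(G)$-linearly), so both sides make sense as elements of $V$.

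There is essentially no obstacle here: the only content beyond bookkeeping is \Cref{Lemma - Formula for adjoint representation} itself, which has already been proven. One might worry about the separate (as opposed to joint) continuity of the $D(G)$-action, but this plays no role because the statement is an identity of individual elements rather than a continuity or density argument.
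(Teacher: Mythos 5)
Your proof is correct, and it takes a genuinely different (shorter) route from the paper. The paper's own proof does \emph{not} invoke \Cref{Lemma - Formula for adjoint representation}; instead it reduces to a Dirac distribution $\delta = \delta_g$ by density, then re-derives the decomposition $\delta_g \ast \mu = \sum_i c_i(g)\,\mu_i \ast \delta_g$ from the proof of \Cref{Prop - Description of composite subalgebra}, and finally applies associativity of the $D(G)$-action on $V$. In other words, the paper inlines a second derivation of the very identity $\delta_g \ast \mu = \Ad(g)(\mu) \ast \delta_g$ that is \Cref{Lemma - Formula for adjoint representation}. You simply cite that lemma together with the associativity axiom for the separately continuous $D(G)$-module $V$, which is cleaner and avoids repeating the density-and-decomposition argument. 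What your route buys: the proof is a three-step chain at the level of module axioms, completely transparent, and it consolidates the only nontrivial content (the adjoint identity) in the one place where it was proved. What the paper's route buys: nothing extra here, it is essentially the same computation unfolded; one could argue it keeps the proof self-contained for a reader who skipped the lemma, but given that the lemma appears only a few lines earlier in the same subsection, your version is preferable. Your closing remark about well-definedness ($\Ad(\delta)(\mu)\in\hy(G)$ because $\hy(G)$ is a $D(G)$-submodule of $D_1(G)$ under $\Ad$, \Cref{Prop - Adjoint representation}) is exactly the right thing to check and not stated in the paper's proof, so that is a small improvement in rigor as well.
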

\begin{proof}
	By density it suffices to consider a Dirac distribution $\delta = \delta_g$, for $g\in G$. 
	Let $\mu \in \hy(G)_n$, for some $n\in \BN_0$.
	We find $\mu_1,\ldots,\mu_r \in \hy(G)_n$ and $c_1,\ldots,c_r \in C^\la(G,K)$ as in the proof of the above proposition.
	For $H\defeq G$ and $\delta \defeq \delta_g$ in that proof, we have $\delta_i = c_i(g)\delta_g$ and consequently $\delta_g \ast \mu = \sum_{i=1}^r c_i(g) \, \mu_i \ast  \delta_g $.
	We then compute, for $v\in V$, that
	\begin{align*}
		\Ad_n(g)(\mu) \ast (\delta_g \ast v) 
			= \sum_{i=1}^r c_i(g) \, \mu_i \ast (\delta_g \ast v) 
			= \bigg( \sum_{i=1}^r c_i(g) \, \mu_i \ast \delta_g \bigg) \ast v 
			= ( \delta_g \ast \mu ) \ast v.
	\end{align*}
\end{proof}

The following statement for $p$-adic $L$ and on the level of abstract $K$-algebras is part of \cite[Lemma 4.1]{SchmidtStrauch16DimLocAnRep}.

\begin{corollary}\label{Cor - Isomorphism for composite algebra}
	The multiplication in $D(G)$ induces a topological isomorphism
	\begin{equation*}
		\hy(G) \ot{\hy(H),\iota} D(H) \overset{\cong}{\lra} D(\dot{\Fg},H) \,,\quad \mu \otimes \delta \lto \mu \ast \delta .
	\end{equation*}
\end{corollary}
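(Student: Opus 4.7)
I begin by observing continuity: the bilinear map $\hy(G) \times D(H) \to D(G)$, $(\mu, \delta) \mapsto \mu \ast \delta$, is separately continuous by \Cref{Prop - Convolution product}, hence continuous on each $\hy(G)_n \times D(H)$ because $\hy(G)_n$ is finite-dimensional. This induces a continuous linear map $\hy(G) \ot{K, \iota} D(H) \to D(G)$ which, by associativity of convolution, factors through the quotient $\hy(G) \ot{\hy(H), \iota} D(H)$. By \Cref{Prop - Description of composite subalgebra} the image is $D(\dot{\Fg}, H)$. The remaining task is to show this map is a topological isomorphism onto $D(\dot{\Fg}, H)$ with its subspace topology from $D(G)$.

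The key tool will be a manifold decomposition. I would choose a locally $L$-analytic section $s \colon G_0/H_0 \to G_0$ with $s(\bar 1) = 1$ (cf.\ the proof of \Cref{Cor - Locally analytic induction is exact}); then $\phi \colon G_0/H_0 \times H \to G$, $(\bar g, h) \mapsto s(\bar g)h$, is a locally analytic isomorphism and \Cref{Prop - Properties of space of distributions}~(iii) gives $D(G) \cong D(G_0/H_0) \cot{K, \iota} D(H)$. The identity $\phi(\bar g, h) \cdot h' = \phi(\bar g, hh')$, together with density of Dirac distributions (\Cref{Prop - Properties of space of distributions}~(iv)), shows that the right $D(H)$-action on $D(G)$ by convolution corresponds to $\id \otimes R_{(\blank)}$ on $D(G_0/H_0) \cot{K, \iota} D(H)$. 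Meanwhile, \Cref{Lemma - Hyperalgebra of product} identifies $\hy(G) \subset D(G)$ with $\hy(G/H) \ot{K} \hy(H) \subset D(G_0/H_0) \cot{K, \iota} D(H)$ (where I write $\hy(G/H) \defeq D_{\bar 1}(G_0/H_0)_\mathrm{fin}$), and by the above this is an isomorphism of right $\hy(H)$-modules with $\hy(H)$ acting on the second tensor factor. Applying \Cref{Lemma - Tensor identities for modules}~(i) and~(ii) then yields
\begin{equation*}
\hy(G) \ot{\hy(H), \iota} D(H) \,\cong\, \bigl(\hy(G/H) \ot{K, \iota} \hy(H)\bigr) \ot{\hy(H), \iota} D(H) \,\cong\, \hy(G/H) \ot{K, \iota} D(H),
\end{equation*}
topologically. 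Under the above identifications, the multiplication map becomes the natural inclusion $\hy(G/H) \ot{K, \iota} D(H) \hookrightarrow D(G_0/H_0) \cot{K, \iota} D(H)$, and a short check shows that its image equals $D(\dot{\Fg}, H)$ as a $K$-vector space.

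The main obstacle is the topological assertion: one must check that this inclusion is a strict monomorphism onto $D(\dot{\Fg}, H)$ with its subspace topology. Injectivity and strictness both stem from a dual-basis argument. Given linearly independent $\mu_1, \ldots, \mu_N \in \hy(G/H)_n$, linear-algebra duality inside $D(G_0/H_0) = C^\la(G_0/H_0, K)'_b$ yields $f_1, \ldots, f_N \in C^\la(G_0/H_0, K)$ with $\mu_i(f_j) = \delta_{ij}$, so the continuous functionals ``apply $f_j$ in the first factor'' separate points of $\hy(G/H)_n \ot{K} D(H)$ and witness that its subspace topology coincides with the natural product topology on $D(H)^N$. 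Assembling these finite-dimensional pieces by the Kopylov--Kuzminov strict-monomorphism criterion — as applied in the proof of \Cref{Lemma - Hyperalgebra of product} — promotes the inclusion to a strict monomorphism into $D(G)$ and hence to the desired topological isomorphism $\hy(G) \ot{\hy(H), \iota} D(H) \cong D(\dot{\Fg}, H)$.
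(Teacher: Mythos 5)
Your proof follows the paper's strategy faithfully right up to the crucial final step: you carry out the same reductions (multiplication map, image identified via \Cref{Prop - Description of composite subalgebra}, manifold splitting $G \cong G_0/H_0 \times H$, identification $\hy(G) \cong \hy(G/H) \ot{K,\iota} \hy(H)$ from \Cref{Lemma - Hyperalgebra of product}, and the associativity bookkeeping from \Cref{Lemma - Tensor identities for modules}) to reduce the claim to showing that $\hy(G/H) \ot{K,\iota} D(H) \hookrightarrow D(G_0/H_0) \cot{K,\iota} D(H)$ is a topological embedding. The divergence is how that embedding is established, and it is there that your argument has a genuine gap.

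Your dual-basis argument is correct at each finite level: for a basis $\mu_1,\ldots,\mu_N$ of $\hy(G/H)_n$, the maps $\ev_{f_j} \otimes \id$ are a continuous left inverse to the inclusion $\hy(G/H)_n \ot{K} D(H) \cong D(H)^N \hookrightarrow D(G_0/H_0) \cot{K,\iota} D(H)$, so this finite-level inclusion is indeed a topological embedding. But this does not ``assemble'' into the statement for $\hy(G/H) = \bigcup_n \hy(G/H)_n$. The topology on $\hy(G/H)$ is the \emph{subspace} topology inherited from the nuclear Fr\'echet space $D_1(G/H)$, which is metrizable; it is \emph{not} the locally convex inductive limit of the $\hy(G/H)_n$ (that inductive limit would carry the finest locally convex topology, being a countable union of finite-dimensional spaces). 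Consequently $\hy(G/H) \ot{K,\iota} D(H)$ is not the colimit of the $\hy(G/H)_n \ot{K} D(H)$, and knowing that each finite level embeds tells you neither that the global $\iota$-topology restricts to the expected product topology on each level, nor that it is coarse enough to equal the subspace topology globally. The citation of Kopylov--Kuzminov in the manner of \Cref{Lemma - Hyperalgebra of product} does not repair this: there the lemma is applied to a commutative square whose bottom map is already a known topological isomorphism, and here there is no such square available without assuming what you are trying to prove.

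The paper closes the gap differently: it decomposes $H$ into a disjoint union of compact open subsets $U_i$, so that each $D(U_i)$ is a Fr\'echet space; then $\hy(G/H)$ and $D(U_i)$ are both pseudo-metrizable with $D(U_i)$ barrelled, so the inductive and projective tensor product topologies coincide (\Cref{Lemma - Equality of projective and inductive tensor product}), and the exactness of the completed projective tensor product on Fr\'echet spaces promotes the strict monomorphism $\hy(G/H) \hookrightarrow D(G/H)$ to one after tensoring by each $D(U_i)$. Summing over $i$ finishes the argument. This is the ingredient your proof needs in place of the assembly step.
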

\begin{proof}
	The multiplication homomorphism of $D(G)$ induces the continuous homomorphism
	\begin{equation*}
		\hy(G) \ot{\hy(H),\iota} D(H) \lra D(G).
	\end{equation*}
	By \Cref{Prop - Description of composite subalgebra} the image of this homomorphism equals $D(\dot{\Fg},H)$.
	On the other hand, the canonical projection $G \ra G/H$ splits on the level of locally $L$-analytic manifolds and such a splitting yields an isomorphism of locally $L$-analytic manifolds $G \cong G/H \times H$, see \cite[Satz 4.1.1]{FeauxdeLacroix99TopDarstpAdischLieGrp}.
	From \Cref{Lemma - Hyperalgebra of product} we thus obtain a topological isomorphism $\hy(G) \cong D_{1H}(G/H)_\mathrm{fin} \ot{K,\iota} \hy(H)$.
	Using \Cref{Lemma - Tensor identities for modules} we obtain the commutative diagram
	\begin{equation*}
	\begin{tikzcd}
			\hy(G) \ot{\hy(H),\iota} D(H) \ar[r] \ar[d, "\cong"] & D(G) \ar[d, "\cong"] \\
			D_{1H}(G/H)_\mathrm{fin} \ot{K,\iota} D(H) \ar[r] & D(G/H) \cot{K,\iota} D(H) 
	\end{tikzcd}
	\end{equation*}
	where the vertical maps are topological isomorphisms.
	Once we show that the bottom map is an embedding, it follows from \cite[Lemma 2]{KopylovKuzminov00KerCokerSeqSemiAbCat} that the top map is a topological isomorphism onto its image $D(\dot{\Fg},H)$.	
	But the bottom map is the composite 
	\begin{equation*}
		D_{1H}(G/H)_\mathrm{fin} \ot{K,\iota} D(H)\lra D(G/H) \ot{K,\iota} D(H) \longhookrightarrow D(G/H) \cot{K,\iota} D(H) .
	\end{equation*}
	Using \cite[Lemma 1.2.13]{Kohlhaase05InvDistpAdicAnGrp} the first morphism is the direct sum of 
	\begin{equation*}
		D_{1H}(G/H)_\mathrm{fin} \ot{K,\iota} D(U_i) \lra D(G/H) \ot{K,\iota} D(U_i) ,
	\end{equation*}
	for some disjoint covering $H = \bigcup_{i\in I} U_i$ by open compact subsets.
	The claim now follows from \Cref{Lemma - Equality of projective and inductive tensor product} and the exactness of the projective tensor product \cite[Lemma 2.1 (ii)]{BreuilHerzig18TowardsFinSlopePartGLn}.
\end{proof}

\subsection{Separately Continuous $D(\dot{\Fg},H)$-Modules}

We will now study the category $\CM_{D(\dot{\Fg},H)}$ of separately continuous $D(\dot{\Fg},H)$-modules as well as its full additive subcategory $\CM_{D(\dot{\Fg},H)}^\mathrm{nF}$ whose $D(\dot{\Fg},H)$-modules are nuclear $K$-Fr\'echet spaces.

\begin{lemma}\label{Lemma - Module structure on tensor product}
	For $M, N \in \CM_{D(\dot{\Fg},H)}$, the inductive tensor product $M\ot{K,\iota} N$ naturally becomes a separately continuous $D(\dot{\Fg},H)$-module via
	\begin{equation*}
		D(\dot{\Fg},H) \times M\ot{K,\iota} N \lra M\ot{K,\iota} N \,,\quad (\delta, m \otimes n ) \lto \delta \ast m \otimes \delta \ast n .
	\end{equation*}
	The analogous assertion for separately continuous $D(G)$-modules holds as well.
\end{lemma}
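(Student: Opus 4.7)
The approach is to realize the desired action as the pullback along the comultiplication of $D(G)$ induced by the group-theoretic diagonal. The diagonal $\Delta_G \colon G \to G \times G$, $g \mapsto (g,g)$, is a locally $L$-analytic group homomorphism; by functoriality together with the K\"unneth isomorphism $D(G\times G) \cong D(G) \cot{K,\iota} D(G)$ from \Cref{Prop - Properties of space of distributions} (iii), it induces a continuous homomorphism of separately continuous $K$-algebras
\begin{equation*}
	\Delta \defeq (\Delta_G)_* \colon D(G) \lra D(G) \cot{K,\iota} D(G) , \qquad \delta_g \lto \delta_g \otimes \delta_g \quad (g \in G).
\end{equation*}
First I would check that $\Delta$ restricts to $D(\dot{\Fg},H) \to D(\dot{\Fg},H) \cot{K,\iota} D(\dot{\Fg},H)$. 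By \Cref{Prop - Description of composite subalgebra} it suffices to verify this on the generating subspaces $\hy(G)$ and $D(H)$: the latter since $\Delta_G$ restricts to the diagonal of $H$; the former because $\Delta_G(1) = (1,1)$, combined with \Cref{Lemma - Functoriality of hyperalgebra} and \Cref{Lemma - Hyperalgebra of product}, forces $\Delta(\hy(G))$ to lie already in $\hy(G \times G) = \hy(G) \otimes_K \hy(G)$.

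Next I would define the $D(\dot{\Fg},H)$-action on $M \ot{K,\iota} N$. The separately continuous scalar multiplications of $M$ and $N$ assemble into a separately continuous $4$-linear map
\begin{equation*}
	D(\dot{\Fg},H) \times D(\dot{\Fg},H) \times M \times N \lra M \ot{K,\iota} N , \quad (\delta_1, \delta_2, m, n) \lto (\delta_1 \ast m) \otimes (\delta_2 \ast n) ,
\end{equation*}
which, via the universal property of $\ot{K,\iota}$ together with a Fubini-type extension in the spirit of \Cref{Lemma - Fubinis theorem}, promotes to a separately continuous bilinear map
\begin{equation*}
	\big( D(\dot{\Fg},H) \cot{K,\iota} D(\dot{\Fg},H) \big) \times \big( M \ot{K,\iota} N \big) \lra M \ot{K,\iota} N .
\end{equation*}
Precomposition with $\Delta$ yields the desired action. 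On Dirac distributions, where $\Delta(\delta_g) = \delta_g \otimes \delta_g$, one recovers precisely the stated formula $\delta_g \ast (m \otimes n) = (\delta_g \ast m) \otimes (\delta_g \ast n)$; by density of Dirac distributions (\Cref{Prop - Properties of space of distributions} (iv)) combined with separate continuity, the action is uniquely determined by this Dirac formula.

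The module axioms follow formally: associativity from $\Delta$ being an algebra homomorphism (coming from $\Delta_G$ being a group homomorphism) together with associativity of the $M$- and $N$-actions, and unitality from $\Delta(\delta_1) = \delta_1 \otimes \delta_1$. Separate continuity is inherited from that of $\Delta$ and of the individual module actions. The final sentence of the lemma, with $D(G)$ in place of $D(\dot{\Fg},H)$, is obtained by the same argument, omitting the restriction step. The main obstacle is making precise the action of $\Delta(\delta)$ -- which a priori lies in the \emph{completed} tensor product $D(\dot{\Fg},H) \cot{K,\iota} D(\dot{\Fg},H)$ -- on the uncompleted space $M \ot{K,\iota} N$; this amounts to extending the separately continuous product-action map from the algebraic tensor product $D(\dot{\Fg},H) \otimes_K D(\dot{\Fg},H)$ to its completion, a Fubini-type statement analogous to \Cref{Lemma - Fubinis theorem}.
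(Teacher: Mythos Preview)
Your comultiplication approach is the conceptually right way to obtain a $K$-linear-in-$\delta$ module action, but the gap you flag at the end is real and is not resolved by your sketch. The map $(\Delta_G)_*$ lands in the \emph{completed} tensor product $D(G)\cot{K,\iota}D(G)$, and to make its image act on the uncompleted $M\ot{K,\iota}N$ one would need to extend the separately continuous action of $D(G)\ot{K,\iota}D(G)$ on $M\ot{K,\iota}N$ to the completion; for general $M,N$ this fails since $M\ot{K,\iota}N$ is not complete. The appeal to ``a Fubini-type extension in the spirit of \Cref{Lemma - Fubinis theorem}'' does not address this --- that lemma evaluates a single convolution on scalar functions and has no bearing on extending a module action across a completion.

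The paper circumvents the completion issue by an entirely different, more elementary device: it proves that the \emph{nonlinear} diagonal
\[
D(G)\lra D(G)\ot{K,\iota}D(G),\qquad \delta\lto \delta\otimes\delta,
\]
is continuous into the \emph{uncompleted} inductive tensor product. The argument localizes via $D(G)\cong\bigoplus_i D(U_i)$ for a disjoint covering of $G$ by compact open $U_i$: each $D(U_i)$ is Fr\'echet, so $\ot{K,\iota}=\ot{K,\pi}$ there, and $\delta\mapsto\delta\otimes\delta$ is the composite of the continuous diagonal $D(U_i)\to D(U_i)\times D(U_i)$ with the jointly continuous canonical map into $D(U_i)\ot{K,\pi}D(U_i)$. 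Since the uncompleted $D(G)\ot{K,\iota}D(G)$ already acts on $M\ot{K,\iota}N$ separately continuously, this immediately yields separate continuity of $(\delta,m\otimes n)\mapsto(\delta\ast m)\otimes(\delta\ast n)$, and the $D(\dot\Fg,H)$ case follows by restriction. Be aware that this argument and yours are genuinely different: the map $\delta\mapsto\delta\otimes\delta$ is \emph{not} $(\Delta_G)_*$ (they agree on Dirac distributions but disagree on sums), so the paper's displayed formula is not $K$-linear in $\delta$, and the two ``actions'' coincide only on grouplike elements.
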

\begin{proof}
	We first consider the case of separately continuous $D(G)$-modules.
	The only assertion which is not verified immediately is the separate continuity of the multiplication map.
	To this end, it suffices to prove that
	\begin{equation}\label{Eq - Diagonal map for distribution algebra}
		D(G) \lra D(G) \ot{K,\iota} D(G) \,,\quad \delta \lto \delta \otimes \delta ,
	\end{equation}
	is continuous.
	Let $G = \bigcup_{i\in I} U_i$ be some disjoint covering by compact open subsets.
	Then \Cref{Prop - Properties of space of distributions} (iii) and \cite[Lemma 1.2.13]{Kohlhaase05InvDistpAdicAnGrp} yield topological isomorphisms
	\begin{equation*}
		D(G) \cong \bigoplus_{i\in I} D(U_i)  \quad\text{ and }\quad D(G) \ot{K,\iota} D(G) \cong \bigoplus_{i,j\in I} D(U_i) \ot{K,\iota} D(U_j) ,
	\end{equation*}
	and $D(U_i)$ maps into $D(U_i) \ot{K,\iota} D(U_i)$ under \eqref{Eq - Diagonal map for distribution algebra}.
	But $D(U_i)$ is a $K$-Fr\'echet space so that $D(U_i) \ot{K,\iota} D(U_i) = D(U_i) \ot{K,\pi} D(U_i)$. 
	Since \eqref{Eq - Diagonal map for distribution algebra} is the composition of the diagonal embedding and the canonical continuous map $D(U_i) \times D(U_i) \ra D(U_i) \ot{K,\pi} D(U_i)$ (cf.\ \cite[Sect.\ 17.B]{Schneider02NonArchFunctAna}), its continuity follows.

	Furthermore, then the homomorphism $D(\dot{\Fg},H) \ra D(\dot{\Fg},H) \ot{K,\iota} D(\dot{\Fg},H)$ induced by \eqref{Eq - Diagonal map for distribution algebra} is continuous.
	This implies the statement for separately continuous $D(\dot{\Fg},H)$-modules.
\end{proof}

\begin{proposition}\label{Prop - Extension of module structure to completion}
	Let $M \in \CM_{D(\dot{\Fg},H)}$ and assume that the underlying locally convex $K$-vector space of $M$ is pseudo-metrizable and barrelled.
	Then the module structure on $M$ extends uniquely to a separately continuous $D(\dot{\Fg},H)$-module structure on the Hausdorff completion $\widehat{M}$.
	In particular, if in addition $M$ is nuclear, then $\widehat{M} \in \CM_{D(\dot{\Fg},H)}^\mathrm{nF}$.
\end{proposition}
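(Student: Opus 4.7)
The plan is to mirror the proof of Lemma \ref{Lemma - Extending separately continuous module structure to completion}, swapping the roles of the algebra and the module: since barrelledness of $D(\dot{\Fg},H)$ is not obviously available, I would instead invoke barrelledness of $M$ to supply the hypocontinuity needed to extend a separately continuous action to the completion of a pseudo-metrizable module.

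Concretely, I first replace $M$ by $M/\ov{\{0\}}$ to assume $M$ is Hausdorff and hence metrizable. Applying \cite[p.III.31 Prop.\ 6]{Bourbaki87TopVectSp1to5} in the \emph{module} variable, barrelledness of $M$ ensures that the separately continuous action map $D(\dot{\Fg},H) \times M \to M$ is $\mathfrak{S}$-hypocontinuous with $\mathfrak{S}$ the set of bounded subsets of $D(\dot{\Fg},H)$. Since $M$ is metrizable, every $\hat m \in \widehat M$ is the limit of a necessarily bounded sequence $(m_n)_n$ in $M$, and for each fixed $z\in D(\dot{\Fg},H)$ the continuous map $z\ast\blank\colon M \to M$ extends uniquely to a continuous map $\widehat M \to \widehat M$.

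For separate continuity in the algebra variable, I fix $\hat m \in \widehat M$ and a sequence $m_n \to \hat m$ with $m_n \in M$, and consider $z_\alpha \to z$ in $D(\dot{\Fg},H)$. The bounded set $B \defeq \{z_\alpha\}\cup\{z\}$ together with the hypocontinuity above yields an equicontinuous family $\{z'\ast\blank : z'\in B\}$ of linear operators on $M$, which extends equicontinuously to $\widehat M$. The three-term decomposition
\begin{equation*}
z_\alpha \ast \hat m - z \ast \hat m = \bigl(z_\alpha \ast \hat m - z_\alpha \ast m_n\bigr) + \bigl(z_\alpha \ast m_n - z \ast m_n\bigr) + \bigl(z \ast m_n - z \ast \hat m\bigr)
\end{equation*}
then forces the left-hand side to zero: the outer terms tend to $0$ as $n \to \infty$ uniformly in $\alpha$ by equicontinuity, while the middle term tends to $0$ as $\alpha \to \infty$ for each fixed $n$ by the original separate continuity (using $m_n \in M$). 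This is the same bookkeeping as in the analogue of \cite[p.III.32 Prop.\ 8]{Bourbaki87TopVectSp1to5}. Associativity of the action and the module axioms transfer from $M$ to $\widehat M$ by density and separate continuity, and the extension is unique for the same reason. Finally, if $M$ is nuclear, \Cref{Lemma - Properties of pseudo-metrizable spaces} gives that $\widehat M$ is a $K$-Fr\'echet space, and nuclearity is preserved under Hausdorff completion (cf.\ \cite[Prop.\ 19.4]{Schneider02NonArchFunctAna}), so $\widehat M \in \CM_{D(\dot{\Fg},H)}^\mathrm{nF}$.

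The main point is the clean transposition of the hypocontinuity argument underlying \Cref{Lemma - Extending separately continuous module structure to completion} to the present setting; once barrelledness of $M$ is exploited to secure $\mathfrak{S}$-hypocontinuity in the bounded subsets of the algebra, the extension to $\widehat M$ reduces to a diagonal-limit computation, and it is not necessary to address barrelledness of $D(\dot{\Fg},H)$ itself.
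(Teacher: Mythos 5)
Your plan to transpose the hypocontinuity argument of \Cref{Lemma - Extending separately continuous module structure to completion} does not go through, and the gap is at the heart of the proposal. Hypocontinuity is not symmetric with respect to the extension problem. Barrelledness of $M$ does give $\mathfrak{S}$-hypocontinuity for $\mathfrak{S}$ the bounded subsets of $D(\dot{\Fg},H)$, i.e.\ equicontinuity of the family $\{\delta \ast \blank \mid \delta \in B\}$ over each bounded $B \subset D(\dot{\Fg},H)$; but what is needed to pass from $M$ to its completion in the \emph{module} variable while keeping separate continuity in the \emph{algebra} variable is equicontinuity of $\{\blank \ast m \mid m \in B\}$ over bounded $B \subset M$, which is precisely hypocontinuity in the other system of bounded sets and is what barrelledness of the \emph{algebra} gives. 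If $\hat m = \lim_n m_n$ with $(m_n)$ a bounded sequence in $M$, then $\delta \mapsto \delta \ast \hat m$ is the pointwise limit of the continuous maps $\delta \mapsto \delta \ast m_n$; to conclude that this limit is continuous one needs those maps to form an equicontinuous family, and the bounded set $\{m_n\}$ lives in $M$, not in $D(\dot{\Fg},H)$. Your hypocontinuity says nothing about it.

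The place where this surfaces in your write-up is the claim that $B \defeq \{z_\alpha\}\cup\{z\}$ is bounded. A convergent \emph{net} need not be bounded, and since $D(\dot{\Fg},H) \cong \bigoplus_{h \in H_0\backslash H} D(\dot{\Fg},H_0)\ast\delta_h$ is not pseudo-metrizable when $H$ is non-compact, you cannot reduce the continuity check to (bounded) sequences either. So the three-term estimate never closes: the outer terms cannot be controlled uniformly in $\alpha$. To salvage the idea one would have to show that $\delta \mapsto \delta \ast \hat m$ is a bounded linear map and that $D(\dot{\Fg},H)$ is bornological, but neither claim is made or justified in the proposal, and the closing assertion that it ``is not necessary to address barrelledness of $D(\dot{\Fg},H)$ itself'' is precisely what hides the difficulty.

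For comparison, the paper's proof circumvents this entirely: it observes that $D(\dot{\Fg},H_0) \cong \hy(G) \ot{\hy(H)} D(H_0)$ is pseudo-metrizable, so that \Cref{Lemma - Equality of projective and inductive tensor product} (with $M$ pseudo-metrizable and barrelled) upgrades separate continuity of $D(\dot{\Fg},H_0) \times M \to M$ to \emph{joint} continuity, extends this jointly continuous action to $\widehat{M}$ by \cite[Lemma 1.2.2]{Emerton17LocAnVect}, and then assembles the full $D(\dot{\Fg},H)$-action from the direct-sum decomposition $D(\dot{\Fg},H) \cong \bigoplus_{h \in H_0\backslash H} D(\dot{\Fg},H_0)\ast\delta_h$. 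That route uses the same hypothesis on $M$, but it avoids the need for any barrelledness or bornologicity statement about $D(\dot{\Fg},H)$ by working with a pseudo-metrizable piece where joint continuity is available.
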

\begin{proof}
	We first consider the compact open subgroup $H_0 \subset H$.
	Since $D(H_0)$ is a $K$-Fr\'echet space and $\hy(G)$ a subspace of the $K$-Fr\'echet space $D(G_0)$, it follows from \Cref{Lemma - Properties of pseudo-metrizable spaces} (i) that $D(\Fg,H_0) \cong \hy(G) \ot{\hy(H)} D(H_0)$ is pseudo-metrizable.
	Hence \Cref{Lemma - Equality of projective and inductive tensor product} implies that the scalar multiplication of $D(\Fg, H_0)$ with $M$ is jointly continuous.
	Therefore it extends to a continuous homomorphism
	\begin{equation}\label{Eq - Extension of scalar mulitplication to completion}
		D(\Fg,H_0) \ot{K} \widehat{M} \lra \widehat{M}
	\end{equation}
	by \cite[Lemma 1.2.2]{Emerton17LocAnVect}.
	Using $D(H) \cong \bigoplus_{h \in H_0\backslash H} D(H_0) \ast \delta_h$ and \cite[Lemma 1.2.13]{Kohlhaase05InvDistpAdicAnGrp}, we find that $D(\Fg,H) \cong \bigoplus_{h \in H_0\backslash H} D(\Fg,H_0) \ast \delta_h$.
	Hence the direct sum
	\begin{equation*}
		D(\Fg,H) \ot{K,\iota} \widehat{M} \cong \bigoplus_{h \in H_0\backslash H} D(\Fg,H_0) \ast \delta_h \ot{K} \widehat{M} \lra \widehat{M}
	\end{equation*}
	of the ``translates'' of the homomorphism \eqref{Eq - Extension of scalar mulitplication to completion} yields the sought extension to a separately continuous $D(\dot{\Fg},H)$-module structure on $\widehat{M}$.

	Finally note that the Hausdorff completion of a nuclear pseudo-metrizable locally convex $K$-vector space is a nuclear $K$-Fr\'echet space, see \Cref{Lemma - Properties of pseudo-metrizable spaces} (iv) and \cite[Prop.\ 20.4]{Schneider02NonArchFunctAna}.
\end{proof}

\begin{corollary}\label{Cor - Completed tensor product is module}
	The quasi-abelian categories $\CM_{D(\dot{\Fg},H)}^\mathrm{nF}$ and $\CM_{D(G)}^\mathrm{nF}$ (see \Cref{Prop - Category of nuclear Frechet modules is quasi-abelian}) are closed under taking the completed tensor product over $K$.
\end{corollary}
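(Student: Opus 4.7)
The plan is to mimic the proof of \Cref{Prop - Extension of module structure to completion} for tensor products. Given $M, N \in \CM_{D(\dot{\Fg}, H)}^\mathrm{nF}$, \Cref{Lemma - Module structure on tensor product} endows $M \ot{K, \iota} N$ with a separately continuous diagonal $D(\dot{\Fg}, H)$-module structure. Since $M$ and $N$ are $K$-Fr\'echet spaces, hence pseudo-metrizable and barrelled, \Cref{Lemma - Equality of projective and inductive tensor product} yields $M \ot{K, \iota} N = M \ot{K, \pi} N$, which is pseudo-metrizable by \Cref{Lemma - Properties of pseudo-metrizable spaces}(i); its Hausdorff completion $M \cot{K} N$ is a nuclear $K$-Fr\'echet space by \Cref{Lemma - Properties of pseudo-metrizable spaces}(iv) together with \cite[Prop.\ 19.11]{Schneider02NonArchFunctAna}.

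The heart of the argument is extending the module structure to $M \cot{K} N$. Rather than invoking \Cref{Prop - Extension of module structure to completion} as a black box (which would require knowing that $M \ot{K, \pi} N$ is itself barrelled), I would reproduce its argument keeping track of both factors. Restricting to the pseudo-metrizable subalgebra $D(\Fg, H_0) \subset D(\dot{\Fg}, H)$ for $H_0 \defeq G_0 \cap H$, the reasoning in the proof of \Cref{Prop - Extension of module structure to completion} — using pseudo-metrizability of $D(\Fg, H_0)$, barrelledness of $M$ (resp.\ of $N$), and \Cref{Lemma - Equality of projective and inductive tensor product} — shows that each action $D(\Fg, H_0) \times M \to M$ and $D(\Fg, H_0) \times N \to N$ is jointly continuous. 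Precomposing the projective tensor product of these two continuous bilinear actions with the continuous diagonal $D(\Fg, H_0) \to D(\Fg, H_0) \ot{K, \pi} D(\Fg, H_0)$ (whose continuity was verified in the proof of \Cref{Lemma - Module structure on tensor product}) produces a jointly continuous action $D(\Fg, H_0) \times (M \ot{K, \pi} N) \to M \ot{K, \pi} N$. By \cite[Lemma 1.2.2]{Emerton17LocAnVect}, this extends to a jointly continuous action of $D(\Fg, H_0)$ on $M \cot{K} N$.

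Finally, using the decomposition $D(\dot{\Fg}, H) \cong \bigoplus_{h \in H_0 \backslash H} D(\Fg, H_0) \ast \delta_h$ established in the proof of \Cref{Prop - Extension of module structure to completion}, the direct sum of the translates of this jointly continuous action assembles to the desired separately continuous $D(\dot{\Fg}, H)$-module structure on $M \cot{K} N$. The $\CM_{D(G)}^\mathrm{nF}$ case is obtained by the parallel reasoning, taking $H = G$ (so that $D(\dot{\Fg}, G) = D(G)$) and $H_0 = G_0$ throughout; alternatively it follows directly since \Cref{Lemma - Module structure on tensor product} already records the analogue for $D(G)$-modules. The main obstacle is the passage from jointly continuous actions on each individual factor to a jointly continuous diagonal action on the tensor product, which is resolved precisely by interposing the diagonal map and appealing to functoriality of the projective tensor product.
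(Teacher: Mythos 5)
Your proof is correct, but it follows a genuinely different route from the one in the paper. The paper's argument is shorter: after noting that $M\ot{K,\pi}N = M\ot{K,\iota}N$ is pseudo-metrizable and nuclear, it makes the additional observation that $M\ot{K,\iota}N$ carries the locally convex final topology with respect to maps $\iota_m\colon N\to M\ot{K}N$ and $\iota_n\colon M\to M\ot{K}N$ from barrelled spaces, and is therefore \emph{itself} barrelled \cite[p.~35, Expl.~3]{Schneider02NonArchFunctAna}. With that in hand, the claim is an immediate instance of \Cref{Lemma - Module structure on tensor product} together with \Cref{Prop - Extension of module structure to completion}, with no need to re-derive anything. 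You deliberately sidestep this barrelledness fact and instead unfold the argument of \Cref{Prop - Extension of module structure to completion}, obtaining joint continuity of the diagonal $D(\Fg,H_0)$-action on $M\ot{K,\pi}N$ by composing the (obviously continuous) diagonal $D(\Fg,H_0)\to D(\Fg,H_0)\ot{K,\pi}D(\Fg,H_0)$ with the projective tensor product of the two jointly continuous factor actions, and then completing via \cite[Lemma 1.2.2]{Emerton17LocAnVect} and summing over $H_0\backslash H$. Both arguments are valid; yours trades one nontrivial functional-analytic citation (barrelledness of a final topology) for an explicit factorization that uses only barrelledness of $M$ and $N$ individually, at the cost of being longer and re-deriving material that the paper packages once in \Cref{Prop - Extension of module structure to completion}. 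One small attribution slip: the continuity of the diagonal into the \emph{projective} tensor product $D(\Fg,H_0)\ot{K,\pi}D(\Fg,H_0)$ is immediate from the universal property and does not actually rely on the computation in the proof of \Cref{Lemma - Module structure on tensor product}, which concerns the finer inductive tensor product topology; this does not affect the correctness of your argument.
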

\begin{proof}
	Note that, for nuclear $K$-Fr\'echet spaces $M$ and $N$, $M \ot{K} N$ is pseudo-metrizable by \Cref{Lemma - Properties of pseudo-metrizable spaces} (i) and nuclear by \cite[Prop.\ 19.11]{Schneider02NonArchFunctAna}.
	Furthermore, since $M \ot{K,\pi} N = M \ot{K,\iota} N$ carries the locally convex final topology with respect to a family of maps from barrelled spaces, it is barrelled itself \cite[p.35, Expl.\ 3]{Schneider02NonArchFunctAna}.
	The claim then follows from \Cref{Lemma - Module structure on tensor product} and \Cref{Prop - Extension of module structure to completion}.
\end{proof}

We can also characterize separately continuous $D(\dot{\Fg},H)$-modules analogously to the $p$-adic situation considered by Agrawal and Strauch in \cite[Sect.\ 7.4]{AgrawalStrauch22FromCatOLocAnRep}.

\begin{definition}\label{Def - Compatible hyperalgebra modules}
	A \emph{locally analytic $(\hy(G),H)$-module} is a locally analytic $H$-representation $V$ which simultaneously is a separately continuous $\hy(G)$-module such that the following two compatibly conditions hold:
	\begin{altenumeratelevel2}
		\item
			The action of $\hy(H)$ as a $K$-subalgebra of $\hy(G)$ agrees with the action induced from \Cref{Prop - Anti-equivalence for locally analytic representations} (i) of $\hy(H)$ as a $K$-subalgebra of $D(H)$.
		
		\item
			For all $h \in H$, $\mu\in \hy(G)$ and $v \in V$, we have 
			\begin{equation*}
				h. (\mu \ast v) = \Ad(h)(\mu) \ast (h.v) .
			\end{equation*}
	\end{altenumeratelevel2}
\end{definition}

\begin{remark}\label{Rmk - Locally analytic representation is compatible hyperalgebra module}
	It follows from \Cref{Cor - Compatibility with adjoint representation} that every locally analytic $G$-representation canonically carries the structure of a locally analytic $(\hy(G),H)$-module for any locally $L$-analytic subgroup $H$ of $G$.
\end{remark}

\begin{proposition}[{cf.\ \cite[Lemma 7.3.7, Lemma 7.4.2]{AgrawalStrauch22FromCatOLocAnRep}}]\label{Prop - Equivalent characterization for composite algebra action}
	Let $V$ be a separately continuous $D(H)$-modules which simultaneously is a separately continuous $\hy(G)$-module such that the two induced $\hy(H)$-actions agree.
	Then these two actions extend to a separately continuous $D(\dot{\Fg},H)$-module structure on $V$ if and only if 
	\begin{equation*}
		\delta_h \ast (\mu \ast v) = \Ad(h)(\mu) \ast (\delta_h \ast v) \quad\text{, for all $h \in H$, $\mu \in \hy(G)$, $v\in V$.}
	\end{equation*}
	In particular, any $(\hy(G),H)$-module naturally carries the structure of a separately continuous $D(\dot{\Fg},H)$-module.
\end{proposition}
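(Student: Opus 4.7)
The plan is to define the candidate $D(\dot{\Fg},H)$-action on $V$ via the isomorphism
\[
\hy(G) \ot{\hy(H),\iota} D(H) \overset{\cong}{\lra} D(\dot{\Fg},H)
\]
from \Cref{Cor - Isomorphism for composite algebra} by setting $(\mu \ast \delta) \ast v \defeq \mu \ast (\delta \ast v)$, and then to verify that it is well-defined, separately continuous, and associative. The necessity of the stated compatibility condition is immediate: if $V$ already carries a separately continuous $D(\dot{\Fg},H)$-module structure extending the given ones, then \Cref{Lemma - Formula for adjoint representation} yields $\delta_h \ast \mu = \Ad(h)(\mu) \ast \delta_h$ in $D(\dot{\Fg},H)$, so associativity gives
\[
\delta_h \ast (\mu \ast v) = (\delta_h \ast \mu) \ast v = \bigl(\Ad(h)(\mu) \ast \delta_h\bigr) \ast v = \Ad(h)(\mu) \ast (\delta_h \ast v).
\]

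For sufficiency, the key step is to promote the Dirac-level hypothesis to the stronger identity $\delta \ast (\mu \ast v) = \sum_{i=1}^r \mu_i \ast (\delta_i \ast v)$ for all $\delta \in D(H)$, where $\mu_1,\ldots,\mu_r$ is a basis of $\hy(G)_n \ni \mu$ and the $\delta_i \in D(H)$ are defined from $\delta$ as in the proof of \Cref{Prop - Description of composite subalgebra}, i.e.\ $\delta_i(f) = \delta\bigl[h \mto c_i(h) f(h)\bigr]$ for certain fixed $c_i \in C^\la(G,K)$. Both sides depend continuously on $\delta$: the left by separate continuity of the two given actions, and the right because each map $\delta \mto \delta_i$ is continuous (being the transpose of the continuous multiplication operator $f \mto c_i \cdot f$ on $C^\la(H,K)$), followed again by the separately continuous module operations. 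The hypothesis asserts the identity for all Dirac distributions $\delta_h$, and since these span a dense subspace of $D(H)$ by \Cref{Prop - Properties of space of distributions} (iv), the identity extends to all of $D(H)$ (assuming $V$ to be Hausdorff, which covers the situations of interest; otherwise one passes to the Hausdorff quotient first).

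With this compatibility in hand, I construct the module structure as follows. For each fixed $v \in V$, the bilinear map $\hy(G) \times D(H) \lra V$, $(\mu, \delta) \lto \mu \ast (\delta \ast v)$, is separately continuous and hence factors through $\hy(G) \ot{K,\iota} D(H)$. Taking $\delta = \nu \in \hy(H)$ in the key identity and invoking the hypothesis that the two $\hy(H)$-actions on $V$ agree shows that this map descends to the quotient $\hy(G) \ot{\hy(H),\iota} D(H) \cong D(\dot{\Fg},H)$, producing a continuous linear map $D(\dot{\Fg},H) \lra V$, $d \lto d \ast v$, for each $v$. Continuity in $v$ for fixed $d = \sum_j \mu_j \ast \delta_j$ follows directly from the separate continuity of the $\hy(G)$- and $D(H)$-actions. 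Associativity of the $D(\dot{\Fg},H)$-action then reduces, via the expression of a general element of $D(\dot{\Fg},H)$ as a finite sum $\mu \ast \delta$ from \Cref{Prop - Description of composite subalgebra}, to a rearrangement using the key identity applied with $\delta_2 \ast v$ in place of $v$.

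The main obstacle is precisely the density argument promoting the Dirac-level hypothesis to the full identity for arbitrary $\delta \in D(H)$; this is the only step where the separate continuity of both module structures and the explicit form of the decomposition from \Cref{Prop - Description of composite subalgebra} interact non-trivially. Once this identity is available, the remaining verifications -- well-definedness on the tensor product, separate continuity, and associativity -- are formal manipulations using \Cref{Cor - Isomorphism for composite algebra} and standard properties of the inductive tensor product topology. The final assertion about $(\hy(G),H)$-modules then follows because the compatibility condition of \Cref{Def - Compatible hyperalgebra modules} (ii) is exactly the required identity.
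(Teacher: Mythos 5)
Your proposal matches the paper's own proof: both define the candidate scalar multiplication via the topological isomorphism of \Cref{Cor - Isomorphism for composite algebra}, reduce the module condition to associativity, and pass from Dirac distributions to general $\delta \in D(H)$ by density (\Cref{Prop - Properties of space of distributions}~(iv)). You merely spell out the continuity needed for that density step (each $\delta \mapsto \delta_i$ being the transpose of multiplication by $c_i$), which the paper leaves implicit by deferring to the proof of \Cref{Cor - Compatibility with adjoint representation}; your Hausdorff caveat is apt, though note that passing to the Hausdorff quotient would only yield the identity modulo $\overline{\{0\}}$ rather than in $V$ itself -- in the applications at hand the modules are Hausdorff anyway.
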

\begin{proof}
	Given separately continuous $D(H)$- and $\hy(G)$-module structures on $V$ which agree on $\hy(H)$, the topological isomorphism of \Cref{Cor - Isomorphism for composite algebra} yields a well defined separately continuous homomorphism $D(\dot{\Fg},H) \times V \ra V $.
	This defines a separately continuous $D(\dot{\Fg},H)$-module structure on $V$ if and only if the scalar multiplication satisfies associativity.
	By the associativity of the $\hy(G)$- and $D(H)$-actions the latter is equivalent to
	\begin{equation*}
		(\delta_h \ast \mu) \ast v  = \delta_h \ast (\mu\ast v) \quad\text{, for all $h \in H$, $\mu \in \hy(G)$, $v\in V$.}
	\end{equation*}
	But like in the proof of \Cref{Cor - Compatibility with adjoint representation}, we find that $(\delta_h \ast \mu) \ast v = \Ad(h)(\mu) \ast (\delta_h \ast v)$.
\end{proof}

\begin{corollary}\label{Cor - Composite module from H-representation}
	For $W \in \CM_{D(H)}^\mathrm{nF}$, we set, for $\nu \otimes w \in \hy(G) \cot{\hy(H)} W$,
	\begin{align*}
		\mu \ast (\nu \otimes w) &\defeq \mu \ast \nu \otimes w && \text{, for $\mu \in \hy(G)$,} \\
		\delta \ast (\nu \otimes w) &\defeq \Ad(\delta)(\nu) \otimes \delta \ast w &&\text{, for $\delta \in D(H)$.}
	\end{align*}
	This defines a separately continuous $D(\dot{\Fg},H)$-module structure such that $\hy(G) \cot{\hy(H)} W $ is contained in $ \CM_{D(\dot{\Fg},H)}^\mathrm{nF}$.
\end{corollary}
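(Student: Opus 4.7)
The plan is to apply \Cref{Prop - Equivalent characterization for composite algebra action}: I will independently equip the tensor product with separately continuous $\hy(G)$- and $D(H)$-module structures, check they coincide on $\hy(H)$, and verify the adjoint compatibility $\delta_h \ast (\mu \ast v) = \Ad(h)(\mu) \ast (\delta_h \ast v)$. I would first define both actions on the uncompleted tensor product $\hy(G) \ot{\hy(H),\iota} W$ and then pass to the completion.

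The $\hy(G)$-action is induced by left multiplication on the first factor; since this commutes with right multiplication by $\hy(H)$, it descends to a separately continuous action on $\hy(G) \ot{\hy(H),\iota} W$. For the $D(H)$-action, \Cref{Prop - Adjoint representation} equips $\hy(G)$ with a separately continuous $D(H)$-module structure, so the diagonal action (as in the argument of \Cref{Lemma - Module structure on tensor product}) turns $\hy(G) \ot{K,\iota} W$ into a separately continuous $D(H)$-module. To check that it descends modulo the relations $\nu \ast \lambda \otimes w - \nu \otimes \lambda \ast w$ ($\lambda \in \hy(H)$), it suffices by separate continuity and density of Dirac distributions in $D(H)$ (\Cref{Prop - Properties of space of distributions} (iv)) to test invariance for $\delta = \delta_h$. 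The key computation is
\begin{equation*}
\delta_h \ast (\nu \ast \lambda \otimes w) = \Ad(h)(\nu) \ast \Ad(h)(\lambda) \otimes \delta_h \ast w \equiv \Ad(h)(\nu) \otimes \delta_h \ast \lambda \ast w = \delta_h \ast (\nu \otimes \lambda \ast w),
\end{equation*}
where one uses that $\Ad(h)$ is an algebra endomorphism of $\hy(G)$ preserving $\hy(H)$, together with \Cref{Lemma - Formula for adjoint representation} in the form $\Ad(h)(\lambda) \ast \delta_h = \delta_h \ast \lambda$.

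Agreement on $\hy(H)$ reduces to the identity $\lambda \ast \nu \otimes w = \Ad(\lambda)(\nu) \otimes \lambda \ast w$ in $\hy(G) \ot{\hy(H),\iota} W$, for $\lambda \in \hy(H)$; applying the tensor relation and once more \Cref{Lemma - Formula for adjoint representation} gives $\Ad(\lambda)(\nu) \otimes \lambda \ast w \equiv \Ad(\lambda)(\nu) \ast \lambda \otimes w = \lambda \ast \nu \otimes w$. The adjoint compatibility is immediate on elementary tensors from the multiplicativity of $\Ad(h)$. Thus \Cref{Prop - Equivalent characterization for composite algebra action} endows $\hy(G) \ot{\hy(H),\iota} W$ with a separately continuous $D(\dot{\Fg},H)$-module structure given by the two formulae of the statement.

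The main obstacle I anticipate is passing to the Hausdorff completion and establishing the nuclear $K$-Fr\'echet property so that the resulting module lies in $\CM_{D(\dot{\Fg},H)}^\mathrm{nF}$. I would address both simultaneously via the identification of $\hy(G)$ as an $\hy(H)$-right-module with $D_{1H}(G/H)_\mathrm{fin} \ot{K} \hy(H)$ arising from a locally analytic splitting of $G \ra G/H$ (cf.\ the proof of \Cref{Cor - Isomorphism for composite algebra} via \Cref{Lemma - Hyperalgebra of product}), which yields $\hy(G) \ot{\hy(H),\iota} W \cong D_{1H}(G/H)_\mathrm{fin} \ot{K,\iota} W$. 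Since $D_{1H}(G/H)_\mathrm{fin}$ is dense in the nuclear $K$-Fr\'echet space $D_{1H}(G/H)$ by \Cref{Cor - Pairing of hyperalgebra and vector valued germs} and $W$ is nuclear Fr\'echet, the Hausdorff completion can be topologically identified with $D_{1H}(G/H) \cot{K} W$, a nuclear $K$-Fr\'echet space. Extension of the $D(\dot{\Fg},H)$-module structure to the completion then follows from \Cref{Prop - Extension of module structure to completion} applied to a suitable pseudo-metrizable, barrelled dense $D(\dot{\Fg},H)$-submodule, or by appealing directly to the separate continuity of the two constituent actions together with this explicit description of the completion.
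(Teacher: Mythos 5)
Your proposal follows the same basic strategy as the paper: define the two actions on the uncompleted $\hy(G)\ot{\hy(H),\iota}W$, verify the hypotheses of \Cref{Prop - Equivalent characterization for composite algebra action}, and pass to the Hausdorff completion. Your verification that the $D(H)$-action descends (via density of Dirac distributions and \Cref{Lemma - Formula for adjoint representation}), and that the two $\hy(H)$-actions agree, is essentially the same computation as in the paper, which uses \Cref{Cor - Compatibility with adjoint representation} and \Cref{Lemma - Formula for adjoint representation} respectively. For the nuclear Fr\'echet property your route through the splitting $G\cong G/H\times H$ and the identification $\hy(G)\ot{\hy(H),\iota}W\cong D_{1H}(G/H)_{\mathrm{fin}}\ot{K,\iota}W$ works; the paper instead observes $\hy(G)\cot{K}W\cong D_1(G)\cot{K}W$ and realizes $\hy(G)\cot{\hy(H)}W$ as a quotient via \Cref{Lemma - Completion of projective tensor product over algebras}. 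These are different but equivalent.

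Where your argument is under-justified is precisely the step you flag yourself: extending the module structure to the completion. Your primary suggestion is to invoke \Cref{Prop - Extension of module structure to completion}, but that result requires the dense submodule to be pseudo-metrizable \emph{and barrelled}, and it is not clear that $\hy(G)\ot{\hy(H),\iota}W$ is barrelled --- $\hy(G)$ is a non-complete, countable-dimensional subspace of the Fr\'echet space $D(G_0)$, so the reasoning used for Fr\'echet tensor factors in the proof of \Cref{Cor - Completed tensor product is module} does not obviously apply, and you do not exhibit a "suitable" barrelled submodule. The paper avoids this by decoupling the two constituent actions before completing: the $\hy(G)$-action on $\hy(G)\ot{\hy(H)}W$ is \emph{jointly} continuous (multiplication in $\hy(G)\subset D(G_0)$ is jointly continuous) and extends by the jointly continuous extension lemma \cite[Lemma 1.2.2]{Emerton17LocAnVect}, which needs no barrelledness; the $D(H)$-action is only separately continuous, but there $D(H)$ (the \emph{algebra}) is barrelled and the module is pseudo-metrizable, so \Cref{Lemma - Extending separately continuous module structure to completion} applies. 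Only after extending both pieces does the paper invoke \Cref{Prop - Equivalent characterization for composite algebra action} on the completion, with the compatibility conditions already checked on the dense subspace. Your "second alternative" gestures in this direction, but for the argument to go through you need this decoupling spelled out rather than left as an aside.
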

\begin{proof}
	It is clear that the above defines a jointly continuous $\hy(G)$-module structure on $\hy(G) \ot{\hy(H)} W$.
	It extends to $\hy(G) \cot{\hy(H)} W$ by \cite[Lemma 1.2.2]{Emerton17LocAnVect}.

	Concerning the $D(H)$-action, we have seen in the proof of \Cref{Lemma - Module structure on tensor product} that the homomorphism $D(H) \ra D(H) \ot{K,\iota} D(H)$, $\delta \mto \delta \otimes \delta$, is continuous. 
	Since $\Ad$ is separately continuous, it follows that
	\begin{align*}
		 D(H) \ot{K,\iota} \hy(G) \ot{K,\iota} W \lra \hy(G) \ot{K,\iota} W \,,\quad
		 \delta \otimes \nu \otimes w \lto  \Ad(\delta)(\nu) \otimes \delta \ast w,
	\end{align*}
	is continuous as well.
	In this way $\delta \in D(H)$ induces a well-defined endomorphism of $\hy(G) \ot{\hy(H)} W$.
	Indeed, we compute that in the latter, for $\nu \in \hy(G)$, $\lambda \in \hy(H)$ and $w\in W$,
	\begin{align*}
		\Ad(\delta)(\nu \ast \lambda) \otimes \delta \ast w 
			= \Ad(\delta)(\nu) \otimes \Ad(\delta)(\lambda) \ast (\delta \ast w )
			= \Ad(\delta)(\nu) \otimes \delta \ast (\lambda \ast w )
	\end{align*}
	because $\Ad(\delta)$ is an algebra homomorphism and by applying \Cref{Cor - Compatibility with adjoint representation}.
	Moreover, using that $\hy(G)$ is a $D(H)$-module under $\Ad$ one also verifies that the above defines a $D(H)$-module structure on $\hy(G) \ot{\hy(H)} W$.
	\Cref{Lemma - Extending separately continuous module structure to completion} then shows that it extends to a separately continuous $D(H)$-module structure on $\hy(G) \cot{\hy(H)} W$.

	By \cite[Lemma 19.10 (i)]{Schneider02NonArchFunctAna}, we have $\hy(G) \cot{K} W \cong D_1(G) \cot{K} W$, which is a nuclear $K$-Fr\'echet space by the comment after \cite[Prop.\ 20.4]{Schneider02NonArchFunctAna}.
	Since by \Cref{Lemma - Completion of projective tensor product over algebras} $\hy(G) \cot{\hy(H)} W$ is a quotient of $\hy(G) \cot{K} W$, it follows that the former is a nuclear $K$-Fr\'echet space.

	To obtain $\hy(G) \cot{\hy(H)} W \in \CM_{D(\dot{\Fg},H)}^\mathrm{nF}$, it remains to prove 
	that the two induced $\hy(H)$-actions agree and that the compatibility condition of \Cref{Prop - Equivalent characterization for composite algebra action} is satisfied.
	For both it suffices to do so in $\hy(G) \ot{\hy(H)} W$, and one verifies the latter compatibility condition there directly.
	On the other hand let again $\lambda \in \hy(H)$, $\nu \in \hy(G)$ and $w\in W$.
	Using \Cref{Lemma - Formula for adjoint representation} we have in $\hy(G) \ot{\hy(H)} W$:
	\begin{align*}
		\Ad(\lambda)(\nu) \otimes \lambda \ast w = \Ad(\lambda)(\nu) \ast \lambda \otimes  w 
			= \lambda \ast \nu \otimes  w,
	\end{align*}
	which shows that the two $\hy(H)$-actions agree.
\end{proof}

\subsection{Definition and Left Exactness of the Functors $\dot{\CF}^G_P$}

Let $M \in  \CM_{D(\dot{\Fg},H)}$ and consider the inclusion $D(\dot{\Fg},H) \hookrightarrow D(G)$ of separately continuous $K$-algebras.
By \Cref{Lemma - Base change for topological modules} (i)  the induced $D(G)$-module $D(G) \ot{D(\dot{\Fg},H),\iota} M$ is separately continuous.

\begin{lemma}\label{Lemma - Tensoring up from composite algebra}
	For $M \in \CM_{D(\dot{\Fg},H)}$, the embedding $D(G_0) \hookrightarrow D(G)$ induces a topological isomorphism of separately continuous $D(G_0)$-modules
	\begin{equation*}
		D(G_0) \ot{D(\dot{\Fg},H_0),\iota} M \overset{\cong}{\lra} D(G) \ot{D(\dot{\Fg},H),\iota} M .
	\end{equation*}
\end{lemma}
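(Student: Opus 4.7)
The plan is to reduce the statement to a chain of canonical topological isomorphisms produced by \Cref{Cor - Isomorphism for composite algebra}, \Cref{Lemma - Tensoring up from distribution algebras of subgroups}(i) and the formal tensor product manipulations of \Cref{Lemma - Tensor identities for modules}. I will repeatedly use the observation that $\hy(H_0)=\hy(H)$ (and $\hy(G_0)=\hy(G)$), since $D_1(\blank)_\mathrm{fin}$ depends only on a neighbourhood of the identity.

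First I would establish a topological isomorphism of $D(\dot{\Fg},H_0)$-$D(H)$-bimodules
\[
D(\dot{\Fg},H_0)\ot{D(H_0),\iota} D(H)\overset{\cong}{\lra} D(\dot{\Fg},H).
\]
By \Cref{Cor - Isomorphism for composite algebra}, the left-hand side equals $\bigl(\hy(G)\ot{\hy(H),\iota} D(H_0)\bigr)\ot{D(H_0),\iota} D(H)$; then associativity (\Cref{Lemma - Tensor identities for modules}(ii)) together with the identity $D(H_0)\ot{D(H_0),\iota} D(H)\cong D(H)$ from \Cref{Lemma - Tensor identities for modules}(i) collapses this to $\hy(G)\ot{\hy(H),\iota} D(H)\cong D(\dot{\Fg},H)$. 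Combining this with \Cref{Lemma - Tensoring up from distribution algebras of subgroups}(i), I would then deduce
\[
D(G_0)\ot{D(\dot{\Fg},H_0),\iota} D(\dot{\Fg},H)\cong D(G_0)\ot{D(H_0),\iota} D(H)\cong D(G)
\]
as separately continuous $D(G_0)$-$D(\dot{\Fg},H)$-bimodules, by a further appeal to \Cref{Lemma - Tensor identities for modules}.

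With this in hand, associativity applied to the module $M$ yields
\[
D(G)\ot{D(\dot{\Fg},H),\iota} M\cong D(G_0)\ot{D(\dot{\Fg},H_0),\iota}\bigl(D(\dot{\Fg},H)\ot{D(\dot{\Fg},H),\iota} M\bigr)\cong D(G_0)\ot{D(\dot{\Fg},H_0),\iota} M,
\]
using \Cref{Lemma - Tensor identities for modules}(i) in the final step. Tracing elementary tensors through the chain shows that this composite is precisely the map induced by $D(G_0)\hookrightarrow D(G)$. The main point to verify is essentially bookkeeping: that at each intermediate stage the relevant bimodule structures are separately continuous so that \Cref{Lemma - Tensor identities for modules} applies, which is immediate since all the algebras and modules in sight are built from $D(G_0)$, $D(H_0)$, $\hy(G)$, and $D(H)$ with their natural separately continuous structures. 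I do not anticipate a deeper obstacle.
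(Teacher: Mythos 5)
Your proof is correct but follows a genuinely different route than the paper. The paper proves \Cref{Lemma - Tensoring up from composite algebra} in a single step: it prolongs the explicit map \eqref{Eq - Homomorphism for tensoring up} from the proof of \Cref{Lemma - Tensoring up from distribution algebras of subgroups}~(ii) to a continuous homomorphism $D(G)\ot{K,\iota} M \to D(G_0) \ot{D(\dot{\Fg},H_0),\iota} M$, and observes that it descends over the quotient $D(G)\ot{D(\dot{\Fg},H),\iota} M$ to provide a continuous inverse. You instead first globalize the isomorphism of \Cref{Cor - Isomorphism for composite algebra} to obtain the bimodule identity $D(\dot{\Fg},H_0)\ot{D(H_0),\iota} D(H) \cong D(\dot{\Fg},H)$, feed that through \Cref{Lemma - Tensoring up from distribution algebras of subgroups}~(i) to get $D(G_0)\ot{D(\dot{\Fg},H_0),\iota} D(\dot{\Fg},H)\cong D(G)$, and then invoke the associativity and unit isomorphisms of \Cref{Lemma - Tensor identities for modules} with $M$ plugged in at the end. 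What you gain is that the factoring-through-the-quotient step becomes invisible (it is absorbed into the universal properties behind the associativity isomorphism), at the cost of having to chain several intermediate bimodule isomorphisms and verify that at each stage the relevant (bi)module structures make \Cref{Lemma - Tensor identities for modules} applicable; the paper gets away with a one-line construction because the bookkeeping was already done in the proof of \Cref{Lemma - Tensoring up from distribution algebras of subgroups}~(ii). One small point you gloss over: after deducing $D(G_0)\ot{D(\dot{\Fg},H_0),\iota} D(\dot{\Fg},H)\cong D(G)$ you should remark that this map is the multiplication map and hence right $D(\dot{\Fg},H)$-linear, since \Cref{Lemma - Tensoring up from distribution algebras of subgroups}~(i) only records $D(H)$-linearity; without this the final application of associativity over $D(\dot{\Fg},H)$ would not be justified. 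But this follows by tracing elementary tensors, exactly as you say.
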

\begin{proof}
	We can prolong the map \eqref{Eq - Homomorphism for tensoring up} from the proof of \Cref{Lemma - Tensoring up from distribution algebras of subgroups} (ii) to obtain a continuous homomorphism
	\begin{equation*}
		D(G) \ot{K,\iota} M \lra D(G_0) \ot{D(H_0),\iota} M \longtwoheadrightarrow D(G_0) \ot{D(\dot{\Fg},H_0),\iota} M . 
	\end{equation*}
	It factors over the quotient $D(G) \ot{D(\dot{\Fg},H),\iota} M$ to give a continuous inverse to the claimed isomorphism.
\end{proof}

\begin{definition}\label{Def - Functor by tensoring up}
	For $M \in \CM_{D(\dot{\Fg},H)}$ we define
	\begin{equation*}
		\dot{\CF}^G_H(M) \defeq \big( D(G) \cot{D(\dot{\Fg},H),\iota} M \big)'_b . 
	\end{equation*}
\end{definition}

Note that in the situation of $G=\bG(L)$, $H = \bP(L)$, for a connected reductive algebraic group $\bG$ over $L$ with a parabolic subgroup $\bP$, we recover the definition of $\dot{\CF}_P^G(M) $ from the introduction.

\begin{proposition}\label{Prop - Locally analytic representation via tensoring up}
	If $M \in  \CM_{D(\dot{\Fg},H)}$ is pseudo-metrizable and nuclear, then $\dot{\CF}_H^G(M)$ has a canonical structure of a locally analytic $G$-representation of compact type.
\end{proposition}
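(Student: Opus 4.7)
The plan is to exhibit $D(G) \cot{D(\dot{\Fg},H),\iota} M$ as an object of $\CM_{D(G)}^\mathrm{nF}$ and then invoke the anti-equivalence of \Cref{Prop - Anti-equivalence for locally analytic representations}(ii) to transfer the desired structure to its strong dual. This reduces the task to verifying two things: (a) that the completed tensor product is a nuclear $K$-Fr\'echet space, and (b) that it carries a separately continuous $D(G)$-module structure.

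For (a), I would first apply \Cref{Lemma - Tensoring up from composite algebra} to identify $D(G) \ot{D(\dot{\Fg},H),\iota} M$ topologically with $D(G_0) \ot{D(\dot{\Fg},H_0),\iota} M$. Since $G_0$ is compact, $D(G_0)$ is a nuclear $K$-Fr\'echet space by \Cref{Prop - Properties of space of distributions}, and in particular barrelled. Together with pseudo-metrizability of $M$, \Cref{Lemma - Equality of projective and inductive tensor product} collapses the inductive and projective tensor product topologies on $D(G_0) \ot{K} M$; this tensor product is then pseudo-metrizable (\Cref{Lemma - Properties of pseudo-metrizable spaces}(i)) and nuclear (\cite[Prop.\ 19.11]{Schneider02NonArchFunctAna}), properties inherited by the quotient $D(G_0) \ot{D(\dot{\Fg},H_0),\iota} M$ (\cite[Prop.\ 19.4]{Schneider02NonArchFunctAna}). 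Its Hausdorff completion is thus a nuclear $K$-Fr\'echet space by \Cref{Lemma - Properties of pseudo-metrizable spaces}(iv) and \cite[Prop.\ 20.4]{Schneider02NonArchFunctAna}. For (b), \Cref{Lemma - Base change for topological modules}(i) already supplies a separately continuous $D(G)$-module structure on the uncompleted tensor product; since $D(G)$ is barrelled (\Cref{Prop - Properties of space of distributions}(i)) and the module is pseudo-metrizable by (a), \Cref{Lemma - Extending separately continuous module structure to completion} extends this module structure uniquely to the Hausdorff completion. Combining (a) and (b) places $D(G) \cot{D(\dot{\Fg},H),\iota} M$ in $\CM_{D(G)}^\mathrm{nF}$, and the anti-equivalence of \Cref{Prop - Anti-equivalence for locally analytic representations}(ii) then realises $\dot{\CF}_H^G(M)$ as a locally analytic $G$-representation of compact type.

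The main subtlety is step (b): without pseudo-metrizability of $M$, one could not invoke \Cref{Lemma - Extending separately continuous module structure to completion} to extend the separately continuous $D(G)$-action to the completion, and without the barrelledness of $D(G)$ the Banach--Steinhaus input underlying that extension would be unavailable. The reduction in (a) to the compact subgroup $G_0$ via \Cref{Lemma - Tensoring up from composite algebra} is the other key ingredient, since it is only over a compact group that the distribution algebra is itself Fr\'echet, enabling the pseudo-metrizability and nuclearity arguments to run cleanly.
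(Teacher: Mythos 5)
Your proposal is correct and follows essentially the same route as the paper: reduce via \Cref{Lemma - Tensoring up from composite algebra} to the compact subgroup $G_0$, establish that the tensor product is pseudo-metrizable and nuclear, extend the $D(G)$-module structure to the completion using barrelledness of $D(G)$ and \Cref{Lemma - Extending separately continuous module structure to completion}, and conclude by the anti-equivalence of \Cref{Prop - Anti-equivalence for locally analytic representations}(ii). The only difference is that you spell out a few intermediate steps (collapsing inductive/projective topologies, nuclearity of the quotient) that the paper compresses into single citations.
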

\begin{proof}
	As we have seen in \Cref{Lemma - Tensoring up from composite algebra}, $D(G) \ot{D(\dot{\Fg},H),\iota} M$ is topologically isomorphic to $D(G_0) \ot{D(\dot{\Fg},H_0)} M $ as a locally convex $K$-vector space.
	Since $D(G_0)$ is a nuclear $K$-Fr\'echet space, \Cref{Lemma - Properties of pseudo-metrizable spaces} (i) and \cite[Prop.\ 19.11]{Schneider02NonArchFunctAna} thus imply that $D(G) \ot{D(\dot{\Fg},H),\iota} M$ is pseudo-metrizable and nuclear.

	Because $D(G)$ is barrelled (\Cref{Prop - Properties of space of distributions} (i)), it follows from \Cref{Lemma - Extending separately continuous module structure to completion} that the $D(G)$-module structure on $D(G) \ot{D(\dot{\Fg},H),\iota} M$ extends to a separately continuous $D(G)$-module structure on the Hausdorff completion $D(G) \cot{D(\dot{\Fg},H),\iota} M$.
	By \cite[Prop.\ 20.4]{Schneider02NonArchFunctAna} this completion is nuclear again, so that $D(G) \cot{D(\dot{\Fg},H),\iota} M \in \CM_{D(G)}^\mathrm{nF}$.
	Via the anti-equivalence of \Cref{Prop - Anti-equivalence for locally analytic representations} (ii), the strong dual space $\dot{\CF}^G_H(M)$ is a locally analytic $G$-representation of compact type.
\end{proof}

\begin{remark}
	For $V\in \Rep_K^{\la,\mathrm{ct}}(H)$, we have seen in \Cref{Cor - Composite module from H-representation} that $\hy(G) \cot{\hy(H)} V'_b$ is contained in $ \CM_{D(\dot{\Fg},H)}^\mathrm{nF}$.
	\Cref{Lemma - Tensor identities for modules} and \Cref{Cor - Isomorphism for composite algebra} then yield a topological isomorphism of separately continuous $D(G)$-modules
	\begin{align*}
		D(G) \ot{D(\dot{\Fg},H),\iota} \big( \hy(G) \ot{\hy(H),\iota} V'_b \big) 
			&\cong D(G) \ot{D(\dot{\Fg},H),\iota} \Big( \big( \hy(G) \ot{\hy(H),\iota} D(H) \big) \ot{D(H),\iota} V'_b \Big) \\
			&\cong D(G) \ot{D(H),\iota} V'_b .
	\end{align*}
	Combined with \Cref{Prop - Module description for induction} we obtain an isomorphism of locally analytic $G$-representations
	\begin{equation*}
		\dot{\CF}_H^G \big( \hy(G) \cot{\hy(H)} V'_b  \big) \cong \Ind_{H}^G \big( V \big) .
	\end{equation*}
\end{remark}

\begin{theorem}\label{Thm - Functor via tensoring up is left exact}
	\Cref{Def - Functor by tensoring up} yields a left exact\footnote{In the sense of \cite[1.1.5]{Schneiders98QuasiAbCat}, i.e. for a short strictly exact sequence $0 \ra M_1 \xrightarrow{f} M_2 \xrightarrow{g} M_3 \ra 0 $ in $\CM_{D(\dot{\Fg},H)}^\mathrm{nF}$, the induced $\dot{\CF}_H^G(g)$ is a strict monomorphism in $\Rep_K^{\la,\mathrm{ct}}(G)$ and $\Im\big(\dot{\CF}_H^G(g)\big) = \Ker\big(\dot{\CF}_H^G(f) \big)$.} contravariant functor
	\begin{equation*}
		\dot{\CF}^G_H \colon \CM_{D(\dot{\Fg},H)}^\mathrm{nF} \lra \Rep_K^{\la, \mathrm{ct}} (G) \,,\quad M \lto \dot{\CF}^G_H(M) , 
	\end{equation*}
	between quasi-abelian categories.
\end{theorem}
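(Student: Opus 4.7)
Functoriality and contravariance are immediate: a morphism $f\colon M \to N$ in $\CM_{D(\dot{\Fg},H)}^{\mathrm{nF}}$ induces $\id \otimes f$, which extends to a continuous $D(G)$-linear map between the Hausdorff completions by density, and combining with the anti-equivalence of \Cref{Prop - Anti-equivalence for locally analytic representations} (ii) yields $\dot{\CF}_H^G$ as a contravariant functor.

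Via said anti-equivalence, which reverses arrows and interchanges strict monomorphisms with strict epimorphisms, left exactness of $\dot{\CF}_H^G$ in the sense of the footnote is equivalent to right exactness of the covariant functor $T \colon M \mto D(G) \cot{D(\dot{\Fg},H),\iota} M$ from $\CM_{D(\dot{\Fg},H)}^\mathrm{nF}$ to $\CM_{D(G)}^\mathrm{nF}$. By \Cref{Lemma - Tensoring up from composite algebra}, which survives the passage to completions, $T(M)$ is topologically isomorphic to $D(G_0) \cot{D(\dot{\Fg},H_0),\iota} M$, so I may reduce to $G = G_0$ and $H = H_0$ compact.

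In this compact case, I would first observe that $M \mto D(G) \cot{D(H),\iota} M$ is exact on $\CM_{D(H)}^{\mathrm{nF}}$ by combining \Cref{Prop - Module description for induction} with the exactness of locally analytic induction in \Cref{Cor - Locally analytic induction is exact}; this restricts to $\CM_{D(\dot{\Fg},H)}^{\mathrm{nF}}$. Invoking \Cref{Lemma - Completion of projective tensor product over algebras} together with \Cref{Prop - Description of composite subalgebra} (whereby $\hy(G)$ and $D(H)$ generate $D(\dot{\Fg},H)$), I would present $T(M)$ as the cokernel in $\CM_{D(G)}^{\mathrm{nF}}$ of the natural continuous map
\begin{equation*}
	\nu_M \colon D(G) \cot{D(H),\iota} \hy(G) \cot{K,\iota} M \lra D(G) \cot{D(H),\iota} M, \quad \delta \otimes \mu \otimes m \lto \delta \ast \mu \otimes m - \delta \otimes \mu \ast m.
\end{equation*}
Using the filtration $\hy(G) = \bigcup_n \hy(G)_n$ by finite-dimensional subspaces, the source decomposes as a filtered colimit of the evidently exact functors $M \mto \big(D(G) \cot{D(H),\iota} M\big)^{\oplus \dim \hy(G)_n}$, while the target is exact as noted above. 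A snake-lemma argument in the quasi-abelian category $\CM_{D(G)}^{\mathrm{nF}}$ (cf.\ \Cref{Prop - Category of nuclear Frechet modules is quasi-abelian} and \cite{Schneiders98QuasiAbCat}), applied at each finite stage $n$ and then passed to the colimit, would then produce the desired right strictly exact sequence $T(M_1) \to T(M_2) \to T(M_3) \to 0$ for any short strictly exact sequence $0 \to M_1 \to M_2 \to M_3 \to 0$ in $\CM_{D(\dot{\Fg},H)}^{\mathrm{nF}}$.

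The main technical obstacle is to control the interaction between the filtered colimit over $n$ and the closure operations inherent in the quasi-abelian cokernel construction, so that the snake lemma delivers not merely an algebraically exact sequence but a strictly exact one in $\CM_{D(G)}^{\mathrm{nF}}$. This comes down to the exactness of the completed projective tensor product on nuclear $K$-Fr\'echet spaces (\cite[Cor.\ 2.2]{BreuilHerzig18TowardsFinSlopePartGLn}) together with the automatic strictness of continuous surjections of nuclear $K$-Fr\'echet spaces via the open mapping theorem.
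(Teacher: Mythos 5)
Your reduction to the compact case via \Cref{Lemma - Tensoring up from composite algebra} and the overall idea of arguing through the anti-equivalence match the paper, but from there you take a genuinely different route. The paper establishes strict coexactness of the uncompleted sequence $D(G_0) \ot{D(\dot{\Fg},H_0)} M_\bullet$ by a direct diagram chase (using the generators $\delta\ast\lambda\otimes m - \delta\otimes\lambda\ast m$ of the relation submodule and surjectivity of $g$) and that $\widehat{g}$ is a strict epimorphism via \cite[Lemma 2]{KopylovKuzminov00KerCokerSeqSemiAbCat}, then completes using \cite[Cor.\ 2.2]{BreuilHerzig18TowardsFinSlopePartGLn}. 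You instead propose to present $T(M)$ as a cokernel of $\nu_M$ with exact source and target and run a snake-type argument; conceptually this would leverage the exactness of locally analytic induction and is an attractive alternative.

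However, the filtered-colimit step is a real gap, not just a ``technical obstacle to be controlled.'' The source you write down, $D(G)\cot{D(H),\iota}\bigl(\hy(G)\ot{K,\iota} M\bigr)$, is a \emph{completed} tensor product, and completion does not commute with filtered colimits; moreover, a filtered colimit of nuclear $K$-Fr\'echet spaces is generally not nuclear Fr\'echet, so there is no colimit to ``pass to'' inside $\CM_{D(G)}^{\mathrm{nF}}$. Consequently, knowing the finite-stage functors $M\mapsto D(G)\cot{D(H),\iota}\bigl(\hy(G)_n\ot{K} M\bigr)$ are exact does not give exactness of the source of $\nu_M$, and the snake-lemma argument as sketched does not run in the stated category. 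You could circumvent this by working with the \emph{uncompleted} tensor products throughout (where the colimit over $n$ is honest and algebraic) and completing only at the final step using \cite[Cor.\ 2.2]{BreuilHerzig18TowardsFinSlopePartGLn} --- but at that point you have essentially reconstructed the paper's diagram chase, and the cokernel presentation via $\nu_M$ buys you little. A smaller point: for $\nu_M$ to be well-defined the $D(H)$-module structure on $\hy(G)\ot{K}M$ must be the diagonal one (adjoint on $\hy(G)$, given action on $M$); this needs to be said and checked against \Cref{Lemma - Formula for adjoint representation}, though it does work out.
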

\begin{proof}
	The functoriality of $\dot{\CF}^G_H\colon \CM_{D(\dot{\Fg},H)}^\mathrm{nF} \ra \Rep_K^{\la, \mathrm{ct}} (G)$ is clear.
	It remains to show that $\dot{\CF}_H^G$ is left exact.
	To this end, let
	\begin{equation*}
		0 \lra M_1 \overset{f}{\lra} M_2 \overset{g}{\lra} M_3 \lra 0 
	\end{equation*}
	be a short strictly exact sequence in $\CM_{D(\dot{\Fg},H)}^\mathrm{nF}$.
	Under the anti-equivalence of \Cref{Prop - Anti-equivalence for locally analytic representations} (ii) and using \Cref{Lemma - Tensoring up from composite algebra}, it suffices to show that 
	\begin{equation}\label{Eq - Tensored up sequence}
		D(G_0) \cot{D(\dot{\Fg},H_0)} M_1 \overset{\widehat{f}}{\lra } D(G_0) \cot{D(\dot{\Fg},H_0)} M_2  \overset{\widehat{g}}{\lra } D(G_0) \cot{D(\dot{\Fg},H_0)} M_3  \lra 0 
	\end{equation}
	is strictly coexact, i.e.\ that $\ov{\Im(\widehat{f})}=\Ker(\widehat{g})$ and that $\widehat{g}$ is a strict epimorphism.

	Concerning the latter, by \Cref{Lemma - Completion of projective tensor product over algebras} we have a commutative square
	\begin{equation*}
		\begin{tikzcd}
				D(G_0) \cot{K} M_2 \ar[r, "\id \cot{} g"] \ar[d, two heads] & D(G_0) \cot{K} M_3 \ar[d, two heads] \\
			D(G_0) \cot{D(\dot{\Fg},H_0)} M_2  \ar[r, "\widehat{g}"] &D(G_0) \cot{D(\dot{\Fg},H_0)} M_3
		\end{tikzcd}
	\end{equation*}
	where the vertical maps are strict epimorphisms. 
	Moreover, $\id \cot{} g$ is a strict epimorphism by \cite[Cor.\ 2.2]{BreuilHerzig18TowardsFinSlopePartGLn}.
	It follows from \cite[Lemma 2]{KopylovKuzminov00KerCokerSeqSemiAbCat} that $\widehat{g}$ is a strict epimorphism as well.

	Functoriality implies that $\ov{\Im(\widehat{f})} \subset \Ker(\widehat{g})$.
	Consider the commutative diagram
		\begin{equation}
		\begin{tikzcd}
			0 \ar[r] &D(G_0) \ot{K} M_1 \ar[r, "\id \ot{} f"] \ar[d, two heads] & D(G_0) \ot{K} M_2 \ar[r, "\id \ot{} g"] \ar[d, two heads] & D(G_0) \ot{K} M_3 \ar[r] \ar[d, two heads] & 0 \\
			&D(G_0) \ot{D(\dot{\Fg},H_0)} M_1 \ar[r, "\ov{f}"] &D(G_0) \ot{D(\dot{\Fg},H_0)} M_2  \ar[r, "\ov{g}"] &D(G_0) \ot{D(\dot{\Fg},H_0)} M_3 \ar[r] &0
		\end{tikzcd}
	\end{equation}
	whose bottom row yields \eqref{Eq - Tensored up sequence} via taking the Hausdorff completion.
	In particular, since $D(G_0) \cot{D(\dot{\Fg},H_0)} M_2$ is hereditarily complete, it follows from \cite[Cor.\ 2.2]{BreuilHerzig18TowardsFinSlopePartGLn} that $\Ker(\widehat{g})$ is the completion of $\Ker(\ov{g})$.
	Therefore it suffices to show that $ \Ker(\ov{g}) \subset \Im(\ov{f}) $.

	For $i=2,3$, let $M'_i$ denote the subspace of $D(G_0) \ot{K} M_i$ generated by 
	\begin{equation*}
		\delta \ast \lambda \otimes m - \delta \otimes \lambda \ast m \quad\text{, for $\delta \in D(G_0)$, $\lambda \in D(\dot{\Fg},H_0)$, $m\in M_i$.}
	\end{equation*}
	Since $g$ is surjective, for every element of $M'_3$, we find a preimage under $\id \ot{} g$ which is contained in $M'_2$.
	Using that $\Im(\id \ot{} f) = \Ker(\id \ot{} g)$, a diagram chase then yields $ \Ker(\ov{g}) \subset \Im(\ov{f}) $.
\end{proof}

\begin{remark}
	For $\mathrm{char}(L)=0$, the Orlik--Strauch functors $\CF_P^G$ are exact \cite[Prop.\ 4.9]{OrlikStrauch15JordanHoelderSerLocAnRep}.
	In view of the compatibility between $\CF_P^G$ and the above $\dot{\CF}_P^G$ (see \Cref{Sect - Dual of Orlik-Strauch functors}) we expect $\dot{\CF}^G_P$ to be an exact functor between quasi-abelian categories for $L$ of arbitrary characteristic.
	As additional evidence, recall that for $M \in \CM_{D(\dot{\Fg},H)}^\mathrm{nF}$ of the form $M = \hy(G) \cot{\hy(H)} W$, for $W\in \CM_{D(H)}^\mathrm{nF}$, we have $\dot{\CF}_H^G(M)\cong \Ind_H^{\la,G}(W'_b)$ and this locally analytic induction functor is exact (see \Cref{Cor - Locally analytic induction is exact}).
\end{remark}

\section{Comparison with the $p$-adic Orlik--Strauch Functors}\label{Sect - Comparison}

In this section we show that the functors $\dot{\CF}^G_P$ from \Cref{Def - Functor by tensoring up} generalize the Orlik--Strauch functors $\CF_P^G$ when $L$ is $p$-adic.
As in the introduction, let $\bG$ be a connected split reductive group over $L$, and let $\bT \subset \bB$ be a split maximal torus and a Borel subgroup of $\bG$, respectively.
Let $\bP$ be a standard parabolic subgroup of $\bG$ with Levi decomposition $\bP = \bL_\bP \bU_\bP$ where $\bT \subset \bL_\bP$.
We let $G=\bG(L)$, $P=\bP(L)$, etc.\ denote the associated locally $L$-analytic Lie groups.
Moreover, let $G_0 \subset G$ be a maximal compact open subgroup (cf.\ the beginning of \Cref{Sect - The functors FGP}), and $P_0 \defeq G_0 \cap P$.

Finally, let $\Fg$, $\Fp$, etc.\ denote the corresponding Lie algebras.
We use the convention to write $U(\Fg)$ in place of $U(\Fg_K)$ and analogously for the other universal enveloping algebras.
In view of \Cref{Prop - Hyperalgebra and universal enveloping algebra} we make the identifications $\hy(G) = U(\Fg)$ and $\hy(P) = U(\Fp)$.
Consequently the subalgebra $D(\dot{\Fg},P)$ of $D(G)$ agrees with the subalgebra $D(\Fg,P)$ defined in \cite[Sect.\ 3.4]{OrlikStrauch15JordanHoelderSerLocAnRep}.

Note that the $K$-Fr\'echet algebra $D(G_0)$ is a Fr\'echet--Stein algebra.
Thus one may consider coadmissible $D(G)$-modules or equivalently the abelian category $\Rep_K^{\la, \adm}(G)$ of admissible locally analytic $G$-representations, see \cite[Sect.\ 5, 6]{SchneiderTeitelbaum03AlgpAdicDistAdmRep}.

\subsection{The Category $\CO_\alg^\Fp$}
The following variants of the Bernstein--Gelfand--Gelfand category $\CO$ of $U(\Fg)$-modules play a key role.

\begin{definition}[{\cite[Sect.\  2.5]{OrlikStrauch15JordanHoelderSerLocAnRep}}]
	\begin{altenumerate}
		\item
		Let $\CO^\Fp$ be the full subcategory of $U(\Fg)$-modules $M$ which satisfy
		\begin{altenumeratelevel2}
			\item 
			$M$ is finitely generated as a $U(\Fg)$-module,
			\item
			viewed as an $\Fl_{\bP,K}$-module, $M$ is the direct sum of finite-dimensional simple modules,
			\item
			the action of $\Fu_{\bP,K}$ on $M$ is locally finite, i.e.\ for every $m\in M$, the $K$-vector subspace $U(\Fu_\bP) m \subset M$ is finite-dimensional.
		\end{altenumeratelevel2}
		\item
		Let ${\rm Irr}(\Fl_{\bP,K})^{\rm fd}$ be the set of isomorphism classes of finite-dimensional irreducible $\Fl_{\bP,K}$-re\-pre\-sen\-tations.
		We define $\CO_\alg^\Fp$ to be the full subcategory of $\CO^\Fp$ of $U(\Fg)$-modules $M$ such that for a decomposition 
		\[ M = \bigoplus_{\Fa \in {\rm Irr}(\Fl_{\bP,K})^{\rm fd}} M_\Fa \]
		into the $\Fa$-isotypic components as in (2) of (i), we have: If $M_\Fa \neq (0)$ then $\Fa$ is the Lie algebra representation induced by some finite-dimensional algebraic $\bL_{\bP,K}$-representation.
	\end{altenumerate}
\end{definition}

Every $M \in \CO_\alg^\Fp$ is the union of finite-dimensional $U(\Fp)$-submodules.
Hence one obtains an (abstract) $D(P)$-module structure on $M$ via lifting each of those to an algebraic $\bP_K$-representation, see \cite[\S 3.4]{OrlikStrauch15JordanHoelderSerLocAnRep}.
Moreover, Orlik and Strauch show:

\begin{lemma}[{\cite[Cor.\ 3.6]{OrlikStrauch15JordanHoelderSerLocAnRep}, cf.\ \cite[Prop.\ 3.3.2]{AgrawalStrauch22FromCatOLocAnRep}}]\label{Lemma - Module structure on object in category O}
	On any $M \in \CO^\Fp_\alg$, there exists a unique (abstract) $D(\Fg, P)$-module structure which extends the given $U(\Fg)$-module structure and such that the Dirac distributions $\delta_p \in D(P)$ act like the group elements $p\in P$ under the above $P$-action on $M$.
	In particular
	\begin{equation*}
		\Ad(p)(\mu) \ast (p.m) = p.(\mu \ast m) \quad\text{, for all $p\in P$, $\mu \in U(\Fg)$, $m \in M$.}
	\end{equation*} 
\end{lemma}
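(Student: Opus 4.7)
The plan is to construct the $D(\Fg,P)$-module structure in two stages: first extend the algebraic $P$-action on $M$ to an abstract $D(P)$-module structure, and then invoke an algebraic analogue of \Cref{Prop - Equivalent characterization for composite algebra action} to obtain the required $D(\Fg,P)$-module structure. Uniqueness will follow from density considerations.

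First I would write $M = \bigcup_{i\in I} V_i$ as the filtered union of its finite-dimensional $U(\Fp)$-stable subspaces; by the definition of $\CO_\alg^\Fp$, each $V_i$ lifts uniquely to an algebraic $\bP_K$-representation. Via \Cref{Cor - Locally analytic Banach space representation is given by homomorphism of Lie groups} each such $V_i$ (equipped with its finite-dimensional $K$-topology) becomes a locally analytic $P$-representation, and then by \Cref{Prop - Anti-equivalence for locally analytic representations} (i) a separately continuous $D(P)$-module whose induced $U(\Fp)$-action coincides with the original one. Since the transition maps $V_i \hookrightarrow V_j$ are $P$-equivariant algebraic inclusions, the associated $D(P)$-actions are compatible, and passing to the union endows $M$ with an (abstract) $D(P)$-module structure extending the $P$-action and agreeing with the given $U(\Fp)$-action on $\hy(P) \subset D(P)$.

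Next I would combine this $D(P)$-structure with the given $U(\Fg) = \hy(G)$-structure via the algebraic analogue of \Cref{Prop - Equivalent characterization for composite algebra action}: since \Cref{Cor - Isomorphism for composite algebra} realizes $D(\dot\Fg,P) \cong \hy(G) \ot{\hy(P)} D(P)$ already as abstract $K$-algebras, the only thing to check is the associativity relation
\begin{equation*}
\delta_p \ast (\mu \ast m) = \Ad(p)(\mu) \ast (\delta_p \ast m) \quad\text{for all } p \in P,\, \mu \in U(\Fg),\, m \in M,
\end{equation*}
which is equivalent to the ``in particular'' clause. I would prove this by fixing $m \in M$, choosing some $V_i \ni m$ as above, and presenting the $U(\Fg)$-submodule $U(\Fg) \cdot V_i \subset M$ as a quotient of the generalized Verma-type module $U(\Fg) \ot{U(\Fp)} V_i$. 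On the latter, the diagonal rule $p.(u \otimes v) \defeq \Ad(p)(u) \otimes p.v$ is a well-defined $P$-action (well-definedness uses the identity $\Ad(p)(u \Fx) = \Ad(p)(u)\,\Ad(p)(\Fx)$ for $\Fx \in \Fp$, the compatibility of $\Ad$ on $\Fp$ with the algebraic $P$-action on $V_i$, and the algebraicity of $\Ad$). Then a direct computation using $\Ad(p)(\mu \mu') = \Ad(p)(\mu)\Ad(p)(\mu')$ verifies the required relation on the Verma module; its validity on the quotient $U(\Fg)\cdot V_i$ follows if the kernel of the surjection is $P$-stable, which one checks using that the kernel is generated by relations already involving the algebraic $\bP_K$-action on $V_i$ (here one uses that the $P$-action on $M$ itself is defined via algebraic lifts of $U(\Fp)$-submodules containing $\mu \cdot m$).

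Finally, for uniqueness I would observe that any $D(\Fg,P)$-module structure on $M$ extending $U(\Fg)$ and sending $\delta_p$ to the $P$-action is determined on the subalgebras $U(\Fg)$ and on the subset $\{\delta_p : p \in P\} \subset D(P)$, and since the $K$-span of the latter is an algebra containing $U(\Fp)$ that, together with $U(\Fg)$, generates $D(\Fg,P)$ as a $K$-algebra (via \Cref{Prop - Description of composite subalgebra}), uniqueness follows from associativity.

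The main obstacle will be the well-definedness and compatibility of the Verma-module construction in the proof of the key relation: one must carefully justify that the naive diagonal formula on $U(\Fg)\ot{U(\Fp)} V_i$ descends to the $U(\Fg)$-submodule $U(\Fg)\cdot V_i \subset M$ in a way that matches the previously constructed $P$-action on $M$, and this is precisely where algebraicity of $\Ad$ and of the lifts $V_i \rightsquigarrow \bP_K\text{-rep}$ must be used in tandem.
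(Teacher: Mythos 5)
The paper does not provide its own proof of this lemma: it cites \cite[Cor.~3.6]{OrlikStrauch15JordanHoelderSerLocAnRep} (and \cite[Prop.~3.3.2]{AgrawalStrauch22FromCatOLocAnRep}) and only sketches the existence half in the two sentences preceding the statement, namely lifting finite-dimensional $U(\Fp)$-submodules to algebraic $\bP_K$-representations to obtain a $D(P)$-module structure. Your existence construction follows exactly that route, and the Verma-module computation you outline for the compatibility relation is the standard one; so far so good.

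The gap is in your uniqueness argument, which as written is incorrect. You claim that the $K$-span of $\{\delta_p : p\in P\}$ is an algebra containing $U(\Fp)$ which, together with $U(\Fg)$, generates $D(\Fg,P)$. Neither part is true. The $K$-span of the Dirac distributions is the group ring $K[P]$, which does \emph{not} contain $U(\Fp)$: an invariant derivative such as $\Fx\in\Fp$ acting by $f\mapsto\tfrac{d}{dt}f(\exp(t\Fx))\vert_{t=0}$ is a limit of finite differences but is not a finite $K$-linear combination of Dirac distributions. Consequently $U(\Fg)$ and $K[P]$ only generate the proper subalgebra $U(\Fg)\cdot K[P]\subsetneq D(\Fg,P)$ (the whole $D(P)$ is strictly larger than $K[P]\cdot U(\Fp)$), so \Cref{Prop - Description of composite subalgebra} does not yield uniqueness this way. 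What is actually needed is an automatic-continuity statement for $D(P)$-actions on finite-dimensional spaces — for instance, that every two-sided ideal of finite codimension in the Fr\'echet--Stein algebra $D(P_0)$ is closed, because a finite-dimensional module is coadmissible — after which density of the Dirac distributions pins down the $D(P)$-action. The same issue is hiding where you invoke an ``algebraic analogue of \Cref{Prop - Equivalent characterization for composite algebra action}'': that proposition reduces associativity to the Dirac relation precisely via density and separate continuity, so for \emph{abstract} modules you must separately justify why verifying $\delta_p\ast(\mu\ast m)=\Ad(p)(\mu)\ast(\delta_p\ast m)$ controls general $\delta\in D(P)$. This is salvageable because the $D(P)$-action you construct is continuous on each finite-dimensional $U(\Fp)$-stable subspace, but the argument should say so explicitly rather than appeal to a proposition whose proof runs through topological hypotheses that have been dropped.
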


In the following we consider $U(\Fg)$ and $U(\Fp)$ as jointly continuous subalgebras of $D(G)$.
As every $M \in \CO_\alg^\Fp$ is finitely generated over $U(\Fg)$, we may endow $M$ with the quotient topology induced by some epimorphism $U(\Fg)^{\oplus n} \twoheadrightarrow M$ of $U(\Fg)$-modules, cf.\ \Cref{Prop - Topologies on finitely generated modules}.

\begin{proposition}\label{Prop - Composite module structure on object in category O}
	For any $M\in \CO_\alg^\Fp$ endowed with the quotient topology, the $D(\Fg,P)$-module structure from \Cref{Lemma - Module structure on object in category O} is separately continuous.
\end{proposition}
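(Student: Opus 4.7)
The plan is to invoke \Cref{Prop - Equivalent characterization for composite algebra action}, which reduces the claim to verifying that $M$ carries separately continuous $U(\Fg) = \hy(G)$- and $D(P)$-module structures whose restrictions to $U(\Fp) = \hy(P)$ agree, together with the compatibility $\delta_p \ast (\mu \ast m) = \Ad(p)(\mu) \ast (\delta_p \ast m)$ for $p \in P$, $\mu \in U(\Fg)$, $m \in M$. The agreement of the two $U(\Fp)$-actions and the adjoint compatibility are built into \Cref{Lemma - Module structure on object in category O}, and the $U(\Fg)$-action on $M$ is separately (in fact jointly) continuous by the remark following \Cref{Prop - Topologies on finitely generated modules}, since $M$ is finitely generated over the jointly continuous subalgebra $U(\Fg) = \hy(G) \subset D(G_0)$ and is equipped with the quotient topology.

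The heart of the proof is thus the separate continuity of the $D(P)$-action on $M$. For continuity in $\delta$ with $m \in M$ fixed, I would exploit the defining conditions of $\CO_\alg^\Fp$: the element $m$ lies in a finite-dimensional $U(\Fp)$-stable subspace $M_0 \subset M$ whose $U(\Fp)$-action integrates to an algebraic $\bP$-representation. With its unique Hausdorff topology, $M_0$ becomes a locally analytic $P$-representation on a finite-dimensional $K$-Banach space in view of \Cref{Cor - Locally analytic Banach space representation is given by homomorphism of Lie groups}, so \Cref{Prop - Anti-equivalence for locally analytic representations}(i) furnishes a separately continuous $D(P)$-action on $M_0$ that coincides with the abstract one of \Cref{Lemma - Module structure on object in category O} by uniqueness. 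Applying \Cref{Prop - Topologies on finitely generated modules} to the $U(\Fp)$-linear inclusion $M_0 \hookrightarrow M$ shows that this inclusion is continuous, and composition gives the desired continuous orbit map $D(P) \to M$, $\delta \lto \delta \ast m$.

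The main obstacle is continuity in $m$ for fixed $\delta \in D(P)$. Via the quotient topology on $M$ induced by an epimorphism $U(\Fg)^{\oplus n} \twoheadrightarrow M$, $e_i \mto m_i$, together with the continuity of addition, this reduces to showing that for each $i$ the map $\varphi_i \colon U(\Fg) \to M$, $u \mto \delta \ast (u \ast m_i)$, is continuous. I would factor $\varphi_i$ as the composition $U(\Fg) \to D(\dot{\Fg},P) \to M$, where the first arrow is $u \mto \delta \ast u$ and the second is the orbit map $\eta \mto \eta \ast m_i$ for the abstract $D(\dot{\Fg},P)$-action. Left multiplication by $\delta$ in $D(G)$ is continuous by \Cref{Prop - Convolution product}, and since $D(\dot{\Fg},P)$ carries the subspace topology from $D(G)$, the first arrow is continuous. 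For the second, I would use the topological identification $D(\dot{\Fg},P) \cong U(\Fg) \ot{U(\Fp),\iota} D(P)$ from \Cref{Cor - Isomorphism for composite algebra} together with the universal properties of the inductive tensor product and of the quotient topology: it then suffices to verify that the bilinear map $U(\Fg) \times D(P) \to M$, $(u, \delta') \mto u \ast (\delta' \ast m_i)$, is separately continuous and $U(\Fp)$-balanced. Separate continuity follows by combining the continuity of $\delta' \mto \delta' \ast m_i$ established above with the separate continuity of the $U(\Fg)$-action on $M$, while $U(\Fp)$-balancedness is a consequence of the associativity of the abstract $D(\dot{\Fg},P)$-action of \Cref{Lemma - Module structure on object in category O} and the agreement of the two $U(\Fp)$-actions. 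Identifying the resulting continuous map with $\eta \mto \eta \ast m_i$ concludes the argument.
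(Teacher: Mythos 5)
Your reduction and overall outline match the paper's: both invoke \Cref{Prop - Equivalent characterization for composite algebra action} and \Cref{Lemma - Module structure on object in category O} to boil the claim down to separate continuity of the $D(P)$-action in the quotient topology, and both handle continuity in $\delta$ by exploiting that $M$ is an increasing union of finite-dimensional algebraic $P$-representations. Where you diverge is the harder direction, continuity of $f_\delta\colon m\mapsto \delta\ast m$. The paper argues directly: with $\varphi\colon U(\Fg)^{\oplus n}\twoheadrightarrow M$ and $\varphi'\defeq f_\delta\circ \varphi$, it writes $\varphi'(\mu_1,\dots,\mu_n)=\sum_i \Ad(\delta)(\mu_i)\ast\big(\delta\ast\varphi(e_i)\big)$ using the adjoint formula $\delta\ast\mu=\Ad(\delta)(\mu)\ast\delta$, and concludes continuity from the continuity of $\Ad(\delta)$ (\Cref{Prop - Adjoint representation}) and of the $U(\Fg)$-orbit maps — a short, computational argument. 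You instead factor each $\varphi_i$ through $D(\dot{\Fg},P)$ via $u\mapsto \delta\ast u \mapsto (\delta\ast u)\ast m_i$, and deduce continuity of the orbit map $\eta\mapsto\eta\ast m_i$ from the topological isomorphism $D(\dot{\Fg},P)\cong U(\Fg)\ot{U(\Fp),\iota} D(P)$ (\Cref{Cor - Isomorphism for composite algebra}) together with the universal property of the inductive tensor product, verifying that the induced bilinear map is separately continuous and $U(\Fp)$-balanced. Both routes are valid and rely on the same underlying adjoint identity (yours through the proof of \Cref{Cor - Isomorphism for composite algebra} rather than by invoking it directly); the paper's is shorter and more elementary, while yours is more structural and additionally uses the easy-direction continuity $\delta'\mapsto\delta'\ast m_i$ inside the separate-continuity check, which the paper's argument does not need. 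One small remark: when you invoke \Cref{Prop - Topologies on finitely generated modules} to make the inclusion $M_0\hookrightarrow M$ continuous, you should note that the quotient topology on the finite-dimensional $M_0$ is coarser than or equal to its unique Hausdorff topology (any linear map from a finite-dimensional Hausdorff space is continuous), so that the continuity of $D(P)\to M_0$ in the Hausdorff topology is indeed enough; this is implicit but worth stating.
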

\begin{proof}
	By \Cref{Prop - Equivalent characterization for composite algebra action} and \Cref{Lemma - Module structure on object in category O} it suffices to show that the $D(P)$-action on $M$ is a separately continuous with respect to the quotient topology.
	Since $M$ is the union of finite-dimensional locally analytic $P$-representations, for each $m\in M$, the homomorphism $D(P)\ra M$, $\delta \mto \delta \ast m$, is continuous.

	We fix $\delta \in D(P)$ and want to show that $f_\delta \colon M \ra M$, $m \mto \delta \ast m$, is continuous.
	Let $\varphi\colon U(\Fg)^{\oplus n} \twoheadrightarrow M$ be a strict epimorphism endowing $M$ with its topology.
	Let $e_1,\ldots,e_n $ denote the standard basis elements of $U(\Fg)^{\oplus n}$.
	Then by \Cref{Lemma - Module structure on object in category O} we have for the composite $\varphi'\defeq f_\delta \circ \varphi$:
	\begin{align*}
		\varphi'(\mu_1,\ldots,\mu_n) = \sum_{i=1}^n \delta \ast \mu_i \ast \varphi(e_i) = \sum_{i=1}^n \Ad(\delta)(\mu_i) \ast \big(\delta \ast \varphi(e_i)\big) , 
	\end{align*}
	for all $\mu_1,\ldots,\mu_n \in U(\Fg)$.
	Because $\Ad(\delta)$ is continuous (see \Cref{Prop - Adjoint representation}), this shows that $\varphi'$ is continuous.
	Since $\varphi$ is open, it follows that $f_\delta$ is continuous.
\end{proof}

\begin{corollary}\label{Cor - Composite module structure on completion of object in category O}
	For $M \in \CO_\alg^\Fp$ the above separately continuous $D(\Fg,P)$-module structure extends to a separately continuous $D(\Fg,P)$-module structure on $\widehat{M}$.
	This yields a strongly exact functor
	\begin{equation*}
		\CC \colon \CO_\alg^\Fp \lra \CM_{D(\dot{\Fg},P)}^\mathrm{nF} \,,\quad M \lto \widehat{M} ,
	\end{equation*}
	i.e.\ it sends short exact sequences in $ \CO_\alg^\Fp$ to short strictly exact sequences in $\CM_{D(\dot{\Fg},P)}^\mathrm{nF} $.
\end{corollary}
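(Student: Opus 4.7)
The plan is to establish the corollary in three steps: extending the module action to the completion, functoriality, and strong exactness.

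\emph{Step 1: Extending the action to $\widehat{M}$.} I would equip $M$ with the quotient topology from a presentation $\varphi\colon U(\Fg)^{\oplus n} \twoheadrightarrow M$ and verify three properties: pseudo-metrizability, nuclearity, and barrelledness. Pseudo-metrizability and nuclearity are straightforward consequences of the inclusion $U(\Fg) \subset D(G_0)$ (a nuclear $K$-Fr\'echet space) via \Cref{Lemma - Properties of pseudo-metrizable spaces}(i) and the standard facts that subspaces, finite direct sums, and Hausdorff quotients of nuclear spaces are nuclear \cite[Ch.~19]{Schneider02NonArchFunctAna}. For barrelledness, one argues that $M = \bigcup_k \varphi(U_k(\Fg)^{\oplus n})$ is an increasing countable union of finite-dimensional $P$-stable subspaces, each carrying its canonical Banach topology, so the quotient topology on $M$ admits an LB-structure making $M$ barrelled. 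With these verified, \Cref{Prop - Extension of module structure to completion} extends the separately continuous $D(\dot{\Fg},P)$-module structure of \Cref{Prop - Composite module structure on object in category O} to $\widehat{M}$, which is then a nuclear $K$-Fr\'echet space and hence an object of $\CM_{D(\dot{\Fg},P)}^\mathrm{nF}$.

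\emph{Step 2: Functoriality.} A morphism $f\colon M \to N$ in $\CO_\alg^\Fp$ is by definition $U(\Fg)$-linear, so by the second assertion of \Cref{Prop - Topologies on finitely generated modules} it is continuous for the quotient topologies on source and target. It extends uniquely to a continuous $K$-linear map $\widehat{f}\colon \widehat{M} \to \widehat{N}$, and the $D(\dot{\Fg},P)$-linearity of $\widehat{f}$ follows by density of $M$ in $\widehat{M}$ combined with separate continuity of the respective actions.

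\emph{Step 3: Strong exactness.} Given a short exact sequence $0 \to M_1 \xrightarrow{f} M_2 \xrightarrow{g} M_3 \to 0$ in $\CO_\alg^\Fp$, I need to show that the completed sequence is strictly exact in $\CM_{D(\dot{\Fg},P)}^\mathrm{nF}$; by \Cref{Prop - Category of nuclear Frechet modules is quasi-abelian} this is equivalent to strict exactness in $\mathrm{LCS}_K$. Strict surjectivity of $\widehat{g}$ is comparatively easy: composing a presentation $U(\Fg)^{\oplus n_2} \twoheadrightarrow M_2$ with $g$ yields a presentation of $M_3$, so by uniqueness of the quotient topology (\Cref{Prop - Topologies on finitely generated modules}), $g$ itself is a strict epimorphism of pseudo-metrizable spaces, which is preserved under Hausdorff completion. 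The main obstacle is exactness at $\widehat{M_2}$ together with strict monomorphicity of $\widehat{f}$. My plan here is to identify $\widehat{M_i}$ with the coadmissible $D(G_0)$-module $D(G_0)\otimes_{U(\Fg)} M_i$ endowed with its canonical Fr\'echet topology, and then invoke the flatness of $D(G_0)$ over $U(\Fg)$---a basic input from Fr\'echet--Stein theory \cite{SchneiderTeitelbaum03AlgpAdicDistAdmRep}---to conclude exactness of the induced sequence of coadmissible modules, with strictness automatic by the open mapping theorem. The hardest step will be the topological identification of $\widehat{M}$ with the coadmissible module $D(G_0)\otimes_{U(\Fg)} M$: this is essentially a special instance of the comparison result announced in \Cref{Thm - B} of the introduction, and the proof here should be obtainable from (or be a simpler version of) the techniques developed later in this section.
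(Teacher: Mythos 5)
Your Step 1 and Step 3 both contain genuine gaps.

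\emph{Step 1: barrelledness of $M$.} You need $M$ with the quotient topology to be barrelled in order to invoke \Cref{Prop - Extension of module structure to completion}, but this is not established and is in fact doubtful. The quotient topology on $M$ is induced from $U(\Fg)^{\oplus n} \subset D(G_0)^{\oplus n}$, and $U(\Fg) = \hy(G)$ carries the \emph{subspace} topology of the Fr\'echet space $D(G_0)$; this is metrizable of countably infinite algebraic dimension and is therefore not barrelled (a Baire-category argument excludes any infinite countable-dimensional metrizable space from being barrelled). The locally convex inductive limit topology $\varinjlim_k \hy(G)_k$ that you implicitly appeal to is strictly finer than this subspace topology, so it does not coincide with the quotient topology on $M$; and if you replaced the quotient topology by that LB topology you would lose the conclusion of \Cref{Prop - Composite module structure on object in category O}, which is stated for the quotient topology. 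The paper circumvents barrelledness of $M$ entirely: it extends the $U(\Fg)$-action (jointly continuous, as $U(\Fg)$ is pseudo-metrizable) via \cite[Lemma 1.2.2]{Emerton17LocAnVect}, extends the $D(P)$-action via \Cref{Lemma - Extending separately continuous module structure to completion} (which only requires $D(P)$ barrelled and $M$ pseudo-metrizable), checks by density that the compatibility conditions of \Cref{Prop - Equivalent characterization for composite algebra action} persist, and combines.

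\emph{Step 3: the identification $\widehat{M} \cong D(G_0) \otimes_{U(\Fg)} M$.} This is false, and the two sides do not even lie in the same category: $\widehat{M}$ is the completion of $M$ in the quotient topology and is a $D(\dot{\Fg},P)$-module, whereas $D(G_0) \otimes_{U(\Fg)} M$ is a (generally much larger) $D(G_0)$-module. Concretely, for $M = U(\Fg)$ one has $\widehat{M}$ equal to the closure of $\hy(G)$ inside $D(G_0)$, namely $D_1(G)$, while $D(G_0) \otimes_{U(\Fg)} U(\Fg) \cong D(G_0)$. The paper's route to strong exactness is far more direct and bypasses coadmissibility and Fr\'echet--Stein theory altogether: by \Cref{Prop - Topologies on finitely generated modules}, any $U(\Fg)$-linear map between finitely generated modules with the quotient topology is continuous, hence a short exact sequence in $\CO_\alg^\Fp$ is already short \emph{strictly} exact in $\mathrm{LCS}_K$; and \cite[Cor.\ 2.2]{BreuilHerzig18TowardsFinSlopePartGLn} guarantees that Hausdorff completion takes short strictly exact sequences of pseudo-metrizable spaces to short strictly exact sequences of Fr\'echet spaces. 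No flatness input is required.

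Your Step 2 (functoriality via \Cref{Prop - Topologies on finitely generated modules} and density) is fine and essentially implicit in the paper.
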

\begin{proof}
	Since $U(\Fg)$ is pseudo-metrizable and nuclear as the subspace of the nuclear $K$-Fr\'echet space $D(G_0)$, also $M$ with the quotient topology is pseudo-metrizable and nuclear, see \Cref{Lemma - Properties of pseudo-metrizable spaces} and \cite[Prop.\ 19.4]{Schneider02NonArchFunctAna}.
	Thus $\widehat{M}$ is a nuclear $K$-Fr\'echet space by \cite[Prop.\ 20.4]{Schneider02NonArchFunctAna}.

	It follows from \Cref{Lemma - Extending separately continuous module structure to completion} that the induced separately continuous $D(P)$-module structure on $M$ extends to a separately continuous $D(P)$-module structure on $\widehat{M}$.
	Moreover, the jointly continuous $U(\Fg)$-module structure on $M$ extends to one on $\widehat{M}$ by \cite[Lemma 1.2.2]{Emerton17LocAnVect}.
	These two extended actions still satisfy the compatibility conditions of \Cref{Prop - Equivalent characterization for composite algebra action} because of continuity in $\widehat{M}$.
	Therefore the former proposition shows that $\widehat{M} \in \CM_{D(\dot{\Fg},P)}^\mathrm{nF}$.

	\Cref{Prop - Topologies on finitely generated modules} shows that every short exact sequence in $\CO_\alg^\Fp$ is a short strictly exact sequence in $\mathrm{LCS}_K$ when the occurring $U(\Fg)$-modules carry the quotient topology.
	It follows from \cite[Cor.\ 2.2]{BreuilHerzig18TowardsFinSlopePartGLn} that the completion of such a short strictly exact sequence is strictly exact again.
\end{proof}

\subsection{The Dual of the Orlik--Strauch Functors $\CF_P^G$}\label{Sect - Dual of Orlik-Strauch functors}

We first recapitulate the construction of the functors $\CF_P^G$ from \cite{OrlikStrauch15JordanHoelderSerLocAnRep}.

Let $V$ be an admissible smooth $P$-representation\footnote{In \cite{OrlikStrauch15JordanHoelderSerLocAnRep} only those representations arising via inflation from an admissible smooth representation of $L_\bP$ are considered, but the following construction works in the generality as stated.} on a $K$-vector space.
It becomes an admissible locally analytic $P$-representation when endowed with the finest locally convex topology \cite[Thm.\ 6.6 (i)]{SchneiderTeitelbaum03AlgpAdicDistAdmRep} and is of compact type.
We will consider such admissible smooth representations always in this way.

For any $M \in \CO^\Fp_\alg$, there exists a finite-dimensional $U(\Fp)$-submodule $W\subset M$ which generates $M$ as a $U(\Fg)$-module, i.e.\ there is a short exact sequence of $U(\Fg)$-modules
\begin{equation*}
	0 \lra \Fd \lra U(\Fg) \ot{U(\Fp)} W \lra M \lra 0  .
\end{equation*}
Then the $\Fp_K$-re\-pre\-sen\-ta\-tion $W$ uniquely lifts to the structure of an algebraic $\bP_K$-re\-pre\-sen\-ta\-tion on $W$ \cite[Lemma 3.2]{OrlikStrauch15JordanHoelderSerLocAnRep}.

Recall the identification $W' \ot{K} V \cong \CL_b (W,V)$ from \cite[Cor.\ 18.8]{Schneider02NonArchFunctAna} and let 
\begin{equation*}
	\ev_w \colon \CL_b(W,V) \lra V\,,\quad h \lto h(w),
\end{equation*}
denote the evaluation homomorphism.
Then there exists a pairing (cf.\ \cite[(3.2.2)]{OrlikStrauch15JordanHoelderSerLocAnRep})
\begin{align*}
	\langle \blank , \!\blank \rangle_{C^\la(G,V)} \colon 
	D(G) \ot{K} W  \times C^\la \big(G, W'\ot{K} V \big) &\lra C^\la(G,V) , \\
	(\delta \otimes w , f) &\lto \Big[ g \mto  \delta \big[ x \mto \ev_w(f(gx))\big] \Big]	.
\end{align*}
Here $\delta$ is applied to the function $\big[x\mto \ev_w(f(gx))\big] \in C^\la(G,V)$ via the pairing from \Cref{Cor - Pairing of distributions and vector valued functions}.
We can embed $U(\Fg) \ot{U(\Fp)} W \hookrightarrow D(G) \ot{D(P)} W$ and, for $\Fz \in U(\Fg) \ot{U(\Fp)} W$ and $f\in \Ind^{\la,G}_P ( W'\ot{K} V)$, one finds that $\langle \Fz , f \rangle_{C^\la(G,V)}$ is well-defined, cf.\ \cite[Lemme 2.1]{Breuil16SocLocAnalI}.
This allows the definition \cite[(4.4.1)]{OrlikStrauch15JordanHoelderSerLocAnRep}
\begin{equation*}
	\CF^G_P (M, V) \defeq \Ind^{\la,G}_P \big(W'\ot{K} V\big)^\Fd = \Big\{ f\in \Ind^{\la,G}_P \big( W'\ot{K} V\big) \,\Big\vert\, \forall \Fz \in \Fd : \langle \Fz, f \rangle_{C^\la(G,V)} = 0 \Big\} .
\end{equation*}

The resulting $\CF^G_P(M,V)$ is an admissible\footnote{In the sense of \cite[\S 6]{SchneiderTeitelbaum03AlgpAdicDistAdmRep}.} locally analytic $G$-representation which does not depend on the choice of $W$ \cite[Prop.\ 4.5]{OrlikStrauch15JordanHoelderSerLocAnRep}.
This construction yields an exact bi-functor 
\begin{equation*}
	\CF^G_P \colon \CO^\Fp_\alg \times {\rm Rep}_K^{\sm, {\rm adm}}(L_\bP) \lra {\rm Rep}_K^{\la,{\rm adm}}(G) \,,\quad (M,V) \lto \CF_P^G(M,V) ,
\end{equation*}
which is contravariant in $M$ and covariant in $V$, see \cite[Prop.\ 4.7]{OrlikStrauch15JordanHoelderSerLocAnRep}.

\medskip

We want to explore another description of these functors in terms of $D(G)$-modules.
For the smooth $P$-representations we restrict ourselves to those which are strongly admissible in the sense of \cite[Sect.\ 2 Def.]{SchneiderTeitelbaumPrasad01UgFinLocAnRep}.
We recall an equivalent definition, cf.\ \cite[Prop.\ 2.1]{SchneiderTeitelbaumPrasad01UgFinLocAnRep}, \cite[Sect.\ 4.1.2]{AgrawalStrauch22FromCatOLocAnRep}.

\begin{definition}
	A smooth representation $V$ of a locally $L$-analytic Lie group $H$ on a $K$-vector space is called \textit{strongly admissible} if $V$ embeds as a subrepresentation into $C^\sm (H_0,K)^{\oplus n}$, for some compact open subgroup $H_0 \subset H$ and some $n\in \BN$, when viewed as a representation of $H_0$.
	Note that smooth representations of finite length are strongly admissible \cite[Sect.\ 4.1]{AgrawalStrauch22FromCatOLocAnRep}, and strongly admissible smooth representations are admissible.
	We denote the category of strongly admissible $H$-representations on $K$-vector spaces by $\Rep_K^{\sm,\mathrm{s\text{-}adm}}(H)$.
\end{definition}

For $M\in \CO^\Fp_\alg$ and a strongly admissible smooth $P$-representation $V$ it is a special case of a result of Agrawal and Strauch \cite[Thm.\ 4.2.3]{AgrawalStrauch22FromCatOLocAnRep} that the (abstract) $D(G)$-module
\begin{equation*}\label{Eq - Module over D(g,P) tensored up}
	D(G) \ot{D(\Fg,P)} \big(M \ot{K} V' \big)
\end{equation*}
is coadmissible.
In particular, this $D(G)$-module carries a canonical Fr\'echet topology \cite[Sect.\  3]{SchneiderTeitelbaum03AlgpAdicDistAdmRep}.

On the other hand, we have seen in \Cref{Prop - Composite module structure on object in category O} that $M$ with its quotient topology is a separately continuous $D(\Fg,P)$-module.
Moreover, $V'_b$ is a separately continuous $D(P)$-module with trivial $U(\Fp)$-action because the representation $V$ is smooth.
It follows from \Cref{Prop - Equivalent characterization for composite algebra action} that $V'_b$ endowed with the trivial $U(\Fg)$-action is a separately continuous $D(\Fg,P)$-module.
Then the projective and inductive tensor product topologies agree on $M \ot{K} V'_b$ by \Cref{Lemma - Equality of projective and inductive tensor product} and $M \ot{K} V'_b \in \CM_{D(\Fg,P)}$ by \Cref{Lemma - Module structure on tensor product}.
Thus the $D(G)$-module structure on $D(G) \ot{D(\Fg,P),\iota} \big(M \ot{K} V'_b \big)$ is separately continuous by \Cref{Lemma - Base change for topological modules} (i).

\begin{theorem}\label{Thm - Comparision between canonical Frechet topology and topological tensor product}
	Let $M \in \CO^\Fp_\alg$ be endowed with the quotient topology, and let $V$ be a strongly admissible smooth $P$-representation.
	Then the topology on the separately continuous $D(G)$-module
	\begin{equation*}
		D(G) \ot{D(\Fg,P),\iota} \big(M \ot{K} V'_b \big)
	\end{equation*}
	agrees with the canonical Fr\'echet topology induced from its underlying abstract $D(G)$-module being coadmissible.
\end{theorem}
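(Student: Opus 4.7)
The plan is to compare the two topologies through a continuous identity map and an open mapping argument, after reducing to a compact setting. By \Cref{Lemma - Tensoring up from composite algebra}, there is a topological isomorphism
\begin{equation*}
	D(G_0) \ot{D(\Fg,P_0),\iota} (M \ot{K} V'_b) \overset{\cong}{\lra} D(G) \ot{D(\Fg,P),\iota} (M \ot{K} V'_b),
\end{equation*}
and since the Fr\'echet--Stein structure of $D(G)$ is inherited from $D(G_0)$, the canonical Fr\'echet topology on the coadmissible $D(G)$-module coincides with that of the same underlying module viewed as a coadmissible $D(G_0)$-module. Thus one reduces to comparing the two topologies on $T \defeq D(G_0)\ot{D(\Fg,P_0)}(M\ot{K} V'_b)$; denote the same abstract module with its canonical Fr\'echet topology by $T_c$.

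Next I would construct a continuous $D(G_0)$-linear map $\phi\colon T\to T_c$ equal to the identity on underlying modules. Via the Fr\'echet--Stein presentation $T_c = \varprojlim_r T_r$ with each $T_r$ a Banach $D(G_0)_r$-module realized as a quotient of $D(G_0)_r \cot{K}(M\ot{K} V'_b)$, the canonical map $M \ot{K} V'_b \to T_c$, $x \mapsto 1\otimes x$, is continuous. Combined with the separately continuous $D(G_0)$-module structure on $T_c$, this shows that the bilinear map $D(G_0) \times (M\ot{K} V'_b) \to T_c$, $(\delta,x)\mapsto \delta\cdot(1\otimes x)$, is separately continuous and $D(\Fg,P_0)$-balanced. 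The universal property of the inductive tensor product then yields the required $\phi$.

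To conclude via the open mapping theorem \cite[Prop.\ 8.6]{Schneider02NonArchFunctAna} it suffices to show $T$ is a Fr\'echet space. Pseudo-metrizability follows from \Cref{Lemma - Properties of pseudo-metrizable spaces}~(i), since $D(G_0)$, $M$ (with its quotient topology from $U(\Fg)^{\oplus n}$), and $V'_b$ (which is Fr\'echet as $V$ is strongly admissible smooth) are pseudo-metrizable; Hausdorffness comes from the continuous injection $\phi$ into the Hausdorff $T_c$. For completeness I would fix a finite-dimensional $U(\Fp)$-submodule $W\subset M$ generating $M$ over $U(\Fg)$ with kernel $\Fd$, and use \Cref{Cor - Isomorphism for composite algebra} together with \Cref{Lemma - Tensor identities for modules}~(ii) to identify
\begin{equation*}
	D(G_0)\ot{D(\Fg,P_0),\iota}\bigl(U(\Fg)\ot{U(\Fp)} W \ot{K} V'_b\bigr)\cong D(G_0)\ot{D(P_0),\iota}(W\ot{K} V'_b).
\end{equation*}
Since $W$ is finite-dimensional and $V'_b$ is Fr\'echet, the right-hand side is a quotient of the Fr\'echet space $D(G_0)\cot{K,\pi}(W\ot{K} V'_b)$ by the closed subspace of $D(P_0)$-relations; hence it is Fr\'echet. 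Realizing $T$ as the further quotient by the closure of the image of $\Fd \ot{K} V'_b$ would then give that $T$ is Fr\'echet, and $\phi$ becomes a continuous bijection between Fr\'echet spaces.

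The main obstacle is verifying the last closure statement: that the image under base change of the algebraic kernel $\Fd$ already cuts out the correct topological subspace of $D(G_0)\ot{D(P_0),\iota}(W\ot{K} V'_b)$ whose quotient is $T$. This likely requires combining the coadmissibility established in \cite[Thm.\ 4.2.3]{AgrawalStrauch22FromCatOLocAnRep} with the strict exactness of the completed projective tensor product on Fr\'echet spaces \cite[Cor.\ 2.2]{BreuilHerzig18TowardsFinSlopePartGLn}, to ensure that the algebraic quotient describing the abstract $D(G_0)$-module matches the topological quotient by a closed subspace in the inductive tensor product.
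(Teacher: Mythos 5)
Your overall strategy (reduce to $G_0$, build a continuous identity $\phi\colon T\to T_c$, establish that $T$ is Fr\'echet, apply the open mapping theorem) is a natural one, but as written it contains two genuine gaps, and the paper actually proceeds by a cleaner route that sidesteps both.

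\medskip

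\emph{Gap 1: continuity of the canonical map $M\ot{K} V'_b\to T_c$.} You assert that $x\mapsto 1\otimes x$ is continuous into the coadmissible module with its canonical Fr\'echet topology, citing the Fr\'echet--Stein presentation. This is not formal: the canonical topology on $T_c$ is built from the \emph{abstract} $D(G_0)$-module structure and nothing immediately relates it to the topology you put on $M\ot{K}V'_b$. In fact, this continuity is essentially one inequality of the comparison you are trying to prove (the inductive tensor product topology being finer than the canonical one is close to equivalent to this). It would need a real argument, not merely an appeal to the inverse limit presentation of $T_c$.

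\medskip

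\emph{Gap 2: completeness of $T$.} You acknowledge this yourself as the ``main obstacle.'' The reduction you sketch requires (a) that $D(G_0)\ot{D(P_0),\iota}(W\ot{K}V'_b)$ be a quotient of a Fr\'echet space by a \emph{closed} subspace, and (b) that the further image of $\Fd\ot{K}V'_b$ be closed. Neither closedness is obvious, and both are essentially as hard as the theorem: if you knew those subspaces were closed, the quotient topology would already be the canonical Fr\'echet topology in each case. Invoking coadmissibility gives closedness in the \emph{canonical} topology, not in the inductive tensor product topology a priori; invoking exactness of the completed projective tensor product preserves strict exactness but does not by itself convert an algebraic image into a closed subspace.

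\medskip

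\emph{How the paper avoids both.} Instead of attacking completeness of $T$ head on, the proof in the paper builds an explicit chain of strict epimorphisms ending in $D(G_0)^{\oplus n}\twoheadrightarrow T$ (with the inductive tensor product topology). The key input here is \cite[Prop.\ 6.4.1]{AgrawalStrauch22FromCatOLocAnRep}: $W\ot{K}V'_b$ is finitely generated as an abstract $D(P_0)$-module, so there is a continuous epimorphism $D(P_0)^{\oplus n}\twoheadrightarrow W\ot{K}V'_b$, which is strict by the open mapping theorem since both are Fr\'echet. Combining this with the strict epimorphism $U(\Fg)\ot{U(\Fp)}W\twoheadrightarrow M$, exactness of the projective tensor product, and \cite[Lemma 2]{KopylovKuzminov00KerCokerSeqSemiAbCat} yields the strict epimorphism $\ov\psi\colon D(G_0)^{\oplus n}\twoheadrightarrow T$. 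On the other hand, coadmissibility of the abstract module implies that the same map $\ov\psi\colon D(G_0)^{\oplus n}\twoheadrightarrow T_c$ is strict for the canonical Fr\'echet topology. Since a strict epimorphism from a fixed source determines the topology on the target uniquely (it is the quotient by the kernel), the two topologies on $T$ must coincide. This never requires you to know that $T$ is Hausdorff or complete in advance, and never touches the kernel $\Fd$ directly.

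I would encourage you to look at how the finite generation of $W\ot{K}V'_b$ over $D(P_0)$ feeds into the argument; that is the step your proposal is missing and it is precisely what replaces the delicate closure questions you ran into.
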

\begin{proof}
	By the assumptions on $M \in \CO^\Fp_\alg$, we may find a finite-dimensional $U(\Fp)$-module $W$ and an epimorphism $\varphi \colon U(\Fg) \ot{U(\Fp)} W \twoheadrightarrow M$ of $U(\Fg)$-modules.
	The composite
	\begin{equation*}
		U(\Fg)^{\oplus \dim_K(W)} \cong U(\Fg) \ot{K} W \longtwoheadrightarrow U(\Fg) \ot{U(\Fp)} W \overset{\varphi}{\longtwoheadrightarrow} M
	\end{equation*}
	then induces the quotient topology on $M$.
	It follows from \cite[Lemma 2]{KopylovKuzminov00KerCokerSeqSemiAbCat} that $\varphi$ is a strict epimorphism with respect to this choice of topology on $M$.
	Using \Cref{Cor - Isomorphism for composite algebra} and \Cref{Lemma - Tensor identities for modules} there is a topological isomorphism
	\begin{equation*}
		D(\Fg,P_0) \ot{D(P_0)} W 
		\cong \big( U(\Fg) \ot{U(\Fp)} D(P_0) \big) \ot{D(P_0)} W  
		\cong U(\Fg) \ot{U(\Fp)} W ,
	\end{equation*}
	which maps $\mu \ast \delta \otimes w$ to $\mu \otimes \delta \ast w$, for $\mu \in U(\Fg)$, $\delta\in D(P_0)$, $w\in W$.
	Therefore $\varphi$ induces a strict epimorphism that we continue to denote by 
	\begin{equation*}
	 	\varphi \colon	D(\Fg,P_0) \ot{D(P_0)} W  \longtwoheadrightarrow M .
	\end{equation*}
	Expressing $\delta \ast \mu$, for $\delta \in D(P_0)$, $\mu \in U(\Fg)$, as $\delta \ast \mu = \sum_{i=1}^r \mu_i \ast \delta_i$, for $\mu_i \in U(\Fg)$, $\delta_i \in D(P_0)$, as in the proof of \Cref{Prop - Description of composite subalgebra}, one verifies that $\varphi$ is $D(\Fg,P_0)$-linear.

	Since the projective tensor product is exact (see \cite[Lemma 2.1 (ii)]{BreuilHerzig18TowardsFinSlopePartGLn}), we obtain a strict epimorphism
	\begin{equation}\label{Eq - Strict epimorphism of composite algebra modules}
		D(\Fg,P_0) \ot{D(P_0)} \big(  W \ot{K} V'_b\big)  \cong \big( D(\Fg,P_0) \ot{D(P_0)} W \big) \ot{K} V'_b \xtwoheadrightarrow{\!\!\varphi \ot{} \id \!\!} M \ot{K} V'_b .
	\end{equation}
	We extend the trivial $U(\Fp)$-action on $V'_b$ to the trivial $U(\Fg)$-action.
	Then \eqref{Eq - Strict epimorphism of composite algebra modules} is $D(\Fg,P_0)$-linear with respect to $D(\Fg,P_0)$ acting on the first factor of the source and the diagonal action on the target. 
	Moreover, the jointly continuous $D(P_0)$-module $W \ot{K} V'_b$ is finitely generated as an abstract $D(P_0)$-module, see \cite[Prop.\ 6.4.1]{AgrawalStrauch22FromCatOLocAnRep}.
	Such an epimorphism $D(P_0)^{\oplus n} \twoheadrightarrow W \ot{K} V'_b$, for some $n\in \BN$, is continuous and necessarily strict by the open mapping theorem \cite[Prop.\ 8.6]{Schneider02NonArchFunctAna} since $W\ot{K} V'_b$ is a $K$-Fr\'echet space.
	Therefore we obtain a commutative diagram 
	\begin{equation*}
		\begin{tikzcd}
			D(\Fg,P_0) \ot{K} D(P_0)^{\oplus n} \ar[r, two heads] \ar[d, two heads] & D(\Fg,P_0) \ot{K} \big(W \ot{K} V'_b \big) \ar[d, two heads] \\
			D(\Fg,P_0)^{\oplus n} \ar[r] & D(\Fg,P_0) \ot{D(P_0)} \big(W\ot{K} V'_b \big)
		\end{tikzcd}
	\end{equation*}
	where the top homomorphism is a strict epimorphism by \cite[Lemma 2.1 (ii)]{BreuilHerzig18TowardsFinSlopePartGLn}.
	It follows from \cite[Lemma 2]{KopylovKuzminov00KerCokerSeqSemiAbCat} that the bottom epimorphism is strict as well.
	Taking the composition of this bottom epimorphism and $\varphi$ we thus arrive at a strict epimorphism of jointly continuous $D(\Fg,P_0)$-modules
	\begin{equation*}
		\psi \colon D(\Fg,P_0)^{\oplus n} \longtwoheadrightarrow M \ot{K} V'_b .
	\end{equation*}

	Via the exactness of the projective tensor product, we obtain the commutative diagram
	\begin{equation*}
		\begin{tikzcd}
			D(G_0) \ot{K} D(\Fg,P_0)^{\oplus n} \ar[d, two heads] \ar[r, two heads, "\id \ot{} \psi"] & D(G_0) \ot{K} \big(M \ot{K} V'_b \big) \ar[d, two heads]  \\
			D(G_0)^{\oplus n} \ar[r, "\ov{\psi}"] & D(G_0) \ot{D(\Fg,P_0)} \big(M \ot{K} V'_b \big) 
		\end{tikzcd}
	\end{equation*}
	where all maps are strict epimorphisms -- for $\ov{\psi}$ one deduces this as before.
	On the other hand, the (abstract) $D(G_0)$-module $D(G_0) \ot{D(\Fg,P_0)} \big(M \ot{K} V'_b \big)$ is coadmissible by \cite[Thm.\ 4.2.3]{AgrawalStrauch22FromCatOLocAnRep}.
	Therefore $\ov{\psi}$ also is strict when $D(G_0) \ot{D(\Fg,P_0)} \big(M \ot{K} V'_b \big)$ carries its canonical Fr\'echet topology (see \cite[Sect.\ 3]{SchneiderTeitelbaum03AlgpAdicDistAdmRep}).
	By the uniqueness of the quotient we conclude that this Fr\'echet topology agrees with the topology on $D(G_0) \ot{D(\Fg,P_0)} \big(M \ot{K} V'_b \big) $.
	The theorem now follows from the topological isomorphism of $D(G_0)$-modules from \Cref{Lemma - Tensoring up from composite algebra}
	\begin{equation*}
		D(G) \ot{D(\Fg,P),\iota} \big(M \ot{K} V' \big) \cong D(G_0) \ot{D(\Fg,P_0)} \big(M \ot{K} V' \big) .
	\end{equation*}
\end{proof}

\begin{lemma}\label{Lemma - Some duality pairing}
	Let $W$ be a finite-dimensional algebraic $\bP_K$-representation and $V$ a strongly admissible smooth $P$-representation.
	Then there exists a $G$-equivariant duality pairing
	\begin{align*}
		\langle \blank , \!\blank \rangle \colon 
		D(G) \ot{D(\Fg,P),\iota} \Big( \big( U(\Fg) \ot{U(\Fp)} W \big) \ot{K} V'_b \Big)  \times \Ind^{\la, G}_P \big( W'\ot{K} V \big) &\lra K , \\
		(\delta \otimes \mu \otimes w \otimes \ell , f) &\lto (w\otimes \ell) \big( (\delta \ast \mu)(f) \big)	,
	\end{align*}
	where $\delta \ast \mu$ is applied to $f\in C^\la\big(G,W'\ot{K} V \big)$ via the pairing of \Cref{Cor - Pairing of distributions and vector valued functions}.
\end{lemma}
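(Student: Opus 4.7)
The plan is to verify that the given formula defines a well-defined bilinear pairing and then to check $G$-equivariance by direct computation. The principal task is ensuring that the formula respects the tensor relations in the domain, both over $U(\Fp)$ inside $U(\Fg) \ot{U(\Fp)} W$ and over $D(\Fg, P)$ on the outside. First I would set up the formula as a $K$-multilinear map from
\[ D(G) \times \big(U(\Fg) \ot{U(\Fp)} W\big) \times V'_b \times \Ind_P^{\la,G}(W' \ot{K} V) \lra K \]
via $(\delta, \mu \otimes w, \ell, f) \lto (w \otimes \ell)\big((\delta \ast \mu)(f)\big)$, using the vector-valued pairing from \Cref{Cor - Pairing of distributions and vector valued functions} to interpret $(\delta \ast \mu)(f) \in W' \ot{K} V$, and letting $w \otimes \ell$ act as $(w \otimes \ell)(w^* \otimes v) \defeq w^*(w)\,\ell(v)$.

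To see that this descends to $U(\Fg) \ot{U(\Fp)} W$ in the middle argument, by PBW reduction it suffices to check, for $\Fx \in \Fp$, the identity $(\eta \ast \Fx)(f) = -\Fx.\eta(f)$ (with $\Fx$ acting on $W' \ot{K} V$ through the contragredient on $W'$ alone, since $V$ is smooth). This follows from \Cref{Lemma - Fubinis theorem} combined with the transformation rule $f(g\exp(t\Fx)) = \exp(-t\Fx).f(g)$ and the contragredient relation $(\Fx.w^*)(w) = -w^*(\Fx.w)$. For the balancing over $D(\Fg, P)$, the case $\lambda \in U(\Fg)$ is automatic from associativity of $\ast$ and the trivial $U(\Fg)$-action on the smooth module $V'_b$. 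For $\delta' \in D(P)$, by density of Dirac distributions (\Cref{Prop - Properties of space of distributions} (iv)) one reduces to $\delta' = \delta_p$, and invokes \Cref{Lemma - Formula for adjoint representation} to rewrite $\delta_p \ast \mu = \Ad(p)(\mu) \ast \delta_p$; Fubini then yields $(\eta \ast \delta_p)(f) = \eta\big[g \mto f(gp)\big] = \eta\big[g \mto p^{-1}.f(g)\big]$, and the desired equality collapses to the standard compatibility
\[ (w \otimes \ell)\big(p^{-1}.(w^* \otimes v)\big) = (p.w \otimes p.\ell)(w^* \otimes v) \]
between the algebraic $\bP$-action on $W, W'$ and the smooth contragredient action on $V, V'_b$.

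Finally, the $G$-equivariance is a direct consequence of the Fubini identity $(\delta_g \ast \eta)(f) = \eta\big[h \mto f(gh)\big]$, matched against the definition of the left regular $G$-action on $\Ind_P^{\la,G}(W' \ot{K} V)$. The main technical obstacle is keeping the bookkeeping of the various actions aligned (Lie algebra, algebraic, smooth, contragredient, and adjoint); once the conventions are pinned down, each verification reduces to a short application of Fubini and the defining compatibility relations for the representations on $W$ and $V$.
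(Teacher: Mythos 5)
Your proposal verifies that the given formula is well-defined and $G$-equivariant via direct computation. The algebra is essentially correct (the balancing over $U(\Fp)$ via $(\eta \ast \Fx)(f) = -\Fx.\eta(f)$ and over $D(P)$ via $\delta_p \ast \mu = \Ad(p)(\mu) \ast \delta_p$ together with $f(gp) = p^{-1}.f(g)$ both check out), but there are two genuine gaps relative to what the lemma needs.

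First, the lemma is used in the proof of \Cref{Thm - Dual of Orlik-Strauch functors} as a \emph{perfect} pairing: it must exhibit $D(G) \ot{D(\Fg,P),\iota}\big((U(\Fg)\ot{U(\Fp)}W)\ot{K}V'_b\big)$ as the continuous dual of $\Ind_P^{\la,G}(W'\ot{K}V)$, so that closed submodules $\mathfrak{D}$ precisely correspond to admissible subrepresentations cut out by $\langle\mathfrak{D},\blank\rangle = 0$. Your argument shows the formula defines \emph{some} well-defined $G$-equivariant pairing, but never addresses non-degeneracy or topological duality, which is the heart of the claim. The paper sidesteps all explicit computation by instead noting that \Cref{Cor - Isomorphism for composite algebra} and \Cref{Lemma - Tensor identities for modules} give a topological isomorphism of $D(G)$-modules onto $D(G)\ot{D(P),\iota}(W\ot{K}V'_b)$, and then invoking \Cref{Prop - Module description for induction}, which already realizes the sought duality with the induction.

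Second, \Cref{Prop - Module description for induction} produces a duality pairing with the \emph{completed} tensor product $D(G)\cot{D(P),\iota}(\blank)$, whereas the lemma uses the uncompleted one. Bridging this requires showing that $D(G)\ot{D(P),\iota}(W\ot{K}V'_b)$ is already complete; the paper achieves this via \Cref{Thm - Comparision between canonical Frechet topology and topological tensor product}, which identifies the inductive tensor product topology with the canonical Fréchet topology coming from coadmissibility. Your proposal never engages with this completeness issue, and without it even the density reduction to Dirac distributions is not fully justified since the required continuity of the pairing in the $D(\Fg,P)$-slot is left implicit.
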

\begin{proof}
	We have $U(\Fg) \ot{U(\Fp)} W \in \CO_\alg^\Fp$, and with the induced separately continuous $D(\Fg,P)$-module structure \Cref{Cor - Isomorphism for composite algebra} and \Cref{Lemma - Tensor identities for modules} yield a topological isomorphism of $D(G)$-modules
	\begin{align}\label{Eq - Generalized Verma module over D(g,P) tensored up}
		D(G) \ot{D(\Fg,P),\iota} \Big( \big( U(\Fg) \ot{U(\Fp)} W \big) \ot{K} V'_b \Big) &\overset{\cong}{\lra} D(G) \ot{D(P),\iota} \big( W \ot{K} V'_b \big) , \\
		\delta \otimes \mu \otimes w \otimes \ell  &\lto \delta \ast \mu \otimes w \otimes \ell. \nonumber
	\end{align}
	It follows from \Cref{Thm - Comparision between canonical Frechet topology and topological tensor product} applied to $M \defeq U(\Fg) \ot{U(\Fp)} W$ and $V$ that the locally convex $K$-vector space underlying
	\begin{equation*}
		D(G) \ot{D(P),\iota} \big( W \ot{K} V'_b \big)
	\end{equation*}
	is Fr\'echet and thus in particular complete.
	Consequently, we obtain the sought pairing from \eqref{Eq - Generalized Verma module over D(g,P) tensored up} and the $G$-equivariant duality pairing of \Cref{Prop - Module description for induction}
	\begin{align*}
		\langle \blank , \!\blank \rangle \colon  D(G) \ot{D(P),\iota} \big(W \ot{K} V'_b \big) \times \Ind^{\la,G}_{P} \big(W' \ot{K} V \big) &\lra K , \\
		( \delta \otimes w \otimes \ell , f ) &\lto (w\otimes \ell)\big( \delta(f) \big) .
	\end{align*}
\end{proof}

\begin{theorem}\label{Thm - Dual of Orlik-Strauch functors}
	Let $M \in \CO^\Fp_\alg$ and let $V$ be a strongly admissible smooth $P$-representation.
	Then there is a natural topological isomorphisms of $D(G)$-modules 
	\begin{equation*}
		\CF^G_P (M,V)'_b \cong D(G) \ot{D(\Fg,P),\iota} \big(M\ot{K} V' \big)  .
	\end{equation*}
\end{theorem}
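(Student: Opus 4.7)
The plan is to reduce to the case of a generalized Verma module and then extract the general result via a duality argument. Choose a finite-dimensional $U(\Fp)$-submodule $W \subset M$ which generates $M$ over $U(\Fg)$ and lifts uniquely to an algebraic $\bP_K$-representation, as in the construction of $\CF_P^G(M,V)$. Set $\wt{M} \defeq U(\Fg) \ot{U(\Fp)} W$ and let $\Fd$ denote the kernel of the resulting epimorphism $\wt{M} \twoheadrightarrow M$, so that $0 \to \Fd \to \wt{M} \to M \to 0$ is short exact in $\CO_\alg^\Fp$. Write $\wt{D} \defeq D(G) \ot{D(\Fg,P),\iota}(\wt{M} \ot{K} V'_b)$. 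For $\wt{M}$ itself we have $\CF^G_P(\wt{M}, V) = \Ind^{\la,G}_P(W' \ot{K} V)$ by definition, and combining \Cref{Prop - Module description for induction}, the topological isomorphism \eqref{Eq - Generalized Verma module over D(g,P) tensored up} appearing in the proof of \Cref{Lemma - Some duality pairing}, and \Cref{Thm - Comparision between canonical Frechet topology and topological tensor product} I obtain
\[ \CF^G_P(\wt{M}, V)'_b \;\cong\; D(G) \cot{D(P),\iota}(W \ot{K} V'_b) \;\cong\; \wt{D}, \]
the middle tensor product being already complete by \Cref{Thm - Comparision between canonical Frechet topology and topological tensor product}. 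The resulting perfect strong duality between $\wt{D}$ and $\Ind^{\la,G}_P(W' \ot{K} V)$ is realized by the pairing $\langle \blank,\blank \rangle$ of \Cref{Lemma - Some duality pairing}.

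Next I reinterpret the condition defining $\CF^G_P(M,V)$. Using \Cref{Lemma - Fubinis theorem} (applied to the vector-valued setting by bilinearity and density) one verifies, for $\Fz = \sum_i \mu_i \otimes w_i \in \wt{M}$, $f \in \Ind^{\la,G}_P(W' \ot{K} V)$, $g \in G$ and $\ell \in V'_b$, the identity
\[ \langle \delta_g \otimes \Fz \otimes \ell,\, f \rangle \;=\; \ell\big( \langle \Fz, f\rangle_{C^\la(G,V)}(g) \big) . \]
The density of Dirac distributions in $D(G)$ (\Cref{Prop - Properties of space of distributions}\,(iv)), separate continuity of $\langle \blank,\blank \rangle$, and the fact that $V'_b$ separates points of $V$ (Hahn--Banach, available since $K$ is spherically complete) together imply that the vanishing of $\langle \Fz, f\rangle_{C^\la(G,V)}$ in $C^\la(G,V)$ for all $\Fz \in \Fd$ is equivalent to the vanishing of $\langle \delta \otimes \Fz \otimes \ell,\, f \rangle$ for all $\delta \in D(G)$, $\Fz \in \Fd$, $\ell \in V'_b$. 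Thus, under the duality of the previous paragraph, $\CF^G_P(M,V) = N^\perp$, where $N \subset \wt{D}$ denotes the $D(G)$-submodule generated by the image of $\Fd \ot{K} V'_b$ inside $\wt{M} \ot{K} V'_b$.

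The right-exact sequence $\Fd \ot{K} V'_b \to \wt{M} \ot{K} V'_b \to M \ot{K} V'_b \to 0$ of separately continuous $D(\Fg,P)$-modules (see \Cref{Prop - Composite module structure on object in category O}) induces, by the universal property of the quotient tensor-product topology, a continuous $D(G)$-linear strict surjection $\pi \colon \wt{D} \twoheadrightarrow D(G) \ot{D(\Fg,P),\iota}(M \ot{K} V'_b)$ with $\Ker(\pi) = \ov{N}$; both source and target are $K$-Fr\'echet spaces by \Cref{Thm - Comparision between canonical Frechet topology and topological tensor product}. Since $\wt{D}$ (nuclear Fr\'echet) and $\Ind^{\la,G}_P(W' \ot{K} V)$ (compact type) are reflexive and in perfect strong duality via the first paragraph, the closed subspace $\CF^G_P(M,V) = N^\perp$ has strong dual canonically isomorphic to $\wt{D}/\ov{N}$, yielding the desired topological $D(G)$-isomorphism
\[ \CF^G_P(M,V)'_b \;\cong\; D(G) \ot{D(\Fg,P),\iota}(M \ot{K} V'_b) , \]
natural in $(M,V)$.

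The technically most delicate step is the identification in the second paragraph of the defining condition for $\CF^G_P(M,V)$ with the annihilator of $N$. This hinges on the Fubini-type identity of \Cref{Lemma - Fubinis theorem} to transfer $\delta_g$ past $\Fz \in U(\Fg) \ot{K} W$, together with the non-degeneracy arguments needed to descend from vanishing of $C^\la(G,V)$-valued functions to vanishing of the scalar pairing. Beyond this point everything reduces to duality between closed subspaces and Hausdorff quotients in the Fr\'echet/compact-type setting, where the open mapping theorem and Hahn--Banach are freely applicable.
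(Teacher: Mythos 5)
Your proof is correct and follows essentially the same skeleton as the paper's: choose a finite-dimensional algebraic $\bP_K$-representation $W$ generating $M$ and the kernel $\Fd$, use \Cref{Prop - Module description for induction} plus \eqref{Eq - Generalized Verma module over D(g,P) tensored up} and \Cref{Thm - Comparision between canonical Frechet topology and topological tensor product} to realize $\wt D$ as the strong dual of $\Ind^{\la,G}_P(W'\ot{K} V)$, establish the compatibility identity $\langle \delta_g \otimes \Fz \otimes \ell, f\rangle = \ell\big(\langle \Fz, f\rangle_{C^\la(G,V)}(g)\big)$, and then identify $\CF^G_P(M,V)$ with the annihilator of (the image of) $\Fd \ot{K} V'_b$. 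The one genuinely different move is how you dispatch the final quotient/subspace duality. The paper first invokes \cite[Cor.\ 7.8.7]{AgrawalStrauch22FromCatOLocAnRep} to promote the tensored-up sequence to a short \emph{exact} sequence of coadmissible $D(G_0)$-modules and then uses the Schneider--Teitelbaum correspondence \cite[Thm.\ 6.3, Lemma 3.6]{SchneiderTeitelbaum03AlgpAdicDistAdmRep} between closed submodules and admissible subrepresentations. You instead observe that ordinary right-exactness of the algebraic tensor product already yields a continuous $D(G)$-linear surjection $\pi$ with abstract kernel $N$; Hausdorffness of the target (again from \Cref{Thm - Comparision between canonical Frechet topology and topological tensor product}) forces $N$ closed, the open mapping theorem makes $\pi$ strict, and then Hahn--Banach plus reflexivity gives $(N^\perp)'_b \cong \wt D / N$ directly. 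This trades the coadmissible-module machinery for a slightly more hands-on Fr\'echet/compact-type duality argument and avoids the exactness input from Agrawal--Strauch (though you still rely on their coadmissibility result implicitly through the topology comparison theorem). Your appeal to \Cref{Lemma - Fubinis theorem} for the compatibility identity is also a cleaner replacement for the paper's explicit diagram chase, provided you combine it with the compatibility statement in \Cref{Cor - Pairing of distributions and vector valued functions} so that the scalar-valued Fubini identity extends to $W'\ot{K} V$-valued functions and commutes past $\ev_{w}$; that step deserves one more sentence in a final write-up, but it is not a gap.
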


When $V$ is the trivial representation, this description of the dual of $\CF^G_P(M,K)$ already is the content of \cite[Prop.\ 3.7]{OrlikStrauch15JordanHoelderSerLocAnRep}.
The above theorem implies that the $p$-adic Orlik--Strauch functor $\CF_P^G$ factors over $\dot{\CF}_P^G$ as follows
\begin{equation*}
	\begin{tikzcd}
		\CO_\alg^\Fp \times \Rep_K^{\sm, \mathrm{s\text{-}adm}} (P) \ar[rrr, "\CF_P^G"] \ar[dr, "\CC \times (\blank)'_b"', end anchor= 164 ] &[-30pt] &[10pt] &[-30pt] \Rep_K^{\la,\adm}(G) \\
		&\CM_{D(\Fg,P)}^\mathrm{nF} \times \CM_{D(\Fg,P)}^\mathrm{nF} \ar[r, "\blank \, \widehat{\otimes}_{K} \blank"] & \CM_{D(\Fg,P)}^\mathrm{nF} \ar[ur, "\dot{\CF}_P^G"'] & .
	\end{tikzcd}
\end{equation*}

\begin{proof}[Proof of \Cref{Thm - Dual of Orlik-Strauch functors}]
	We may find a finite-dimensional algebraic $\bP_K$-representation $W$ and a short exact sequence
	\begin{equation*}
		0 \lra \Fd \lra U(\Fg) \ot{U(\Fp)} W \lra M \lra 0 
	\end{equation*}
	of $U(\Fg)$-modules.
	Prescribing the trivial $\Fg$-action on $V'$ and taking the tensor product of this sequence with $V'$ over $K$, we arrive at a short exact sequence of $D(\Fg,P)$-modules.
	Its constituents are finitely presented over $D(\Fg,P_0)$ \cite[Prop.\ 4.1.5]{AgrawalStrauch22FromCatOLocAnRep}.
	Hence it follows from \cite[Cor.\ 7.8.7]{AgrawalStrauch22FromCatOLocAnRep} that the resulting
	\begin{align*}
		0 \lra D(G_0) \ot{D(\Fg,P_0)} \big( \Fd \ot{K} V') 
		\lra &{\hspace{2pt}} D(G_0) \ot{D(\Fg,P_0)} \Big( \big( U(\Fg) \ot{U(\Fp)} W \big) \ot{K} V' \Big) \\
		&\qquad\lra D(G_0) \ot{D(\Fg,P_0)} \big( M \ot{K} V' \big) \lra 0 
	\end{align*}
	is a short exact sequence of coadmissible $D(G_0)$-modules.
	By \Cref{Thm - Comparision between canonical Frechet topology and topological tensor product} it is a short strictly exact sequence of jointly continuous $D(G_0)$-modules.
	In view of \Cref{Lemma - Tensoring up from composite algebra} it thus yields a topological isomorphism of separately continuous $D(G)$-modules
	\begin{align}\label{Eq - Quotient for tensored up generalized Verma module}
		D(G) \ot{D(\Fg,P),\iota} &\big( M \ot{K} V'_b \big)\nonumber \\ 
			&\cong D(G) \ot{D(\Fg,P),\iota} \Big( \big(U(\Fg) \ot{U(\Fp)} W \big) \ot{K} V'_b \Big)
		\Big/ D(G) \ot{D(\Fg,P),\iota} \big( \Fd \ot{K} V'_b \big) 
	\end{align}

	\begin{lemma}\label{Lemma - Compatibility of duality pairings}
			For $ \Fz \in U(\Fg)\ot{U(\Fp)} W$, $g\in G$, $\ell \in V'_b$ and $f\in \Ind^{\la, G}_P \big( W'\ot{K} V \big)$, we have for the duality pairing of \Cref{Lemma - Some duality pairing}
			\begin{equation*}
				\langle \delta_{g} \otimes \Fz \otimes \ell , f \rangle = \ell \big( \langle \Fz, f \rangle_{C^\la(G,V)} (g) \big) .
			\end{equation*}
	\end{lemma}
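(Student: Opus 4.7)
The plan is to reduce to the case of a single elementary representative and then unwind the two pairings explicitly by combining Fubini's theorem with the naturality of the pairing $D(G) \times C^\la(G, \blank) \to (\blank)$ in the target variable.

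By $K$-linearity and continuity, it suffices to verify the identity for $\Fz$ the image in $U(\Fg) \otimes_{U(\Fp)} W$ of an elementary tensor $\mu \otimes w$ with $\mu \in U(\Fg) = \hy(G)$ and $w \in W$. Unpacking the definition in Lemma \ref{Lemma - Some duality pairing}, the left-hand side becomes
\[\langle \delta_g \otimes \mu \otimes w \otimes \ell,\, f \rangle \,=\, (w \otimes \ell)\big((\delta_g \ast \mu)(f)\big),\]
where $(\delta_g \ast \mu)(f) \in W' \otimes_K V$ is produced by the pairing of Corollary \ref{Cor - Pairing of distributions and vector valued functions}, and where $w \otimes \ell \in W \otimes_K V' \cong (W' \otimes_K V)'$ acts on a pure tensor $\phi \otimes v$ by $\phi(w) \ell(v) = \ell(\ev_w(\phi \otimes v))$.

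Two identities now do all the work. First, Fubini's theorem (Lemma \ref{Lemma - Fubinis theorem}) applied to the convolution $\delta_g \ast \mu \in D(G)$ gives
\[(\delta_g \ast \mu)(f) \,=\, \mu\big[h' \mapsto f(g h')\big].\]
Second, the compatibility clause of Corollary \ref{Cor - Pairing of distributions and vector valued functions} (essentially the naturality of the integration map $I$ of Theorem \ref{Thm - Integration map} in the target space) implies that $\mu(\blank)$ commutes with any continuous $K$-linear map in the second variable. Applied to $\ev_w \colon W' \otimes_K V \to V$, this yields
\[\ev_w\Big(\mu\big[h' \mapsto f(g h')\big]\Big) \,=\, \mu\big[h' \mapsto \ev_w(f(g h'))\big].\]
Combining these and applying the functional $\ell$, the left-hand side equals $\ell\big(\mu[h' \mapsto \ev_w(f(g h'))]\big)$, which by the defining formula of the Orlik--Strauch pairing $\langle \blank, \blank \rangle_{C^\la(G,V)}$ is exactly $\ell\big(\langle \mu \otimes w, f\rangle_{C^\la(G,V)}(g)\big)$, proving the claim.

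The only real subtlety is justifying that $\mu$ commutes with post-composition by the continuous linear map $\ev_w$; this should be a direct consequence of the naturality of the integration map from Theorem \ref{Thm - Integration map}, together with the compatibility of the pairing under restriction to a BH-subspace $E \subset W' \otimes_K V$ containing the image of $h' \mapsto f(g h')$ on some compact open neighbourhood of $1$. Once that is checked, everything else is formal unwinding, and the passage from $U(\Fg) \otimes_K W$ to the quotient $U(\Fg) \otimes_{U(\Fp)} W$ is automatic because both sides factor through it (by the well-definedness noted after the definition of $\langle \blank, \blank \rangle_{C^\la(G,V)}$ in the paper, respectively by the very construction of the tensor product in Lemma \ref{Lemma - Some duality pairing}).
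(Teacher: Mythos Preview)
Your argument is correct and follows essentially the same path as the paper's proof, which packages the two naturality statements you use (naturality of $I$ in the source under left translation by $g$, and naturality of $I$ in the target under $\ev_w$) into a single commutative diagram. One small point of looseness: \Cref{Lemma - Fubinis theorem} is stated only for scalar-valued $f$, so your identity $(\delta_g \ast \mu)(f) = \mu\big[h' \mapsto f(gh')\big]$ for $f \in C^\la(G, W' \otimes_K V)$ is not a literal citation of it; rather it is the naturality of $I$ under the locally analytic map $h' \mapsto gh'$ (this is exactly the top square in the paper's diagram), which you do gesture at in your closing paragraph.
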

	\begin{proof}
		The statement follows once we show that, for all $w\in W$, $\ell \in V'_b$, $g \in G$, $\mu \in U(\Fg)$,
		\begin{equation*}
			(w\otimes \ell) \big( I(f)(\delta_g \ast \mu) \big) = \ell \Big( I\big(x \mto \ev_w(f(gx)) \big) (\mu) \Big) 
			\quad\text{, for all $f \in C^\la \big(G, W' \ot{K} V \big)$.}
		\end{equation*}
		But this is a consequence of the commutativity of the following diagram:
		\begin{equation*}
			\begin{tikzcd}
				C^\la \big(G, W' \ot{K} V \big) \ar[r, "I_{W'\otimes_{K}V}"] \ar[d, "\rho_l(g^{-1})"] &[15pt] \CL_b \big(D(G), W'\ot{K}V \big) \ar[rd, "\ev_{\delta_g \ast \mu}"] \ar[d, "\eta_g"] & & \\
				C^\la \big(G, W' \ot{K} V \big) \ar[r, "I_{W'\otimes_{K}V}"] \ar[d, "(\ev_{w})_\ast"] &\CL_b \big(D(G), W'\ot{K}V \big) \ar[r, "\ev_{\mu}"] \ar[d, "(\ev_{w})_\ast"]& W'\ot{K} V \ar[r, "w \otimes \ell"] \ar[d, "\ev_w"] &K \\
				C^\la(G,V) \ar[r, "I_V"] & \CL_b (D(G), V) \ar[r, "\ev_\mu"] & V \ar[ru, "\ell"] . &
			\end{tikzcd}
		\end{equation*}
		Here $\rho_l$ denotes the left regular $G$-action on $C^\la \big(G, W'\ot{K} V \big)$, and $\eta_g$ the map that sends $\varphi \in \CL_b \big(D(G), W'\ot{K}V \big)$ to the homomorphism $\varphi( \delta_g \ast \blank)$.
	\end{proof}

	Using this lemma, we now adapt the proof of \cite[Prop.\ 3.3]{OrlikStrauch15JordanHoelderSerLocAnRep}.
	By \cite[Thm.\ 6.3, Lemma 3.6]{SchneiderTeitelbaum03AlgpAdicDistAdmRep} the admissible subrepresentations of $\Ind^{\la, G}_P( W'\ot{K} V )$ correspond to closed $D(G)$-submodules of its continuous dual \eqref{Eq - Generalized Verma module over D(g,P) tensored up}.
	Explicitly, such a submodule $\mathfrak{D}$ corresponds to the subrepresentation
	\begin{equation*}
		\Big\{ f\in \Ind^{\la, G}_P \big(W'\ot{K} V \big) \,\Big\vert\, \forall \FZ \in \mathfrak{D}: \langle \FZ, f \rangle = 0 \Big\} ,
	\end{equation*}
	and $\langle \blank ,\!\blank \rangle$ puts this subrepresentation into duality with the quotient
	\begin{equation*}
		D(G) \ot{D(\Fg,P),\iota} \Big( \big( U(\Fg) \ot{U(\Fp)} W \big) \ot{K} V'_b \Big)  \Big/ \mathfrak{D} .
	\end{equation*}
	We claim that, for $f\in \Ind^{\la, G}_P \big(W'\ot{K} V \big)$, the following are equivalent:
	\begin{altenumeratelevel2}
		\item
		$\forall \FZ \in D(G) \ot{D(\Fg,P),\iota} \big( \Fd \ot{K} V'_b \big) : \langle \FZ , f\rangle = 0 $,
		\item
		$\forall \Fz \in \Fd: \langle \Fz, f \rangle_{C^\la(G,V)} = 0 $.
	\end{altenumeratelevel2}
	Indeed, assume (1) and let $\Fz \in \Fd$, $g\in G$ and $\ell \in V'_b$. 
	Then it follows from \Cref{Lemma - Compatibility of duality pairings} that
	\begin{align*}
		 \ell \big( \langle \Fz, f \rangle_{C^\la(G,V)} (g) \big)  = \langle \delta_{g} \otimes \Fz \otimes \ell, f \rangle 
		=0.
	\end{align*}
	Since $\ell \in V'_b$ and $g\in G$ were arbitrary, we conclude that $\langle \Fz, f \rangle_{C^\la(G,V)}$ is the zero function.
	The other implication follows directly from \Cref{Lemma - Compatibility of duality pairings} and density of Dirac distributions in $D(G)$.
	The above claim implies that the subrepresentation $\Ind^{\la, G}_P \big(W'\ot{K} V \big)^\Fd $ corresponds to the submodule $\mathfrak{D} \defeq D(G) \ot{D(\Fg,P),\iota} \big( \Fd \ot{K} V'_b \big) $.
	Hence the theorem follows from \eqref{Eq - Quotient for tensored up generalized Verma module}.
\end{proof}

\AtNextBibliography{\small}
\printbibliography

\end{document}